\edef\savecatcodeat{\the\catcode`@}
\def\tb@ifSpecChars#1#2{#1}
\def\tb@ifNoSpecChars#1#2{#2}
\def\tableau{%
  \bgroup
  \@ifstar{\let\Tif\tb@ifNoSpecChars\tb@tableauB}
          {\let\Tif\tb@ifSpecChars\tb@tableauB}}
\def\tb@tableauB{
  \@ifnextchar[{\tb@tableauC}{\tb@tableauC[]}}
\def\tb@tableauC[#1]{\hbox\bgroup%
    \let\\=\cr
    \def\bl{\global\let\tbcellF\tb@cellNF}%
    \def\tf{\global\let\tbcellF\tb@cellH}
%
    \dimen2=\ht\strutbox \advance\dimen2 by\dp\strutbox%
    \ifx\baselinestretch\undefined\relax%
    \else%
       \dimen0=100sp \dimen0=\baselinestretch\dimen0%
       \dimen2=100\dimen2 \divide\dimen2 by\dimen0%
    \fi%
    \let\tpos\tb@vcenter
    \tb@initYoung
    \tb@options#1\eoo
    \let\arrow\tb@arrow%
    \dimen0=\Tscale\dimen2%
    \dimen1=\dimen0 \advance\dimen1 by \tb@fframe%
    \lineskip=0pt\baselineskip=0pt
%
    \def\tb@nothing{}%
    \def\endcellno{$\rss\egroup\bss\egroup}
    \def\endcell{\endcellno\kern-\dimen0}
    \def\begincell{\vbox to\dimen0\bgroup\vss\hbox to\dimen0\bgroup\hss$}%
    \let\overlay\tb@overlay%
    \let\fl\tb@fl%
    \let\lss\hss\let\rss\hss\let\tss\vss\let\bss\vss
    \def\mkcell##1{
        \let\tbcellF\tb@cellD
        \def\tb@cellarg{##1}
        \ifx\tb@cellarg\tb@nothing\let\tb@cellarg\tb@cellE\fi%
        \begincell\tb@cellarg\endcellno
        \tbcellF}
    \let\savecellF\tbcellF
     \Tif{\catcode`,=4\catcode`|=\active}{}\tb@tableauD}%
\let\tb@savetableauD\tableauD
\gdef\tableauD#1{%
  \Tif{
    \mathcode`|="8000 \mathcode`*="8000%
    \mathcode`~="8000 \mathcode`@="8000%
    \def@{\bullet}%
    \let|\cr
    \let*\tf
    \let~\sk
  }{}%
  \tpos{\tabskip=0pt\halign{&\mkcell{##}\cr#1\crcr}}%
  \global\let\tbcellF\savecellF
  \egroup
  \egroup}
\let\tb@tableauD\tableauD
\let\tableauD\tb@savetableauD
\let\tb@savetableauD\undefined
\def\tb@options#1{\ifx#1\eoo\relax\else\tb@option#1\expandafter\tb@options\fi}
\def\tb@option#1{%
  \if#1t\let\tpos\tb@vtop\fi
  \if#1c\let\tpos\tb@vcenter\fi
  \if#1b\let\tpos\vbox\fi
  \if#1F\tb@initFerrers\fi
  \if#1Y\tb@initYoung\fi
  \if#1s\tb@initSmall\fi
  \if#1m\tb@initMedium\fi
  \if#1l\tb@initLarge\fi
  \if#1p\tb@initPartition\fi
  \if#1a\tb@initArrow\fi
}
\def\tb@vcenter#1{\ifmmode\vcenter{#1}\else$\vcenter{#1}$\fi}
\def\tb@vtop#1{\hbox{\raise\ht\strutbox\hbox{\lower\dimen0\vtop{#1}}}}
\def\tb@initPartition{\def\Tscale{.3}}
\def\tb@initSmall{\def\Tscale{1}}
\def\tb@initMedium{\def\Tscale{2}}
\def\tb@initLarge{\def\Tscale{3}}
\def\tb@initArrow{\dimen2=1.25em}
\def\tb@initYoung{%
  \def\tb@cellE{}
  \let\tb@cellD\tb@cellN
  \def\sk{\global\let\tbcellF\tb@cellNF}}
\def\tb@initFerrers{%
  \def\tb@cellE{\bullet}
  \let\tb@cellD\tb@cellNF
  \def\sk{\bullet}}
\def\tb@sframe#1{%
  \vbox to0pt{
    \vss
    \hbox to0pt{%
      \hss
      \vbox to\dimen1{
        \hrule depth #1 height0pt
        \vss
        \hbox to\dimen1{
          \vrule width #1 height\dimen1
          \hss
          \vrule width #1
          }%
        \vss
        \hrule height #1 depth 0in
        }%
      \kern-\tb@hframe
      }%
    \kern-\tb@hframe}}
\def\tb@hframe{.2pt}\def\tb@fframe{.4pt}\def\tb@bframe{2pt}
\def\tb@cellH{\tb@sframe{\tb@bframe}}       
\def\tb@cellNF{}                            
\def\tb@cellN{\tb@sframe{\tb@fframe}}       
\let\tbcellF\tb@cellN                       
\def\tb@rpad{1pt}
\def\tb@lpad{1pt}
\def\tb@tpad{1.8pt}
\def\tb@bpad{1.8pt}
\def\tb@overlay{\endcell\@ifnextchar[{\tb@overlaya}{\begincell}}
\def\tb@overlaya[#1]{\vbox to\dimen0\bgroup%
  \tb@overlayoptions#1\eoo%
  \tss\hbox to\dimen0\bgroup\lss$}
\def\tb@overlayoptions#1{\ifx#1\eoo\relax\else\tb@overlayoption#1\expandafter\tb@overlayoptions\fi}
\def\tb@overlayoption#1{
  \if#1t\def\tss{\vskip\tb@tpad}\let\bss\vss\fi
  \if#1c\let\tss\vss\let\bss\vss\fi
  \if#1b\def\bss{\vskip\tb@bpad}\let\tss\vss\fi
  \if#1l\def\lss{\hskip\tb@lpad}\let\rss\hss\fi
  \if#1m\let\lss\hss\let\rss\hss\fi
  \if#1r\def\rss{\hskip\tb@rpad}\let\lss\hss\fi
}
\def\tb@fl{\endcell\begincell\vrule depth 0pt width \dimen0 height \dimen0 \endcell\begincell}
\def\tb@arrowpad{.5}
\newoptcommand{\tb@arrow}{\@ne}[2]{%
  \endcell
   \begingroup%
   \let\dg@getnodesize\tb@getnodesize
   \dg@USERSIZE=#1\relax%
   \ifnum\dg@USERSIZE<\@ne \dg@USERSIZE=\@ne \fi%
   \dg@parse{#2}%
   \dg@label{\tb@draw{#1}{#2}}}
\def\tb@getnodesize#1#2#3#4#5{\dimen3=\tb@arrowpad\dimen2 #4=\dimen3 #5=\dimen3\relax}
\def\tb@getnodesize#1#2#3#4#5{\ifnum#2=0\ifnum#3=0\tb@getnodesizetail{#4}{#5}\else\tb@getnodesizehead{#4}{#5}\fi\else\tb@getnodesizehead{#4}{#5}\fi}
\def\tb@getnodesizetail#1#2{\dimen3=.5\dimen2 #1=\dimen3 #2=\dimen3}
\def\tb@getnodesizehead#1#2{\dimen3=.5\dimen2 #1=\dimen3 #2=\dimen3}
\def\tb@draw#1#2#3#4{%
        \dg@X=0\dg@Y=0\dg@XGRID=1\dg@YGRID=1\unitlength=.001\dimen0%
        \dg@LBLOFF=\dgLABELOFFSET \divide\dg@LBLOFF\unitlength%
        \dg@drawcalc
        \begincell
        \let\lams@arrow\tb@lams@arrow
        \begin{picture}(0,0)\begingroup\dg@draw{#1}{#2}{#3}{#4}\end{picture}%
        \endcell
        \endgroup
        \begincell}
\def\tb@lams@arrow#1#2{%
 \lams@firstx\z@\lams@firsty\z@
 \lams@lastx#1\relax\lams@lasty#2\relax
 \lams@center\z@
 %
 \N@false\E@false\H@false\V@false
 \ifdim\lams@lastx>\z@\E@true\fi
 \ifdim\lams@lastx=\z@\V@true\fi
 \ifdim\lams@lasty>\z@\N@true\fi
 \ifdim\lams@lasty=\z@\H@true\fi
 \NESW@false
 \ifN@\ifE@\NESW@true\fi\else\ifE@\else\NESW@true\fi\fi
 %
 \ifH@\else\ifV@\else
  \lams@slope
  \ifnum\lams@tani>\lams@tanii
   \lams@ht\ten@\p@\lams@wd\ten@\p@
   \multiply\lams@wd\lams@tanii\divide\lams@wd\lams@tani
  \else
   \lams@wd\ten@\p@\lams@ht\ten@\p@
   \divide\lams@ht\lams@tanii\multiply\lams@ht\lams@tani
  \fi
 \fi\fi
 %
 \ifH@  \lams@harrow
 \else\ifV@ \lams@varrow
 \else \lams@darrow
 \fi\fi
}
\let\savecatcodeat\undefined
\numberwithin{equation}{section}
\renewcommand{\subsubsection}{\@startsection
{subsubsection} {3} {0mm} {\baselineskip} {-0.5\baselineskip} {\normalfont\normalsize\bfseries}} \makeatother
\newtheorem{theorem}{Theorem}
\newtheorem{lemma}[theorem]{Lemma}
\newtheorem{proposition}[theorem]{Proposition}
\newtheorem{corollary}[theorem]{Corollary}
\newtheorem{remark}[theorem]{Remark}
\newtheorem*{acknow}{Acknowledgments}
\def\la{{\lambda}}
\def\cal L{{\mathcal L}}
\newcommand{\La}{\Lambda}
\def \part {\vdash}
\def\La {\Lambda}
\def \Om {\Omega}
\def \ta {\theta}
\def\cd{\circledast}
\newcommand{\tcercle}[1]{\ensuremath{\setlength{\unitlength}{1ex}\begin{picture}(2.8,2.8)\put(1.4,1.4){\circle{2.8}\makebox(-5.6,0){#1}}\end{picture}}}
\begin{document}

\title[Pieri rules for Schur functions in superspace]
{Pieri rules for Schur functions in superspace}

\author{Miles Jones }
\address{Instituto de Matem\'atica y F\'{\i}sica, Universidad de Talca,
Casilla 747, Talca, Chile} \email{mjones-at-inst-mat.utalca.cl}

\author{Luc Lapointe}
\address{Instituto de Matem\'atica y F\'{\i}sica, Universidad de Talca, Casilla 747, Talca,
Chile} \email{lapointe-at-inst-mat.utalca.cl}


\begin{abstract}   
The Schur functions in superspace $s_\Lambda$ and $\bar s_\Lambda$
are the limits $q=t=0$ and $q=t=\infty$ respectively of the Macdonald polynomials in superspace. 
We prove Pieri rules for the bases $s_\Lambda$ and $\bar s_{\Lambda}$ (which happen to
be essentially dual).  As a consequence, we derive the basic properties of these bases such as dualities, monomial expansions, and tableaux generating functions.
\end{abstract}

\keywords{Schur functions, Key polynomials, symmetric functions in superspace}

\maketitle

\section{Introduction}

An extension to superspace of the theory of symmetric functions was developed in \cite{BDLM2,DLM1,DLM2}.  In this extension, the polynomials
$f(x,\theta)$, where $(x,\theta)=(x_1,\dots,x_N,\theta_1,\dots,\theta_N)$, not only depend on the usual commuting variables
$x_1,\dots,x_N$ but also on the anticommuting variables 
$\theta_1,\dots,\theta_N$ ($\theta_i \theta_j=- \theta_j \theta_i, \text{~and~} \theta_i^2=0)$.  In this article, we
are concerned with two natural generalizations to superspace of the Schur functions that arise as special limits of
the Macdonald polynomials in superspace and whose combinatorics appears to be extremely rich.

The extension to superspace of the Macdonald polynomials, $\{ P_{\Lambda}(x,\theta;q,t) \}_\Lambda$, is a basis
of the ring $\mathbb Q(q,t)[x_1,\dots,x_N;\theta_1,\dots,\theta_N]^{S_N}$ of symmetric polynomials in superspace, where the superscript $S_N$ 
indicates that the elements of the ring are invariant under the diagonal action of the symmetric group $S_N$ (that is, invariant under
the simultaneous interchange of $x_i \leftrightarrow x_j$ and $\theta_i \leftrightarrow \theta_j$, for any $i,j$).  They are
indexed by superpartitions $\Lambda$  and defined as the unique basis 
such that 
\begin{enumerate}
\item $P_{\Lambda}(q,t) = m_\Lambda +$ smaller terms 
\item  $\langle  P_{\Lambda}(q,t), P_{\Omega}(q,t) \rangle \!  \rangle_{q,t}= 0$ \quad if \quad  $\Lambda \neq \Omega$
\end{enumerate}
where the scalar product  $ \langle \!  \langle \cdot , \cdot  \rangle \!  \rangle_{q,t}$ is given by
\begin{equation} \label{scalmac}
\langle \!  \langle  p_{\Lambda}, p_{\Omega} \rangle \!  \rangle_{q,t}= \delta_{\Lambda \Omega} \, q^{|\Lambda^a|} z_{\Lambda^s} \prod_{i}\frac{1-q^{\Lambda^s_i}}{1-t^{\Lambda^{s}_i}}
\end{equation}
on the power sum symmetric functions in superspace (see Section~\ref{secdef} 
for all the relevant definitions).  
It was shown in \cite{BDLM2} that even though the limits $q=t=0$ and $q=t=\infty$
of this scalar product are degenerate and not well-defined respectively, the corresponding 
limits $s_\Lambda:=P_{\Lambda}(0,0)$ and $\bar s_\Lambda:=P_\Lambda(\infty,\infty)$ of the Macdonald superpolynomials 
exist and are related to Key polynomials \cite{Ion,LS}.   As we will see,  the rich combinatorics of these functions 
makes them the genuine extensions to superspace of the Schur functions. 
In comparison, the a priori more relevant limit $q=t=1$ of the Macdonald polynomials in superspace,
which corresponds to the
limit $\alpha=1$ of the Jack polynomials in superspace $P_\Lambda(\alpha)$, does not seem to be very interesting from the combinatorial point of view (in the figure below, the limit $q=t=1$ of the Macdonald polynomials in superspace corresponds to $S_\Lambda$).

\begin{figure}
$$\qquad 
\dgARROWLENGTH=2.8em
\begin{diagram}
\node{} \node{{P_\Lambda{(q,t)}}} \arrow{ssw,t,..}{q=t=0} 
\arrow{sse,t,..}{q=t=\infty}
\arrow{s,b}{}
\node{}  \\
\node{} \node{{P_\Lambda{(\alpha)}}} \arrow{s,b}{\alpha\to 1} \node{} 
 \\
\node{ {s_\Lambda}}  \node{{S_\Lambda}}  \node{{\bar s_\Lambda}}   
\end{diagram}
\qquad \qquad 
$$
\end{figure}

The basis $s_\Lambda$ is especially relevant since it plays the role of the Schur functions
in the generalization to superspace of the original Macdonald positivity conjectures \cite{BDLM1}.  To be more specific, let
$J_\Lambda(q,t)=c_\Lambda(q,t) P_\Lambda(q,t)$ be the integral form of the Macdonald superpolynomials ($c_\Lambda(q,t)$ is a constant belonging to $\mathbb Z[q,t]$) and let $\varphi (s_{\Lambda})$ be a certain plethystically transformed version of 
the function $s_{\Lambda}$ (see \cite{BDLM1} for more details).  Then the coefficients $K_{\Omega \Lambda}(q,t)$ appearing in 
\begin{equation}\label{mackostka}
J_\Lambda(q,t)= \sum_{\Omega} K_{\Omega \Lambda}(q,t) \, \varphi (s_{\Omega})
\end{equation}
are conjectured to be polynomials in $q$ and $t$ with nonnegative integer coefficients (the conjecture is known to hold
when the degree in the anticommuting variables is either zero \cite{Hai}, which corresponds to the usual Macdonald case, or sufficiently large \cite{BLM}).

In this article, we will derive the basic properties of the bases $s_\Lambda$ and $\bar s_{\Lambda}$ (which as we will
see are essentially dual) such as Pieri rules, dualities, monomial expansions, tableaux generating functions,
and Cauchy identities.  It is important to note that the combinatorics of the bases $s_\Lambda$ and $\bar s_{\Lambda}$  was first studied in \cite{BM}.  Our work stems in large part from a desire to develop the right framework to 
prove the conjectures therein, especially those concerning
Pieri rules\footnote{The connection between the results of \cite{BM} and those of this article is
discussed in Remarks \ref{remarkpieri} and \ref{remarktableaux}.}. 

We are  confident that this work is only the tip of the iceberg and that
deeper properties of the bases $s_\Lambda$ and $\bar s_{\Lambda}$ will be uncovered in the future,
such as for instance a group-theoretical interpretation of the generalization to superspace of the Macdonald positivity
conjecture.  At the tableau level, we are hopeful that this work will eventually lead to
a Robinson-Schensted-Knuth insertion algorithm in superspace, and ultimately to a charge statistic on tableaux that would solve
the case $q=0$ of \eqref{mackostka}.

The most technical parts of this work
are the proofs of the Pieri rules.  
They rely on the correspondence between
the Schur functions in superspace and Key polynomials \cite{BDLM2} which allows to use the powerful machinery of
divided differences \cite{Las}.  Once the Pieri rules are assumed to hold, the remaining results follow somewhat easily from duality
arguments or well-known techniques of symmetric function theory \cite{Mac,Sta}.   To alleviate the presentation, the proofs of
many technical results have been relegated to the appendix.

\section{Symmetric function theory in superspace: basic definitions} \label{secdef}

Before introducing the basic 
concepts of symmetric function theory in superspace, we discuss an important identification (as vector spaces) between the ring of symmetric polynomials in superspace and the ring of bisymmetric polynomials.

A polynomial {in} superspace, or equivalently, a superpolynomial, is
a polynomial in the usual $N$ variables $x_1,\ldots ,x_N$  and the $N$ anticommuting variables $\ta_1,\ldots,\ta_N$ over a certain field, which will be taken in the remainder of this article to be $\mathbb Q$.  
A superpolynomial $P(x,\theta)$,
with $x=(x_1,\ldots,x_N)$ and $\theta=(\theta_1,\ldots,\theta_N)$, is said to be symmetric if the following is satisfied:
\begin{equation}
P(x_1,\dots,x_N,\theta_1,\dots,\theta_N) = P(x_{\sigma(1)},\dots,x_{\sigma(N)},\theta_{\sigma(1)},\dots,\theta_{\sigma(N)}) \qquad 
\forall \, \sigma \in S_N
\end{equation}
where $S_N$ is the symmetric group on $\{1,\dots, N \}$.  The ring of superpolynomials in $N$ variables has a natural grading with respect to the fermionic 
degree $m$ (the total degree in the anticommuting variables).
We will denote
by $\Lambda^m_N$ the ring of symmetric superpolynomials in $N$ variables and fermionic degree $m$ over the field $\mathbb Q$.  By symmetry, one can reconstruct
a superpolynomial in  $\Lambda^m_N$ from its coefficient 
$\theta_1 \cdots \theta_m$.  Furthermore, since that coefficient is necessarily antisymmetric in the variables $x_1,\dots,x_m$, and symmetric in the remaining variables, there is a natural identification as vector spaces 
between $\Lambda_N^m$ and the ring of bisymmetric polynomials 
$\mathbb Q [x_1,\dots,x_N]^{S_m \times S_{m^c}}$, where $S_m \times S_{m^c}$
is taken as the subgroup of $S_N$ given by the permutations that leave
the sets $\{1,\dots,m \}$ and  $\{m+1,\dots,N \}$ invariant.  Given an
element $P(x,\theta)$ of $\Lambda_N^m$, the identification is simply 
\begin{equation} \label{eqidenti}
P(x,\theta) \quad  \longleftrightarrow \quad \frac{P(x,\theta) \Big |_{\theta_1 \cdots \theta_m}}{\Delta_m(x)}
\end{equation}
where $\Delta_m(x)$ is the Vandermonde determinant $\prod_{1 \leq i<j \leq m} (x_i-x_j)$, and $P(x,\theta) \Big |_{\theta_1 \cdots \theta_m}$
is the coefficient of $\theta_1\cdots \theta_m$ in $P(x,\theta)$.

\subsection{Superpartitions}
We  first recall some definitions
related to partitions \cite{Mac}.
A partition $\lambda=(\lambda_1,\lambda_2,\dots)$ of degree $|\lambda|$
is a vector of non-negative integers such that
$\lambda_i \geq \lambda_{i+1}$ for $i=1,2,\dots$ and such that
$\sum_i \lambda_i=|\lambda|$.  
Each partition $\lambda$ has an associated Ferrers diagram
with $\lambda_i$ lattice squares in the $i^{th}$ row,
from the top to bottom. Any lattice square in the Ferrers diagram
is called a cell (or simply a square), where the cell $(i,j)$ is in the $i$th row and $j$th
column of the diagram.  
The conjugate $\lambda'$ of  a partition $\lambda$ is represented  by
the diagram
obtained by reflecting  $\lambda$ about the main diagonal.
We say that the diagram $\mu$ is contained in $\la$, denoted
$\mu\subseteq \la$, if $\mu_i\leq \la_i$ for all $i$.  Finally,
$\la/\mu$ is a horizontal (resp. vertical) $n$-strip if $\mu \subseteq \lambda$, $|\lambda|-|\mu|=n$,
and the skew diagram $\la/\mu$ does not have two cells in the same column
(resp. row). 

Symmetric superpolynomials  are naturally indexed by superpartitions\footnote{Superpartitions correspond, using a trivial bijection, to the
overpartitions studied in \cite{CL}.}. A superpartition $\Lambda$ of
degree $(n|m)$, or  $\Lambda \vdash (n|m)$ for short, 
 is a pair $(\Lambda^\circledast,\Lambda^*)$ of partitions
$\Lambda^\circledast$ and $\Lambda^*$ such
 that:\\ \\
 1. $\Lambda^* \subseteq \Lambda^\circledast$;\\
 2.  the degree of $\Lambda^*$ is $n$;\\
 3.  the skew diagram $\Lambda^\circledast/\Lambda^*$
is both a horizontal and a vertical $m$-strip\footnote{Such diagrams are sometimes called $m$-rook strips.}\\ \\
We refer to  $m$ and $n$ respectively as the fermionic degree 
and total degree 
of $\La$. 
 Obviously, if
$\Lambda^\circledast= \Lambda^*=\lambda$,
then $\Lambda=(\lambda,\lambda)$ can be interpreted as the partition
$\lambda$.

We will also need another characterization of a superpartition.
 A superpartition $\La$ is 
a pair of partitions $(\La^a; \La^s)=(\La_{1},\ldots,\La_m;\La_{m+1},\ldots,\La_N)$, 
 where $\La^a$ is a partition with $m$ 
distinct parts (one of them possibly  equal to zero),
and $\La^s$ is an ordinary partition (with possibly a string of zeros at the end).   The correspondence 
between $(\Lambda^\circledast,\Lambda^*)$ and 
$(\Lambda^a; \Lambda^s)$ is given explicitly as follows:
given 
$(\Lambda^\circledast,\Lambda^*)$, the parts of $\Lambda^a$ correspond to the
parts of $\Lambda^*$ such that $\Lambda^{\circledast}_i\neq 
\Lambda^*_i$, while the parts of $\Lambda^s$ correspond to the
parts of $\Lambda^*$ such that $\Lambda^{\circledast}_i=\Lambda^*_i$.

The conjugate of a superpartition 
$\Lambda=(\Lambda^\circledast,\Lambda^*)$ is $\Lambda'=((\Lambda^\circledast)',(\Lambda^*)')$.
A diagrammatic representation of $\La$ is given by 
the Ferrers diagram of $\La^*$ with circles added in the cells corresponding
to $\Lambda^{\circledast}/\Lambda^*$.
For instance, if $\La=(\Lambda^a;\Lambda^s)
=(3,1,0;2,1)$,  we have $\Lambda^\circledast=(4,2,2,1,1)$ and $\Lambda^*
=(3,2,1,1)$, so that 
\begin{equation} \label{exdia}
{\scriptsize     \La^\cd:\quad{\tableau[scY]{&&&\\&\\&\\\\ \\ }} \quad
         \La^*:\quad{\tableau[scY]{&&\\&\\ \\ \\ }} \quad
 \Longrightarrow\quad      \La:\quad {\tableau[scY]{&&&\bl\tcercle{}\\&\\&\bl\tcercle{}\\ \\
    \bl\tcercle{}}} \quad    \La':\quad {\tableau[scY]{&&&&\bl\tcercle{}\\&&\bl\tcercle{}\\ \\
    \bl\tcercle{}}},}
\end{equation}
where the last diagram illustrates the conjugation operation that corresponds, as usual, to replacing rows by columns.

From the dominance ordering on partitions
\begin{equation}\label{ordre}
   \mu \leq \la\quad\text{ iff }\quad |\mu|=|\la|\quad\text{
and }\quad \mu_1 + \cdots + \mu_i \leq \lambda_1 + \cdots + \lambda_i\quad \forall i \, . \end{equation}
we define the dominance ordering on superpartitions as
\begin{equation} \label{eqorder1}
 \Omega\leq\Lambda \quad \text{iff}\quad
 \deg(\La)=\deg(\Om) ,
 \quad \Omega^* \leq \Lambda^*\quad \text{and}\quad
\Omega^{\circledast} \leq  \Lambda^{\circledast}
\end{equation}
where we stress that the order on partitions is the dominance ordering.

\subsection{Simple bases}
Four simple bases  of the space of 
symmetric polynomials in superspace will be particularly relevant to our work \cite{DLM1}:
\begin{enumerate}\item 
the extension of the monomial symmetric 
functions, $m_\La$, defined by 
\begin{equation}
m_\La={\sum_{\sigma \in S_N} }' \theta_{\sigma(1)}\cdots\theta_{\sigma(m)} x_{\sigma(1)}^{\La_1}\cdots x_{\sigma(N)}^{\La_N},
\end{equation}
where the sum is over the  permutations of $\{1,\ldots,N\}$ that produce distinct terms, and where
the entries of $(\Lambda_1,\dots,\Lambda_N)$ are those of $\Lambda=(\La^a; \La^s)=(\La_{1},\ldots,\La_m;\La_{m+1},\ldots,\La_N)$;
\item 
 the generalization of the power-sum 
symmetric
functions $p_\La=\tilde{p}_{\La_1}\cdots\tilde{p}_{\La_m}p_{\La_{m+1}}\cdots p_{\La_{\ell}}\, $,
\begin{equation} 
\text{~~~{where}~~~}
\tilde{p}_k=\sum_{i=1}^N\theta_ix_i^k\qquad\text{and}\qquad p_r=
\sum_{i=1}^Nx_i^r \, , \text{~~~~for~~} k\geq 0,~r \geq 1;
\end{equation}  

\item 
 the generalization of the elementary 
symmetric
functions $e_\La=\tilde{e}_{\La_1}\cdots\tilde{e}_{\La_m}e_{\La_{m+1}}\cdots e_{\La_{\ell}}$ ,

\begin{equation} \text{~~~where~~~} \tilde{e}_k=m_{(0;1^k)}
\quad \text{and} \quad e_r=m_{(\emptyset;1^r)}
, \text{~~~~for~~} k\geq 0,~r \geq 1;
\end{equation}  
\item 
 the generalization of the homogeneous
symmetric
functions  $h_\La=\tilde{h}_{\La_1}\cdots\tilde{h}_{\La_m}h_{\La_{m+1}}\cdots h_{\La_{\ell}}\, ,$
\begin{equation}  \text{~~~where~~~}  \tilde{h}_k=\sum_{\Lambda \vdash (n|1)} (\Lambda_1+1)m_{\Lambda}
\qquad \text{and} \qquad h_r=\sum_{\Lambda \vdash (n|0)}m_{\Lambda}, 
 \text{~~~~for~~} k\geq 0,~r \geq 1
\end{equation}  
\end{enumerate}
Observe that when $\Lambda=(\emptyset; \lambda)$, we have that $m_\Lambda=m_\lambda$,  $p_\Lambda=p_\lambda$,  $e_\Lambda=e_\lambda$ and
 $h_\Lambda=h_\lambda$ are respectively the usual monomial, power-sum, elementary and homogeneous symmetric functions.  Also note
that if we define the operator $d=\theta_1 {\partial}/{\partial_{x_1}}+\cdots + \theta_N \partial/\partial_{x_N}$, we have
\begin{equation}
(k+1) \, \tilde p_k = d ( p_{k+1})\, , \qquad  \tilde e_k = d (e_{k+1})  \quad \text{and} \quad \tilde h_k = d(h_{k+1})  
\end{equation}
that is, the new generators in the superspace versions of the bases can be obtained from acting with $d$ on the generators of the usual symmetric function versions.

\subsection{Scalar product and duality}
The relevant scalar product in this article is the specialization $q=t=1$ of the scalar product (\ref{scalmac})\footnote{The scalar product (\ref{scalmac})
differs from that of \cite{BDLM2} by a sign depending on the fermionic degree.}, that is, 
\begin{equation} \label{scal1}
\langle \! \langle p_{\Lambda}, p_{\Omega} \rangle \! \rangle= 
\delta_{\Lambda \Omega}\, z_{\Lambda^s}
\end{equation}
where, as usual,  $z_\lambda=1^{n_\lambda(1)} n_\lambda(1)! \, 2^{n_\lambda(2)} n_\lambda(2)! \cdots $ with $n_\lambda(i)$ the number of
parts of $\lambda$ equal to $i$.  
The homogeneous and monomial bases are dual with respect to this scalar product \cite{DLM1}
\begin{equation} \label{hmdual}
\langle \! \langle h_{\Lambda}, m_{\Omega} \rangle \! \rangle = 
 \delta_{\Lambda \Omega}
\end{equation}
We define the homomorphism $\omega$ as
\begin{equation}
\omega(\tilde p_r) = (-1)^r \tilde p_r  \quad {\rm and}  \quad \omega(p_r)= (-1)^{r-1} p_r
\end{equation}
This homomorphism, which is obviously an involution and an isometry of the scalar product 
$\langle \! \langle \cdot, \cdot \rangle \! \rangle$, is such that \cite{DLM1}
\begin{equation} \label{hedual}
\omega(h_\Lambda)= e_\Lambda
\end{equation}

The Schur functions in superspace $s_\Lambda$ and $\bar s_\Lambda$ were defined in the introduction as the special limits
$q=t=0$ and $q=t=\infty$ respectively of the Macdonald polynomials in superspace.  Remarkably, the 
functions $s_\Lambda$ and $\bar s_\Lambda$ are essentially dual with respect to our scalar product.
\begin{proposition}[\cite{BDLM2}] \label{propdual}
Let $s_{\Lambda}^*$ and $\bar s_{\Lambda}^*$ be the bases dual to the bases  $s_{\Lambda}$ and $\bar s_{\Lambda}$ respectively, that is, let $s_{\Lambda}^*$ and $\bar s_{\Lambda}^*$ be such that
\begin{equation}
\langle \! \langle s_{\Lambda}^*, s_{\Omega} \rangle \! \rangle =
\langle \! \langle \bar s_{\Lambda}^*, \bar s_{\Omega} \rangle \! \rangle =
 \delta_{\Lambda \Omega}
\end{equation}
Then
\begin{equation}
s_{\Lambda}^*= (-1)^{\binom{m}{2}} \omega \bar s_{\Lambda'} \qquad  {\rm and} \qquad
\bar s_{\Lambda}^*= (-1)^{\binom{m}{2}} \omega  s_{\Lambda'}
\end{equation}
where $m$ is the fermionic degree of $\Lambda$.
\end{proposition}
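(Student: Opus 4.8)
The plan is to reduce the statement to a single mixed-orthogonality relation and then to produce that relation as a limit of the Macdonald duality at generic $(q,t)$. First I would note that the two displayed formulas are equivalent, so that only one needs proof. Writing $\epsilon_m=(-1)^{\binom{m}{2}}$ and pairing $s_\Lambda^*=\epsilon_m\,\omega\bar s_{\Lambda'}$ against $s_\Omega$, the isometry and involutivity of $\omega$ give
\[
\delta_{\Lambda\Omega}=\langle \! \langle s_\Lambda^*,s_\Omega\rangle \! \rangle=\epsilon_m\langle \! \langle \omega\bar s_{\Lambda'},s_\Omega\rangle \! \rangle=\epsilon_m\langle \! \langle \bar s_{\Lambda'},\omega s_\Omega\rangle \! \rangle,
\]
so the claim is equivalent to $\langle \! \langle \bar s_{\Lambda'},\omega s_\Omega\rangle \! \rangle=\epsilon_m\,\delta_{\Lambda\Omega}$. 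Using once more that $\omega$ is a self-adjoint involution and that the scalar product is symmetric, one checks that the second formula $\bar s_\Lambda^*=\epsilon_m\,\omega s_{\Lambda'}$ is a relabeling of the first; hence it suffices to establish the single relation $\langle \! \langle \bar s_\Lambda,\omega s_\Omega\rangle \! \rangle=\epsilon_m\,\delta_{\Lambda,\Omega'}$, where $m$ is the common fermionic degree.

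The engine for this relation is the duality theory of the Macdonald superpolynomials. I would use two inputs at generic $(q,t)$: the self-duality $\langle \! \langle P_\Lambda(q,t),Q_\Omega(q,t)\rangle \! \rangle_{q,t}=\delta_{\Lambda\Omega}$, where $Q_\Lambda(q,t)=b_\Lambda(q,t)\,P_\Lambda(q,t)$; and the conjugation symmetry $\omega_{q,t}P_\Lambda(q,t)=\epsilon_m\,Q_{\Lambda'}(t,q)$, in which $\omega_{q,t}$ is the $(q,t)$-deformation of $\omega$ acting diagonally on the power sums in superspace and degenerating to $\omega$ in the limit at hand. Since the scalar products $\langle \! \langle \cdot,\cdot\rangle \! \rangle_{q,t}$ and $\langle \! \langle \cdot,\cdot\rangle \! \rangle$ differ only by the diagonal rescaling of power sums read off from \eqref{scalmac} and \eqref{scal1}, there is a diagonal operator $\Theta_{q,t}$ with $\langle \! \langle f,g\rangle \! \rangle_{q,t}=\langle \! \langle f,\Theta_{q,t}g\rangle \! \rangle$. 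Consequently the basis dual to $\{P_\Lambda(q,t)\}$ for the fixed product $\langle \! \langle \cdot,\cdot\rangle \! \rangle$ is $\{\Theta_{q,t}Q_\Lambda(q,t)\}$.

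I would then let $q=t\to 0$. On one side, $P_\Lambda(q,t)\to s_\Lambda$, and because the fixed product is nondegenerate and the $P_\Lambda(q,t)$ remain a basis, the dual family converges to $s_\Lambda^*$. On the other side, I would rewrite $\Theta_{q,t}Q_\Lambda(q,t)$ through the conjugation symmetry, turning it into $\epsilon_m$ times $\omega$ applied to $P_{\Lambda'}$ evaluated at parameters that the fermionic part of $\Theta_{q,t}$ — the factor $q^{|\Lambda^a|}$ in \eqref{scalmac}, absent in the bosonic sector — shifts from $0$ to $\infty$. In the limit this second description yields exactly $\epsilon_m\,\omega\bar s_{\Lambda'}$, and matching the two limits gives $s_\Lambda^*=\epsilon_m\,\omega\bar s_{\Lambda'}$, hence the relation isolated in the first paragraph.

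The hard part is precisely the passage to the limit. The values $q=t=0$ and $q=t=\infty$ are the degenerate and ill-defined limits of the $(q,t)$-scalar product flagged in the introduction, so $\Theta_{q,t}$, the normalizations $b_\Lambda(q,t)$, and $\omega_{q,t}$ are each individually singular there. The technical heart is to show that these singularities cancel: that the fermionic rescaling encoded in $\Theta_{q,t}$ exactly interchanges the $q=t=0$ and $q=t=\infty$ evaluation points of the Macdonald superpolynomials, so that $\lim_{q=t\to 0}\Theta_{q,t}Q_\Lambda(q,t)$ is finite and equals $\epsilon_m\,\omega\bar s_{\Lambda'}$. One must also verify that the fermionic signs accumulated from reordering the anticommuting variables under $\Lambda\mapsto\Lambda'$ collapse to the single factor $(-1)^{\binom{m}{2}}$; fixing the conventions so that this sign comes out correctly (the footnoted sign discrepancy with \cite{BDLM2} being a symptom of this delicacy) is where most of the care is required.
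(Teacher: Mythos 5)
First, note that the paper does not actually prove Proposition \ref{propdual}: it is quoted from \cite{BDLM2}, so there is no internal argument to compare yours against, and what you are proposing is a reproof of the imported result. Your opening reduction is correct: since $\omega$ is a self-adjoint involution and an isometry for $\langle \! \langle \cdot,\cdot\rangle \! \rangle$, the two displayed identities are indeed equivalent to the single mixed orthogonality $\langle \! \langle \bar s_\Lambda, \omega s_\Omega\rangle \! \rangle = (-1)^{\binom{m}{2}}\delta_{\Lambda,\Omega'}$, and your identification of the basis dual to $\{P_\Lambda(q,t)\}$ for the fixed product as $\{\Theta_{q,t}Q_\Lambda(q,t)\}$ is sound, as is the convergence of that dual family to $s_\Lambda^*$ once the Gram matrix of the $P_\Lambda(q,t)$ with respect to the fixed product converges to an invertible limit.

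The gap is that the entire content of the proposition is concentrated in the step you defer. The claim that $\lim_{q=t\to 0}\Theta_{q,t}Q_\Lambda(q,t)$ exists, is nonzero, and equals $(-1)^{\binom{m}{2}}\,\omega \bar s_{\Lambda'}$ is not a technicality to be checked at the end; it \emph{is} the theorem. At $q=t$ the bosonic part of $\Theta_{q,t}$ is trivial but the fermionic part rescales $\tilde p_k\mapsto t^k\,\tilde p_k$, so $\Theta_{t,t}$ degenerates as $t\to 0$, the normalization $b_\Lambda(t,t)$ in $Q_\Lambda=b_\Lambda P_\Lambda$ is itself singular, and one must show that these singularities compensate and that the surviving limit is the $q=t\to\infty$ specialization $\bar s_{\Lambda'}$ (transformed by $\omega$) rather than something degenerate or zero. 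You also invoke, but never state, the precise form of $\omega_{q,t}$ on the fermionic power sums and of the superspace conjugation symmetry $\omega_{q,t}P_\Lambda(q,t)=\pm Q_{\Lambda'}(t,q)$, including its sign; the factor $(-1)^{\binom{m}{2}}$ has to be extracted from exactly these data, and you acknowledge this verification is "where most of the care is required" without performing it. As written, the argument is a plausible strategy outline whose decisive computation is asserted rather than carried out.
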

When $m=0$, we have $s_\Lambda=s_{(\emptyset; \lambda)}=s_\lambda$ and  $\bar s_\Lambda=\bar s_{(\emptyset; \lambda)}=s_\lambda$.  In this case the proposition is simply stating the well known fact 
that the dual basis of the Schur basis $\{ s_\lambda\}$ with respect to the Hall scalar product is the basis $\{s_\lambda= \omega s_{\lambda'}\}$, that is, that the Schur basis is orthonormal.

\section{Relation with  Key polynomials}

A connection between Key polynomials and Schur functions in superspace
was established in \cite{BDLM2}.  Before stating that connection explicitly
 we introduce the Key polynomials \cite{Las,LS}\footnote{Key polynomials are also known as Type A Demazure atoms \cite{Mas}.}.

\subsection{Key polynomials}
 Let $\pi_i$ be the isobaric divided difference operator
\begin{equation}
 \pi_i = \frac{1}{(x_i-x_{i+1})} (x_i-x_{i+1} \kappa_{i,i+1}) 
\end{equation}
where $\kappa_{i,i+1}$ is the operator that interchanges the variables $x_i$ and
$x_{i+1}$: 
\begin{equation}
 \kappa_{i,i+1} \, f(x_1,\dots,x_i,x_{i+1},\dots,x_N) =  f(x_1,\dots,x_{i+1},x_{i},\dots,x_N)
\end{equation}
The isobaric divided difference operators satisfy the relations
\begin{equation}
\pi_i \pi_j = \pi_j \pi_i \quad {\rm if~}|i-j|> 1 \, , \qquad
\pi_i \pi_{i+1} \pi_i= \pi_{i+1} \pi_{i} \pi_{i+1} \, , \qquad \pi_i^2=\pi_i
\end{equation}
and
\begin{equation}
\pi_i \, x_i = x_{i+1} \pi_i +x_i \, , \qquad
\pi_i\,  x_{i+1} = x_{i} \, \pi_i -x_i
\end{equation}

Given $\eta \in \mathbb Z_{\geq 0}^N$,  the Key polynomial $K_\eta(x_1,\dots,x_N)$ 
is defined recursively as follows.  If $\eta$ is weakly decreasing, then
\begin{equation}
  K_\eta = x^\eta= x_1^{\eta_1} x_2^{\eta_2} \cdots x_N^{\eta_N}
\end{equation}
Otherwise, if $\eta_i < \eta_{i+1}$ then
\begin{equation}
K_{\eta} = \pi_i K_{s_i \eta }
\end{equation}
where $s_i \eta$ is equal to $\eta$ with the $i$-th and $i+1$-th entries interchanged.  It should be noted that if $\eta$ is weakly increasing then
$K_\eta$ is equal to a Schur function:
\begin{equation} \label{keyschur}
K_\eta(x_1,\dots,x_N) = s_{\lambda}(x_1,\dots,x_N)
\end{equation}
where $\lambda$ is the partition corresponding to the rearrangement of the entries of $\eta$.

The adjoint Key polynomial $\hat K_\eta(x_1,\dots,x_N)$ 
is defined similarly.  If $\eta$ is weakly decreasing, then
\begin{equation}
  \hat K_\eta = x^\eta= x_1^{\eta_1} x_2^{\eta_2} \cdots x_N^{\eta_N}
\end{equation}
Otherwise, if $\eta_i < \eta_{i+1}$ then
\begin{equation}
\hat K_{\eta} = \hat \pi_i \hat K_{s_i \eta }
\end{equation}
where $\hat \pi_i$ is the divided difference operator
\begin{equation}
\hat \pi_i = \pi_i-1  
\end{equation}
which satisfies the relations
\begin{equation}
\hat \pi_i \hat \pi_j = \hat \pi_j \hat \pi_i \quad {\rm if~}|i-j|> 1 \, , 
\qquad
\hat \pi_i \hat \pi_{i+1} \hat \pi_i= \hat \pi_{i+1} \hat \pi_{i} 
\hat \pi_{i+1} \, , \qquad \hat \pi_i^2=-\hat \pi_i\, .
\end{equation}
Note that $\pi_i \hat \pi_i=\hat \pi_i \pi_i = 0$ from the definition of $\hat \pi_i$ and the relation $\pi_i^2=\pi_i$.
\subsection{Key polynomials and Schur functions in superspace} 
To connect Key polynomials and Schur functions in superspace,
we will also need the usual divided difference operator
\begin{equation}
 \partial_i = \frac{1}{(x_i-x_{i+1})} (1- \kappa_{i,i+1}) 
\end{equation}
which is such that
\begin{equation}
\partial_i \partial_j = \partial_j \partial_i 
\quad {\rm if~}|i-j|> 1 \, , \qquad
\partial_i \partial_{i+1} \partial_i= \partial_{i+1} \partial_{i} 
\partial_{i+1} \, , \qquad \partial_i^2=0\, .
\end{equation}
Observe that $\pi_i=\partial_i x_i$, which implies that $\partial_i \pi_i=0$.

For $s_{i_1} \dots s_{i_\ell}$ a reduced decomposition of the permutation 
$\sigma$, we let $\partial_{\sigma}=\partial_{i_1} \cdots \partial_{i_\ell}$ (and similarly for
$\pi_\sigma$ and $\hat \pi_\sigma$). We also let
$\omega_m$ and $\omega_{m^c}$ be the longest 
 permutation of $\{1,\dots,m\}$ and  $\{m+1,\dots,N\}$ respectively.

We now establish the image of the
Schur functions in superspace $s_\Lambda$ and $\bar s_\Lambda$ in the
identification \eqref{eqidenti} between $\Lambda_N^m$ and $\mathbb Q [x_1,\dots,x_N]^{S_m \times S_{m^c}}$.

\begin{lemma} \label{lems}
Let $\Lambda$ be a superpartition of fermionic degree $m$.
In the identification \eqref{eqidenti}
between $\Lambda_N^m$ and $\mathbb Q [x_1,\dots,x_N]^{S_m \times S_{m^c}}$
we have
\begin{equation}
s_\Lambda \quad \longleftrightarrow \quad  (-1)^{\binom{m}{2}}
\partial_{\omega_m} \pi_{\omega_{m^c}} \hat K_{(\Lambda^a)^R,\Lambda^s}(x)
\end{equation}
where $(\Lambda^a)^R,\Lambda^s$ is the composition $(\Lambda^a_m\dots,
\Lambda^a_1,\Lambda^s_1,\Lambda_2^s,\dots)$ obtained by concatenating 
$\Lambda^a$ in reverse order and $\Lambda^s$.  
\end{lemma}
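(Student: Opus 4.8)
The plan is to translate the statement into an identity of bisymmetric polynomials through the identification \eqref{eqidenti}, and then to recognize the right-hand side as the $q=t=0$ degeneration of the building blocks of the Macdonald superpolynomial supplied by \cite{BDLM2}. First I would recall the Jacobi-type identity $\partial_{\omega_m}=\Delta_m(x)^{-1}\mathcal A_m$, where $\mathcal A_m=\sum_{\sigma\in S_m}\mathrm{sgn}(\sigma)\,\sigma$ is the antisymmetrizer acting on $x_1,\dots,x_m$; it follows by iterating $\partial_i=(x_i-x_{i+1})^{-1}(1-\kappa_{i,i+1})$. Multiplying the claimed formula by $\Delta_m(x)$ then reduces the lemma, via \eqref{eqidenti}, to the equality
\[
s_\Lambda\big|_{\theta_1\cdots\theta_m}\;=\;(-1)^{\binom m2}\,\mathcal A_m\!\left(\pi_{\omega_{m^c}}\hat K_{(\Lambda^a)^R,\Lambda^s}(x)\right),
\]
both sides being antisymmetric in $x_1,\dots,x_m$ and symmetric in $x_{m+1},\dots,x_N$. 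I would note at the outset that $\partial_{\omega_m}$ and $\pi_{\omega_{m^c}}$ commute, since $\partial_i$ and $\pi_j$ commute whenever $|i-j|>1$, and that the output is genuinely bisymmetric because $\partial_i f$ and $\pi_j f$ are symmetric in their adjacent pairs.

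The main input is the connection of \cite{BDLM2}: at $q=t=0$ the operators building the Macdonald superpolynomials degenerate to isobaric divided differences, so that the nonsymmetric building block attached to the composition $((\Lambda^a)^R,\Lambda^s)$ becomes exactly the adjoint Key polynomial $\hat K_{(\Lambda^a)^R,\Lambda^s}$, while the superspace symmetrization splits into a symmetrization over the bosonic block and an antisymmetrization over the fermionic block. I would make this precise by checking that $\pi_{\omega_{m^c}}$ performs the bosonic symmetrization, producing a symmetric function in $x_{m+1},\dots,x_N$ built from the weakly decreasing tail $\Lambda^s$ in accordance with \eqref{keyschur}, and that $\mathcal A_m$ (equivalently $\partial_{\omega_m}$) performs the fermionic antisymmetrization. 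The reversal $(\Lambda^a)^R$ is forced because the Key-polynomial recursion sorts a strictly increasing prefix, whereas the $\theta_1\cdots\theta_m$ coefficient records $\Lambda^a$ in decreasing order; the cost of this reversal inside $\mathcal A_m$ is precisely $\mathrm{sgn}(\omega_m)=(-1)^{\binom m2}$, which I expect to account exactly for the global sign.

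The delicate step will be to pin down that the limit produces the adjoint Key polynomial $\hat K$ (built from $\hat\pi_i=\pi_i-1$) rather than the ordinary Key polynomial $K$, and to confirm that no spurious terms survive the antisymmetrizer. Here I would exploit the relations $\partial_i\pi_i=0$ and $\pi_i\hat\pi_i=0$ together with the braid relations: since a reduced word for $\omega_m$ uses only $\partial_i$ with $i<m$, the identities $\partial_i\hat\pi_i=-\partial_i$ let one collapse the $\hat\pi$-operators appearing in $\hat K_{(\Lambda^a)^R,\Lambda^s}$ and discard every contribution that is symmetric under an adjacent transposition, leaving only the extremal monomial $x^{(\Lambda^a)^R,\Lambda^s}$ times the symmetrized bosonic part. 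Matching this surviving term against the known monomial expansion of $s_\Lambda\big|_{\theta_1\cdots\theta_m}$ from \cite{BDLM2}, and tracking the sign through the reversal, would complete the identification. The bookkeeping of signs and the $K$ versus $\hat K$ distinction is where the real work lies; everything else is a formal consequence of the divided-difference calculus recalled above.
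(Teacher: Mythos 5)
Your setup coincides with the paper's: pass to the bisymmetric side via \eqref{eqidenti}, use $\partial_{\omega_m}=\Delta_m(x)^{-1}A_m$, and invoke the $q=t=0$ result of \cite{BDLM2}. But the paper's proof consists of nothing more than quoting the explicit formula \eqref{eqsdev} from \cite{BDLM2}, which already expresses $s_\Lambda(x,\theta)$ as $(-1)^{\binom{m}{2}}\sum_{\sigma}\mathcal K_\sigma\,\theta_1\cdots\theta_m\,A_m\bigl(\sum_{v\in S_{m^c}}\hat\pi_v\bigr)\hat K_{(\Lambda^a)^R,\Lambda^s}(x)$, extracting the coefficient of $\theta_1\cdots\theta_m$, and applying the two standard identities $A_m=\Delta_m(x)\,\partial_{\omega_m}$ and $\pi_{\omega_{m^c}}=\sum_{v\in S_{m^c}}\hat\pi_v$. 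You use the first identity but never mention the second, which is needed to turn the sum of $\hat\pi_v$'s appearing in the cited formula into the operator $\pi_{\omega_{m^c}}$ of the statement; more importantly, you do not take \eqref{eqsdev} itself as the input, and instead propose to re-derive the appearance of the adjoint Key polynomial by your own computation.

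That computation is where the genuine gap lies. You claim that the identities $\partial_i\hat\pi_i=-\partial_i$ let you collapse the $\hat\pi$-operators inside $\hat K_{(\Lambda^a)^R,\Lambda^s}$, ``leaving only the extremal monomial $x^{(\Lambda^a)^R,\Lambda^s}$ times the symmetrized bosonic part.'' Only the leading factor $\hat\pi_{\omega_m}$ in the reduced expression \eqref{eq428} for $\hat K_{(\Lambda^a)^R,\Lambda^s}$ is absorbed by $\partial_{\omega_m}$ in this way; the remaining factors $\hat\pi_{[i,\alpha_i-1]}$ involve indices $\geq m$ and act nontrivially. If your collapse were valid, the bisymmetric image of $s_\Lambda$ would equal, up to sign, $\partial_{\omega_m}\pi_{\omega_{m^c}}x^{(\Lambda^a)^R,\Lambda^s}=\pm\, s_{\Lambda^a-\delta_m}(x_1,\dots,x_m)\,s_{\Lambda^s}(x_{m+1},\dots,x_N)$, a product of ordinary Schur polynomials in disjoint variable sets. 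The paper's section on the large $m$ and $N$ limit shows that this product formula holds only in the stable regime $m\geq|\lambda|+|\mu|$, $N-m\geq|\lambda|+|\mu|$, and emphatically not in general, so the mechanism you describe would prove a false identity. The repair is to treat formula \eqref{eqsdev} of \cite{BDLM2} as the given input: once it is quoted, no collapsing of $\hat K$ is needed, the $K$ versus $\hat K$ question never arises, and the sign $(-1)^{\binom{m}{2}}$ is already present in the cited formula rather than being something the reversal of $\Lambda^a$ must generate.
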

\begin{proof}
Let $A_m= \sum_{\sigma \in S_m} 
(-1)^{\ell(\sigma)} \kappa_\sigma$ be the antisymmetrization operator with respect
to the variables $x_1,\dots,x_m$.  It was established in \cite{BDLM2} that
\begin{equation} \label{eqsdev}
s_\Lambda(x,\theta) = (-1)^{\binom{m}{2}} \sum_{\sigma \in S_N /(S_m \times S_{m^c})}
{\mathcal K}_{\sigma} \theta_1 \cdots \theta_m A_m 
\left(\sum_{v \, \in S_{m^c}} \hat \pi_v \right) \hat K_{(\Lambda^a)^R,\Lambda^s}(x)
\end{equation}
where ${\mathcal K}_\sigma$ is the operator such that
\begin{equation}
{\mathcal K}_\sigma P(x_1,\dots,x_N,\theta_1,\dots,\theta_N) = P(x_{\sigma(1)},\dots,x_{\sigma(N)},\theta_{\sigma(1)},\dots,\theta_{\sigma(N)}) \qquad 
\forall \, \sigma \in S_N
\end{equation}
It is known that $\pi_{\omega_{m^c}}= \sum_{v \, \in S_{m^c}} \hat \pi_v$
and that $A_m ={\Delta_m(x)} \partial_{\omega_m}$ (see for instance \cite{Las}).
We thus have that
\begin{equation}\label{eqn:keyhat}
\frac{s_\Lambda(x,\theta) \big|_{\theta_1\cdots \theta_m}}{\Delta_m(x)} = (-1)^{\binom{m}{2}}
\partial_{\omega_m} \pi_{\omega_{m^c}} \hat K_{(\Lambda^a)^R,\Lambda^s}(x)
\end{equation}
and the lemma follows.
\end{proof}

\begin{lemma} \label{lemsb}
Let $\Lambda$ be a superpartition of fermionic degree $m$.
In the identification \eqref{eqidenti} between $\Lambda_N^m$ and $\mathbb Q [x_1,\dots,x_N]^{S_m \times S_{m^c}}$
we have
\begin{equation}
\bar s_\Lambda \quad \longleftrightarrow \quad  
\partial_{\omega_{(N-m)^c}}' K_{(\Lambda^s)^R,\Lambda^a}(y)
\end{equation}
where $(\Lambda^s)^R,\Lambda^a$ is the composition $(\Lambda^s_{N-m},\dots,
\Lambda^s_1,\Lambda^a_1,\Lambda_2^a,\dots)$ obtained by concatenating 
$\Lambda^s$ in reverse order with $\Lambda^a$, where 
$y$ stands for the variables $y_1=x_N,y_2=x_{N-1},\dots,y_N=x_1$ and where
the $'$ indicates that the divided differences act on the y variables.
 \end{lemma}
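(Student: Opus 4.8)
The plan is to mirror the proof of Lemma~\ref{lems}, replacing the explicit $q=t=0$ expansion \eqref{eqsdev} of $s_\Lambda$ by the corresponding $q=t=\infty$ expansion of $\bar s_\Lambda$ established in \cite{BDLM2}. The $q=t=\infty$ limit produces ordinary Key polynomials $K$ rather than the adjoint polynomials $\hat K$, and, crucially, it arranges the data in the reversed variables $y_i=x_{N+1-i}$, with the bosonic part $\Lambda^s$ and the fermionic part $\Lambda^a$ interchanged and the former written in reverse order. Concretely, I would start from the analogue of \eqref{eqsdev}, of the same shape but with $\hat K,\hat\pi$ replaced by $K,\pi$ and the variables reversed,
\[
\bar s_\Lambda(x,\theta)=\sum_{\sigma\in S_N/(S_m\times S_{m^c})}\mathcal{K}_\sigma\,\theta_1\cdots\theta_m\,\bar A\,\Bigl(\sum_{v}\pi_v\Bigr)K_{(\Lambda^s)^R,\Lambda^a}(y),
\]
where $\bar A$ is the antisymmetrizer in the variables $y_{N-m+1},\dots,y_N$ (these are exactly $x_m,\dots,x_1$, the variables carrying $\theta_1\cdots\theta_m$), and the remaining sum acts on the bosonic block $y_1,\dots,y_{N-m}$.

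I would then simplify using the same two operator identities as in Lemma~\ref{lems}, transported to the $y$ variables: the antisymmetrizer factors as $\bar A=\pm\,\Delta_m(x)\,\partial'_{\omega_{(N-m)^c}}$ (longest element of the block $\{N-m+1,\dots,N\}$). The new feature, and the reason no bosonic symmetrizer survives in the final formula, is that the composition $(\Lambda^s)^R,\Lambda^a$ is weakly increasing on its first $N-m$ entries (there $\Lambda^s$ appears reversed); hence, by the same mechanism that produces \eqref{keyschur}, the polynomial $K_{(\Lambda^s)^R,\Lambda^a}(y)$ is already symmetric in $y_1,\dots,y_{N-m}$, so any bosonic symmetrizer $\sum_v\pi_v$ acts as the identity on it and can be dropped. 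After extracting the coefficient of $\theta_1\cdots\theta_m$ and dividing by $\Delta_m(x)$ as dictated by the identification \eqref{eqidenti}, the antisymmetrizer collapses to $\partial'_{\omega_{(N-m)^c}}$ and one is left with exactly $\partial'_{\omega_{(N-m)^c}}K_{(\Lambda^s)^R,\Lambda^a}(y)$.

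The step I expect to be the main obstacle is the careful transport into the reversed variables $y_i=x_{N+1-i}$ together with the accompanying sign bookkeeping. Under this reversal the divided differences satisfy $\partial^{(x)}_i=-\partial^{(y)}_{N-i}$ and the distinguished variable in each isobaric operator $\pi_i=\partial_i x_i$ changes, so one must verify that the $q=t=\infty$ expansion of \cite{BDLM2} really is the reversed mirror of \eqref{eqsdev}, and that the sign $(-1)^{\binom{m}{2}}$ present in Lemma~\ref{lems} is exactly cancelled here: reversing the order of the $m$ fermionic variables in the Vandermonde introduces one factor $(-1)^{\binom{m}{2}}$ (the ratio $\bar A/\Delta_m(x)$), which multiplies the $(-1)^{\binom{m}{2}}$ carried by the expansion to give no net prefactor in Lemma~\ref{lemsb}. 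As a consistency check one could match the result against Proposition~\ref{propdual}, which forces $\bar s_\Lambda=(-1)^{\binom{m}{2}}\omega\,s_{\Lambda'}^{*}$ and so links the two divided-difference descriptions; in the bosonic case $m=0$ both sides reduce to $K_{(\Lambda^s)^R}(y)=s_{\Lambda^s}$, which is precisely \eqref{keyschur}.
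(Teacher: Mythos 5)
Your proposal is correct and follows essentially the same route as the paper: both start from the $q=t=\infty$ expansion of \cite{BDLM2}, pass to the reversed variables $y_i=x_{N+1-i}$, factor the fermionic antisymmetrizer as $\Delta_{(N-m)^c}(y)\,\partial'_{\omega_{(N-m)^c}}$, and observe that the $(-1)^{\binom{m}{2}}$ from comparing $\Delta_m(x)$ with $\Delta_{(N-m)^c}(y)$ cancels the prefactor of the expansion. The only cosmetic discrepancies are that the actual \cite{BDLM2} formula carries the bosonic operator in the form $\frac{1}{\Delta_{N-m}(y)}A'_{N-m}y^{\delta}=\pi'_{\omega_{N-m}}$ acting on the \emph{un-reversed} $K_{\Lambda^s,\Lambda^a}(y)$ (so it performs the reordering to $K_{(\Lambda^s)^R,\Lambda^a}$ rather than acting as the identity), and your literal $\sum_v\pi_v$ should be $\sum_v\hat\pi_v=\pi_{\omega}$ — as written it would contribute a factor $(N-m)!$ on a symmetric polynomial.
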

\begin{proof}
It was established in \cite{BDLM2} that
\begin{equation}
\bar s_\Lambda(x,\theta) = (-1)^{\binom{m}{2}+\binom{N-m}{2}} 
\sum_{\sigma \in S_N /(S_m \times S_{m^c})}
{\mathcal K}_{\sigma}  \theta_1 \cdots \theta_m 
A_m \frac{1}{\Delta_{m^c}(x)} A_{m^c}
x^{\gamma}   K_{\Lambda^s,\Lambda^a}(x_N,\dots,x_1)
\end{equation}
where $x^\gamma=x_N^{N-m-1} x_{N-1}^{N-m-2} \cdots x_{m+2}$,
and where $A_{m^c}$ is the antisymmetrization operator (see the proof the previous lemma) 
acting on the variables
$x_{m+1},\dots,x_N$ (and similarly for $\Delta_{m^c}(x)$).
  In this case,
it is convenient to make the change of variables $y_i=x_{N+1-i}$ for $i=1,\dots,N$ to obtain
\begin{equation} \label{eqsbarinkey}
\bar s_\Lambda(x,\theta) = (-1)^{\binom{m}{2}}
\sum_{\sigma \in S_N /(S_m \times S_{m^c})}
{\mathcal K}_{\sigma}  \theta_1 \cdots \theta_m 
A_{(N-m)^c}' \frac{1}{\Delta_{N-m}(y)} A_{N-m}' y^\delta   K_{\Lambda^s,\Lambda^a}(y)
\end{equation}
where $y^\delta=y_1^{N-m-1} y_2^{N-m-2} \cdots y_{N-m-1}$, and the $'$ indicate that the operator now acts on the $y$ variables 
(note that ${\mathcal K}_{\sigma}$ still acts on the $x$ variables).  
Observe that changing $\Delta_{m^c}(x)$
to $\Delta_{N-m}(y)$ introduces a factor of $(-1)^{\binom{N-m}{2}}$.  
Using $A_{(N-m)^c}' = \Delta_{(N-m)^c}(y) \partial_{\omega_{(N-m)^c}}'$ and
$ A_{N-m}' y^\delta = \Delta_{N-m}(y) \pi_{\omega_{N-m}}'$,
where again the $'$ mean that that the operators are acting on the $y$ variables, we get
\begin{equation}\label{eqn:key1}
\frac{\bar s_\Lambda(x,\theta) \big|_{\theta_1\cdots \theta_m}}{\Delta_m(x)} =
\partial_{\omega_{(N-m)^c}}' K_{(\Lambda^s)^R,\Lambda^a}(y)
\end{equation}
since the effect of $\pi_{\omega_{N-m}}'$ on  $K_{\Lambda^s,\Lambda^a}(y)$ is to reorder the entries of $\Lambda^s$ in weakly increasing order.
Observe that there is no sign anymore in the previous equation due to the fact
that $\Delta_m(x)$ and $\Delta_{(N-m)^c}(y)$ differ by a factor 
$(-1)^{\binom{m}{2}}$.
\end{proof}

To prove the Pieri rules associated to the multiplication by a fermionic quantity 
(a superpolynomial of fermionic degree one) we need to find the image of
$\tilde e_\ell \, s_\Lambda$, $\tilde p_\ell \, s_\Lambda$ and $\tilde e_\ell \, \bar s_\Lambda$
in the
identification between $\Lambda_N^{m+1}$ and $\mathbb Q [x_1,\dots,x_N]^{S_{m+1} \times S_{(m+1)^c}}$.

\begin{lemma} \label{lemets}
Let $\Lambda$ be a superpartition of fermionic degree $m$.
In the identification \eqref{eqidenti} between $\Lambda_N^{m+1}$ and $\mathbb Q [x_1,\dots,x_N]^{S_{m+1} \times S_{(m+1)^c}}$
we have
\begin{equation}
\tilde e_\ell \, s_\Lambda \quad \longleftrightarrow \quad  
(-1)^{\binom{m+1}{2}} \partial_{\omega_{m+1}} e_\ell^{(m+1)} \pi_{\omega_{m^c}} \hat K_{(\Lambda^a)^R,\Lambda^s}(x)
\end{equation}
where $e_\ell^{(m+1)}$ stands for the elementary symmetric function $e_\ell$ without the variable $x_{m+1}$ (or equivalently, at $x_{m+1}=0$).
\end{lemma}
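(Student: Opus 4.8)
The plan is to compute the image of $\tilde e_\ell\,s_\Lambda$ by extracting the coefficient of $\theta_1\cdots\theta_{m+1}$ and dividing by $\Delta_{m+1}(x)$, and then to recognize the outcome as an antisymmetrizer applied to $e_\ell^{(m+1)}\pi_{\omega_{m^c}}\hat K_{(\Lambda^a)^R,\Lambda^s}$. From the monomial definition $\tilde e_\ell=m_{(0;1^\ell)}$ one reads off
\begin{equation*}
\tilde e_\ell=\sum_{i=1}^N\theta_i\,e_\ell^{(i)},
\end{equation*}
where $e_\ell^{(i)}$ denotes $e_\ell$ with the variable $x_i$ deleted. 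Writing $s_\Lambda=\sum_{|J|=m}\theta_J\,g_J(x)$ with $\theta_J=\theta_{j_1}\cdots\theta_{j_m}$ for $J=\{j_1<\cdots<j_m\}$, only the pairs with $\{i\}\sqcup J=\{1,\dots,m+1\}$ survive in the coefficient of $\theta_1\cdots\theta_{m+1}$, and moving $\theta_i$ into position contributes a sign $(-1)^{i-1}$. Hence, setting $I_i:=\{1,\dots,m+1\}\setminus\{i\}$,
\begin{equation*}
\bigl(\tilde e_\ell\,s_\Lambda\bigr)\big|_{\theta_1\cdots\theta_{m+1}}=\sum_{i=1}^{m+1}(-1)^{i-1}\,e_\ell^{(i)}\,g_{I_i}(x).
\end{equation*}

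Next I would reduce every $g_{I_i}$ to the single coefficient $P_\Lambda:=s_\Lambda\big|_{\theta_1\cdots\theta_m}$, which by \eqref{eqn:keyhat} and $A_m=\Delta_m(x)\partial_{\omega_m}$ equals $(-1)^{\binom m2}A_m\Phi$, where $\Phi:=\pi_{\omega_{m^c}}\hat K_{(\Lambda^a)^R,\Lambda^s}(x)$. Applying the symmetry ${\mathcal K}_{(i,m+1)}s_\Lambda=s_\Lambda$ and comparing the coefficients of $\theta_{I_i}$ (again tracking the sign from reordering the anticommuting variables) gives, for $1\le i\le m$,
\begin{equation*}
g_{I_i}(x)=(-1)^{m-i}\,\kappa_{(i,m+1)}P_\Lambda(x),
\end{equation*}
where $\kappa_{(i,m+1)}$ swaps $x_i$ and $x_{m+1}$, while $g_{I_{m+1}}=P_\Lambda$. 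Substituting back and collecting the signs $(-1)^{i-1}(-1)^{m-i}=(-1)^{m-1}$ yields
\begin{equation*}
\bigl(\tilde e_\ell\,s_\Lambda\bigr)\big|_{\theta_1\cdots\theta_{m+1}}=(-1)^m e_\ell^{(m+1)}P_\Lambda+(-1)^{m-1}\sum_{i=1}^m e_\ell^{(i)}\,\kappa_{(i,m+1)}P_\Lambda.
\end{equation*}

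The crux is to match this with $(-1)^{\binom{m+1}2}A_{m+1}e_\ell^{(m+1)}\Phi$. For this I would factor the antisymmetrizer over the minimal-length left coset representatives of $S_{m+1}/S_m$, the cycles $c_j=(j\ j{+}1\ \cdots\ m{+}1)$ with $c_j(m+1)=j$ and $\ell(c_j)=m+1-j$, so that $A_{m+1}=\sum_{j=1}^{m+1}(-1)^{m+1-j}\kappa_{c_j}A_m$. Three facts then do the work: (i) $e_\ell^{(m+1)}$ is symmetric in $x_1,\dots,x_m$, so $A_m e_\ell^{(m+1)}\Phi=e_\ell^{(m+1)}A_m\Phi$; (ii) $\kappa_{c_j}\bigl(e_\ell^{(m+1)}\bigr)=e_\ell^{(j)}$, since $c_j$ carries the deleted index $m+1$ to $j$; and (iii) writing $c_j=(j,m+1)\sigma$ with $\sigma\in S_m$ and $\mathrm{sgn}(\sigma)=(-1)^{m-j}$, the antisymmetry of $A_m\Phi$ in $x_1,\dots,x_m$ gives $\kappa_{c_j}A_m\Phi=(-1)^{m-j}\kappa_{(j,m+1)}A_m\Phi$. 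Using $A_m\Phi=(-1)^{\binom m2}P_\Lambda$ and $(-1)^{\binom{m+1}2}(-1)^{\binom m2}=(-1)^m$, these identities turn $(-1)^{\binom{m+1}2}A_{m+1}e_\ell^{(m+1)}\Phi$ into precisely the right-hand side displayed above. Finally, dividing by $\Delta_{m+1}(x)$ and using $A_{m+1}=\Delta_{m+1}(x)\partial_{\omega_{m+1}}$ produces $(-1)^{\binom{m+1}2}\partial_{\omega_{m+1}}e_\ell^{(m+1)}\pi_{\omega_{m^c}}\hat K_{(\Lambda^a)^R,\Lambda^s}$, which is the claimed image.

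The main obstacle is the sign bookkeeping: the factor $(-1)^{i-1}$ from the coefficient extraction, the factor $(-1)^{m-i}$ from the symmetry relation for $g_{I_i}$, and the coset signs $(-1)^{m+1-j}$ must all be reconciled. In particular the index $i=m+1$, where the transposition $(i,m+1)$ degenerates to the identity and escapes the uniform sign law, has to be isolated before it recombines with the $c_{m+1}=\mathrm{id}$ term of the coset expansion. Once the signs are controlled, the conceptual content is transparent: the deletion of $x_{m+1}$ in $e_\ell^{(m+1)}$ is transported by the coset cycles $c_j$ to the deletion of $x_j$, which is exactly what generates the sum $\sum_i\pm e_\ell^{(i)}(\cdots)$ and hence the insertion of $e_\ell^{(m+1)}$ between $\partial_{\omega_{m+1}}$ and $\pi_{\omega_{m^c}}$.
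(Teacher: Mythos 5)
Your proposal is correct and follows essentially the same route as the paper: both expand $\tilde e_\ell=\sum_i\theta_ie_\ell^{(i)}$, extract the coefficient of $\theta_1\cdots\theta_{m+1}$ using the $S_N$-symmetry of $s_\Lambda$ to reduce everything to the single component $s_\Lambda|_{\theta_1\cdots\theta_m}$, and then recognize the resulting signed sum as $A_{m+1}e_\ell^{(m+1)}$ acting on $\pi_{\omega_{m^c}}\hat K_{(\Lambda^a)^R,\Lambda^s}$ via the coset identity $A_{m+1}=\bigl(1-\sum_{i=1}^m\kappa_{i,m+1}\bigr)A_m$. The only cosmetic difference is that you derive this identity from the minimal-length coset representatives $c_j$ and then convert to transpositions, whereas the paper uses the transpositions $(i,m+1)$ directly; the sign bookkeeping matches in both.
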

\begin{proof}
Using 
$
\tilde e_\ell = \sum_{i=1}^N  \theta_i e_\ell^{(i)}
$, we get
\begin{align} \label{eqetsym}
\tilde e_\ell \sum_{\sigma \in S_N /(S_m \times S_{m^c})} 
{\mathcal K}_{\sigma} \theta_1 \cdots \theta_m  A_{m}  \Big |_{\theta_1\cdots \theta_{m+1}}
 &=
\left( \theta_{m+1} e_\ell^{(m+1)}+\sum_{i=1}^{m}  \theta_i e_\ell^{(i)} {\mathcal K}_{i m+1}  \right) \theta_1 \cdots \theta_m A_{m} \Big |_{\theta_1\cdots \theta_{m+1}} \nonumber \\
& = (-1)^m \left(1+\sum_{i=1}^{m} {\mathcal K}_{i m+1} \right) \theta_1 \cdots \theta_{m+1} A_{m}  e_\ell^{(m+1)} \Big |_{\theta_1\cdots \theta_{m+1}} \nonumber \\
& = (-1)^m \left(1-\sum_{i=1}^{m} {\kappa}_{i m+1} \right) \theta_1 \cdots \theta_{m+1} A_{m}  e_\ell^{(m+1)} \Big |_{\theta_1\cdots \theta_{m+1}} \nonumber \\
&=(-1)^m \left(1- \sum_{i=1}^{m}  {\kappa}_{i m+1} \right) A_m e_\ell^{(m+1)}    
\nonumber \\
&=(-1)^m A_{m+1} e_\ell^{(m+1)} 
\end{align}
Note that in the third equality we used the fact that interchanging $\theta_i$ and $\theta_{m+1}$ introduces a sign (${\mathcal K}_{\sigma}$ permutes both $x$'s and $\theta$'s while $\kappa_{\sigma}$ only permutes $x$'s).  From \eqref{eqsdev},  
$\pi_{\omega_{m^c}}= \sum_{v \, \in S_{m^c}} \hat \pi_v$ and 
$A_{m+1} ={\Delta_{m+1}(x)} \partial_{\omega_{m+1}}$, we then deduce that
\begin{equation}
\frac{\tilde e_\ell s_\Lambda(x) \big|_{\theta_1\cdots \theta_{m+1}}}{\Delta_{m+1}(x)}
= (-1)^{m+\binom{m}{2}} \partial_{\omega_{m+1}} e_\ell^{(m+1)} \pi_{\omega_{m^c}} \hat K_{(\Lambda^a)^R,\Lambda^s}(x)
\end{equation}
\end{proof}

\begin{lemma} \label{lemetsb}
Let $\Lambda$ be a superpartition of fermionic degree $m$.
In the identification \eqref{eqidenti} between $\Lambda_N^{m+1}$ and $\mathbb Q [x_1,\dots,x_N]^{S_{m+1} \times S_{(m+1)^c}}$
we have
\begin{equation}
\tilde e_\ell \, \bar s_\Lambda \quad \longleftrightarrow \quad  
 (-1)^m \partial_{\omega_{(N-m-1)^c}}' e_\ell^{(N-m)}(y) K_{(\Lambda^s)^R,\Lambda^a}(y)
\end{equation}
where again $y$ stands for the variables 
$y_1=x_N,y_2=x_{N-1},\dots,y_N=x_1$, and 
the $'$ indicates that the divided differences act on the y variables.
 \end{lemma}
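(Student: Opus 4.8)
The plan is to mirror the proof of Lemma~\ref{lemets} step for step, feeding it the expression \eqref{eqsbarinkey} for $\bar s_\Lambda$ in place of the one used there for $s_\Lambda$. First I would substitute $\tilde e_\ell = \sum_{i=1}^N \theta_i e_\ell^{(i)}$, form the product $\tilde e_\ell\,\bar s_\Lambda$, and extract the coefficient of $\theta_1\cdots\theta_{m+1}$. Since the only fermion-carrying prefix of \eqref{eqsbarinkey} is $\sum_\sigma \mathcal{K}_\sigma\,\theta_1\cdots\theta_m$, which is formally identical to the one appearing in \eqref{eqsdev}, the fermionic bookkeeping is word for word that of \eqref{eqetsym}: only the term $i=m+1$ with coset representative $\sigma=\mathrm{id}$ and the terms $i\le m$ with $\sigma=(i,m+1)$ survive, pulling $\theta_{m+1}$ to the front produces the global factor $(-1)^m$, each $\mathcal{K}_{i,m+1}$ is converted into $-\kappa_{i,m+1}$, and commuting $e_\ell^{(i)}$ through $\mathcal{K}_{i,m+1}$ turns it into $e_\ell^{(m+1)}=e_\ell^{(N-m)}(y)$ (recall $x_{m+1}=y_{N-m}$).

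The one genuinely new point is where $e_\ell^{(N-m)}(y)$ comes to rest. As it is symmetric in $y_{N-m+1},\dots,y_N$, it commutes with the outer antisymmetrizer $A_{(N-m)^c}'$ but not with the inner factor $\tfrac{1}{\Delta_{N-m}(y)}A_{N-m}'\,y^\delta$, so I would slide it to the immediate left of that factor before collapsing the coset sum. The collapse is the $y$-version of the identity used in \eqref{eqetsym}, namely $\bigl(1-\sum_{i=1}^m\kappa_{i,m+1}\bigr)A_{(N-m)^c}' = A_{(N-m-1)^c}'$, valid because the transpositions $\kappa_{i,m+1}$ are exactly those adjoining $y_{N-m}$ to $\{y_{N-m+1},\dots,y_N\}$. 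This yields
\[
\tilde e_\ell\,\bar s_\Lambda\big|_{\theta_1\cdots\theta_{m+1}}
= (-1)^{\binom{m}{2}+m}\,A_{(N-m-1)^c}'\,e_\ell^{(N-m)}(y)\,
\frac{1}{\Delta_{N-m}(y)}\,A_{N-m}'\,y^\delta\,K_{\Lambda^s,\Lambda^a}(y).
\]
By the computation in the proof of Lemma~\ref{lemsb}, the inner factor reorders the key polynomial, $\tfrac{1}{\Delta_{N-m}(y)}A_{N-m}'\,y^\delta\,K_{\Lambda^s,\Lambda^a}(y)=K_{(\Lambda^s)^R,\Lambda^a}(y)$, so $e_\ell^{(N-m)}(y)$ simply multiplies this function from the left. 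Dividing by $\Delta_{m+1}(x)$ as prescribed by \eqref{eqidenti} and writing $A_{(N-m-1)^c}' = \Delta_{(N-m-1)^c}(y)\,\partial_{\omega_{(N-m-1)^c}}'$ then puts the expression into the desired form.

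I expect the sign to be the only delicate part. Three contributions enter: the prefactor $(-1)^{\binom{m}{2}}$ of \eqref{eqsbarinkey}, the fermionic factor $(-1)^m$ from the $\theta$-bookkeeping, and the sign produced when $\Delta_{(N-m-1)^c}(y)$ is compared with $\Delta_{m+1}(x)$ under the substitution $y_j=x_{N+1-j}$. This last sign is the one-degree-higher analogue of the reversal that already stripped the sign off Lemma~\ref{lemsb}, and reconciling the three is exactly the bookkeeping that must be carried out carefully to land on the factor recorded in the statement. A minor point worth double-checking along the way is that $e_\ell^{(N-m)}(y)$ really does commute past the outer antisymmetrizer but not the inner one, so that it ends up between $\partial_{\omega_{(N-m-1)^c}}'$ and $K_{(\Lambda^s)^R,\Lambda^a}(y)$ rather than being swept into the reordering.
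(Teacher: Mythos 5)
Your outline reproduces the paper's own proof, which likewise consists of invoking the identity $(-1)^m A_{m+1}e_\ell^{(m+1)}$ established in \eqref{eqetsym} and then ``following the argument that led to \eqref{eqn:key1}''. Everything up to your displayed formula is sound: the fermionic bookkeeping is indeed word for word that of \eqref{eqetsym}, since the prefix $\sum_\sigma\mathcal{K}_\sigma\theta_1\cdots\theta_m A_m$ of \eqref{eqsbarinkey} is exactly the expression to which that computation applies (note $A_m=A_{(N-m)^c}'$ and $A_{m+1}=A_{(N-m-1)^c}'$ as operators), one has $e_\ell^{(m+1)}=e_\ell^{(N-m)}(y)$ under $x_{m+1}=y_{N-m}$, and the factor $\tfrac{1}{\Delta_{N-m}(y)}A_{N-m}'y^\delta$ acts first on $K_{\Lambda^s,\Lambda^a}(y)$ to produce $K_{(\Lambda^s)^R,\Lambda^a}(y)$, so that $e_\ell^{(N-m)}(y)$ comes to rest exactly where the statement places it.

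The gap is precisely the step you defer. The three sign contributions you name are $(-1)^{\binom{m}{2}}$ from \eqref{eqsbarinkey}, $(-1)^m$ from \eqref{eqetsym}, and $(-1)^{\binom{m+1}{2}}$ from $\Delta_{(N-m-1)^c}(y)=(-1)^{\binom{m+1}{2}}\Delta_{m+1}(x)$; since $\binom{m+1}{2}=\binom{m}{2}+m$, their product is $(-1)^{2\binom{m}{2}+2m}=+1$, not $(-1)^m$. So the bookkeeping as you have set it up does not ``land on the factor recorded in the statement'', and you cannot close the argument by asserting that it does: you must either exhibit a fourth source of sign or confront the discrepancy explicitly. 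A concrete test is $N=2$, $m=1$, $\ell=1$, $\Lambda=(0;\emptyset)$: there $\bar s_\Lambda=\theta_1+\theta_2$ and $\tilde e_1=\theta_1x_2+\theta_2x_1$, so $\tilde e_1\,\bar s_\Lambda=(x_2-x_1)\theta_1\theta_2$ and the identification \eqref{eqidenti} yields $(x_2-x_1)/(x_1-x_2)=-1$, while $\partial'_{\omega_{(0)^c}}e_1^{(1)}(y)K_{(0,0)}(y)=\partial'_1(y_2)=-1$; the two sides agree with overall coefficient $+1$, consistent with your three contributions cancelling. Carry the sign computation through to the end and reconcile the result with the $(-1)^m$ in the statement (and with its subsequent use in \eqref{eqn:thmkeyeqn2}) before considering the proof complete.
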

\begin{proof}
As was established in the proof of the previous lemma, we have
\begin{equation}
\tilde e_\ell \sum_{\sigma \in S_N /(S_m \times S_{m^c})} 
{\mathcal K}_{\sigma} \theta_1 \cdots \theta_m A_{m} \Big |_{\theta_1\cdots \theta_{m+1}}
=(-1)^m A_{m+1} e_\ell^{(m+1)} 
\end{equation}
 from which we deduce from \eqref{eqsbarinkey}, following the argument that led to \eqref{eqn:key1},  that
\begin{equation}
\frac{\tilde e_\ell \bar s_\Lambda(x) \big|_{\theta_1\cdots \theta_{m+1}}}{\Delta_{m+1}(x)}
= (-1)^m \partial_{\omega_{(N-m-1)^c}}' e_\ell^{(N-m)}(y) K_{(\Lambda^s)^R,\Lambda^a}(y)
\end{equation}
\end{proof}

\begin{lemma} \label{lempts}
Let $\Lambda$ be a superpartition of fermionic degree $m$.
In the identification \eqref{eqidenti} between $\Lambda_N^{m+1}$ and $\mathbb Q [x_1,\dots,x_N]^{S_{m+1} \times S_{(m+1)^c}}$
we have
\begin{equation}
\tilde p_\ell \, s_\Lambda \quad \longleftrightarrow \quad  
(-1)^{\binom{m+1}{2}} \partial_{\omega_{m+1}}  x_{m+1}^\ell \pi_{\omega_{m^c}} \hat K_{(\Lambda^a)^R,\Lambda^s}(x)
\end{equation}
 \end{lemma}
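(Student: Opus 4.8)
The plan is to mirror the proof of Lemma~\ref{lemets} almost verbatim, the role played there by the elementary symmetric function $e_\ell^{(i)}$ being taken throughout by the monomial $x_i^\ell$. Starting from the expansion \eqref{eqsdev} of $s_\Lambda$, I would multiply by $\tilde p_\ell = \sum_{i=1}^N \theta_i x_i^\ell$ and extract the coefficient of $\theta_1\cdots\theta_{m+1}$. Exactly as in \eqref{eqetsym}, the only cosets $\sigma \in S_N/(S_m\times S_{m^c})$ that contribute are those for which $\{\sigma(1),\dots,\sigma(m)\}$ is one of the $m+1$ subsets of $\{1,\dots,m+1\}$ of size $m$: to build $\theta_1\cdots\theta_{m+1}$ we must multiply $\theta_{\sigma(1)}\cdots\theta_{\sigma(m)}$ by the single missing $\theta_j$, and $\theta_j^2=0$ annihilates all other terms. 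Keeping the identity coset (with $j=m+1$) together with the representatives ${\mathcal K}_{i,m+1}$ (with $j=i$, for $i=1,\dots,m$) produces
\begin{equation*}
\tilde p_\ell \sum_{\sigma} {\mathcal K}_\sigma \theta_1\cdots\theta_m A_m \Big|_{\theta_1\cdots\theta_{m+1}} = \Big( \theta_{m+1} x_{m+1}^\ell + \sum_{i=1}^m \theta_i x_i^\ell {\mathcal K}_{i,m+1} \Big) \theta_1\cdots\theta_m A_m \Big|_{\theta_1\cdots\theta_{m+1}},
\end{equation*}
the precise analogue of the first line of \eqref{eqetsym}.

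The remaining manipulation then follows that of Lemma~\ref{lemets} step for step, resting on two structural facts about $x_i^\ell$ that parallel the ones invoked there for $e_\ell^{(i)}$. First, since ${\mathcal K}_{i,m+1}$ interchanges $x_i$ and $x_{m+1}$, we have the operator identity $x_i^\ell {\mathcal K}_{i,m+1} = {\mathcal K}_{i,m+1} x_{m+1}^\ell$, which lets me pull a common factor $x_{m+1}^\ell$ to the right past every surviving term. Second, $x_{m+1}^\ell$ commutes with the antisymmetrizer $A_m$ (here even more transparently than $e_\ell^{(m+1)}$ did, since $x_{m+1}$ is simply disjoint from the variables $x_1,\dots,x_m$ on which $A_m$ acts). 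Reordering $\theta_{m+1}\theta_1\cdots\theta_m = (-1)^m\theta_1\cdots\theta_{m+1}$ and converting each ${\mathcal K}_{i,m+1}$ into $\kappa_{i,m+1}$ — which flips a sign because ${\mathcal K}$ permutes the $\theta$'s whereas $\kappa$ does not — I reach $(-1)^m\big(1-\sum_{i=1}^m \kappa_{i,m+1}\big) A_m\, x_{m+1}^\ell$ after stripping off $\theta_1\cdots\theta_{m+1}$. The antisymmetrizer identity $\big(1-\sum_{i=1}^m \kappa_{i,m+1}\big)A_m = A_{m+1}$ already used in the proof of Lemma~\ref{lemets} then gives
\begin{equation*}
\tilde p_\ell \sum_{\sigma} {\mathcal K}_\sigma \theta_1\cdots\theta_m A_m \Big|_{\theta_1\cdots\theta_{m+1}} = (-1)^m A_{m+1}\, x_{m+1}^\ell .
\end{equation*}

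To finish, I would reinstate the suppressed operator $\pi_{\omega_{m^c}}\hat K_{(\Lambda^a)^R,\Lambda^s}$ (recall $\pi_{\omega_{m^c}} = \sum_{v\in S_{m^c}}\hat\pi_v$), substitute $A_{m+1} = \Delta_{m+1}(x)\partial_{\omega_{m+1}}$, and divide by $\Delta_{m+1}(x)$ in accordance with the identification \eqref{eqidenti}. Combining the overall sign $(-1)^{\binom m2}$ from \eqref{eqsdev} with the $(-1)^m$ above and using $\binom m2 + m = \binom{m+1}2$ yields the claimed image $(-1)^{\binom{m+1}2}\partial_{\omega_{m+1}} x_{m+1}^\ell \pi_{\omega_{m^c}}\hat K_{(\Lambda^a)^R,\Lambda^s}(x)$. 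I do not anticipate a genuine obstacle here: the whole content is that $x_i^\ell$ satisfies the same two commutation properties (with ${\mathcal K}_{i,m+1}$ and with $A_m$) that $e_\ell^{(i)}$ satisfied, so the argument of Lemma~\ref{lemets} transfers essentially unchanged. The only place demanding care is the bookkeeping of the fermionic signs when reordering the $\theta$'s and when passing from ${\mathcal K}_{i,m+1}$ to $\kappa_{i,m+1}$.
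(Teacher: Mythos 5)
Your proposal is correct and follows the same route as the paper, which itself proves Lemma~\ref{lempts} by repeating the computation \eqref{eqetsym} with $e_\ell^{(i)}$ replaced by $x_i^\ell$ and then concluding as in Lemma~\ref{lemets}. The two commutation facts you isolate ($x_i^\ell\,{\mathcal K}_{i,m+1}={\mathcal K}_{i,m+1}\,x_{m+1}^\ell$ and the commutation of $x_{m+1}^\ell$ with $A_m$) are exactly what makes the transfer work, and your sign bookkeeping $(-1)^m(-1)^{\binom{m}{2}}=(-1)^{\binom{m+1}{2}}$ matches the paper's.
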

\begin{proof}
Recall that $\tilde p_\ell=\sum_{i=1}^N  \theta_i x_i^\ell$.
As in \eqref{eqetsym}, we have
\begin{equation}
\left(\sum_{i=1}^N  \theta_i x_i^\ell\right) \sum_{\sigma \in S_N /(S_m \times S_{m^c})} 
{\mathcal K}_{\sigma} \theta_1 \cdots \theta_m A_{m} \Big |_{\theta_1\cdots \theta_{m+1}} =(-1)^m A_{m+1}  \, x_{m+1}^\ell 
\end{equation}
We then deduce, as in the proof of Lemma~\ref{lemets}, that
\begin{equation}
\frac{\tilde p_\ell \, s_{\Lambda}(x,\theta) \big|_{\theta_1\cdots \theta_{m+1}}}{\Delta_{m+1}(x)}
= (-1)^{m+\binom{m}{2}} \partial_{\omega_{m+1}}  x_{m+1}^\ell \pi_{\omega_{m^c}} \hat K_{(\Lambda^a)^R,\Lambda^s}(x)
\end{equation}
\end{proof}

\section{Pieri rules} \label{secpieri}
Pieri rules for the multiplication of a Schur functions in superspace
$s_\Lambda$ or $\bar s_\Lambda$ by $e_\ell$ and $\tilde e_\ell$ will be
established in this section using Key polynomials.  As we will see, these Pieri rules
are essentially the transposed of
those corresponding to the multiplication of 
$s_\Lambda^*$ or $\bar s_\Lambda^*$ by $h_\ell$ and $\tilde h_\ell$.

\subsection{Pieri rules for $s_\Lambda^*$ and $\bar s_\Lambda$}  \label{subsec41}
In order to use Lemmas~\ref{lemsb} and \ref{lemetsb}, it will prove convenient
to express the Key polynomial appearing in these lemmas
as a sequence of operators $\pi_j$'s acting on a monomial.   
Letting $\pi_{(b,a)}=\pi_b\pi_{b-1}\cdots\pi_a$, for $b\geq a$ and $\pi_{(b,a)}=1$ if $b<a$, we get (see Lemma~\ref{lemreorder} in the Appendix for extra details)
\begin{equation} \label{eqlong}
K_{(\Lambda^s)^R,\Lambda^a}(x)=\pi_{\omega_{N-m}}\pi_{(N-m,\alpha_1)}\pi_{(N-m+1,\alpha_2)}\cdots\pi_{(N-1,\alpha_m)}x^{\Lambda^*}
\end{equation} where $\alpha_i$ is the row of the $i$-th circle (starting from the top) in $\Lambda$.
To simplify the notation, we set, for $\alpha_1<\alpha_2<\cdots <\alpha_m$, 
\begin{equation}
\mathcal{R}_{N,[\alpha_1,\dots,\alpha_m]}= \pi_{\omega_{N-m}}\pi_{(N-m,\alpha_1)}\pi_{(N-m+1,\alpha_2)}\cdots\pi_{(N-1,\alpha_m)}.
\end{equation}
from which we have
 \begin{equation}\label{id:KR}
 K_{(\Lambda^s)^R,\Lambda^a}(x)=\mathcal{R}_{N,[\alpha_1,\dots,\alpha_m]}x^{\Lambda^*}.
 \end{equation}

\begin{theorem}\label{thm:esbar} Let $\Lambda$ be a superpartition of fermionic degree $m$.  
Then, for $\ell \geq 1$, we have respectively
\begin{equation} \label{eqesbar} 
 s_{\Lambda}^* \, h_\ell= \sum_{\Omega}  s_{\Omega}^* \qquad \quad {\rm and }\qquad \quad  
\bar s_{\Lambda} \,  e_\ell = \sum_{\Omega} \bar s_{\Omega}
\end{equation}
where the sum is over all superpartitions $\Omega$ of fermionic degree $m$ 
such that
\begin{enumerate}
\item $\Omega^*/\Lambda^*$ is a horizontal (resp. vertical) $\ell$-strip. 
\item  The $i$-th circle, starting from below, of $\Omega$ is either in the same
row (resp. column) as the $i$-th circle of $\Lambda$ if $\Omega^*/\Lambda^*$ does not contain a cell in that row (resp. column) or 
one row below that (resp. one column to the right) of the $i$-th circle of $\Lambda$ if $\Omega^*/\Lambda^*$ contains a cell in the row (resp. column) of the
$i$-th circle of $\Lambda$.  In the latter case, we say that the circle was moved. 
\end{enumerate}
\end{theorem}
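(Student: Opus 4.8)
The plan is to prove only the second identity $\bar s_\Lambda\, e_\ell = \sum_\Omega \bar s_\Omega$ and to recover the first from it by duality, since it is the $\bar s$-version that is directly accessible through Lemma~\ref{lemsb}. Recall that $\omega$ is a multiplicative involution with $\omega(e_\ell)=h_\ell$ by \eqref{hedual}, and that Proposition~\ref{propdual} gives $\omega \bar s_{\Theta} = (-1)^{\binom{m}{2}} s_{\Theta'}^*$ for every superpartition $\Theta$ of fermionic degree $m$. Applying $\omega$ to the identity $\bar s_{\Lambda'}\, e_\ell = \sum_\Theta \bar s_\Theta$ and cancelling the common sign $(-1)^{\binom{m}{2}}$ yields $s_\Lambda^*\, h_\ell = \sum_\Theta s_{\Theta'}^*$. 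Because conjugation interchanges horizontal and vertical $\ell$-strips and exchanges the row and column conditions on the circles, the map $\Theta \mapsto \Theta'$ carries the index set of the vertical rule for $\Lambda'$ bijectively onto the index set of the horizontal rule for $\Lambda$, so the two statements are equivalent.

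To establish $\bar s_\Lambda\, e_\ell = \sum_\Omega \bar s_\Omega$ I would pass to the Key-polynomial picture. Since $e_\ell$ is bosonic, multiplication by it preserves the fermionic degree $m$, and under the identification \eqref{eqidenti} it simply multiplies the image of $\bar s_\Lambda$ by $e_\ell(y)$. The crucial point is that $e_\ell(y)$ is symmetric in \emph{all} of $y_1,\dots,y_N$, hence commutes with every $\pi_i$ and every $\partial_i$, and therefore with both $\mathcal{R}_{N,[\alpha_1,\dots,\alpha_m]}$ and $\partial'_{\omega_{(N-m)^c}}$. Using Lemma~\ref{lemsb} and \eqref{id:KR} I can thus push $e_\ell(y)$ all the way onto the monomial and expand $e_\ell(y)\, y^{\Lambda^*} = \sum_{|S|=\ell} y^{\Lambda^*+\mathbf{1}_S}$, the sum being over $\ell$-element subsets $S \subseteq \{1,\dots,N\}$ with $\mathbf{1}_S$ the associated $0$–$1$ vector. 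Note that each $\mathbf{1}_S$ adds one box to each of $\ell$ distinct rows, which is exactly what produces vertical strips. The theorem then reduces to the operator identity
\begin{equation*}
\partial'_{\omega_{(N-m)^c}}\,\mathcal{R}_{N,[\alpha_1,\dots,\alpha_m]} \sum_{|S|=\ell} y^{\Lambda^*+\mathbf{1}_S} \;=\; \sum_{\Omega} \partial'_{\omega_{(N-m)^c}}\,\mathcal{R}_{N,[\beta_1,\dots,\beta_m]}\, y^{\Omega^*},
\end{equation*}
where $\beta_i$ denotes the row of the $i$-th circle of $\Omega$ and the right-hand sum is over the $\Omega$ listed in the statement.

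The decisive step is to evaluate each term $\partial'_{\omega_{(N-m)^c}}\,\mathcal{R}_{N,[\alpha]}\, y^{\Lambda^*+\mathbf{1}_S}$ and to match the survivors with the right-hand side. I would show that each such term either vanishes or straightens to exactly one expression $\partial'_{\omega_{(N-m)^c}}\,\mathcal{R}_{N,[\beta]}\, y^{\Omega^*}$. The antisymmetrizer $\partial'_{\omega_{(N-m)^c}}$, through $\partial_i^2=0$ and the vanishing of divided differences on objects symmetric in an adjacent pair of the last $m$ variables, kills every perturbation producing a repeated part; this forces $\Omega^*/\Lambda^*$ to be a genuine vertical $\ell$-strip and yields condition~(1). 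The way an added box falling in the block of a given circle is absorbed by the coupling factor $\pi_{(N-m+i-1,\alpha_i)}$, analyzed through $\pi_i x_i = x_{i+1}\pi_i + x_i$, $\pi_i x_{i+1} = x_i \pi_i - x_i$ and $\pi_i^2=\pi_i$, is precisely what displaces that circle as prescribed by condition~(2) when the strip occupies its column and leaves it fixed otherwise. I expect the main obstacle to be exactly this circle-bookkeeping: proving that the straightening is multiplicity-free (coefficient $+1$), that distinct subsets $S$ never contribute to the same $\Omega$, and that every admissible $\Omega$ is attained. I would isolate this as a commutation lemma of the kind relegated to the appendix, proved by induction on the fermionic degree $m$, with the base case $m=0$ being the classical Pieri rule $s_\lambda\, e_\ell = \sum_\mu s_\mu$ recovered from $\bar s_{(\emptyset;\lambda)}=s_\lambda$.
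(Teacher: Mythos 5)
Your overall architecture coincides with the paper's: reduce to the $\bar s_\Lambda\,e_\ell$ rule by applying $\omega$ together with Proposition~\ref{propdual} and \eqref{hedual}, pass to the bisymmetric picture via Lemma~\ref{lemsb} and \eqref{id:KR}, commute $e_\ell$ onto the monomial, and finish with circle bookkeeping deferred to appendix-style commutation lemmas (the lemmas you postulate are precisely the paper's Corollary~\ref{coroalphap} and Lemma~\ref{lemequal}). The one place you genuinely diverge is the expansion of $y^{\Lambda^*}e_\ell$: you take the naive sum over $\ell$-subsets $S$, whereas the paper uses the structured identity of Lemma~\ref{lemapelx}, which writes $x^{\Lambda^*}e_\ell$ as a sum over genuine vertical strips times operators $\boldsymbol{\pi}_{i_j,I_j}$ that are then absorbed by $\mathcal{R}_{N,[\alpha_1,\dots,\alpha_m]}$. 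Your route can be made to work and is more elementary at this step, but it obliges you to dispose of the subsets $S$ for which $\Lambda^*+\mathbf{1}_S$ is not a partition, and your argument for that disposal is wrong as written.

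The failure mode for $\gamma=\Lambda^*+\mathbf{1}_S$ is not a repeated part (repeated parts are harmless: $\gamma$ is still a partition and the strip is still vertical); it is an ascent $\gamma_{i+1}=\gamma_i+1$, which occurs exactly when $\Lambda^*_i=\Lambda^*_{i+1}$, $i\notin S$, $i+1\in S$. Moreover the operator that kills such a term is not the outer $\partial'_{\omega_{(N-m)^c}}$, which only involves the indices $N-m+1,\dots,N-1$ and acts after $\mathcal{R}$ has rearranged everything; it is $\mathcal{R}_{N,[\alpha_1,\dots,\alpha_m]}$ itself. The correct mechanism is the absorption identity $\mathcal{R}_{N,[\alpha]}\pi_i=\mathcal{R}_{N,[\alpha]}$ for $i\notin\{\alpha_1-1,\dots,\alpha_m-1\}$ (Lemma~\ref{lem:propR}), combined with $\pi_i\,y^\gamma=0$ whenever $\gamma_{i+1}=\gamma_i+1$, plus the observation that the ascent position $i$ can never equal some $\alpha_j-1$: a circle in row $i+1$ forces $\Lambda^*_i>\Lambda^*_{i+1}$, contradicting $\Lambda^*_i=\Lambda^*_{i+1}$. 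With that repair your plan lands on the paper's intermediate identity \eqref{eqverti}, after which the two proofs coincide. Note also that the remaining circle bookkeeping you defer does not follow from the relations $\pi_ix_i=x_{i+1}\pi_i+x_i$ you cite; what is actually needed is the relabelling $\mathcal{R}_{N,[\dots,\alpha_i,\dots]}x^\mu=\mathcal{R}_{N,[\dots,\alpha_i',\dots]}x^\mu$ when row $\alpha_i$ of $\mu$ is no longer an addable corner (Corollary~\ref{coroalphap}) and the vanishing when two circles collide (Lemma~\ref{lemequal}).
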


We illustrate the rules by giving the expansion of  $s_{(4,1,0;2)}^*\, h_3$. 

\medskip

{\tiny

\begin{tabular}{rrrrrr} 
 $\left(~{\tableau[scY]{&&&&\bl\tcercle{}\\&\\&\bl\tcercle{}\\  \bl\tcercle{}}}~\right)
   \left( ~{\tableau[scY]{\blacksquare& \blacksquare& \blacksquare}}~\right)=$&
  $ {\tableau[scY]{&&&&\blacksquare&\blacksquare&\blacksquare\\&&\bl\tcercle{}\\&\bl\tcercle{}\\  \bl\tcercle{}}}$&+
  $ {\tableau[scY]{&&&&\blacksquare&\blacksquare\\&&\blacksquare&\bl\tcercle{}\\&\bl\tcercle{}\\  \bl\tcercle{}}}$&+
  $ \xcancel {\tableau[scY]{&&&&\blacksquare&\blacksquare\\&&\bl\tcercle{}\\&\blacksquare\\ \bl\tcercle{}& \bl\tcercle{}}}$&+
  $ {\tableau[scY]{&&&&\blacksquare&\blacksquare\\&&\bl\tcercle{}\\&\bl\tcercle{}\\ \blacksquare\\ \bl\tcercle{}}}$&+\\
  \\ 
  $ {\tableau[scY]{&&&&\blacksquare\\&&\blacksquare&\blacksquare&\bl\tcercle{}\\&\bl\tcercle{}\\  \bl\tcercle{}}}$&+
  $ \xcancel {\tableau[scY]{&&&&\blacksquare\\&&\blacksquare&\bl\tcercle{}\\&\blacksquare\\ \bl\tcercle{}& \bl\tcercle{}}}$&+
   $ {\tableau[scY]{&&&&\blacksquare\\&&\blacksquare&\bl\tcercle{}\\&\bl\tcercle{}\\ \blacksquare\\  \bl\tcercle{}}}$&+
   $ {\tableau[scY]{&&&&\blacksquare\\&&\bl\tcercle{}\\&\blacksquare\\ \blacksquare &\bl\tcercle{}\\  \bl\tcercle{}}}$&+
$   \xcancel{\tableau[scY]{&&&&\bl\tcercle{} \\ &&\blacksquare&\blacksquare \\ &\blacksquare \\ \bl\tcercle{} &\bl\tcercle{} \\  }}$&+\\
\\
${\tableau[scY]{&&&&\bl\tcercle{} \\ &&\blacksquare&\blacksquare \\ &\bl\tcercle{} \\ \blacksquare \\ \bl\tcercle{} \\  }}$&+
  $ {\tableau[scY]{&&&&\bl\tcercle{}\\&&\blacksquare\\&\blacksquare\\ \blacksquare &\bl\tcercle{}\\  \bl\tcercle{}}}$
  
\end{tabular}}
$$s_{(4,1,0;2)}^*\, h_3=s_{(2,1,0;7)}^*+ s_{(3,1,0;6)}^*+ s_{(2,1,0;6,1)}^*+ s_{(4,1,0;5)}^*+ s_{(3,1,0;5,1)}^*+ s_{(2,1,0;5,2)}^*+
 s_{(4,1,0;4,1)}^*+s_{(4,1,0;3,2)}^*$$

To generate all $\Omega$'s described in Theorem~\ref{thm:esbar}, draw all possible horizontal strips on $\Lambda^*$.
 For each partition obtained this way, start from the bottom row and proceed row by row.
 If a new square occupies the place of a circle in $\Lambda^{\circledast}$,
 move the circle to the next row and slide it to the first available column.
  If there already is a circle occupying the row then the resulting diagram
is not a superpartition and should be discarded. Note that this happens twice in our example.

\begin{proof}  Applying the involution $\omega$ on $\bar s_\Lambda \, e_\ell$, we obtain 
from Proposition~\ref{propdual} and \eqref{hedual} that
\begin{equation}
\bar s_\Lambda \, e_\ell = \sum_\Omega  \bar s_{\Omega} \quad \iff \quad 
s_{\Lambda'}^* \, h_\ell = \sum_\Omega  s_{\Omega'}^* 
\end{equation}
The superpartitions $\Omega$ appearing in the expansion of
$\bar s_\Lambda \, e_\ell$ are thus simply the transposes of those appearing in the expansion of
$s_\Lambda^* \, h_\ell$.  It thus suffices to prove  the rule for the multiplication of $\bar s_\Lambda$ by $e_\ell$.
 From Lemma~\ref{lemsb} and the commutativity between $\bar s_\Lambda$ and $e_\ell$, 
this is equivalent to proving the statement
\begin{equation}\label{eqn:thmkeyeqn1}
 e_\ell \, \partial_{\omega_{(N-m)^c}} K_{(\Lambda^s)^R,\Lambda^a}(x)= \sum_{\Omega} \partial_{\omega_{(N-m)^c}} K_{(\Omega^s)^R,\Omega^a}(x)
\end{equation}
where the sum is over all superpartitions $\Omega$ described above.
Note that for simplicity we changed $y$ by $x$ and removed 
the $'$ on the divided differences.

Using (\ref{id:KR}) and the fact that $e_{\ell}$  
commutes with all the operators,  it is easily seen that we can rewrite the 
left hand side of (\ref{eqn:thmkeyeqn1}) as
\begin{equation}
e_\ell \, \partial_{\omega_{(N-m)^c}} K_{(\Lambda^s)^R,\Lambda^a}(x)=\partial_{\omega_{(N-m)^c}}\mathcal{R}_{N,[\alpha_1,\dots,\alpha_m]}x^{\Lambda^*} e_\ell
\end{equation}

Suppose that ${\Lambda^*}$ is a partition whose entries are of
exactly $k$ distinct sizes, that is, 
$\Lambda^*=(a_1^{\ell_1},\dots,a_k^{\ell_k})$ with $a_1>a_2>\cdots > a_k$,
and let $I_1, I_2, \dots, I_k$ be intervals such that the entries $a_1,a_2,\dots,a_k$ occupy the rows 
$I_1, I_2, \dots, I_k$ respectively in $\Lambda^*$. For example, if the partition is  $\Lambda^*=(5,5,5,4,3,3,3,3,1)$ then $I_1=[1,3]$, $I_2=[4]$, $I_3=[5,8]$ and $I_4=[9]$.

For  $I$ equal to the interval $[a,b]$ and $i\leq |I|$, we
let $x^{(i,I)}=x_a x_{a+1} \cdots x_{a+i-1}$ and
\begin{equation}
\boldsymbol{\pi}_{i,I} =      \pi_{[b-i,b-1]}   \cdots \pi_{[a+1,a+i]} \pi_{[a,a+i-1]}
\end{equation}
where $\pi_{[c,d]}=\pi_c \pi_{c+1} \cdots \pi_{d}=\pi_{(d,c)}^{-1}$ for $c\leq d$ and $\pi_{[c,d]}=1$ if $c>d$
(note that $\boldsymbol{\pi}_{i,I}=1$ if $i=|I|$).
As shown in Lemma~\ref{lemapelx} of the Appendix, we have the expansion
\begin{equation} \label{eqxe}
x^{\Lambda^*} e_\ell=
\sum_{i_1+i_2+\dots+i_k=\ell}
\boldsymbol{\pi}_{i_1,I_1}  \boldsymbol{\pi}_{i_2,I_2} \cdots \boldsymbol{\pi}_{i_k,I_k} 
x^{\Lambda^*} x^{(i_1,I_1)}x^{(i_2,I_2)}\cdots x^{(i_k,I_k)}
\end{equation}
The left hand side side of (\ref{eqn:thmkeyeqn1}) thus becomes
\begin{equation} \label{eqpiiR}
\sum_{i_1+i_2+\dots+i_k=\ell}\partial_{\omega_{(N-m)^c}}\mathcal{R}_{N,[\alpha_1,\dots,\alpha_m]}
\boldsymbol{\pi}_{i_1,I_1}  \boldsymbol{\pi}_{i_2,I_2} \cdots \boldsymbol{\pi}_{i_k,I_k} 
x^{\Lambda^*} x^{(i_1,I_1)}x^{(i_2,I_2)}\cdots x^{(i_k,I_k)}
\end{equation}

It is shown in the appendix that   $\mathcal{R}_{N,[\alpha_1,\dots,\alpha_m]} \pi_j
=\mathcal{R}_{N,[\alpha_1,\dots,\alpha_m]}$ whenever $j\neq \alpha_i-1$ for all $i$.
By definition, 
the circles in $\Lambda^{\circledast}$  can only occur at the beginning of
the intervals $I_1,\dots,I_k$.  
Therefore, a value $j = \alpha_i-1$ would correspond to the end of one such
interval. Now, the $\pi_r$'s that occur in  
a given ${\boldsymbol \pi}_{s,I}$ are such that $r$ belongs to $[a,b-1]$ (supposing that $I=[a,b]$), which implies that they cannot coincide with 
any $\pi_{\alpha_i-1}$.  Hence,
every $\boldsymbol{\pi}_{i_j,I_j}$
in  \eqref{eqpiiR} acts as the identity on $\mathcal{R}_{N,[\alpha_1,\dots,\alpha_m]}$, 
and we have
\begin{equation}
e_\ell \,\partial_{\omega_{(N-m)^c}}  K_{(\Lambda^s)^R,\Lambda^a}(x)=\sum_{i_1+i_2+\dots+i_k=\ell}\partial_{\omega_{(N-m)^c}}\mathcal{R}_{N,[\alpha_1,\dots,\alpha_m]}x^{\Lambda^*} x^{(i_1,I_1)}x^{(i_2,I_2)}\dots x^{(i_k,I_k)}
\end{equation}
Notice that this summation sums over all vertical $\ell$-strips and so we get
\begin{equation} \label{eqverti}
e_\ell \, \partial_{\omega_{(N-m)^c}}  K_{(\Lambda^s)^R,\Lambda^a}(x)=
\sum_{\substack{\mu\\ \mu/{\Lambda^*} \text{ vertical $\ell$-strip}}}
\partial_{\omega_{(N-m)^c}}\mathcal{R}_{N,[\alpha_1,\dots,\alpha_m]}x^\mu
\end{equation}
We will now show that \eqref{eqn:thmkeyeqn1} holds 
(and thus also the theorem)
 by showing
that 
\begin{equation}
\sum_{\substack{\mu\\ \mu/{\Lambda^*} \text{ vertical $\ell$-strip}}}\partial_{\omega_{(N-m)^c}}\mathcal{R}_{N,[\alpha_1,\dots,\alpha_m]}x^\mu=\sum_{\Omega}\partial_{\omega_{(N-m)^c}} K_{(\Omega^s)^R,\Omega^a}(x)
\end{equation}
where the sum runs over the  $\Omega$'s described in the statement of the
theorem.  
First of all, part (1) of Theorem \ref{thm:esbar} is clearly satisfied since by construction we have that $\Omega^*=\mu$ and $\mu/\Lambda^*$ is a vertical $\ell$-strip.  Second, for a given vertical strip, there can be at most 
one $\Omega$ in the statement of the theorem.  It thus suffices to show 
that for a given vertical strip $\mu/\Lambda^*$, we have that
$\partial_{\omega_{(N-m)^c}}\mathcal{R}_{N,[\alpha_1,\dots,\alpha_m]}x^\mu$ is either equal 
to zero (if there is no corresponding $\Omega$)
or to $\partial_{\omega_{(N-m)^c}} K_{(\Omega^s)^R,\Omega^a}(x)$ for the
right value of $\Omega$ (see part (2) of Theorem \ref{thm:esbar}).
We will in fact see that if $\partial_{\omega_{(N-m)^c}}\mathcal{R}_{N,[\alpha_1,\dots,\alpha_m]}x^\mu$ is not equal to zero then
\begin{equation}
\mathcal{R}_{N,[\alpha_1,\dots,\alpha_m]}x^\mu
= \mathcal{R}_{N,[\alpha_1',\dots,\alpha_m']}x^\mu =
K_{(\Omega^s)^R,\Omega^a}(x).
\end{equation}
where $\Omega$ is the superpartition obtained from $\mu$
by adding circles in rows $\alpha_1',\dots, \alpha_m'$.

We simply need to focus on what happens to $\alpha_i$ in 
$\partial_{\omega_{(N-m)^c}}\mathcal{R}_{N,[\alpha_1,\dots,\alpha_m]}x^\mu$.
Note that circles can only occupy rows that are addable corners of
$\Lambda^*$.  The possible cases are:  
\begin{enumerate}
\item
If there is no cell in row $\alpha_i$ of $\mu/\Lambda^*$  
then $\alpha_i'=\alpha_i$ and there is a circle in row 
$\alpha_i$ of $\Omega$ (hence the $i$-th
circle is in the same position in $\Lambda$ and in $\Omega$).
\item
If there is a cell in row $\alpha_i$ of $\mu/\Lambda^*$ and row $\alpha_i$ is still an addable corner of $\mu$
then there will be a circle in row $\alpha_i$ of $\Omega$ (hence the $i$-th
circle of $\Omega$ is one column to the right of the $i$-th circle
of $\Lambda$ since $\mu/\Lambda^*$ is vertical).
\item
If there is a cell in row $\alpha_i$ of $\mu/\Lambda^*$ and row $\alpha_i$ is not an addable corner of $\mu$
then there cannot be a circle in row $\alpha_i$ of $\Omega$. But this is 
okay since in this case there exists a row, call it $\alpha_i'$ that has
the same length as row $\alpha_i$ and is an addable corner, and such that 
by Corollary~\ref{coroalphap} of the Appendix
$$\mathcal{R}_{N,[\alpha_1,\dots,\alpha_i,\dots,\alpha_m]}x^\mu=
\mathcal{R}_{N,[\alpha_1,\dots,\alpha_i',\dots,\alpha_m]}x^\mu$$
Since $\mu/\Lambda^*$ is vertical, we thus have that again the $i$-th
circle of $\Omega$ is one column to the right of the $i$-th circle
of $\Lambda$.

If, however, there is a circle in row $\alpha'_i$, that is $\alpha_{i-1}=\alpha'_i$, then we have
$$\mathcal{R}_{N,[\alpha_1,\dots,\alpha_{i-1},\alpha_i,\dots,\alpha_m]}x^\mu=
\mathcal{R}_{N,[\alpha_1,\dots,\alpha_i',\alpha_i',\dots,\alpha_m]}x^\mu=0$$
according to Lemma \ref{lemequal} in the Appendix.

\end{enumerate}
Therefore, part (2) of Theorem \ref{thm:esbar} is satisfied, which completes
the proof of the theorem.

\end{proof}

\medskip

\begin{theorem}\label{pinball2} Let $\Lambda$ be a superpartition of fermionic degree $m$.  
Then, for $\ell \geq 0$, we have respectively
\begin{equation}  \label{eqtwo}
 s_{\Lambda}^* \, \tilde h_\ell= \sum_{\Omega}  (-1)^{\#(\Omega,\Lambda)} 
s_{\Omega}^* \qquad \quad {\rm and }\qquad \quad \bar s_{\Lambda} \, \tilde e_\ell = \sum_{\Omega} (-1)^{\#(\Omega,\Lambda)} \bar s_{\Omega}
\end{equation}
where the sum is over all superpartitions $\Omega$ of fermionic degree $m+1$ 
such that
\begin{enumerate}
\item 
$\Omega^*/\Lambda^*$ is a horizontal (resp. vertical) $\ell$-strip. 
\item There exists a unique circle of $\Omega$ (the new circle), let's say in column $c$ (resp. row $r$), such that
\begin{itemize}
\item column $c$ (resp. row $r$) does not contain any cell
of $\Omega^*/\Lambda^*$. 
\item there is a cell of $\Omega^*/\Lambda^*$ in 
every column (resp. row) strictly to the left of column $c$ 
(resp. strictly above row $r$). 
\end{itemize}
\item If $\tilde \Omega$ is $\Omega$ without its new circle,
then the $i$-th circle, starting from below, of $\tilde \Omega$ is either in the same
row (resp. column) as the $i$-th circle of $\Lambda$ if $\Omega^*/\Lambda^*$ does not contain a cell in that row (resp. column) or 
one row below that (resp. one column to the right) of the $i$-th circle of $\Lambda$ if $\Omega^*/\Lambda^*$ contains a cell in the row (resp. column) of the
$i$-th circle of $\Lambda$. In the latter case, we say that the circle was moved. 
\end{enumerate}
and where ${\#}(\Omega, \Lambda)$ is the number of circles in $\Omega$
below the new circle.
\end{theorem}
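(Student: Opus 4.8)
The plan is to follow the strategy of Theorem~\ref{thm:esbar}, the essential new feature being that $\tilde e_\ell$ raises the fermionic degree by one, so that a new circle is created and the sign $(-1)^{\#(\Omega,\Lambda)}$ must be produced. First I would reduce the two formulas in \eqref{eqtwo} to one by applying $\omega$. Since $\omega$ is a ring involution with $\omega(\tilde e_\ell)=\tilde h_\ell$ (by \eqref{hedual}) and $\omega\bar s_\Lambda=(-1)^{\binom m2}s^*_{\Lambda'}$ (a consequence of Proposition~\ref{propdual}), and since $\binom{m+1}2-\binom m2=m$, applying $\omega$ to $\bar s_\Lambda\,\tilde e_\ell=\sum_\Omega(-1)^{\#(\Omega,\Lambda)}\bar s_\Omega$ yields $s^*_{\Lambda'}\,\tilde h_\ell=\sum_\Omega(-1)^{\#(\Omega,\Lambda)+m}s^*_{\Omega'}$. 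The leftover sign is reconciled by the remark that the circles of a superpartition have strictly decreasing columns read from top to bottom, so that ``below'' coincides with ``to the left''; hence $\#(\Omega,\Lambda)+\#(\Omega',\Lambda')=m$, and the two formulas are exact conjugates (horizontal/vertical strips, and the column-$c$/row-$r$ conditions of (2), transposing into each other). It thus suffices to establish the rule for $\bar s_\Lambda\,\tilde e_\ell$.

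By Lemma~\ref{lemetsb}, by Lemma~\ref{lemsb} applied in fermionic degree $m+1$, and by \eqref{id:KR}, this reduces to the operator identity
\begin{equation*}
\partial_{\omega_{(N-m-1)^c}}\,e_\ell^{(N-m)}(x)\,\mathcal{R}_{N,[\alpha_1,\dots,\alpha_m]}\,x^{\Lambda^*}
=\sum_{\Omega}(-1)^{\#(\Omega,\Lambda)+m}\,\partial_{\omega_{(N-m-1)^c}}\,K_{(\Omega^s)^R,\Omega^a}(x),
\end{equation*}
where the antisymmetrizer now runs over the $m+1$ variables $x_{N-m},\dots,x_N$ and $x_{N-m}$ is exactly the newly created fermionic variable, which is why the variable excluded from $e_\ell^{(N-m)}$ is $x_{N-m}$. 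Unlike in Theorem~\ref{thm:esbar}, the factor $e_\ell^{(N-m)}$ cannot be moved to sit next to $x^{\Lambda^*}$, because it fails to commute with the factors of $\mathcal{R}$ involving $x_{N-m}$. I would instead write $e_\ell^{(N-m)}=\sum_{j\ge 0}(-1)^j x_{N-m}^j\,e_{\ell-j}$, in which every $e_{\ell-j}$ is fully symmetric and hence commutes with $\partial_{\omega_{(N-m-1)^c}}$ and with $\mathcal{R}$. Applying \eqref{eqxe} (Lemma~\ref{lemapelx}) to $x^{\Lambda^*}e_{\ell-j}$ and using, as before, that the resulting $\boldsymbol{\pi}$-operators act as the identity on $\mathcal{R}$, the left-hand side becomes $\sum_{j}(-1)^j\sum_{\nu}\partial_{\omega_{(N-m-1)^c}}\,x_{N-m}^j\,\mathcal{R}_{N,[\alpha_1,\dots,\alpha_m]}\,x^{\nu}$, the inner sum ranging over vertical $(\ell-j)$-strips $\nu/\Lambda^*$.

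The heart of the matter is then the single term $\partial_{\omega_{(N-m-1)^c}}\,x_{N-m}^j\,\mathcal{R}_{N,[\alpha_1,\dots,\alpha_m]}\,x^{\nu}$. Here the leading factor $\pi_{\omega_{N-m}}$ of $\mathcal{R}$ symmetrizes $x_1,\dots,x_{N-m}$, and multiplying by $x_{N-m}^j$ before antisymmetrizing in $x_{N-m},\dots,x_N$ is precisely the operation that converts the degree-$m$ operator into a degree-$(m+1)$ operator $\mathcal{R}_{N,[\alpha_1',\dots,\alpha_{m+1}']}$ carrying one extra circle. I would record this conversion as an appendix lemma, in the spirit of Lemma~\ref{lemreorder}: it identifies the row of the new circle (the first addable corner not met by the strip, all higher rows of the strip being filled, which is exactly condition~(2)) and shows that the reassembled string yields $K_{(\Omega^s)^R,\Omega^a}$ for the superpartition $\Omega$ of condition~(3). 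The positions of the $m$ old circles are settled exactly as in the three cases of Theorem~\ref{thm:esbar}, using the degree-$(m+1)$ analogues of Corollary~\ref{coroalphap} and Lemma~\ref{lemequal}; terms in which the new circle would collide with an existing one vanish.

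Finally, the sign. Collecting the $(-1)^m$ emerging from the displayed identity, the $(-1)^j$ of the decomposition, and the sign produced when the newly created circle is commuted into its correct slot among $\alpha_1'<\cdots<\alpha_{m+1}'$, I would show that the net contribution of each surviving term is $(-1)^{\#(\Omega,\Lambda)}$ for the function identity $\bar s_\Lambda\,\tilde e_\ell=\sum_\Omega(-1)^{\#(\Omega,\Lambda)}\bar s_\Omega$. I expect the main obstacle to be precisely this sign bookkeeping: proving that moving the new circle's string of $\pi$'s past the string of each of the $\#(\Omega,\Lambda)$ circles lying below it contributes exactly one sign apiece, and that this count is consistent across the sum over $j$ and over vertical strips. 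This is the delicate point at which the anticommutativity of the $\theta$-variables, entering through the sign of Lemma~\ref{lemetsb} and the reordering of circles, is converted into the combinatorial statistic $\#(\Omega,\Lambda)$.
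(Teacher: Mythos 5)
Your skeleton coincides with the paper's: reduce to the $\bar s_\Lambda\,\tilde e_\ell$ rule via $\omega$ (using $\#(\Omega,\Lambda)+\#(\Omega',\Lambda')=m$), pass to the bisymmetric world via Lemmas~\ref{lemsb} and \ref{lemetsb}, expand the inhomogeneous factor $e_\ell^{(N-m)}$, and then settle the old circles exactly as in Theorem~\ref{thm:esbar}. The gap is at the central step. You replace the paper's Corollary~\ref{corobigexpan} by the expansion $e_\ell^{(N-m)}=\sum_j(-1)^jx_{N-m}^je_{\ell-j}$ and then assert that each single term $\partial_{\omega_{(N-m-1)^c}}\,x_{N-m}^j\,\mathcal{R}_{N,[\alpha_1,\dots,\alpha_m]}\,x^{\nu}$ converts into one degree-$(m+1)$ operator $\mathcal{R}_{N,[\alpha_1',\dots,\alpha_{m+1}']}$ applied to $x^\nu$. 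That cannot hold term by term: already for $j=0$ the term is $\partial_{\omega_{(N-m-1)^c}}K_{(\Lambda^s)^R,\Lambda^a}(x)\,e_{\ell}$, a degree-$m$-circle object antisymmetrized over $m+1$ variables, which expands into many Key polynomials; and for a fixed vertical strip $\nu$ nothing in your organization forces the defining constraint of condition (2) (a new cell in every row above the new circle and none in its row) --- that constraint can only emerge from cancellations \emph{across} different values of $j$. So the ``conversion lemma'' you defer to the appendix is not a routine analogue of Lemma~\ref{lemreorder}; it is the substance of the proof, and in the per-term form you state it, it is false.

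What the paper does instead is reorganize \emph{before} splitting off the vertical strips: Lemma~\ref{lem:elvpi} rewrites $e_\ell^{(v)}\pi_{\omega_v}=\sum_{j}\pi_{\omega_{v-1}}\pi_{(v-1,j)}\,x_1\cdots x_{j-1}\,e_{\ell-j+1}^{(1,\dots,j)}$ by an induction that repeatedly absorbs the stray powers of $x_v$ using $x_{v}\pi_{\omega_{v-1}}\pi_{(v-1,i)}=\pi_{\omega_{v-1}}\pi_{(v-1,i+1)}(\pi_i-1)x_i$ (Lemma~\ref{lem:xpipi}); the factor $x_1\cdots x_{r-1}e_{\ell-r+1}^{(1,\dots,r)}$ produced there is exactly what encodes condition (2), and pushing it past the old-circle factors (Lemma~\ref{lem:prod1}, Corollary~\ref{cor:delpi1}) is what creates the sign $(-1)^{\mathrm{pos}(r)}$, which is then identified with $(-1)^{\#(\Omega,\Lambda)'}$. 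In particular the sign is not ``collected at the end'' from the $(-1)^j$'s; it is fixed at the moment the new circle's string of $\pi$'s is inserted into its slot. (Two smaller points: your displayed reduction carries a spurious $(-1)^m$ --- the identity to prove is $\partial_{\omega_{(N-m-1)^c}}e_\ell^{(N-m)}K_{(\Lambda^s)^R,\Lambda^a}=\sum_\Omega(-1)^{\#(\Omega,\Lambda)}\partial_{\omega_{(N-m-1)^c}}K_{(\Omega^s)^R,\Omega^a}$ --- and the application of \eqref{eqxe} must ultimately be made with the strip confined to rows below the new circle, which again only makes sense after the reorganization above.)
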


We illustrate this time the rules by giving the expansion of  
$s_{(4,1;3)}^*\, \tilde h_3$. 

\medskip

{\tiny

\begin{tabular}{rrrrrrr} 
 $\left(~{\tableau[scY]{&&&&\bl\tcercle{}\\&&\\&\bl\tcercle{}  }}~\right)
   \left( ~{\tableau[scY]{\blacksquare& \blacksquare& \blacksquare &\bl\tcercle{$\bullet$}}}~\right)=$&
  $ {\tableau[scY]{&&&&\blacksquare&\blacksquare&\blacksquare\\&&&\bl\tcercle{}\\&\bl\tcercle{}\\ \bl\tcercle{$\bullet$}  }}$&+
  $  {\tableau[scY]{&&&&\blacksquare&\blacksquare\\&&&\blacksquare&\bl\tcercle{}\\&\bl\tcercle{}\\ \bl\tcercle{$\bullet$}  }}$&
$-~{\tableau[scY]{&&&&\blacksquare\\&&& \bl\tcercle{}\\&\blacksquare&\bl\tcercle{$\bullet$} \\ \blacksquare&\bl\tcercle{} }}$&
 $ -~ {\tableau[scY]{&&&&\bl\tcercle{}\\&&&\blacksquare \\&\blacksquare&\bl\tcercle{$\bullet$} \\ \blacksquare&\bl\tcercle{} }}$&
   $-~ {\tableau[scY]{&&&&\bl\tcercle{}\\&&&\bl\tcercle{$\bullet$} \\&\blacksquare &\blacksquare\\ \blacksquare&\bl\tcercle{} }}$&
  
\end{tabular}}
$$s_{(4,1;3)}^* \, \tilde h_3= s_{(3,1,0;7)}^*+  s_{(4,1,0;6)}^* - s_{(3,2,1;5)}^* - s_{(4,2,1;4)}^*- s_{(4,3,1;3)}^*$$

This rule is very similar to the previous one but in this case keep in mind that
 every column to the left of the new circle must have a new box. Also, multiply by $(-1)$ for every circle below the new circle.

\begin{proof}
 Applying the involution $\omega$, we obtain
from Proposition~\ref{propdual} and \eqref{hedual} that
\begin{equation}
\bar s_\Lambda \, \tilde e_\ell = \sum_\Omega (-1)^{\#(\Omega,\Lambda)} \bar s_{\Omega} \quad \iff \quad 
s_{\Lambda'}^* \, \tilde h_\ell = (-1)^m \sum_\Omega (-1)^{\#(\Omega,\Lambda)} s_{\Omega'}^* 
\end{equation}
The superpartitions $\Omega$ appearing in the expansion of
$\bar s_\Lambda \, \tilde e_\ell$ are thus simply the transposes of those appearing in the expansion of
$s_\Lambda^* \, \tilde h_\ell$.  Observe that $(-1)^{\#(\Omega,\Lambda)+m}=(-1)^{\#(\Omega',\Lambda')}$, which means
that the sign associated to $\Omega$ is the same in both formulas of \eqref{eqtwo}. Hence, it suffices to prove  the rule for the multiplication of $\bar s_\Lambda$ by $\tilde e_\ell$.
Using $\tilde e_\ell \, \bar s_{\Lambda}= (-1)^m \bar s_{\Lambda} \, \tilde e_\ell$, this is equivalent to proving that 
\begin{equation}
\tilde e_\ell \, \bar s_{\Lambda} =  \sum_{\Omega} (-1)^{\#(\Omega,\Lambda)'} \bar s_{\Omega}
\end{equation}
where $\#(\Omega,\Lambda)'$ is equal to the number of circles in 
$\Omega$ {\it above} the
new circle. From Lemmas~\ref{lemsb} and \ref{lemetsb}, this corresponds in the bisymmetric world  $\mathbb Q [x_1,\dots,x_N]^{S_{m+1} \times S_{(m+1)^c}}$ to proving
that \begin{equation}\label{eqn:thmkeyeqn2}
(-1)^m\partial_{\omega_{(N-m-1)^c}} e^{(N-m)}_\ell K_{(\Lambda^s)^R,\Lambda^a}(x)= 
\sum_{\Omega} (-1)^{\#(\Omega,\Lambda)'} \partial_{\omega_{(N-m-1)^c}} K_{(\Omega^s)^R,\Omega^a}(x)
\end{equation}
where the sum is over all superpartitions $\Omega$ described in the statement of the theorem.
From (\ref{id:KR}), we have
 \begin{equation}
 (-1)^m\partial_{\omega_{(N-m-1)^c}} e^{(N-m)}_\ell K_{(\Lambda^s)^R,\Lambda^a}(x)=
 (-1)^m\partial_{\omega_{(N-m-1)^c}} e^{(N-m)}_\ell\mathcal{R}_{N,[\alpha_1,\dots,\alpha_m]}x^{\Lambda^*}
\end{equation}
Using Corollary~\ref{corobigexpan} of the Appendix, we get 
\begin{align}
& {\partial_{\omega_{(N-m-1)^c}}} e^{(N-m)}_\ell\mathcal{R}_{N,[\alpha_1,\dots,\alpha_m]}= 
\nonumber \\
& \qquad \qquad { \partial_{\omega_{(N-m-1)^c}}} \sum_{r\not \in \{\alpha_1,\dots,\alpha_m \}}
(-1)^{\text{pos}(r)}\mathcal{R}_{N,[\alpha_1,\ldots,r,\ldots, \alpha_m]}x_1\dots x_{r-1}e_{\ell-r+1}^{(1,2,\dots,r)}\\
\end{align}
where $e_{\ell}^{(1,2,\dots,r)}$ is $e_\ell$ with $x_1=x_2=\dots=x_r=0$, and where
$\text{pos}(r)$ is the position of $r$ starting from $0$ in the increasing sequence
 $[\alpha_1,\ldots,r,\ldots, \alpha_m]$, that is, $\text{pos}(r)$ is the number of entries of $[\alpha_1,\ldots,r,\ldots, \alpha_m]$ that
 are less than $r$. For example, if we have the increasing sequence $[2,3,9,13,19,22]$ then $\text{pos}(13)=3$ because there are 
 $3$ numbers less than $13$ in the sequence.
 Therefore, we have
\begin{align}
&(-1)^m\partial_{\omega_{(N-m-1)^c}} e^{(N-m)}_\ell K_{(\Lambda^s)^R,\Lambda^a}(x)= \nonumber \\
& \qquad \qquad (-1)^m\partial_{\omega_{(N-m-1)^c}}\sum_{r \not \in \{\alpha_1,\ldots, \alpha_m \}}
(-1)^{\text{pos}(r)}\mathcal{R}_{N,[\alpha_1,\ldots,r,\ldots, \alpha_m]} \, 
e_{\ell-r+1}^{(1,2,\dots,r)}x^{\Lambda^* +1^{r-1}}
\end{align}
where $\Lambda^*+1^{r-1}$ is the partition obtained by adding a new cell in the first $r-1$ rows of $\Lambda^*$.
Using an argument similar to the one that led to \eqref{eqverti} 
in the proof of 
Theorem~\ref{thm:esbar},
we can show that
\begin{equation}
\mathcal{R}_{N,[\alpha_1,\ldots,r,\ldots, \alpha_m]} \, e_{\ell-r+1}^{(1,2,\dots,r)}x^{\Lambda^*+1^{r-1}}=
\sum_{\mu} \mathcal{R}_{N,[\alpha_1,\ldots,r,\ldots, \alpha_m]} \, x^{\mu+1^{r-1}}
\end{equation}
where the sum is over all $\mu$'s such that $\mu/\Lambda^*$
is a vertical $(\ell-r+1)$-strip whose cells
are all in rows strictly below row $r$.  From the previous two equations, the right hand 
side of equation (\ref{eqn:thmkeyeqn2}) is seen to be equal to
\begin{equation}
\sum_{r \not \in \{\alpha_1,\dots,\alpha_m \}}
\sum_{\mu}
\partial_{\omega_{(N-m-1)^c}}(-1)^{m+\text{pos}(r)}\mathcal{R}_{N,[\alpha_1,\ldots,r,\ldots,\alpha_m]} \, x^{\mu+1^{r-1}}
\end{equation}
where the sum runs over the $\mu$'s described in the previous equation.
 Now, as in the proof of Theorem~\ref{thm:esbar}, we have that if 
$\partial_{\omega_{(N-m-1)^c}} \mathcal{R}_{N,[\alpha_1,\ldots,r,\dots, \alpha_m]} \, x^{\mu+1^{r-1}} \neq 0$ then 
\begin{equation}
\mathcal{R}_{N,[\alpha_1,\dots,r,\ldots, \alpha_m] }\, x^{\mu+1^{r-1}}
= \mathcal{R}_{N,[\alpha_1',\dots,r,\dots, \alpha_m'] }\, x^{\mu+1^{r-1}} =
K_{(\Omega^s)^R,\Omega^a}(x)
\end{equation}
where $\Omega$ is the superpartition obtained from $\mu+1^{r-1}$
by adding circles in certain rows $\alpha_1',\dots,r,\dots,  \alpha_m'$.
Observe that we can always add a circle in row $r$ since there is an addable corner in row $r$ of $\mu+1^{r-1}$.
This circle corresponds to the new circle in row $r$ mentioned in part (2) of  
Theorem~\ref{pinball2}.  Notice that in $(\mu+1^{r-1})/\Lambda^*$
there is, by definition, no cell in row $r$ and  a new cell in every
row above row $r$, as required in 
part (2) of  the theorem.  The sign is also correct given that
 $(-1)^{m+({\text{pos}(r)})}$ is the same as $(-1)^{\#(\Omega,\Lambda)'}$ (which
corresponds to the number of circles above row $r$).  Furthermore, the sum is over all
vertical strips that contain cells in every row above row $r$ and none
in row $r$ as required by part (1) and (2) of the theorem.  Now,
if  $\alpha_i < r$ then row $\alpha_i$ is still an addable corner in row $\alpha_i$ of
$\mu+1^{r-1}$.  The corresponding circle is thus moved one column to the right
given the new cell in its row.  Finally, for the $\alpha_i$ below row $r$,
we proceed exactly as in the proof of the previous theorem to show that
part (3) needs to be satisfied.

Therefore, the sum over $\Omega$ in the right hand side of (\ref{eqn:thmkeyeqn2}) is indeed 
as 
in the statement of Theorem~\ref{pinball2}, which concludes the proof.
\end{proof}

The following corollary shows that a product of fermionic symmetric functions $\tilde e_\ell$ or $\tilde h_\ell$
corresponds to a single
Schur function in superspace.
\begin{corollary}  \label{coroesb}
We have
$e_{(\Lambda^a; \emptyset)} = (-1)^{\binom{m}{2}}\bar s_{(\Lambda^a; \emptyset)'}$ and $h_{(\Lambda^a;\emptyset)}=s_{(\Lambda^a; \emptyset)}^*$. 
In particular, $\tilde e_\ell= \bar s_{(0;1^\ell)}$ and $\tilde h_\ell= s_{(\ell;\emptyset)}^*$. 
\end{corollary}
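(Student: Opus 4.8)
The plan is to prove the homogeneous statement $h_{(\Lambda^a;\emptyset)}=s_{(\Lambda^a;\emptyset)}^*$ first, by induction on the fermionic degree $m$ using the Pieri rule of Theorem~\ref{pinball2}, and then to deduce the elementary statement by applying the involution $\omega$. Writing $\Lambda^a=(b_1,\dots,b_m)$ with $b_1>\cdots>b_m\ge 0$, we have $h_{(\Lambda^a;\emptyset)}=\tilde h_{b_1}\cdots\tilde h_{b_m}$, and the factorization $\tilde h_{b_1}\cdots\tilde h_{b_m}=\bigl(\tilde h_{b_1}\cdots\tilde h_{b_{m-1}}\bigr)\,\tilde h_{b_m}$ lets me multiply on the right by the smallest generator, which is exactly the form $s_\Lambda^*\,\tilde h_\ell$ handled by Theorem~\ref{pinball2}, so that no anticommutation signs intervene. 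The base case $m=1$ is the instance $\Lambda=\emptyset$ of that theorem, giving $\tilde h_\ell=s_\emptyset^*\,\tilde h_\ell=s_{(\ell;\emptyset)}^*$; likewise the $m=1$ instance of the elementary rule gives $\tilde e_\ell=\bar s_{(0;1^\ell)}$, since $(\ell;\emptyset)'=(0;1^\ell)$ and $\binom{1}{2}=0$. These are the two displayed special cases.

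For the inductive step I would assume $\tilde h_{b_1}\cdots\tilde h_{b_{m-1}}=s_{(b_1,\dots,b_{m-1};\emptyset)}^*$ and expand $s_{(b_1,\dots,b_{m-1};\emptyset)}^*\,\tilde h_{b_m}$ by Theorem~\ref{pinball2}. Here $\Lambda^*=(b_1,\dots,b_{m-1})$ carries circles at the ends of its rows, namely in the cells $(i,b_i+1)$. The claim to establish is that the whole Pieri sum collapses to the single term $s_{(b_1,\dots,b_m;\emptyset)}^*$ with coefficient $+1$. The candidate term is immediate: taking the horizontal $b_m$-strip to be a brand-new bottom row of length $b_m$ makes $\Omega^*=(b_1,\dots,b_m)$, places the new circle at the end of that row (column $b_m+1$, with a strip cell in every column to its left, as required by part~(2)), leaves every old circle fixed (part~(3), since the strip meets none of the rows $1,\dots,m-1$), and gives $\#(\Omega,\Lambda)=0$ because the new circle is the lowest one.

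The main obstacle is showing that no other term survives, and this is where I expect to spend the effort. The key geometric fact is that $b_m<b_{m-1}$ forces every row of $\Lambda^*$ to have length at least $b_{m-1}>b_m$, so columns $1,\dots,b_m$ are already completely filled in rows $1,\dots,m-1$; consequently any strip cell lying in one of the first $b_m$ columns can only be placed in the new row $m$. Since part~(2) demands a strip cell in every column strictly to the left of the new circle, the low-column cells of the strip must all sit in row $m$, and the new circle must cap that row. Any strip cell placed instead at the end of an existing row $i\le m-1$ necessarily occupies the cell $(i,b_i+1)$ that holds the $i$-th circle; by part~(3) that circle is then displaced one row downward, where it collides either with the circle already present in row $i+1$ or, when the displacements cascade up to $i=m-1$, with the new circle in row $m$, so $\Omega^\circledast$ fails to be a partition and the rook-strip condition is violated. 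Hence all such terms are discarded, leaving the single stacking term and completing the induction. Finally, applying $\omega$ to $h_{(\Lambda^a;\emptyset)}=s_{(\Lambda^a;\emptyset)}^*$ and using $\omega(h_\Lambda)=e_\Lambda$ from \eqref{hedual} together with $\omega(s_\Lambda^*)=(-1)^{\binom{m}{2}}\bar s_{\Lambda'}$, which follows from Proposition~\ref{propdual} and $\omega^2=\mathrm{id}$, yields $e_{(\Lambda^a;\emptyset)}=(-1)^{\binom{m}{2}}\bar s_{(\Lambda^a;\emptyset)'}$, as claimed.
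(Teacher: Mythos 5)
Your proposal is correct and follows essentially the same route as the paper's own proof: induction on $m$ peeling off the smallest part $b_m$, with both the base case and the inductive step handled by Theorem~\ref{pinball2}, a circle-collision argument showing that the Pieri expansion collapses to the single term $s_{(\Lambda^a;\emptyset)}^*$ with sign $+1$ (new circle in the bottom row), and finally an application of $\omega$ together with Proposition~\ref{propdual} and \eqref{hedual} to deduce the elementary statement. The only detail you leave implicit is the one-line count ruling out a new circle in a column $c>b_m+1$ (there would not be enough strip cells to occupy every column strictly to its left), which the paper states explicitly; otherwise the arguments match.
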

\begin{proof}
Let $\lambda=\Lambda^a$ and $\Lambda=(\lambda;\emptyset)$.  We first show that $h_\Lambda=s_\Lambda^*$.
We proceed by induction on $m$.  If $m=1$, the result holds since
\begin{equation}
\tilde h_{\lambda_1}= s_{(\emptyset;\emptyset)}^*\,  \tilde h_{\lambda_1} = s_{(\lambda_1;\emptyset)}^* 
\end{equation}
from Theorem~\ref{pinball2} (starting from the empty superpartition, the only superpartition $\Omega$ satisfying the conditions of the theorem is obviously the one consisting of a row with $\lambda_1$ squares followed by a circle).  

Now, let $\hat \lambda=(\lambda_1,\lambda_2,\dots,\lambda_{m-1})$ 
be the partition $\lambda$ without its
last part.  By induction, we simply need to show that 
\begin{equation}
 {s}_{(\hat \lambda;\emptyset)}^*\, \tilde h_{\lambda_m} ={s}_{\Lambda}^* 
\end{equation}
By Theorem \ref{pinball2}, this corresponds to showing that in 
$$ {s}_{(\hat \lambda; \emptyset)}^* \, \tilde h_{\lambda_m}=\sum_{\Omega}  (-1)^{\#(\Omega,\hat{\Lambda})}s_{\Omega}^*$$
the sum only contains the term ${s}_{\Lambda}^*$. Observe that this is precisely the superpartition 
obtained by placing the new circle in column $\lambda_{m}+1$ and having a new square in every column to the left of the new circle.

The new circle cannot be in a column $c>\lambda_m+1$ because there are not enough new squares to fill every column strictly to the left of 
column $c$. So the new circle must be in row $m$ seeing as $\lambda_m<\lambda_{m-1}$ (this is possible given
that there is no circle in row $m$ of $\hat \Lambda=(\hat \lambda;\emptyset)$).  

Suppose by contradiction that the new
circle is in column $c\le\lambda_m$.  Then there are new squares in every column strictly to the left of cell $c$. Since
$c\le\lambda_m$, there are some extra new squares that need to be placed to the right of cell $c$ in a horizontal strip. For each possible horizontal strip 
to the right of column $c$, the leftmost new square will replace an existing circle (since every row of $\hat \Lambda$ ends with a circle) and move it down.  But the resulting diagram cannot be a superpartition since
the highest such horizontal strip will move a circle down in a row already containing a circle given that the new circle is
in row $m$ and $\hat \Lambda$ has circles in every row. Therefore, the only resulting superpartition is $\Lambda$, and so we have
$$ {s}_{(\hat \lambda; \emptyset)}^*\, \tilde h_{\lambda_m}=  (-1)^{\#(\Lambda,\hat{\Lambda})}s_{\Lambda}^*$$
The result then follows since  $(-1)^{\#(\Lambda,\hat{\Lambda})}=1$  given that
the new circle is in the last row.

Applying the homomorphism $\omega$ on both sides of  $h_{\Lambda} = s_{\Lambda}^*$ we immediately get from
 Proposition~\ref{propdual} and \eqref{hedual} 
that $e_{\Lambda} = (-1)^{\binom{m}{2}}\bar s_{\Lambda'}$.
\end{proof}

\subsection{Pieri rules for $\bar s_\Lambda^*$ and $s_\Lambda$} \label{secpieri2}
This time, we express the Key polynomial appearing in \eqref{eqn:keyhat} as a sequence of operators
$\hat \pi_j$'s acting on a monomial.   
Letting $\hat \pi_{[a,b]}=\hat \pi_a \hat \pi_{a+1}\cdots \hat \pi_b$, for $a \leq b$ and $\hat \pi_{[a,b]}=1$ if $a>b$, we get
\begin{equation} \label{eq428}
\hat{K}_{(\Lambda^a)^R,\Lambda^s}(x)=\hat{\pi}_{\omega_m}\hat{\pi}_{[m, {\alpha_m-1}]}\cdots\hat{\pi}_{[1,{\alpha_1-1}]}x^{\Lambda^*}
\end{equation} where $\alpha_i$ is again the row of the $i$-th circle (starting from the top) in $\Lambda$.  Note that
it is understood that $\hat \pi_{[i,\alpha_i-1]}=1$ if $\alpha_i=i$.  We set, for $\alpha_1<\dots<\alpha_m$, 
\begin{equation}
 \mathcal{P}_{N,[\alpha_1,\dots,\alpha_m]}= \partial_{\omega_m} \pi_{\omega_{m^c}}
\hat \pi_{\omega_m}
\hat{\pi}_{[m, {\alpha_m-1}]}\cdots\hat{\pi}_{[1,{\alpha_1-1}]}
\end{equation}
which implies
 \begin{equation}\label{id:KP}
 \partial_{\omega_{m}} \pi_{\omega_{m^c}} \hat{K}_{(\Lambda^a)^R,\Lambda^s}(x)=\mathcal{P}_{N,[\alpha_1,\dots,\alpha_m]}x^{\Lambda^*}
 \end{equation}

\begin{theorem} \label{thm:es}
Let $\Lambda$ be a superpartition of fermionic degree $m$.  Then, for $\ell \geq 1$, we have respectively
\begin{equation} \label{eons}
   \bar s_{\Lambda}^* \, h_\ell = \sum_{\Omega} \bar s_{\Omega}^* \qquad {\rm and} \qquad  s_{\Lambda} \, e_\ell = \sum_{\Omega}   s_{\Omega}
\end{equation}
where the sum is over all superpartitions $\Omega$ of fermionic degree $m$ 
such that
\begin{itemize}
\item  $\Omega^*/\Lambda^*$ is a horizontal (resp. vertical) $\ell$-strip 
\item 
the $i$-th circles, starting from the first row and going down, 
of $\Omega$ and
$\Lambda$ are either in the same row or the same column.  In the
case where the $i$-th circles are in the same row $r$ (resp. column $c$), 
the circle in row $r$ (resp. column $c$ ) of $\Omega$ cannot be located passed row $r-1$ (resp. column $c-1$) of $\Lambda$
(if $r=1$ or $c=1$ the condition does not apply).
\end{itemize}
\end{theorem}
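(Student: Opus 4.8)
The plan is to run the argument of Theorem~\ref{thm:esbar} in the ``adjoint'' world, replacing the operator $\mathcal{R}_{N,[\alpha_1,\dots,\alpha_m]}$ by the operator $\mathcal{P}_{N,[\alpha_1,\dots,\alpha_m]}$ of \eqref{id:KP}. First I would collapse the two claimed identities into one using the involution $\omega$: applying $\omega$ to $s_\Lambda\, e_\ell=\sum_\Omega s_\Omega$ and using Proposition~\ref{propdual} together with \eqref{hedual} (so that $\omega(e_\ell)=h_\ell$ and $\omega\, s_\Lambda=(-1)^{\binom{m}{2}}\bar s_{\Lambda'}^*$) turns it into $\bar s_{\Lambda'}^*\, h_\ell=\sum_\Omega \bar s_{\Omega'}^*$. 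Since transposition interchanges horizontal and vertical strips and swaps rows with columns, the two sums are term-by-term transposes and it suffices to prove the vertical-strip rule for $s_\Lambda\, e_\ell$. By Lemma~\ref{lems} and \eqref{id:KP}, and because multiplication by the symmetric function $e_\ell$ preserves the fermionic degree so that the common prefactor $(-1)^{\binom{m}{2}}$ cancels, this is equivalent to the operator identity
\begin{equation}
e_\ell\,\mathcal{P}_{N,[\alpha_1,\dots,\alpha_m]}\,x^{\Lambda^*}=\sum_\Omega \mathcal{P}_{N,[\alpha_1',\dots,\alpha_m']}\,x^{\Omega^*},
\end{equation}
where $\alpha_i$ and $\alpha_i'$ denote the rows of the $i$-th circles of $\Lambda$ and $\Omega$.

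Next, since $e_\ell$ is symmetric it commutes past every divided-difference factor of $\mathcal{P}$, so the left-hand side equals $\mathcal{P}_{N,[\alpha_1,\dots,\alpha_m]}\,x^{\Lambda^*}e_\ell$, and I would expand $x^{\Lambda^*}e_\ell$ exactly as in \eqref{eqxe} (Lemma~\ref{lemapelx}) as a sum of terms $\boldsymbol{\pi}_{i_1,I_1}\cdots\boldsymbol{\pi}_{i_k,I_k}\,x^{\Lambda^*}x^{(i_1,I_1)}\cdots x^{(i_k,I_k)}$. The key reduction is then an operator identity for $\mathcal{P}$ parallel to $\mathcal{R}_{N,[\alpha]}\pi_j=\mathcal{R}_{N,[\alpha]}$ used in Theorem~\ref{thm:esbar}, namely
\begin{equation}
\mathcal{P}_{N,[\alpha_1,\dots,\alpha_m]}\,\pi_j=\mathcal{P}_{N,[\alpha_1,\dots,\alpha_m]}\qquad\text{whenever } j\neq\alpha_i-1 \text{ for all } i,
\end{equation}
which I would establish (as an appendix companion) by verifying the equivalent identity $\mathcal{P}_{N,[\alpha]}\,\hat\pi_j=0$, using the braid relations together with $\pi_{\omega_{m^c}}\hat\pi_j=0$, $\hat\pi_j^2=-\hat\pi_j$ and $\pi_i\hat\pi_i=0$. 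As in Theorem~\ref{thm:esbar}, the circles sit at the tops of the intervals $I_1,\dots,I_k$, so the indices $j=\alpha_i-1$ are exactly the interval ends, while each $\pi_r$ in a $\boldsymbol{\pi}_{s,I}$ with $I=[a,b]$ has $r\in[a,b-1]$; hence every $\boldsymbol{\pi}_{i_j,I_j}$ acts as the identity on $\mathcal{P}$ and we collapse to a sum over vertical $\ell$-strips:
\begin{equation}
e_\ell\,\mathcal{P}_{N,[\alpha_1,\dots,\alpha_m]}\,x^{\Lambda^*}=\sum_{\mu/\Lambda^*\text{ vertical }\ell\text{-strip}}\mathcal{P}_{N,[\alpha_1,\dots,\alpha_m]}\,x^{\mu}.
\end{equation}

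It then remains to analyze, for each vertical strip $\mu/\Lambda^*$, the single term $\mathcal{P}_{N,[\alpha_1,\dots,\alpha_m]}\,x^\mu$ and to show that it is either zero or equal to $\mathcal{P}_{N,[\alpha_1',\dots,\alpha_m']}\,x^\mu=\partial_{\omega_m}\pi_{\omega_{m^c}}\hat K_{(\Omega^a)^R,\Omega^s}(x)$ for a unique admissible $\Omega$ obtained by inserting circles into $\mu$ in rows $\alpha_1',\dots,\alpha_m'$. I expect this step to be the main obstacle. Because $\mathcal{P}$ is assembled from the adjoint operators $\hat\pi$ (together with $\partial_{\omega_m}$, $\pi_{\omega_{m^c}}$, and the relations $\hat\pi_i^2=-\hat\pi_i$, $\pi_i\hat\pi_i=0$), the way circles reposition and the criteria for vanishing differ from the $\mathcal{R}$ case and are exactly what produce the positional compatibility condition in the second bullet of the theorem. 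Concretely I would isolate two appendix lemmas for $\mathcal{P}$ playing the roles that Corollary~\ref{coroalphap} and Lemma~\ref{lemequal} play in Theorem~\ref{thm:esbar}: one that replaces an obstructed circle-row index $\alpha_i$ by an equivalent admissible index $\alpha_i'$ without altering $\mathcal{P}_{N,[\alpha]}\,x^\mu$, and one that forces $\mathcal{P}_{N,[\alpha]}\,x^\mu=0$ when two circle indices are driven to coincide. A case analysis on whether $\mu/\Lambda^*$ meets the row of each circle and whether that row remains an admissible corner of $\mu$ then pins down, for every surviving strip, the unique $\Omega$ satisfying the two bullets and discards exactly the vanishing terms, completing the proof.
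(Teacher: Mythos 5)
Your overall architecture (reduce to the $s_\Lambda\, e_\ell$ case via $\omega$ and Proposition~\ref{propdual}, pass to the bisymmetric identity through Lemma~\ref{lems} and \eqref{id:KP}, expand $x^{\Lambda^*}e_\ell$ by Lemma~\ref{lemapelx}) matches the paper's. But the step you single out as ``the key reduction'' is false, and the failure is not a repairable technicality. You claim $\mathcal{P}_{N,[\alpha_1,\dots,\alpha_m]}\pi_j=\mathcal{P}_{N,[\alpha_1,\dots,\alpha_m]}$ whenever $j\neq\alpha_i-1$. This fails at $j=\alpha_i$: since $\hat\pi_{[i,\alpha_i-1]}\pi_{\alpha_i}=\hat\pi_{[i,\alpha_i-1]}(1+\hat\pi_{\alpha_i})$, one gets (Lemma~\ref{lem:propP}) $\mathcal{P}_{N,[\dots,\alpha_i,\dots]}\pi_{\alpha_i}=\mathcal{P}_{N,[\dots,\alpha_i,\dots]}+\mathcal{P}_{N,[\dots,\alpha_i+1,\dots]}$, so in particular $\mathcal{P}_{N,[\alpha]}\hat\pi_{\alpha_i}=\mathcal{P}_{N,[\dots,\alpha_i+1,\dots]}\neq 0$, contrary to the vanishing you propose to verify. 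The feature you imported from Theorem~\ref{thm:esbar} --- that the dangerous indices are the interval ends $\alpha_i-1$, which the operators $\boldsymbol{\pi}_{i_j,I_j}$ avoid --- is precisely what changes here: for $\mathcal{P}$ the dangerous index is $\alpha_i$ itself, and $\pi_{\alpha_i}$ is the first letter of $\boldsymbol{\pi}_{i_j,I_j}$ whenever $I_j$ starts at a circle row. Such a factor therefore contributes \emph{two} terms, $\mathcal{P}_{N,[\dots,\alpha_i,\dots]}+\mathcal{P}_{N,[\dots,\alpha_i+i_j,\dots]}$: the circle either stays in its row or slides down its column.

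This branching is not incidental; it is the combinatorial source of the ``same row or same column'' alternative in the second bullet of the theorem. Your collapse to a single term $\mathcal{P}_{N,[\alpha]}x^\mu$ per strip would force at most one $\Omega$ per strip, as in Theorem~\ref{thm:esbar}, which contradicts the statement being proved: already in the paper's displayed expansion of $\bar s_{(4,1;5,4)}^*\,h_3$ (the transposed, horizontal-strip side of the same rule) the terms $\bar s_{(4,1;5,5,2)}^*$ and $\bar s_{(4,2;5,5,1)}^*$ share the same $\Omega^*=(5,5,4,2,1)$. The correct continuation --- the one the paper follows --- keeps both branches, notes that $\boldsymbol{\pi}_{i_j,I_j}=1$ when $i_j=|I_j|$ (so a circle cannot slide past the bottom of its block), and kills the configurations where a circle would be pushed right with no addable corner by combining $x^{\Omega^*}=\pi_{\alpha_i-1}x^{\Omega^*}$ with $\mathcal{P}_{N,[\alpha]}\pi_{\alpha_i-1}=0$; no analogue of Corollary~\ref{coroalphap} (relocating an obstructed circle index) is needed, or true, in this setting.
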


We illustrate the rules by giving the expansion of
$ \bar s_{(4,1;5,4)}^* \, h_3$.

\medskip

{\Tiny

\begin{tabular}{rrrrrrrr} 
 $\left(~{\tableau[scY]{&&&&\\&&&&\bl\tcercle{}\\&&& \\ &\bl\tcercle{}   }}~\right)$ &
   $\left( ~{\tableau[scY]{\blacksquare& \blacksquare& \blacksquare}}~\right)=$&
   
  ${\tableau[scY]{&&&& & \blacksquare & \blacksquare & \blacksquare \\&&&&\bl\tcercle{}\\&&& \\ &\bl\tcercle{}   }}$&
    $+~ {\tableau[scY]{&&&& & \blacksquare & \blacksquare  \\&&&& \blacksquare \\&&&&  \bl\tcercle{} \\ &\bl\tcercle{}   }}$&
    $+~ {\tableau[scY]{&&&& & \blacksquare & \blacksquare  \\&&&&  \bl\tcercle{} \\&&& \\ &  \blacksquare &\bl\tcercle{}   }}$&
      $+~ {\tableau[scY]{&&&& & \blacksquare & \blacksquare  \\&&&&  \bl\tcercle{} \\&&& \\  &\bl\tcercle{}   \\  \blacksquare}}$&
    $+~ {\tableau[scY]{&&&& & \blacksquare  \\&&&& \blacksquare \\&&&&  \bl\tcercle{} \\  & \blacksquare &\bl\tcercle{}   }}$&
\\
\\
  $+~ {\tableau[scY]{&&&& & \blacksquare  \\&&&& \blacksquare \\&&&&  \bl\tcercle{} \\   &\bl\tcercle{}  \\  \blacksquare }}$&
  $+~ {\tableau[scY]{&&&& & \blacksquare  \\&&& & \bl\tcercle{} \\&&& \\   &    \blacksquare & \blacksquare & \bl\tcercle{}  }}$&
  $+~ {\tableau[scY]{&&&& & \blacksquare  \\&&& &  \bl\tcercle{}  \\&&&\\  & \blacksquare &\bl\tcercle{} \\ \blacksquare  }}$&
 $+~ {\tableau[scY]{&&&& & \blacksquare  \\&&& &  \bl\tcercle{}  \\&&&\\  & \blacksquare   \\ \blacksquare  &\bl\tcercle{} }}$&
 $+~ {\tableau[scY]{&&&&  \\&&&& \blacksquare \\&&&&  \bl\tcercle{} \\ & \blacksquare & \blacksquare  &\bl\tcercle{}   }}$&
    $+~ {\tableau[scY]{&&&&  \\&&& &  \bl\tcercle{}  \\&&&\\  & \blacksquare  & \blacksquare &\bl\tcercle{} \\ \blacksquare  }}$&
   $+~ {\tableau[scY]{&&&&   \\&&& &  \bl\tcercle{}  \\&&&\\  & \blacksquare & \blacksquare  \\ \blacksquare  &\bl\tcercle{} }}$&\\
   \\
   $+~{\tableau[scY]{&&&& \\ &&&&\blacksquare \\ &&&&\bl\tcercle{} \\ &\blacksquare \\ \blacksquare&\bl\tcercle{} \\  }}$&
   $+~{\tableau[scY]{&&&& \\ &&&&\blacksquare \\ &&&&\bl\tcercle{} \\ &\blacksquare&\bl\tcercle{} \\ \blacksquare \\  }}$
  
\end{tabular}}

\begin{align}
 \bar s_{(4,1;5,4)}^* \, h_3=& \, \bar s_{(4,1;8,4)}^*+\bar s_{(4,1;7,5)}^*+ \bar s_{(4,2;7,4)}^*+  \bar s_{(4,1;7,4,1)}^* +\bar s_{(4,2;6,5)}^* +\bar  s_{(4,1;6,5,1)}^*   \nonumber \\
& \qquad +\bar s_{(4,3;6,4)}^* + \bar s_{(4,2;6,4,1)}^*+
 \bar s_{(4,1;6,4,2)}^* +\bar s_{(4,3;5,5)}^* +\bar s_{(4,3;5,4,1)}^*+\bar s_{(4,1;5,4,3)}^* \nonumber\\
 & \qquad +\bar s_{(4,1;5,5,2)}^* + \bar s_{(4,2;5,5,1)}^* \nonumber
 \end{align}
 
Notice here that the circle can be pushed down if there is room or pushed to the right. It cannot be pushed to the right farther than the
original row above it.

\begin{proof}
Applying the involution $\omega$ on $s_\Lambda \, e_\ell$, we deduce
from Proposition~\ref{propdual} that the superpartitions $\Omega$ appearing in the expansion of
$s_\Lambda \, e_\ell$ are simply the transposed of those appearing in the expansion of
$\bar s_\Lambda^* \, h_\ell$ (we used a similar argument in the proof of Theorem~\ref{thm:esbar}).  
It thus suffices to prove  the rule for the multiplication of $s_\Lambda$ by $e_\ell$.
{F}rom Lemma~\ref{lems} and the commutativity between $s_\Lambda$ and $e_\ell$, 
this is equivalent to proving the statement:
\begin{equation}\label{eqn:thmkeyhateqn2}
 e_\ell \, \partial_{\omega_{m}} \pi_{\omega_{m^c}} \hat{K}_{(\Lambda^a)^R,\Lambda^s}(x)= \sum_{\Omega} \partial_{\omega_{m}}\pi_{\omega_{m^c}} \hat{K}_{(\Omega^a)^R,\Omega^s}(x)
\end{equation}
where the sum is over the superpartitions $\Omega$ described in the statement
of the theorem.  Using (\ref{id:KP}) and the commutativity of $e_\ell$ with
the divided-difference operators,  
we can rewrite the left side of (\ref{eqn:thmkeyhateqn2}) as
\begin{equation}
e_\ell\, \partial_{\omega_{m}} \pi_{\omega_{m^c}}  \hat{K}_{(\Lambda^a)^R,\Lambda^s}(x)=\mathcal{P}_{N,[\alpha_1,\dots,\alpha_m]}x^{\Lambda^*}e_\ell
\end{equation}
Recall expansion \eqref{eqxe}
$$x^{\Lambda^*} e_\ell=
\sum_{i_1+i_2+\dots+i_k=\ell}
\boldsymbol{\pi}_{i_1,I_1}  \boldsymbol{\pi}_{i_2,I_2} \cdots \boldsymbol{\pi}_{i_k,I_k} 
x^{\Lambda^*} x^{(i_1,I_1)}x^{(i_2,I_2)}\cdots x^{(i_k,I_k)}$$
which gives
 \begin{equation}\label{eqn:eqnel}
e_\ell \, \partial_{\omega_{m}} \pi_{\omega_{m^c}} \hat{K}_{(\Lambda^a)^R,\Lambda^s}(x)= \sum_{i_1+i_2+\ldots+i_k=\ell} \mathcal P_{N,[\alpha_1,\ldots,\alpha_m]}
\left(\prod_{j=1}^k\boldsymbol{\pi}_{i_j,I_j}x^{(i_j,I_j)} \right) x^{\Lambda^*}
\end{equation}
Observe that the summation corresponds to all possible ways to add a 
vertical $\ell$-strip to $\Lambda^*$. Now, we apply the product of $\boldsymbol{\pi}_{i_j,I_j}$'s to $\mathcal{P}_{N,[\alpha_1,\dots,\alpha_m]}$ one at a time according to the following rules (see Lemma~\ref{lem:propP} in the Appendix): 
\begin{enumerate}
\item
$\mathcal P_{N,[\alpha_1,\ldots,\alpha_m]}\boldsymbol{\pi}_{i_j,I_j}=
\mathcal P_{N,[\alpha_1,\ldots,\alpha_i,\ldots,\alpha_m]}+\mathcal P_{N,[\alpha_1,\ldots,\alpha_i+i_j,\ldots,\alpha_m]}$
if the interval $I_j$ starts with $\alpha_i \in \{\alpha_1,\dots,\alpha_m \}$.\\
\item
$\mathcal P_{N,[\alpha_1,\ldots,\alpha_m]}\boldsymbol{\pi}_{i_j,I_j}= \mathcal P_{N,[\alpha_1,\ldots,\alpha_m]}$
if the interval $I_j$ starts with $\beta \notin \{\alpha_1,\dots,\alpha_m \}$.
\\
\end{enumerate}
Since the $\boldsymbol{\pi}_{i_j,I_j}$'s commute with each other, by the second rule above
we can get rid 
of the factors $\boldsymbol{\pi}_{i_j,I_j}$ such that
$I_j$ start with $\beta \notin \{\alpha_1,\dots,\alpha_m\}$.  Hence
\eqref{eqn:eqnel} becomes
 \begin{equation}\label{eqn:eqnel2}
e_\ell \, \partial_{\omega_{m}} \pi_{\omega_{m^c}} \hat{K}_{(\Lambda^a)^R,\Lambda^s}(x)= \sum_{i_1+i_2+\ldots+i_k=\ell} \mathcal P_{N,[\alpha_1,\ldots,\alpha_m]}
\left( \prod_{\substack{1\le j \le k\\ I_j \text{ starts with }\alpha_i}}\boldsymbol{\pi}_{i_j,I_j}
x^{(i_j,I_j)} \right) x^{\Lambda^*}
\end{equation}
Now we interpret the right side of equation (\ref{eqn:eqnel}) and show that it 
corresponds to the left hand side of \eqref{eqn:thmkeyhateqn2}.
First of all, the sum in (\ref{eqn:eqnel}) is over vertical strips so the first point in the characterization of the superpartitions $\Omega$'s 
in Theorem~\ref{thm:es} is satisfied.
If $I_j$ starts with $\alpha_i$ then
$x^{\Lambda^*}$ is multiplied by $x^{(i_j,I_j)}$, which means that cells
are added in rows $\alpha_i,\alpha_i+1,\dots,\alpha_i+i_j-1$ of $\Lambda^*$. 
The rule for 
$\mathcal{P}_{N,[\alpha_1,\ldots,\alpha_m]}\boldsymbol{\pi}_{i_j,I_j}$ given above
results in this case in two 
terms.  In the first one, the circle remains in row $\alpha_i$ while in the second the circle is moved from row $\alpha_i$ to $\alpha_i+i_j$ (thus sliding down in its column).  Observe that if 
$i_j = |I_j|$, then $\boldsymbol{\pi}_{i_j,I_j}=1$ and the 
second term does not 
occur, meaning that a circle cannot slide down a column past the end of its
block. 
Furthermore, if $\Omega^*_{\alpha_i}=\Omega^*_{\alpha_i-1}$ for some $\alpha_i$ then by Lemma~\ref{lem:propP} of the Appendix,
\begin{equation}
\mathcal{P}_{N,[\alpha_1,\ldots,\alpha_i,\ldots,\alpha_m]}x^{\Omega^*} =\mathcal{P}_{N,[\alpha_1,\ldots,\alpha_i,\ldots,\alpha_m]}\pi_{\alpha_i-1}x^{\Omega^*}=0
\end{equation}
In other words, if there is no addable corner in row $\Omega_{\alpha_i}$ then
the circle in that row cannot be pushed one cell to  its right.  Hence the resulting diagrams have to be superpartitions
obeying the conditions of the theorem and the proof is complete.
\end{proof}

\begin{theorem} \label{thm:ets}
Let $\Lambda$ be a superpartition of fermionic degree $m$.  Then, for $\ell \geq 0$, we have respectively
\begin{equation} \label{etons}
    \bar s_{\Lambda}^* \, \tilde h_\ell = \sum_{\Omega} (-1)^{\#(\Omega,\Lambda)} \bar s_{\Omega}^* \qquad {\rm and} \qquad  s_{\Lambda} \, \tilde e_\ell = \sum_{\Omega} (-1)^{\#(\Omega,\Lambda)} s_{\Omega}
\end{equation}
where the sum is over all superpartitions $\Omega$ of fermionic degree 
$m+1$ 
such that
\begin{itemize}
\item  
$\Omega^{\circledast}/\Lambda^\circledast$ is a horizontal (resp. vertical) $(\ell+1)$-strip
whose rightmost (resp. lowermost) cell (the new circle) belongs to $\Omega^{\circledast}/\Omega^{*}$. 
\item Let $\tilde \Omega$ be $\Omega$ without its new circle. Then
the $i$-th circles, starting from the first row and going down,
of $\tilde \Omega$ and
$\Lambda$ are either in the same row or the same column.  In the
case where the $i$-th circles are in the same row $r$ (resp. column $c$), the circle
in row $r$ (column $c$) of $\Omega$ cannot be located passed row $r-1$ (resp. column $c-1$) of $\Lambda$
(if $r=1$ or $c=1$ the condition does not apply).
\end{itemize}
and where ${\#}(\Omega, \Lambda)$ is again the number of circles in $\Omega$
below the new circle.
\end{theorem}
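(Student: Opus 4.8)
The plan is to treat Theorem~\ref{thm:ets} exactly as Theorem~\ref{pinball2} was treated, but in the ``hat'' world of Lemmas~\ref{lems} and \ref{lemets} (the operators $\mathcal{P}_{N,[\dots]}$ assembled from the $\hat\pi_j$) rather than the world of Lemmas~\ref{lemsb} and \ref{lemetsb} (the operators $\mathcal{R}_{N,[\dots]}$): Theorem~\ref{thm:ets} is to Theorem~\ref{thm:es} what Theorem~\ref{pinball2} is to Theorem~\ref{thm:esbar}, the only new feature being the insertion of one extra circle coming from the fermionic factor $\tilde e_\ell$. First I would collapse the two identities of \eqref{etons} to one. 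Applying $\omega$ and using Proposition~\ref{propdual}, \eqref{hedual} and $\omega(\tilde h_\ell)=\tilde e_\ell$ sends the expansion of $\bar s_\Lambda^*\,\tilde h_\ell$ to that of $s_{\Lambda'}\,\tilde e_\ell$, with transposed superpartitions; the signs agree because $(-1)^{\#(\Omega,\Lambda)+m}=(-1)^{\#(\Omega',\Lambda')}$, exactly as noted in the proof of Theorem~\ref{pinball2}. Thus it suffices to prove the rule for $s_\Lambda\,\tilde e_\ell$. Using the anticommutation $\tilde e_\ell\,s_\Lambda=(-1)^m s_\Lambda\,\tilde e_\ell$ (the fermionic degrees are $1$ and $m$), the sign becomes $(-1)^{\#(\Omega,\Lambda)'}$, where $\#(\Omega,\Lambda)'$ counts the circles of $\Omega$ \emph{above} the new one, and Lemma~\ref{lemets} applies to the left-hand side. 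Combining it with Lemma~\ref{lems} and \eqref{id:KP} for $\Omega$ of fermionic degree $m+1$ and cancelling the common factor $(-1)^{\binom{m+1}{2}}$, the theorem reduces to the operator identity
\begin{equation*}
\partial_{\omega_{m+1}}\, e_\ell^{(m+1)}\, \pi_{\omega_{m^c}}\, \hat K_{(\Lambda^a)^R,\Lambda^s}(x)=\sum_{\Omega} (-1)^{\#(\Omega,\Lambda)'}\, \mathcal{P}_{N,[\alpha_1',\dots,\alpha_{m+1}']}\, x^{\Omega^*},
\end{equation*}
the sum being over the $\Omega$ described in the statement.

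Next I would expand the left-hand side. Writing $\pi_{\omega_{m^c}}\hat K_{(\Lambda^a)^R,\Lambda^s}(x)$ through \eqref{eq428} as the tail of the degree-$m$ operator $\mathcal{P}_{N,[\alpha_1,\dots,\alpha_m]}$ (that is, $\mathcal{P}$ without its leading $\partial_{\omega_m}$) acting on $x^{\Lambda^*}$, the factor $e_\ell^{(m+1)}$ plays the role $e_\ell^{(N-m)}$ played in Theorem~\ref{pinball2}. The key step is the $\mathcal{P}$-analogue of Corollary~\ref{corobigexpan}, which I would establish in the appendix: it rewrites $\partial_{\omega_{m+1}} e_\ell^{(m+1)}\mathcal{P}_{N,[\alpha_1,\dots,\alpha_m]}$ as a signed sum, over the admissible insertion rows $r\notin\{\alpha_1,\dots,\alpha_m\}$ of a new circle, of degree-$(m+1)$ operators $(-1)^{\mathrm{pos}(r)}\,\partial_{\omega_{m+1}}\mathcal{P}_{N,[\alpha_1,\dots,r,\dots,\alpha_m]}$ times a residual elementary factor and a monomial shift, where the sign $(-1)^{\mathrm{pos}(r)}$ equals $(-1)^{\#(\Omega,\Lambda)'}$. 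One must check that this insertion forces the new circle to be the lowermost cell of the eventual $(\ell+1)$-strip and to lie in $\Omega^{\circledast}/\Omega^{*}$, matching part~(1) of the theorem.

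Finally I would treat the residual factor and the old circles as in the proof of Theorem~\ref{thm:es}: expand the remaining elementary symmetric factor by \eqref{eqxe} into products of $\boldsymbol{\pi}_{i_j,I_j}$ and push them through $\mathcal{P}_{N,[\dots,r,\dots]}$ one block at a time using the two rules of Lemma~\ref{lem:propP}. A block starting at a circle row splits into a ``stay'' term and a ``slide down the column'' term, a block starting at a non-circle row acts as the identity, and the bound that a circle cannot be pushed past an occupied neighbour comes, as in Theorem~\ref{thm:es}, from the vanishing $\mathcal{P}_{N,[\dots]}\pi_{\alpha_i-1}x^{\Omega^*}=0$ when there is no addable corner. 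This reproduces part~(2) of the theorem (each surviving old circle is fixed or moved one column to the right, subject to the ``not past column $c-1$ of $\Lambda$'' bound) and, together with the previous paragraph, yields exactly the superpartitions $\Omega$ of the statement with the correct sign.

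The hard part will be the $\mathcal{P}$-analogue of Corollary~\ref{corobigexpan}: controlling how $e_\ell^{(m+1)}$ commutes through the adjoint operators $\hat\pi_j$, $\hat\pi_{\omega_m}$ and then $\partial_{\omega_{m+1}}$ to produce a \emph{single} new circle in the correct position (lowermost cell, its own column empty, all columns to its left filled) with the correct sign $(-1)^{\mathrm{pos}(r)}$. A secondary difficulty is verifying that this insertion does not interfere with the sliding mechanism of Lemma~\ref{lem:propP}: when the insertion row $r$ competes with a row an old circle would slide into, the offending terms must vanish (the analogue of case~(3) of Theorem~\ref{thm:esbar} and of Lemma~\ref{lemequal}), so that no spurious or doubly-counted superpartitions survive.
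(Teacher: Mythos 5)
Your proposal follows essentially the same route as the paper's proof: the same reduction via $\omega$ and anticommutation to the $s_\Lambda\,\tilde e_\ell$ case with sign $(-1)^{\#(\Omega,\Lambda)'}$, the same key operator identity (your ``$\mathcal{P}$-analogue of Corollary~\ref{corobigexpan}'' is precisely the content of Lemmas~\ref{lem:elm+1} and \ref{lemfinal} in the appendix, which yield $\sum_{r}(-1)^{\mathrm{pos}(r)}\mathcal{P}_{N,[\alpha_1,\dots,r,\dots,\alpha_m]}e_\ell^{\{1,\dots,r-1\}}x^{\Lambda^*}$ — note the residual factor keeps degree $\ell$ restricted to the first $r-1$ variables rather than carrying a monomial shift as in the $\mathcal{R}$ case), and the same final interpretation via \eqref{eqxe} and Lemma~\ref{lem:propP}. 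The difficulties you flag are exactly the ones the paper resolves in the appendix, so the plan is sound.
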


We illustrate this time the rules by giving the expansion of
$\bar s_{(4,1;5,4)}^* \, \tilde h_2$.

\medskip

{\Tiny

\begin{tabular}{rrrrrrrr} 
 $\left(~{\tableau[scY]{&&&&\\&&&&\bl\tcercle{}\\&&& \\ &\bl\tcercle{}   }}~\right)$&
   $\left( ~{\tableau[scY]{\blacksquare& \blacksquare& \bl\tcercle{$\bullet$} }}~\right)=$&
   
  ${\tableau[scY]{&&&& & \blacksquare & \blacksquare & \bl\tcercle{$\bullet$} \\&&&&\bl\tcercle{}\\&&& \\ &\bl\tcercle{}   }}$&
    $+~ {\tableau[scY]{&&&& & \blacksquare & \bl\tcercle{$\bullet$} \\&&&& \blacksquare \\&&&&  \bl\tcercle{} \\ &\bl\tcercle{}   }}$&
    $+~ {\tableau[scY]{&&&& & \blacksquare & \bl\tcercle{$\bullet$}  \\&&&&  \bl\tcercle{} \\&&& \\ &  \blacksquare &\bl\tcercle{}   }}$&
        $+~ {\tableau[scY]{&&&& & \blacksquare &  \bl\tcercle{$\bullet$}  \\&&&&  \bl\tcercle{} \\&&& \\  &\bl\tcercle{}   \\  \blacksquare}}$&
    $+~ {\tableau[scY]{&&&& &  \bl\tcercle{$\bullet$}  \\&&&& \blacksquare \\&&&&  \bl\tcercle{} \\  & \blacksquare &\bl\tcercle{}   }}$&
\\
\\
  $+~ {\tableau[scY]{&&&& &  \bl\tcercle{$\bullet$}  \\&&&& \blacksquare \\&&&&  \bl\tcercle{} \\   &\bl\tcercle{}  \\  \blacksquare }}$&
  $+~ {\tableau[scY]{&&&& &\bl\tcercle{$\bullet$} \\&&& & \bl\tcercle{} \\&&& \\   &    \blacksquare & \blacksquare & \bl\tcercle{}  }}$&
  $+~ {\tableau[scY]{&&&& &  \bl\tcercle{$\bullet$}  \\&&& &  \bl\tcercle{}  \\&&&\\  & \blacksquare &\bl\tcercle{} \\ \blacksquare  }}$&
 $+~ {\tableau[scY]{&&&& & \bl\tcercle{$\bullet$}  \\&&& &  \bl\tcercle{}  \\&&&\\  & \blacksquare   \\ \blacksquare  &\bl\tcercle{} }}$&
 $+~ \xcancel{\tableau[scY]{&&&&  \\&&&& \bl\tcercle{$\bullet$} \\&&&&  \bl\tcercle{} \\ & \blacksquare & \blacksquare  &\bl\tcercle{}   }}$&
    $+~ \xcancel{\tableau[scY]{&&&&  \\&&& &  \bl\tcercle{}  \\&&&\\   & \blacksquare & \bl\tcercle{$\bullet$} &\bl\tcercle{} \\ \blacksquare  }}$&
   $-~ {\tableau[scY]{&&&&   \\&&& &  \bl\tcercle{}  \\&&&\\  & \blacksquare &  \bl\tcercle{$\bullet$}   \\ \blacksquare  &\bl\tcercle{} }}$&
  
\end{tabular}}

\begin{align}
 \bar s_{(4,1;5,4)}^* \, \tilde h_3=& \,   \bar s_{(7,4,1;4)}^*+ \bar s_{(6,4,1;5)}^*+ \bar s_{(6,4,2;4)}^*+  
  \bar s_{(6,4,1;4,1)}^* +  \bar s_{(5,4,2;5)}^* +  \bar s_{(5,4,1;5,1)}^* \nonumber \\
& \qquad + \bar  s_{(5,4,3;4)}^* + \bar s_{(5,4,2;4,1)}^* +
  \bar s_{(5,4,1;4,2)}^* -  \bar s_{(4,2,1;5,4)}^* \nonumber
 \end{align}

To generate these superpartitions, start with the expansion 
of $\bar s_{(4,1;5,4)}^* \, h_3$ given
previously and replace the new box that is farthest to the right with the new circle. Discard every diagram that has two circles in the same row or column.

\begin{proof}  We first use an argument similar to the one given at the beginning of the proof of Theorem~\ref{pinball2}.
 Applying the involution $\omega$, we obtain
from Proposition~\ref{propdual} that
\begin{equation}
s_\Lambda \, \tilde e_\ell = \sum_\Omega (-1)^{\#(\Omega,\Lambda)} s_{\Omega} \quad \iff \quad 
\bar s_{\Lambda'}^* \, \tilde h_\ell = (-1)^m \sum_\Omega (-1)^{\#(\Omega,\Lambda)} \bar s_{\Omega'}^* 
\end{equation}
The superpartitions $\Omega$ appearing in the expansion of
$s_\Lambda \, \tilde e_\ell$ are thus simply the transposed of those appearing in the expansion of
$\bar s_\Lambda^* \, \tilde h_\ell$.   Observe that $(-1)^{\#(\Omega,\Lambda)+m}=(-1)^{\#(\Omega',\Lambda')}$, which means
that the sign associated to $\Omega$ is the same in both formulas of \eqref{etons}.
It thus suffices to prove  the rule for the multiplication of $s_\Lambda$ by $\tilde e_\ell$.
Using $\tilde e_\ell \, s_{\Lambda}= (-1)^m s_{\Lambda} \, \tilde e_\ell$, this is equivalent to proving that 
\begin{equation}
\tilde e_\ell \, s_{\Lambda} =  \sum_{\Omega} (-1)^{\#(\Omega,\Lambda)'} s_{\Omega}
\end{equation}
where we recall that $\#(\Omega,\Lambda)'$ is equal to the number of circles in 
$\Omega$ {\it above} the
new circle. From Lemmas~\ref{lems} and \ref{lemets}, this corresponds in the bisymmetric world  $\mathbb Q [x_1,\dots,x_N]^{S_{m+1} \times S_{(m+1)^c}}$ to proving
\begin{equation}\label{eqn:etils}
 \partial_{\omega_{m+1}} e_\ell^{(m+1)}\pi_{\omega_{m^c}} \hat K_{(\Lambda^a)^R
,\Lambda^s}(x)=
\sum_{\Omega}(-1)^{\#(\Omega,\Lambda)'}\partial_{\omega_{m+1}}\pi_{\omega_{(m+1)^c}} \hat K_{(\Omega^a)^R,\Omega^s}(x)
\end{equation}
Using  $\pi_{\omega_{m^c}}=\pi_{\omega_{(m+1)^c}}\pi_{[m+1,N-1]}$ and commuting
$e_\ell^{(m+1)}$ with $\pi_{\omega_{(m+1)^c}}$, the left hand side 
of (\ref{eqn:etils}) becomes
\begin{equation}\label{eqn:etils2} 
\partial_{\omega_{m+1}} \pi_{\omega_{(m+1)^c}}e_\ell^{(m+1)}\pi_{[m+1,N-1]} \hat K_{(\Lambda^a)^R,\Lambda^s}(x)
\end{equation}
Lemma~\ref{lem:elm+1} of the Appendix states that
\begin{equation} 
\partial_{\omega_{m+1}}\pi_{\omega_{(m+1)^c}}e_\ell^{(m+1)}\pi_{[m+1,N-1]}= \partial_{\omega_{m+1}}\pi_{\omega_{(m+1)^c}}\sum_{i=1}^{N-m}\hat{\pi}_{[m+1,m+i-1]}e_\ell^{\{1,\dots, m+i-1\}}
\end{equation}
where $e_\ell^{\{1,\dots, m+i-1\}}$ corresponds to $e_\ell(x_1,\dots,x_{m+i-1})$.  We then use
\eqref{eq428} to transform (\ref{eqn:etils2}) into
\begin{equation}
\partial_{\omega_{m+1}} \pi_{\omega_{(m+1)^c}}
\sum_{i=1}^{N-m}\hat{\pi}_{[{m+1},{m+i-1}]}e_\ell^{\{1,\dots, m+i-1\}}
\hat{\pi}_{\omega_m}\hat{\pi}_{[m,{\alpha_m-1}]}\ldots\hat{\pi}_{[1,{\alpha_1-1}]}x^{\Lambda^*}
\end{equation}
As shown in Lemma~\ref{lemfinal} of the Appendix, this expression can be further reduced to
\begin{equation}
\sum_{r\not \in \{\alpha_1,\dots,\alpha_m \}}(-1)^{{\rm pos}(r)}\mathcal P_{N,[\alpha_1,\dots,r,\dots,\alpha_m]}
e_\ell^{\{1,\dots,r-1\}}x^{\Lambda^*}
\end{equation}
where $\text{pos}(r)$ is again the number of entries of $[\alpha_1,\ldots,r,\ldots, \alpha_m]$ that
 are less than $r$.  As in \eqref{eqxe}, we use the identity
$$e_\ell^{\{1,\dots,r-1\}}x^{\Lambda^*}=\sum_{i_1+i_2+\ldots+i_k=\ell}
\left( \prod_{j=1}^k\boldsymbol{\pi}_{i_j,I_j} x^{(i_j,I_j)} \right) x^{\Lambda^*}$$
where the $I_j$'s are this time the blocks of the subpartition $(\Lambda_1^*,\dots,\Lambda_{r-1}^*)$ given that $e_\ell^{\{1,\dots,r-1\}}$ only involves the first $r-1$ variables.  Using a similar argument to the one that led to (\ref{eqn:eqnel2}), we then get
 \begin{eqnarray}\label{eqn:eqnel3}
\sum_{r\not \in \{\alpha_1,\dots,\alpha_m \}}& \displaystyle{\sum_{i_1+i_2+\ldots+i_k=\ell}(-1)^{{\rm pos}(r)}\mathcal P_{N,[\alpha_1,\ldots,r,\ldots,\alpha_m]}
\left( \prod_{\substack{1\le j \le k\\ I_j \text{ starts with }\alpha_i}} \boldsymbol{\pi}_{i_j,I_j}  x^{(i_j,I_j)}\right)}
 x^{\Lambda^*}\nonumber\\
 &=
\displaystyle{ \sum_{r\not \in \{\alpha_1,\dots,\alpha_m \}}\sum_{\mu}(-1)^{{\rm pos}(r)}
\mathcal P_{N,[\alpha_1,\ldots,r,\ldots,\alpha_m]}
\hspace{-1.1em}\prod_{\substack{1\le j \le k\\ I_j \text{ starts with }\alpha_i}}\hspace{-1.1em}\boldsymbol{\pi}_{i_j,I_j}\quad x^\mu}
\end{eqnarray}
where the sum is over all $\mu$'s such that $\mu/\Lambda^*$
is a vertical $\ell$-strip whose cells
are all in rows strictly less than $r$.

The right hand side of the previous equation is very reminiscent of 
(\ref{eqn:eqnel2}), whence its interpretation in the language of Key polynomials will be very similar. The first sum goes through all possibilities
of where the new circle could be. Note that if there is no addable corner in row $r$ of $\mu$ then $\mu_{r-1}=\mu_r$, which implies that  $x^\mu=\pi_{r-1}x^{\mu}$.  Given that by Lemma~\ref{lem:propP} {\it (4)} and {\it (5)} of the Appendix there will still be a circle in row $r$ after  the
operators $\boldsymbol{\pi}_{i_j,I_j}$'s have been applied to
$\mathcal P_{N,[\alpha_1,\ldots,r,\ldots,\alpha_m]}$ (recall that $\boldsymbol{\pi}_{i_j,I_j}$ only acts on the rows above row $r$), the ensuing action
of $\pi_{r-1}$ will annihilate it by Lemma~\ref{lem:propP} {\it (2)} of the Appendix.  
Thus 
the new circle in row $r$ occupies
an addable corner of $\mu$.  We also have that the new cells
are added only in rows strictly less than $r$, that is, strictly above the new
circle. Finally,
 ${\rm pos}(r)$ is exactly ${\#(\Omega,\Lambda)'}$ and so we have 
\begin{equation}
\partial_{\omega_{m+1}} e_\ell^{(m+1)}\pi_{\omega_{m^c}} \hat K_{(\Lambda^a)^R
,\Lambda^s}(x)=
\sum_{\Omega}(-1)^{\#(\Omega,\Lambda)'}\partial_{\omega_{m+1}}\pi_{\omega_{(m+1)^c}} \hat K_{(\Omega^a)^R,\Omega^s}(x)
\end{equation}
where the sum over $\Omega$ runs over the set of superpartitions
stated in the theorem.
\end{proof}

The following corollary shows that a product of fermionic symmetric functions $\tilde e_\ell$ or $\tilde h_\ell$
corresponds to a single
Schur function in superspace (although distinct from that appearing in Corollary~\ref{coroesb}).
\begin{corollary}  \label{coroepieri}
We have
$e_{(\Lambda^a;  \emptyset)} = (-1)^{\binom{m}{2}}s_{(\Lambda^a; \emptyset)'}$, and $h_{(\Lambda^a;  \emptyset)}=\bar s_{(\Lambda^a;  \emptyset)}^*$. 
In particular, $\tilde e_\ell = s_{(0;1^\ell)}$ and $\tilde h_\ell= \bar s_{(\ell;\emptyset)}^*$.

\end{corollary}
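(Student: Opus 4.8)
The plan is to follow the proof of Corollary~\ref{coroesb} in its structure, but to drive the induction with the Pieri rule of Theorem~\ref{thm:ets} in place of Theorem~\ref{pinball2}. Writing $\lambda=\Lambda^a$ and $\Lambda=(\lambda;\emptyset)$, the whole corollary reduces to the single identity $h_{(\lambda;\emptyset)}=\bar s_{(\lambda;\emptyset)}^*$. Once this is known, applying the involution $\omega$ and using $\omega(h_\Lambda)=e_\Lambda$ from \eqref{hedual} together with $\bar s_\Lambda^*=(-1)^{\binom{m}{2}}\omega s_{\Lambda'}$ from Proposition~\ref{propdual} gives $e_{(\lambda;\emptyset)}=(-1)^{\binom{m}{2}}s_{(\lambda;\emptyset)'}$; the two ``in particular'' statements are then the cases $\lambda=(\ell)$, after checking that $(\ell;\emptyset)'=(0;1^\ell)$.

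I would prove $h_{(\lambda;\emptyset)}=\bar s_{(\lambda;\emptyset)}^*$ by induction on the fermionic degree $m$. The base case $m=1$ reads $\tilde h_{\lambda_1}=\bar s_{(\emptyset;\emptyset)}^*\,\tilde h_{\lambda_1}$, and Theorem~\ref{thm:ets} applied to the empty superpartition leaves a single term: the horizontal $(\lambda_1+1)$-strip must be a single row, its rightmost cell being the new circle, so the answer is $\bar s_{(\lambda_1;\emptyset)}^*$ with sign $+1$. For the inductive step I put $\hat\lambda=(\lambda_1,\dots,\lambda_{m-1})$ and $\hat\Lambda=(\hat\lambda;\emptyset)$; by the inductive hypothesis it suffices to show that $\bar s_{\hat\Lambda}^*\,\tilde h_{\lambda_m}=\bar s_{(\lambda;\emptyset)}^*$. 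Here $\hat\Lambda^\circledast=(\lambda_1+1,\dots,\lambda_{m-1}+1)$ is strictly decreasing and carries a circle at the rightmost cell of every one of its rows.

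Expanding by Theorem~\ref{thm:ets} with $\ell=\lambda_m$, every surviving $\Omega$ is obtained by adjoining a horizontal $(\lambda_m+1)$-strip to $\hat\Lambda^\circledast$, with the strip's rightmost cell designated the new circle and the circles of $\tilde\Omega$ matching those of $\hat\Lambda$. The crux is to see that the only admissible $\Omega$ is $(\lambda;\emptyset)$. Since $\hat\Lambda^\circledast$ is a partition, the rightmost cell of the strip lies in the topmost row the strip touches. If that row is some $j\le m-1$, then after removing the new circle row $j$ of $\tilde\Omega$ ends in an ordinary cell and contains no circle, so the $j$-th circle of $\tilde\Omega$ (reading top to bottom) is forced strictly below row $j$; since every remaining circle sits in a strictly shorter row or column, it cannot be matched with the $j$-th circle of $\hat\Lambda$ at column $\lambda_j+1$ without violating the ``no push past the row above'' clause of Theorem~\ref{thm:ets}. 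Hence the strip touches no row $\le m-1$, and the horizontal-strip condition (no two cells in one column) then forbids it from occupying two distinct fresh rows, so it fills exactly the columns $1,\dots,\lambda_m+1$ of the new row $m$. This yields $\Omega^\circledast=(\lambda_1+1,\dots,\lambda_m+1)$, i.e. $\Omega=(\lambda;\emptyset)$, and the sign $(-1)^{\#(\Omega,\hat\Lambda)}$ is $+1$ because the new circle occupies the bottom row and no circle lies below it.

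The step I expect to be the real obstacle is precisely this uniqueness/annihilation argument: verifying, for an arbitrary horizontal strip spread over several rows, that declaring its rightmost cell the new circle is incompatible with matching the surviving circles to those of $\hat\Lambda$. This requires tracking how the circles of $\hat\Lambda$ are pushed down and to the right and confirming that the top-to-bottom matching together with the column bound of Theorem~\ref{thm:ets} leaves $(\lambda;\emptyset)$ as the only possibility. Everything else — the base case, the sign computation, and the passage to the $e$-identity via $\omega$ — is routine bookkeeping.
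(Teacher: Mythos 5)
Your proposal is correct in outline but takes the mirror-image route to the paper's. The paper peels off the \emph{first} part: it writes $h_\Lambda=\tilde h_{\lambda_1}\,h_{(\hat\lambda;\emptyset)}$ with $\hat\lambda=(\lambda_2,\dots,\lambda_m)$, converts $\tilde h_{\lambda_1}\bar s_{(\hat\lambda;\emptyset)}^*$ to a right multiplication (trading the sign $\#$ for $\#'$, the number of circles \emph{above} the new circle), and then the uniqueness argument is a clean forced cascade: the new circle must sit in a column $\geq\lambda_1+1$, hence in row $1$, hence the old circle of row $1$ must drop to row $2$, forcing row $2$ to be completed, and so on down to row $m$. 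You peel off the \emph{last} part, which buys you a direct application of Theorem~\ref{thm:ets} with no commutation/sign bookkeeping (the surviving term has its new circle in the bottom row, so $\#=0$ immediately), but it shifts all the work onto showing that the horizontal $(\lambda_m+1)$-strip cannot touch any of rows $1,\dots,m-1$. That claim is true, but the justification you give for it is not quite the right mechanism: if the new circle lands in row $j\le m-1$, the $j$-th circle of $\tilde\Omega$ cannot invoke the ``no push past the row above'' clause (that clause governs a circle staying in its row and moving right); rather, it is forced into column $\lambda_j+1$ of row $j+1$, which forces the $(j+1)$-th circle into column $\lambda_{j+1}+1$ of row $j+2$, and so on down to row $m$ — and summing the cells of the resulting strip gives exactly $c$ cells where $c\geq\lambda_j+2>\lambda_m+1$ is the column of the new circle, contradicting the strip having $\lambda_m+1$ cells. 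With that cascade-plus-cell-count argument substituted for your one-line justification (which you yourself flagged as the weak point), your proof goes through; the remaining reductions via $\omega$, Proposition~\ref{propdual} and \eqref{hedual} match the paper exactly.
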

\begin{proof}
Let $\lambda=\Lambda^a$ and $\Lambda=(\lambda;\emptyset)$.  We first show that $h_\Lambda=\bar s_\Lambda^*$.
We proceed by induction on $m$.  If $m=1$, the result holds since
\begin{equation}
\tilde h_{\lambda_1}=  \bar s_{(\emptyset;\emptyset)}^* \,  \tilde h_{\lambda_1}  =\bar  s_{(\lambda_1;\emptyset)}^* 
\end{equation}
from Theorem~\ref{thm:ets} (starting from the empty superpartition, the only superpartition $\Omega$ satisfying the conditions of the theorem is obviously the one consisting of a row with $\lambda_1$ squares followed by a circle).  

Now, let $\hat \lambda=(\lambda_2,\dots,\lambda_{m})$ 
be the partition $\lambda$ without its
first part.  By induction, we simply need to show that 
\begin{equation}
 \tilde h_{\lambda_1} \,  \bar {s}_{(\hat \lambda;\emptyset)}^*  = \bar {s}_{\Lambda}^* 
\end{equation}
By Theorem \ref{thm:ets} and the fact that $  \bar {s}_{(\hat \lambda;\emptyset)}^*= (-1)^{\binom{m}{2}} \bar {s}_{(\hat \lambda;\emptyset)}^* \,\tilde h_{\lambda_1} $, this corresponds to showing that in 
$$ \tilde h_{\lambda_1} \, \bar {s}_{(\hat \lambda; \emptyset)}^* =\sum_{\Omega}  (-1)^{\#(\Omega,\hat{\Lambda})'} \bar s_{\Omega}^*$$
the sum only contains the term ${s}_{\Lambda}^*$ (with positive sign), where we recall that $\#(\Omega,\hat{\Lambda})'$ is the number of circles above the new circle.
Observe that $\Lambda$ is precisely the superpartition obtained by 
placing the new circle in column $\lambda_1+1$ and having a new square in every column to the left of the new circle.

The new circle cannot be in a column $c<\lambda_1+1$ because by the first conditions of Theorem~\ref{thm:ets}, all of the new squares must 
be to the left of the new circle. So the new circle must be in the first row. This means that the circle in the first row must be moved to the second row.  By the conditions of Theorem \ref{thm:ets}, this implies that
the second row must be completed. Now the circle in the second row must be moved to the third row and 
the third row must thus also be completed.
Since there is a circle in every row, repeating the argument again and again we see that all rows must be completed
 (including row $m$). Therefore, the new circle must be in column $\lambda_1+1$ (otherwise it would be impossible 
to complete all the rows) and there is a 
new square in every column to the left of it.  As previously commented, this corresponds to the superpartition $\Lambda$.
Hence,
\begin{equation}
\tilde h_{\lambda_1} \bar s_{(\hat{\lambda}; \emptyset)}^* = (-1)^{\#(\Lambda,\hat{\Lambda})} \bar s_{\Lambda}^*
\end{equation}
and the result holds since $\#(\Lambda,\hat{\Lambda})'=0$ (the new circle is in the first row).

Applying the homomorphism $\omega$ on both sides of  $h_{\Lambda} = \bar s_{\Lambda}^*$ we immediately get, from
 Proposition~\ref{propdual} and \eqref{hedual}, 
that $e_{\Lambda} = (-1)^{\binom{m}{2}} s_{\Lambda'}$.

\end{proof}

\section{Kostka coefficients in superspace}

Define the Kostka coefficients in superspace $K_{\Omega \Lambda}$ and $\bar K_{\Omega \Lambda}$  to be respectively such that
\begin{equation}\label{h2snew}
h_\Lambda = \sum_{\Omega} K_{\Omega \Lambda} \, \bar s_{\Omega}^*
\qquad \text{and} \qquad h_\Lambda = \sum_{\Omega} \bar K_{\Omega \Lambda} \, s_{\Omega}^*
\end{equation}
As expected, the Kostka coefficients in superspace give the monomial expansion of the Schur functions in superspace.
\begin{proposition} We have 
\begin{equation} \label{s2m}
s_\Lambda= \sum_{\Omega \leq \Lambda} \bar K_{\Lambda \Omega} \, m_{\Omega} \qquad {\rm and} \qquad
\bar s_\Lambda= \sum_{\Omega \leq \Lambda} K_{\Lambda \Omega} \, m_{\Omega} 
\end{equation}
\end{proposition}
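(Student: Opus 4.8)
The plan is to prove both identities by the standard duality argument of symmetric function theory, exactly as in the classical case where $h_\mu=\sum_\lambda K_{\lambda\mu}s_\lambda$ forces $s_\lambda=\sum_\mu K_{\lambda\mu}m_\mu$. The only inputs I will use are: the duality $\langle\!\langle h_\Lambda,m_\Omega\rangle\!\rangle=\delta_{\Lambda\Omega}$ of \eqref{hmdual}; the defining duality of $s_\Lambda^*$ and $\bar s_\Lambda^*$ from Proposition~\ref{propdual}; the symmetry of the scalar product $\langle\!\langle\cdot,\cdot\rangle\!\rangle$, which is manifest from \eqref{scal1}; and the dominance-triangularity of the Schur functions in superspace with respect to the order \eqref{eqorder1}.

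First I would record the expansion formula induced by \eqref{hmdual}: since $\{h_\Omega\}$ and $\{m_\Omega\}$ are dual bases, any symmetric superpolynomial $f$ satisfies $f=\sum_\Omega \langle\!\langle f,h_\Omega\rangle\!\rangle\,m_\Omega$, the sum ranging over superpartitions of the degree of $f$. Applying this to $f=s_\Lambda$, the coefficient of $m_\Omega$ is $\langle\!\langle s_\Lambda,h_\Omega\rangle\!\rangle$. Next I substitute the definition $h_\Omega=\sum_\Xi \bar K_{\Xi\Omega}\,s_\Xi^*$ from \eqref{h2snew} and use $\langle\!\langle s_\Lambda,s_\Xi^*\rangle\!\rangle=\delta_{\Lambda\Xi}$ (which follows from Proposition~\ref{propdual} together with the symmetry of the scalar product), obtaining $\langle\!\langle s_\Lambda,h_\Omega\rangle\!\rangle=\bar K_{\Lambda\Omega}$. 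The identical computation with $f=\bar s_\Lambda$ and $h_\Omega=\sum_\Xi K_{\Xi\Omega}\,\bar s_\Xi^*$ yields $\langle\!\langle \bar s_\Lambda,h_\Omega\rangle\!\rangle=K_{\Lambda\Omega}$. This already gives the two expansions, with the sums taken over all $\Omega$ of the appropriate degree.

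It remains to restrict the sums to $\Omega\le\Lambda$, which is exactly the triangularity statement $\bar K_{\Lambda\Omega}=K_{\Lambda\Omega}=0$ for $\Omega\not\le\Lambda$. For this I would invoke property~(1) of the Macdonald polynomials in superspace, namely $P_\Lambda(q,t)=m_\Lambda+(\text{smaller terms in dominance})$. Specializing at $q=t=0$ gives $s_\Lambda=m_\Lambda+\sum_{\Omega<\Lambda}(\cdots)\,m_\Omega$, so $\bar K_{\Lambda\Omega}=0$ whenever $\Omega\not\le\Lambda$ and $\bar K_{\Lambda\Lambda}=1$. For $\bar s_\Lambda$ the same conclusion follows by passing to the limit $q=t=\infty$: for every $(q,t)$ the coefficient of $m_\Omega$ in $P_\Lambda(q,t)$ is a rational function that vanishes identically for $\Omega\not\le\Lambda$ and equals $1$ for $\Omega=\Lambda$, and since the limit defining $\bar s_\Lambda$ exists (as recalled in the introduction, following \cite{BDLM2}), these properties persist in the limit, giving $K_{\Lambda\Omega}=0$ for $\Omega\not\le\Lambda$.

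The argument is entirely formal; the only point requiring a moment's care is this last one, namely that the $q=t=\infty$ limit preserves the dominance-triangularity of $\bar s_\Lambda$. This is immediate once one notes that triangularity holds coefficient-by-coefficient for every $(q,t)$ and hence survives any limit in which $\bar s_\Lambda$ is defined. Everything else reduces to the duality of $h$ and $m$ and the defining property of the dual bases $s_\Lambda^*$ and $\bar s_\Lambda^*$, so I expect no genuine obstacle.
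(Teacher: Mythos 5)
Your proposal is correct and follows essentially the same route as the paper: both arguments extract the coefficient of $m_\Omega$ via the duality $\langle\!\langle h_\Lambda,m_\Omega\rangle\!\rangle=\delta_{\Lambda\Omega}$, identify it with $\bar K_{\Lambda\Omega}$ (resp.\ $K_{\Lambda\Omega}$) using \eqref{h2snew} and Proposition~\ref{propdual}, and obtain the restriction to $\Omega\le\Lambda$ from the built-in dominance triangularity of the Macdonald polynomials in superspace. Your added remark that the triangularity survives the $q=t=\infty$ limit coefficient-by-coefficient is a slightly more careful justification of a point the paper simply asserts.
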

\begin{proof}
The triangularity in both equations follows from the fact that the Schur functions in superspace
are special cases of Macdonald 
polynomials in superspace, which are triangular by construction.

Suppose that $s_\Lambda= \sum_{\Omega} A_{\Lambda \Omega} \, m_{\Omega}$.
On the one hand, by duality of the bases $s_\Lambda$ and $s_\Lambda^*$, we have 
 $\langle \! \langle h_{\Lambda}, s_{\Gamma} \rangle \! \rangle = \bar K_{\Gamma \Lambda}$ by the second formula of \eqref{h2snew}.  
On the other hand, by the duality \eqref{hmdual} between the bases $m_\Lambda$ and $h_\Lambda$, we have 
\begin{equation}
\langle \! \langle h_{\Lambda}, s_{\Gamma} \rangle \! \rangle =
 \langle \! \langle h_{\Lambda}, \sum_{\Omega} A_{\Gamma \Omega} \, m_{\Omega} \rangle \! \rangle=  A_{\Gamma \Lambda}
\end{equation}  
Therefore $ A_{\Lambda  \Omega}=\bar K_{\Lambda \Omega}$ and the first equality holds.

Similarly, suppose that $\bar s_\Lambda= \sum_{\Omega} B_{\Lambda \Omega} \, m_{\Omega}$.
As before, by duality of the bases $\bar s_\Lambda$ and $\bar s_\Lambda^*$,
we get that $\langle \! \langle h_{\Lambda}, \bar s_{\Gamma} 
\rangle \! \rangle = K_{\Gamma \Lambda}$ by the first formula of \eqref{h2snew}, while 
by duality of the bases $m_\Lambda$ and $h_\Lambda$ we get 
and
$\langle \! \langle h_{\Lambda}, \bar s_{\Gamma} \rangle \! \rangle = 
B_{\Gamma \Lambda}$.
Hence $ B_{\Lambda  \Omega}=K_{\Lambda \Omega}$ and the second equality also holds.
\end{proof}
The  Kostka coefficients  in superspace turn out to be nonnegative integers,
 which is relatively  surprising given that 
signs show up in the Pieri rules.   
\begin{proposition} The Kostka coefficients in superspace
$K_{\Omega \Lambda}$ and $\bar K_{\Omega \Lambda}$ are nonnegative integers.  
\end{proposition}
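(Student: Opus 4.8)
The plan is to expand $h_\Lambda$ through a sequence of Pieri multiplications organized so that only the sign-free rules of Theorems~\ref{thm:esbar} and \ref{thm:es} are invoked, thereby sidestepping the signs carried by the fermionic rules of Theorems~\ref{pinball2} and \ref{thm:ets}. Integrality requires no work: each coefficient produced by any of the four Pieri rules lies in $\{0,\pm 1\}$, and $h_\Lambda$ is a finite product of generators, so both $K_{\Omega\Lambda}$ and $\bar K_{\Omega\Lambda}$ are integers. The substance of the proposition is nonnegativity, and that is what I would argue.

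Writing $\Lambda=(\Lambda^a;\Lambda^s)=(\Lambda_1,\dots,\Lambda_m;\Lambda_{m+1},\dots,\Lambda_\ell)$, the definition of $h_\Lambda$ factors as $h_\Lambda=\tilde h_{\Lambda_1}\cdots\tilde h_{\Lambda_m}\,h_{\Lambda_{m+1}}\cdots h_{\Lambda_\ell}=h_{(\Lambda^a;\emptyset)}\,h_{\Lambda_{m+1}}\cdots h_{\Lambda_\ell}$. The first step is to collapse the fermionic prefix into a single dual Schur function. For $K_{\Omega\Lambda}$, Corollary~\ref{coroepieri} gives $h_{(\Lambda^a;\emptyset)}=\bar s_{(\Lambda^a;\emptyset)}^*$, so that
\begin{equation}
h_\Lambda=\bar s_{(\Lambda^a;\emptyset)}^*\,h_{\Lambda_{m+1}}\cdots h_{\Lambda_\ell}.
\end{equation}
I would then apply Theorem~\ref{thm:es} to the rightmost bosonic factor and iterate: each multiplication by a single $h_{\Lambda_i}$ sends a basis element $\bar s_\Gamma^*$ to a sum $\sum_\Omega \bar s_\Omega^*$ with every coefficient equal to $1$. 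A nonnegative integer combination of such expansions is again nonnegative and integral, so $K_{\Omega\Lambda}\ge 0$. For $\bar K_{\Omega\Lambda}$ the argument is the mirror image: Corollary~\ref{coroesb} gives $h_{(\Lambda^a;\emptyset)}=s_{(\Lambda^a;\emptyset)}^*$, and repeated application of Theorem~\ref{thm:esbar}, whose coefficients are again all equal to $1$, yields $\bar K_{\Omega\Lambda}\ge 0$.

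The main obstacle, and the whole point of the argument, is that building up $h_\Lambda$ by multiplying in the fermionic factors $\tilde h_{\Lambda_i}$ one at a time would force the use of Theorems~\ref{pinball2} and \ref{thm:ets}, producing cancelling $\pm 1$ contributions whose nonnegative sum would be far from transparent. The resolution is that the entire fermionic prefix $\tilde h_{\Lambda_1}\cdots\tilde h_{\Lambda_m}$ equals one dual Schur function, which is exactly the content of Corollaries~\ref{coroesb} and \ref{coroepieri}; this replaces all the sign-bearing steps simultaneously and leaves only sign-free multiplications by ordinary $h_\ell$'s. Since $h_\Lambda$ is a fixed symmetric function, its coefficients in $\{s_\Omega^*\}$ and $\{\bar s_\Omega^*\}$ are independent of the order of multiplication, so this reorganization is legitimate.
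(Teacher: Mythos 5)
Your proposal is correct and follows essentially the same route as the paper's own proof: collapse the fermionic prefix $\tilde h_{\Lambda_1}\cdots\tilde h_{\Lambda_m}$ into a single dual Schur function via Corollaries~\ref{coroepieri} and \ref{coroesb}, then expand by repeated application of the sign-free Pieri rules of Theorems~\ref{thm:es} and \ref{thm:esbar}. The pairings ($K_{\Omega\Lambda}$ with $\bar s^*_\Omega$ and Theorem~\ref{thm:es}; $\bar K_{\Omega\Lambda}$ with $s^*_\Omega$ and Theorem~\ref{thm:esbar}) are exactly as in the paper.
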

\begin{proof}
We first prove the nonnegativity of $K_{\Omega \Lambda}$.  From Corollary~\ref{coroepieri}, 
we have that if $\Lambda=(\Lambda^a;\emptyset)$, then
$h_\Lambda=\bar s_{\Lambda}^*$ and the proposition holds in that case.  
For $\Lambda$ generic, we thus have
$h_\Lambda=h_{(\Lambda^a;\emptyset)} \, h_{(\emptyset;\Lambda^s)}=\bar s_{(\Lambda^a;\emptyset)}^* h_{(\emptyset;\Lambda^s)}$.
The expansion of $h_\Lambda$ in terms of Schur functions in superspace $s_\Omega$ of \eqref{h2snew}
is thus obtained by using repeatedly,
starting from  $\bar s_{(\Lambda^a;\emptyset)}^*$, the Pieri rule for the multiplication by $h_\ell$ of Theorem~\ref{thm:es}.
But since that Pieri rule does not involve signs, the positivity of $K_{\Omega  \Lambda}$ is immediate.

The positivity of $\bar K_{\Omega \Lambda}$ follows similarly using Corollary~\ref{coroesb} and 
the Pieri rule for the multiplication by $h_\ell$ of Theorem~\ref{thm:esbar}.
\end{proof}
A combinatorial interpretation in terms of tableaux for the coefficients
 $\bar K_{\Lambda \Omega}$ and $K_{\Lambda \Omega}$  
will be given in Corollary~\ref{coroschur} and Corollary~\ref{coroschurb} respectively.
As will be discussed in Remark~\ref{remarktableaux}, a 
different  combinatorial interpretation for $\bar K_{\Lambda \Omega}$ and $K_{\Lambda \Omega}$  
is conjectured in \cite{BM}.

\section{Extra Pieri rules}
In order to establish the duality that will appear in 
Section~\ref{SecDual}, we need to derive another Pieri rule 
whose terms are exactly those that appear in the multiplication of $h_\ell$ by $\bar s_\Lambda^*$
obtained in Theorem~\ref{thm:es}.

\begin{theorem}\label{thm:hs}
Let $\Lambda$ be a superpartition of fermionic degree $m$.  For $\ell \geq 1$,
we have
\begin{equation}
s_{\Lambda} \, h_\ell= \sum_{\Omega} s_{\Omega}
\end{equation}
where the sum is over all superpartitions $\Omega$ of fermionic degree $m$ 
such that
\begin{itemize}
\item  $\Omega^*/\Lambda^*$ is a horizontal $\ell$-strip 
\item 
the $i$-th circles, starting from the first row and going down, 
of $\Omega$ and
$\Lambda$ are either in the same row or the same column.  In the
case where the $i$-th circles are in the same row $r$, 
the circle in row $r$ of $\Omega$ cannot be located passed row $r-1$ 
of $\Lambda$
(if $r=1$ the condition does not apply).
\end{itemize}
\end{theorem}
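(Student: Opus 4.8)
The plan is to reduce the statement to an identity of Key polynomials and then run an argument parallel to the proof of Theorem~\ref{thm:es}, with the elementary symmetric function $e_\ell$ replaced by $h_\ell$ and vertical strips replaced by horizontal ones. First I would stress that, unlike the products treated in Section~\ref{secpieri}, which come in $\omega$-related pairs, the product $s_\Lambda\,h_\ell$ is \emph{not} obtainable from any previously computed product by the involution $\omega$ (Proposition~\ref{propdual} intertwines $\bar s$ with $s^*$, not $s$ with itself), so a genuinely new computation is required; this is exactly why the rule belongs among the extra Pieri rules. Since $h_\ell$ has fermionic degree $0$, multiplication by $h_\ell$ leaves the fermionic degree equal to $m$, so by Lemma~\ref{lems} together with \eqref{id:KP} the theorem is equivalent to the bisymmetric identity
\begin{equation*}
h_\ell\,\mathcal{P}_{N,[\alpha_1,\dots,\alpha_m]}\,x^{\Lambda^*}=\sum_{\Omega}\mathcal{P}_{N,[\alpha_1',\dots,\alpha_m']}\,x^{\Omega^*},
\end{equation*}
the sum being over the superpartitions $\Omega$ described in the statement.

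Because $h_\ell$ is symmetric it commutes with all of $\partial_i,\pi_i,\hat\pi_i$, so I would move it to the right of $\mathcal{P}_{N,[\alpha_1,\dots,\alpha_m]}$ and work with $\mathcal{P}_{N,[\alpha_1,\dots,\alpha_m]}\bigl(x^{\Lambda^*}h_\ell\bigr)$. The key new ingredient is an expansion of $x^{\Lambda^*}h_\ell$ playing, for horizontal strips, the role that \eqref{eqxe} plays for vertical strips. The crucial simplification is that a horizontal strip added to $\Lambda^*$ can only lengthen the top row of each block $I_1,\dots,I_k$ of equal parts, since the interlacing condition forces every lower row of a block to stay put; thus the local contribution of each block is again controlled by suitable isobaric-divided-difference operators (analogous to the $\boldsymbol{\pi}_{i,I}$ of \eqref{eqxe}), now seeding cells along the top row of a block rather than down a column. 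Applying the rules for $\mathcal{P}_{N,[\alpha_1,\dots,\alpha_m]}\boldsymbol{\pi}_{i,I}$ from Lemma~\ref{lem:propP} as in Theorem~\ref{thm:es}, each block whose top carries a circle should produce two terms, one in which the circle stays in its row and is pushed to the right by the newly added cells, and one in which it slides down its column, while blocks with no circle at the top contribute a single term.

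It then remains to match the surviving terms with the $\Omega$'s in the statement and to verify that every coefficient is $1$. The ``stay in the same row'' term is admissible only when the circle can be pushed to the right without overtaking the row above it; in the operator picture this is precisely the existence of the relevant addable corner, and the forbidden configurations are killed by the vanishing relation $\mathcal{P}_{N,[\dots,\alpha_i,\dots]}\pi_{\alpha_i-1}x^{\Omega^*}=0$ of Lemma~\ref{lem:propP} (applied when $\Omega^*_{\alpha_i}=\Omega^*_{\alpha_i-1}$). This reproduces the bound ``the circle in row $r$ of $\Omega$ cannot be located passed row $r-1$ of $\Lambda$'', and together with the horizontal-strip condition on $\Omega^*/\Lambda^*$ it yields exactly the set of $\Omega$'s of Theorem~\ref{thm:hs}, which, as promised, coincides with the terms of $\bar s_\Lambda^*\,h_\ell$ in Theorem~\ref{thm:es}.

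I expect the main obstacle to be the second step: producing the correct horizontal analogue of \eqref{eqxe} and verifying that its interaction with $\mathcal{P}_{N,[\alpha_1,\dots,\alpha_m]}$ moves each circle either to the right within its row or downward along its column, with exactly the right vanishing and no spurious terms. The bookkeeping is more delicate than in the vertical case of Theorem~\ref{thm:es}: there, adding an $i_j$-strip to a block slides a displaced circle down by $i_j$ rows, whereas here the added cells all pile into a single row, so one must determine precisely which addable corner the circle descends to and how the strip and the displaced circle fit together. Isolating the operator identity that encodes this, most plausibly as an additional appendix lemma in the spirit of Lemma~\ref{lem:propP}, is where the real work lies.
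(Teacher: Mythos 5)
Your reduction to the bisymmetric identity and your overall strategy (expand $x^{\Lambda^*}h_\ell$ in terms of isobaric divided differences, then apply the rules for $\mathcal{P}_{N,[\alpha_1,\dots,\alpha_m]}$ from Lemma~\ref{lem:propP}) do match the paper's proof. But the step you yourself flag as ``where the real work lies'' is genuinely missing, and the shape you propose for it is unlikely to work. You want a block-by-block expansion analogous to \eqref{eqxe}, with operators seeding cells along the top row of each block $I_j$. The vertical expansion \eqref{eqxe} rests on $e_\ell=\sum e_{i_1}(X_{I_1})\cdots e_{i_k}(X_{I_k})$ \emph{together with} the automatic vanishing $e_i(X_{I_j})=0$ for $i>|I_j|$, which is exactly what truncates the sum to genuine vertical strips. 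The homogeneous analogue $h_\ell=\sum h_{i_1}(X_{I_1})\cdots h_{i_k}(X_{I_k})$ has no such vanishing: $h_i(X_{I_j})$ is nonzero for every $i\geq 0$, so a block-local expansion produces terms in which the top row of a block is lengthened past the row above it, i.e.\ monomials $x^\nu$ with $\nu$ not a partition, and these do not cancel for free. The paper sidesteps this with Lemma~\ref{lemKh}, which is indexed directly by horizontal $\ell$-strips $\mu/\Lambda^*$ and attaches a single $\pi_i$ to each \emph{completed} row (one with $\mu_{i+1}=\Lambda^*_i$), the product taken in decreasing order of $i$; it is proved by induction on the number of variables, not by a block decomposition.

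The second half of your sketch also underestimates the case analysis. Whether the circle in row $\alpha_i$ may slide down is not a property of the block containing $\alpha_i$ alone: it slides to row $\alpha_i+1$ exactly when row $\alpha_i+1$ has been completed (which may involve the next block), and even then one must split according to whether $\mu_{\alpha_i}=\mu_{\alpha_i+1}$ (in which case $\pi_{\alpha_i}$ acts as the identity on $x^\mu$ and only the ``stay'' term survives) or not (in which case $\mathcal{P}_{N,[\ldots]}\pi_{\alpha_i}$ contributes both the ``stay'' and ``slide down'' terms). The decreasing order of the product over completed rows is what makes this bottom-up, row-by-row processing well defined. You correctly identify the vanishing $\mathcal{P}_{N,[\dots,\alpha_i,\dots]}\pi_{\alpha_i-1}=0$ as the mechanism enforcing the ``cannot pass row $r-1$'' bound, but without Lemma~\ref{lemKh} and the ordered product there is no precise expression on which to run that analysis. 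The proposal is therefore a correct outline with the decisive lemma absent and its conjectured form in need of replacement.
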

\begin{proof}
Using Lemma~\ref{lems} and the commutativity of $h_\ell$ and $s_\Lambda$, it is equivalent to prove that
\begin{equation}\label{eqn:thmkeyhateqn}
h_\ell \, \partial_{\omega_{m}} \pi_{\omega_{m^c}}  \hat{K}_{(\Lambda^a)^R,\Lambda^s}(x)= \sum_{\Omega} \partial_{\omega_{m}}\pi_{\omega_{m^c}} \hat{K}_{(\Omega^a)^R,\Omega^s}(x)
\end{equation}
where the sum is over all superpartitions $\Omega$ described in the statement of the theorem.  With the help of
(\ref{id:KP}) and the fact that $h_\ell$ commutes with all the operators, 
we can rewrite the left hand side of (\ref{eqn:thmkeyhateqn}) as
\begin{equation}
h_\ell \, \partial_{\omega_{m}} \pi_{\omega_{m^c}} \hat{K}_{(\Lambda^a)^R,\Lambda^s}(x)=\mathcal{P}_{N,[\alpha_1,\dots,\alpha_m]}x^{\Lambda^*}h_\ell.
\end{equation}
As shown in Lemma~\ref{lemKh} of the Appendix, we have
 $$x^{\Lambda^*} h_{\ell}=
 \sum_{\substack{\mu/\Lambda^* \\ \text{horizontal $\ell$-strip}}}\left(\prod^{(\rm {down})}_{\substack{i\\ \mu_{i+1}=\Lambda^*_i}}\pi_i \right)x^\mu$$
where the superscript ``down'' means that the $\pi_i$'s in the product are ordered from the largest to 
the smallest index $i$.   Hence,
\begin{equation}\label{eqn:KhatP}
 h_\ell \, \partial_{\omega_{m}} \pi_{\omega_{m^c}} \hat{K}_{(\Lambda^a)^R,\Lambda^s}(x)=
 \sum_{\substack{\mu/\Lambda^* \\ \text{horizontal $\ell$-strip}}}\mathcal{P}_{N,[\alpha_1,\dots,\alpha_m]} \left( \prod^{(\rm{down})}_{\substack{i\\ \mu_{i+1}=\Lambda^*_i}}\pi_i \right)  x^\mu
\end{equation}
We say that row $i+1$ of $\Lambda^*$ has been completed whenever, as in the product above, $\mu_{i+1}=\Lambda_i^*$. Note that
this also include the case where $\mu_{i+1}=\Lambda_{i+1}^*=\Lambda_i^*$ where no real completion occurred.

We need to interpret the right hand side of equation (\ref{eqn:KhatP}) and show that it sums over superpartitions satisfying the two conditions given in the statement of  Theorem \ref{thm:hs}.  The fist condition is satisfied because the sum in (\ref{eqn:KhatP}) is over horizontal $\ell$-strips.  To show that the second condition is satisfied is more complex.  In order to do so, we
need to understand the behavior or the $\pi_i$'s associated to completed rows. This relies essentially 
on the following rules (see Lemma~\ref{lem:propP} of the Appendix):
\begin{enumerate}
\item
$\mathcal P_{N,[\alpha_1,\ldots,\alpha_m]}{\pi}_{\alpha_i}=
\mathcal P_{N,[\alpha_1,\ldots,\alpha_m]}+\mathcal{P}_{N,[\alpha_1,\ldots,\alpha_i+1,\ldots,\alpha_m]}$
\item
$\mathcal P_{N,[\alpha_1,\ldots,\alpha_m]}{\pi}_{\alpha_i-1}=0$
\item
$ \mathcal P_{N,[\alpha_1,\ldots,\alpha_m]}{\pi}_{\beta}= 
\mathcal P_{N,[\alpha_1,\ldots,\alpha_m]}$
if $\beta \notin \{\alpha_i,\alpha_i-1\}$ for $i=1,\ldots,m$.
\end{enumerate}

Since the product $\prod^{(\rm{down})}_{\substack{i : \mu_{i+1}=\Lambda^*_i}} {\pi}_i$ is decreasing, we start from row $\rho=N$ 
with the greatest value and follow the steps.
 \begin{enumerate}
\item
If row $\rho$ has not been completed then 
 $\pi_{\rho-1}$ does not appear in the product $\prod^{(\rm{down})}_{\substack{i : \mu_{i+1}=\Lambda^*_i}}\pi_i$
and so $\mathcal P_{N,[\alpha_1,\ldots,\alpha_m]}$ is unaffected and we move on to row $\rho-1$.
\item
If row $\rho$ has been completed but there is no circle in both rows
$\rho$ and $\rho-1$ then $\pi_{\rho-1}$ belongs to the product $\prod^{(\rm{down})}_{\substack{i : \mu_{i+1}=\Lambda^*_i}}\pi_i$ but 
$\rho-1\notin \{\alpha_i,\alpha_i-1\}$ for $i=1,\ldots,m$.   Hence $\pi_{\rho-1}$ acts as the identity on
 $\mathcal P_{N,[\alpha_1,\ldots,\alpha_m]}$ and we move on to row $\rho-1$.
\item
If row $\rho$ has been completed and there is a circle in row $\rho$ then  $\rho=\alpha_i$ for some $i$ and
$\pi_{\alpha_i-1}$ is in the product $\prod^{(\rm{down})}_{\substack{i : \mu_{i+1}=\Lambda^*_i}}\pi_i$.
Therefore in this case $\mathcal P_{N,[\alpha_1,\ldots,\alpha_m]}\pi_{\alpha_i-1}=0$ and we stop the process.
\item
If row $\rho$ has been completed and there is no circle in row $\rho$ while there is
a circle in row $\rho-1$ then $\rho-1=\alpha_i$ for some $i$ and
$\pi_{\alpha_i}$ is in the product $\prod^{(\rm{down})}_{\substack{i : \mu_{i+1}=\Lambda^*_i}}\pi_i$.  There are two possible cases:
\begin{itemize}  
\item If $\mu_{\alpha_i}=\mu_{\alpha_i+1}$, then $\mu_{\alpha_i}=\Lambda_{\alpha_i}^*<
\Lambda_{\alpha_i-1}^*\leq \mu_{\alpha_i-1}$ since there is a circle in row $\alpha_i$.  Thus row $\alpha_i$ is not a completed row, which means that $\pi_{\alpha_i-1}$ is not in the product $\prod^{(\rm{down})}_{\substack{i : \mu_{i+1}=\Lambda^*_i}}\pi_i$.
Therefore, $\pi_{\alpha_i}$ commutes with all the $\pi_i$'s to its right and then
acts as the identity on $x^\mu$ 
(recall that $\mu_{\alpha_i}=\mu_{\alpha_i+1}$).   We can thus consider that
$\mathcal P_{N,[\alpha_1,\ldots,\alpha_m]}\pi_{\alpha_i}=P_{N,[\alpha_1,\ldots,\alpha_m]}$ and we
move on to row $\rho-1$. 
\item Otherwise, $\mathcal P_{N,[\alpha_1,\ldots,\alpha_m]}\pi_{\alpha_i}=\mathcal P_{N,[\alpha_1,\ldots,\alpha_m]}+\mathcal P_{N,[\alpha_1,\ldots,\alpha_i+1,\ldots,\alpha_m]}$
and we split the process into the one for $\mathcal P_{N,[\alpha_1,\ldots,\alpha_m]}$ and that for which $\mathcal P_{N,[\alpha_1,\ldots,\alpha_i,\ldots, \alpha_m]}$ is replaced by $\mathcal P_{N,[\alpha_1,\ldots,\alpha_i+1,\ldots,\alpha_m]}$ (the two cases correspond to leaving the circle in its
row $\alpha_i$ and sliding it down to row $\alpha_{i}+1$ respectively).  We then move on to row $\rho-1$. 
\end{itemize}
\end{enumerate}

We thus have, by construction, that
\begin{equation}
 \mathcal{P}_{N,[\alpha_1,\dots,\alpha_m]} \left( \prod^{(\rm{down})}_{\substack{i\\ \mu_{i+1}=\Lambda^*_i}}\pi_i \right)  x^\mu = \sum_{\alpha_1',\dots,\alpha_m'}
\mathcal{P}_{N,[\alpha_1',\dots,\alpha_m']}   x^\mu =\sum_\Omega
\partial_{\omega_{m}} \pi_{\omega_{m^c}} \hat{K}_{(\Omega^a)^R,\Omega^s}(x)
\end{equation}
where the sum is over all superpartitions $\Omega$ (obtained from the
partition $\mu=\Omega^*$ by adding circles in rows $\alpha_1',\dots,\alpha_m'$) such that
\begin{enumerate}[1.]
\item
If a square was added to a row with a circle then that circle may move to the right if there is room
or down if there is room (that is, if the row below has been completed)
\item
A square cannot push a circle to the right past the length of the original length of the row above.
\end{enumerate}
For a given $\Omega^*=\mu$, these are precisely the superpartitions $\Omega$ described in the statement of the
theorem.  Since the sum in  \eqref{eqn:KhatP} is over all $\mu$'s such that $\mu/\Lambda^*$ is a horizontal $\ell$-strip,
the other condition in the statement of the theorem is satisfied and the proof is complete.
\end{proof}
The Pieri rule corresponding to the multiplication of $s_\Lambda$ by $\tilde p_\ell$ will be established 
in full generality in Corollary~\ref{coronewpieri}.  For our purposes, it is sufficient at this point to prove the very special
case when the superpartition $\Lambda=(\Lambda^a;\Lambda^s)$ is such that $\Lambda^s$ is the empty partition.
\begin{lemma} Let $\lambda=(\lambda_1,\dots,\lambda_m,\lambda_{m+1})$ be a partition and let
$\hat \lambda=(\lambda_1,\dots,\lambda_m)$  be $\lambda$ without its
last part.  We have
\begin{equation}
 s_{(\hat \lambda; \emptyset)} \, \tilde p_{\lambda_{m+1}}
= s_{(\lambda; \emptyset)} 
\end{equation}
\end{lemma}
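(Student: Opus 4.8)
The plan is to compare both sides inside the bisymmetric picture furnished by the identification \eqref{eqidenti}, which converts the equality of superpolynomials into an equality of divided-difference expressions acting on monomials.

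First I would dispose of the ordering of the product. Since $\tilde p_{\lambda_{m+1}}$ is fermionic of degree one and $s_{(\hat\lambda;\emptyset)}$ is fermionic of degree $m$, the same supercommutation used in the proofs of Theorems~\ref{pinball2} and \ref{thm:ets} gives
$$s_{(\hat\lambda;\emptyset)}\,\tilde p_{\lambda_{m+1}} = (-1)^m\, \tilde p_{\lambda_{m+1}}\, s_{(\hat\lambda;\emptyset)},$$
so it is enough to prove $\tilde p_{\lambda_{m+1}}\, s_{(\hat\lambda;\emptyset)} = (-1)^m s_{(\lambda;\emptyset)}$. Both sides lie in $\Lambda_N^{m+1}$, so I would apply Lemma~\ref{lempts} (with $\Lambda=(\hat\lambda;\emptyset)$ and $\ell=\lambda_{m+1}$) to the left side and Lemma~\ref{lems} (with $\Lambda=(\lambda;\emptyset)$, of fermionic degree $m+1$) to the right side. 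Both carry the prefactor $(-1)^{\binom{m+1}{2}}$, and after cancelling it the claim becomes the operator identity
$$\partial_{\omega_{m+1}}\, x_{m+1}^{\lambda_{m+1}}\, \pi_{\omega_{m^c}}\, \hat K_{(\hat\lambda)^R,\emptyset}(x) = (-1)^m\, \partial_{\omega_{m+1}}\, \pi_{\omega_{(m+1)^c}}\, \hat K_{(\lambda)^R,\emptyset}(x).$$

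Next I would collapse the two adjoint Key polynomials. In both $(\hat\lambda;\emptyset)$ and $(\lambda;\emptyset)$ the circles occupy the consecutive top rows $1,\dots,m$ (resp. $1,\dots,m+1$), so every bracketed factor $\hat\pi_{[i,\alpha_i-1]}$ in \eqref{eq428} equals $1$ and that formula gives $\hat K_{(\hat\lambda)^R,\emptyset} = \hat\pi_{\omega_m}\,x_1^{\lambda_1}\cdots x_m^{\lambda_m}$ and $\hat K_{(\lambda)^R,\emptyset} = \hat\pi_{\omega_{m+1}}\,x_1^{\lambda_1}\cdots x_{m+1}^{\lambda_{m+1}}$. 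These are polynomials in $x_1,\dots,x_m$ and $x_1,\dots,x_{m+1}$ respectively; since $\pi_{\omega_{m^c}}$ and $\pi_{\omega_{(m+1)^c}}$ are built from operators acting only on the higher-indexed variables, each acts as the identity on the corresponding polynomial, while $x_{m+1}^{\lambda_{m+1}}$ commutes with $\hat\pi_{\omega_m}$. The displayed identity thus reduces to the equality $\partial_{\omega_{m+1}}\hat\pi_{\omega_m}\,x^\lambda = (-1)^m\,\partial_{\omega_{m+1}}\hat\pi_{\omega_{m+1}}\,x^\lambda$, where $x^\lambda = x_1^{\lambda_1}\cdots x_{m+1}^{\lambda_{m+1}}$.

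The crux is then the single relation $\partial_{\omega_{m+1}}\hat\pi_j = -\,\partial_{\omega_{m+1}}$, valid for every $j\in\{1,\dots,m\}$: writing $\omega_{m+1}=\omega' s_j$ with $\ell(\omega')=\ell(\omega_{m+1})-1$ (possible because $\omega_{m+1}$ is the longest element of $S_{m+1}$) and using $\partial_j\pi_j=0$, one gets $\partial_{\omega_{m+1}}\hat\pi_j=\partial_{\omega'}\partial_j(\pi_j-1)=-\partial_{\omega'}\partial_j=-\partial_{\omega_{m+1}}$. Peeling off one factor at a time yields $\partial_{\omega_{m+1}}\hat\pi_{\omega_m}=(-1)^{\binom{m}{2}}\partial_{\omega_{m+1}}$ and $\partial_{\omega_{m+1}}\hat\pi_{\omega_{m+1}}=(-1)^{\binom{m+1}{2}}\partial_{\omega_{m+1}}$, so the two reduced sides become $(-1)^{\binom{m}{2}}\partial_{\omega_{m+1}}x^\lambda$ and $(-1)^{m+\binom{m+1}{2}}\partial_{\omega_{m+1}}x^\lambda$; these agree because $\binom{m}{2}\equiv m+\binom{m+1}{2}\pmod 2$. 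This proves $\tilde p_{\lambda_{m+1}}s_{(\hat\lambda;\emptyset)}=(-1)^m s_{(\lambda;\emptyset)}$, and combined with the supercommutation the two factors of $(-1)^m$ cancel, giving the lemma. I expect the one genuinely delicate point to be precisely this sign bookkeeping: a naive comparison of the two images differs by an unwanted $(-1)^m$, and the resolution is to recognize it as exactly the Koszul sign produced by placing $s_{(\hat\lambda;\emptyset)}$ to the left of the fermionic factor $\tilde p_{\lambda_{m+1}}$.
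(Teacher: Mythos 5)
Your proof is correct and follows essentially the same route as the paper: supercommute to reduce to $\tilde p_{\lambda_{m+1}}\,s_{(\hat\lambda;\emptyset)}=(-1)^m s_{(\lambda;\emptyset)}$, pass to the bisymmetric picture via Lemmas~\ref{lems} and \ref{lempts}, collapse the adjoint Key polynomials to $\hat\pi_{\omega_m}x^{\hat\lambda}$ and $\hat\pi_{\omega_{m+1}}x^{\lambda}$, and resolve the sign through the interaction of $\partial_{\omega_{m+1}}$ with the $\hat\pi_j$'s. The only cosmetic difference is at the last step, where the paper inserts $\hat\pi_{[1,m]}$ via $\partial_{\omega_{m+1}}=(-1)^m\partial_{\omega_{m+1}}\hat\pi_{[1,m]}$ and merges it with $\hat\pi_{\omega_m}$ to form $\hat\pi_{\omega_{m+1}}$, while you evaluate $\partial_{\omega_{m+1}}\hat\pi_{\omega_m}$ and $\partial_{\omega_{m+1}}\hat\pi_{\omega_{m+1}}$ separately as $(-1)^{\binom{m}{2}}\partial_{\omega_{m+1}}$ and $(-1)^{\binom{m+1}{2}}\partial_{\omega_{m+1}}$ --- the same computation read in the other direction.
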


\begin{proof}
We will instead show that $ \tilde p_{\lambda_{m+1}} \, s_{(\hat \lambda; \emptyset)} 
= (-1)^m s_{(\lambda; \emptyset)}$.
Let $\ell=\lambda_{m+1}$ and note that $\hat \lambda$ is a partition of length $m$.  
Using Lemma~\ref{lems} and Lemma~\ref{lempts}, we need to show that
\begin{equation}\label{eqn:ptils}
\partial_{\omega_{m+1}} x_{m+1}^\ell  \pi_{\omega_{m^c}} \hat K_{(\hat \lambda)^R, 0^{N-m}}(x)=(-1)^m\partial_{\omega_{m+1}} \pi_{\omega_{(m+1)^c}}\hat K_{\lambda^R,0^{N-m-1}}(x)
\end{equation}
We start with the left hand side and use $\pi_{\omega_{m^c}}=\pi_{\omega_{(m+1)^c}}\pi_{[m+1,N-1]}$ to get
\begin{equation}
\partial_{\omega_{m+1}}  x_{m+1}^\ell \pi_{\omega_{m^c}} \hat K_{(\hat \lambda^R; 0^{N-m})}(x)
=\partial_{\omega_{m+1}}  x_{m+1}^\ell\pi_{\omega_{(m+1)^c}}\pi_{[m+1,N-1]} \hat K_{(\hat \lambda^R; 0^{N-m})}(x)
\end{equation}
By definition, 
$\hat K_{\hat \lambda^R,0^{N-m}}(x)=\hat\pi_{\omega_m}x^{{\hat \lambda}}$, which means that
$\pi_{m+1},\dots,\pi_{N-1}$ act on the right as the identity 
(since rows $m+1,\dots,N$ of $\hat \lambda$ are all zero).  After commuting $x_{m+1}^\ell$ with 
$\pi_{\omega_{(m+1)^c}}  \hat\pi_{\omega_m}$ and combining 
$x_{m+1}^\ell x^{{\hat \lambda}}= x^{{\lambda}}$,  we then get
\begin{equation}
\partial_{\omega_{m+1}} x_{m+1}^\ell  \pi_{\omega_{m^c}} \hat K_{(\hat \lambda)^R, 0^{N-m}}(x)=
 \partial_{\omega_{m+1}} \pi_{\omega_{(m+1)^c}} \hat\pi_{\omega_m} x^{{ \lambda}}
\end{equation}
Owing to the fact that $\partial_{\omega_{m+1}}=(-1)^m\partial_{\omega_{m+1}}\hat{\pi}_{[1,m]}$,
we have
\begin{eqnarray}
\partial_{\omega_{m+1}} \pi_{\omega_{(m+1)^c}} \hat\pi_{\omega_m} x^{{ \lambda}}
&=&
(-1)^m\partial_{\omega_{m+1}}\hat{\pi}_{[1,m]} \pi_{\omega_{(m+1)^c}} \hat\pi_{\omega_m}  x^{{\lambda}}\nonumber\\
 &=&  (-1)^m\partial_{\omega_{m+1}} \pi_{\omega_{(m+1)^c}} \hat{\pi}_{[1,m]}\hat\pi_{\omega_m}  x^{{ \lambda}}\nonumber\\
  &=& (-1)^m\partial_{\omega_{m+1}} \pi_{\omega_{(m+1)^c}} \hat\pi_{\omega_{m+1}} x^{{\lambda}} \nonumber \\
&=& (-1)^m\partial_{\omega_{m+1}} \pi_{\omega_{(m+1)^c}} \hat K_{\lambda^R,0^{N-m-1}}(x)
\end{eqnarray}
where we used the fact that $\hat\pi_{\omega_{m+1}} x^{{\lambda}}= \hat K_{\lambda^R,0^{N-m-1}}(x)$ since
$\lambda$ is a partition of length $m+1$. Therefore \eqref{eqn:ptils} holds from the previous two equations and the claim follows.
\end{proof}
The following corollary is immediate.
\begin{corollary} \label{corop} If $\Lambda=(\Lambda^a;\emptyset)$ then
$
p_{(\Lambda^a;\emptyset)}=\tilde p_{\Lambda^a_{1}}\cdots \tilde p_{\Lambda^a_{m}}= s_{(\Lambda^a; \emptyset)}=s_{\Lambda}$. 
In particular, $\tilde p_\ell = s_{(\ell;\emptyset)}$.
\end{corollary}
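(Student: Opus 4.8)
The plan is to reduce the statement to the preceding Lemma and run a short induction on the fermionic degree $m$, the number of parts of $\Lambda^a$. The last equality $s_{(\Lambda^a;\emptyset)}=s_\Lambda$ is merely a restatement of $\Lambda=(\Lambda^a;\emptyset)$, so only the first and middle equalities require argument.

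First I would dispose of the equality $p_{(\Lambda^a;\emptyset)}=\tilde p_{\Lambda^a_1}\cdots\tilde p_{\Lambda^a_m}$ directly from the definition of the power-sum basis in superspace. Writing $\Lambda=(\Lambda^a;\Lambda^s)=(\Lambda_1,\dots,\Lambda_m;\Lambda_{m+1},\dots,\Lambda_N)$, the definition gives $p_\Lambda=\tilde p_{\Lambda_1}\cdots\tilde p_{\Lambda_m}\,p_{\Lambda_{m+1}}\cdots p_{\Lambda_\ell}$. When $\Lambda^s=\emptyset$ the ordinary power sums $p_{\Lambda_{m+1}}\cdots$ constitute an empty product equal to $1$, so $p_{(\Lambda^a;\emptyset)}=\tilde p_{\Lambda^a_1}\cdots\tilde p_{\Lambda^a_m}$ by inspection, with no computation needed.

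Next I would establish $\tilde p_{\Lambda^a_1}\cdots\tilde p_{\Lambda^a_m}=s_{(\Lambda^a;\emptyset)}$ by induction on $m$. The base case $m=0$ reads $1=s_{(\emptyset;\emptyset)}$, which holds. For the inductive step, set $\hat\lambda=(\Lambda^a_1,\dots,\Lambda^a_{m-1})$; because $\Lambda^a$ has $m$ strictly decreasing (hence distinct) parts, $\hat\lambda$ is a genuine partition and $(\hat\lambda;\emptyset)$ is a superpartition of fermionic degree $m-1$, so the induction hypothesis yields $\tilde p_{\Lambda^a_1}\cdots\tilde p_{\Lambda^a_{m-1}}=s_{(\hat\lambda;\emptyset)}$. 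I would then invoke the preceding Lemma with $\lambda=\Lambda^a$, whose hypotheses are met since $\Lambda^a$ is a partition and $\hat\lambda$ is $\Lambda^a$ without its last part, obtaining $s_{(\hat\lambda;\emptyset)}\,\tilde p_{\Lambda^a_m}=s_{(\Lambda^a;\emptyset)}$. Multiplying the induction hypothesis on the right by $\tilde p_{\Lambda^a_m}$ and substituting then gives $\tilde p_{\Lambda^a_1}\cdots\tilde p_{\Lambda^a_m}=s_{(\Lambda^a;\emptyset)}$, completing the induction. The in-particular statement $\tilde p_\ell=s_{(\ell;\emptyset)}$ is exactly the case $m=1$ (equivalently, the Lemma with $\hat\lambda=\emptyset$, $\lambda=(\ell)$, and $s_{(\emptyset;\emptyset)}=1$).

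There is essentially no obstacle here, as the analytic content is entirely carried by the Lemma; this is why the paper calls the corollary immediate. The only point deserving a moment of care is verifying that each truncation $(\Lambda^a_1,\dots,\Lambda^a_j)$ remains a valid partition and that the associated pair is a valid superpartition at every stage of the induction, so that the Lemma can legitimately be applied at each step. This is precisely what the strict decrease of the parts of $\Lambda^a$ guarantees.
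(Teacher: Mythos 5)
Your proof is correct and follows exactly the route the paper intends: the paper states the corollary is "immediate" from the preceding lemma, and your induction on $m$ (with the first equality read off from the definition of $p_\Lambda$ and the base case $s_{(\emptyset;\emptyset)}=1$) is precisely the spelled-out version of that argument. The only point of care you flag — that each truncation of $\Lambda^a$ has distinct parts and so indexes a valid superpartition — is indeed the right one, and it is guaranteed by the definition of $\Lambda^a$.
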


\section{Duality} \label{SecDual}
We will now see that there is a natural duality that relates $s_\Lambda$ and $s_{\Lambda'}$.  Unexpectedly, for reasons we will see later in this section, no such simple duality exists in the case of $\bar s_\Lambda$.

Applying $\omega$ on the first formula of \eqref{h2snew}, we obtain from \eqref{hedual} and Proposition~\ref{propdual} that
\begin{equation}\label{e2snew}
e_\Lambda = (-1)^{\binom{m}{2}} \sum_{\Omega} K_{\Omega \Lambda} \, s_{\Omega'}
\end{equation}
Now, let $H_\Lambda=p_{(\Lambda^a;\emptyset)} h_{(\emptyset;\Lambda^s)}$, that is, $H_\Lambda$ is $h_\Lambda$ with the superspace generators $\tilde h_r$ replaced by $\tilde p_r$.  
\begin{proposition} We have
\begin{equation}\label{H2s}
H_\Lambda = \sum_{\Omega} K_{\Omega \Lambda} \, s_{\Omega}
\end{equation}
where we stress that the coefficients $K_{\Omega \Lambda}$ are exactly those that appear in \eqref{e2snew}. 
\end{proposition}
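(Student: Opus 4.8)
The plan is to recognize both $H_\Lambda$ and $h_\Lambda$ as the output of applying the \emph{same} sequence of horizontal-strip Pieri operators to a common seed, the sole difference being whether the result is read in the basis $s_\Omega$ or in the basis $\bar s_\Omega^*$. Since the relevant $h_\ell$-Pieri rules are multiplicity free and select the same superpartitions in both bases, the coefficients must coincide.

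First I would factor both functions. Writing $\Lambda=(\Lambda^a;\Lambda^s)$, we have $h_\Lambda = h_{(\Lambda^a;\emptyset)}\, h_{(\emptyset;\Lambda^s)}$ and, by definition, $H_\Lambda = p_{(\Lambda^a;\emptyset)}\, h_{(\emptyset;\Lambda^s)}$, where $h_{(\emptyset;\Lambda^s)} = h_{\Lambda^s_1} h_{\Lambda^s_2}\cdots$ is an honest product of bosonic complete homogeneous functions. Corollary~\ref{coroepieri} gives $h_{(\Lambda^a;\emptyset)} = \bar s_{(\Lambda^a;\emptyset)}^*$, while Corollary~\ref{corop} gives $p_{(\Lambda^a;\emptyset)} = s_{(\Lambda^a;\emptyset)}$. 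Hence
\begin{equation}
h_\Lambda = \bar s_{(\Lambda^a;\emptyset)}^*\, h_{\Lambda^s_1} h_{\Lambda^s_2}\cdots \qquad\text{and}\qquad H_\Lambda = s_{(\Lambda^a;\emptyset)}\, h_{\Lambda^s_1} h_{\Lambda^s_2}\cdots,
\end{equation}
so that both functions are obtained from \emph{seeds indexed by the same superpartition} $(\Lambda^a;\emptyset)$ by multiplying by exactly the same ordered list of $h_\ell$'s.

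The crux is the observation that the two relevant Pieri rules are combinatorially identical. Theorem~\ref{thm:es} expands $\bar s_\Gamma^*\, h_\ell = \sum_\Delta \bar s_\Delta^*$, whereas Theorem~\ref{thm:hs} expands $s_\Gamma\, h_\ell = \sum_\Delta s_\Delta$; in both the sum runs, with multiplicity one, over the superpartitions $\Delta$ of the same fermionic degree for which $\Delta^*/\Gamma^*$ is a horizontal $\ell$-strip and the circles of $\Delta$ obey the identical ``same row or column, not pushed past the row above'' condition relative to those of $\Gamma$. Introducing the transfer operator $M_\ell$ on the free module on superpartition symbols by $M_\ell(\Gamma)=\sum_\Delta \Delta$ over precisely these $\Delta$, expanding $h_\Lambda$ by iterating Theorem~\ref{thm:es} and expanding $H_\Lambda$ by iterating Theorem~\ref{thm:hs} both amount to applying $M_{\Lambda^s_1}M_{\Lambda^s_2}\cdots$ to the symbol $(\Lambda^a;\emptyset)$, read off in $\bar s^*$ versus in $s$.

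I would then close by induction on the number of parts of $\Lambda^s$ (the $h_\ell$ commute, so the order is immaterial): the coefficient of $\bar s_\Omega^*$ in $h_\Lambda$ and the coefficient of $s_\Omega$ in $H_\Lambda$ are each equal to the number of chains from $(\Lambda^a;\emptyset)$ to $\Omega$ under $M_{\Lambda^s_1}M_{\Lambda^s_2}\cdots$, hence equal. Since the former coefficient is $K_{\Omega\Lambda}$ by the first formula of \eqref{h2snew}, this yields $H_\Lambda = \sum_\Omega K_{\Omega\Lambda}\, s_\Omega$. The only point requiring care — and the main, though modest, obstacle — is to verify that the two Pieri rules really pick out the same set of superpartitions at \emph{every} intermediate step, i.e.\ that Theorem~\ref{thm:hs} is verbatim the horizontal specialization of Theorem~\ref{thm:es}; once that matching of conditions is confirmed, equality of the coefficients is automatic, and no sign issues arise since neither $h_\ell$-Pieri rule involves signs.
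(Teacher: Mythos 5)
Your proof is correct, and it establishes the statement by essentially the same mechanism as the paper: factor off the fermionic part as a single (dual) Schur function via Corollaries~\ref{coroepieri} and~\ref{corop}, then iterate a sign-free horizontal-strip Pieri rule and observe that the two iterations produce identical coefficients. The one genuine difference is the object you compare $H_\Lambda$ against. The paper compares $H_\Lambda$ with $e_\Lambda$: it expands $e_\Lambda=(-1)^{\binom{m}{2}}s_{(\Lambda^a;\emptyset)'}\,e_{(\emptyset;\Lambda^s)}$ by the $e_\ell$-rule of Theorem~\ref{thm:es} and $H_\Lambda=s_{(\Lambda^a;\emptyset)}\,h_{(\emptyset;\Lambda^s)}$ by Theorem~\ref{thm:hs}, and then invokes the fact that these two rules are \emph{transposes} of each other together with the conjugate seeds and the $(-1)^{\binom{m}{2}}$ from \eqref{e2snew}. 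You instead compare $H_\Lambda$ with $h_\Lambda=\bar s_{(\Lambda^a;\emptyset)}^*\,h_{(\emptyset;\Lambda^s)}$, matching Theorem~\ref{thm:hs} against the $h_\ell$-case of Theorem~\ref{thm:es}; since the conditions in those two statements are word-for-word identical (your ``point requiring care'' is immediate from the theorem statements), you avoid the conjugation of diagrams and all sign bookkeeping, and you land on the coefficients $K_{\Omega\Lambda}$ directly through the first formula of \eqref{h2snew} rather than through \eqref{e2snew} (these agree, since \eqref{e2snew} is obtained from \eqref{h2snew} by applying $\omega$). Both routes rest on the same corollaries and the same multiplicity-free Pieri combinatorics; yours is marginally cleaner, the paper's has the advantage of setting up the $e_\Lambda$ comparison that is then reused immediately in Corollary~\ref{corosdual}.
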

\begin{proof}
 From Corollary~\ref{coroepieri} and \ref{corop}, we have that if $\Lambda=(\Lambda^a;\emptyset)$, then
$e_\Lambda=(-1)^{\binom{m}{2}} s_{\Lambda'}$ and $H_\Lambda= p_{(\Lambda^a;\emptyset)}
= s_{\Lambda}$.  Which means that the proposition
holds in that case with $K_{\Omega \Lambda}=\delta_{\Omega \Lambda}$.  For $\Lambda$ generic, we thus have
$e_\Lambda=e_{(\Lambda^a;\emptyset)} \, e_{(\emptyset;\Lambda^s)}=(-1)^{\binom{m}{2}} s_{(\Lambda^a;\emptyset)'} e_{(\emptyset;\Lambda^s)}$,
and similarly, $H_\Lambda= p_{(\Lambda^a;\emptyset)} \, h_{(\emptyset;\Lambda^s)}=s_{(\Lambda^a;\emptyset)} h_{(\emptyset;\Lambda^s)}$.
The expansion of $e_\Lambda$ in terms of Schur functions in superspace $s_\Omega$ is thus obtained by using repeatedly,
starting from  $(-1)^{\binom{m}{2}} s_{(\Lambda^a;\emptyset)'}$, the Pieri rule for the multiplication by $e_\ell$ of Theorem~\ref{thm:es}. 
Similarly, the expansion of $H_\Lambda$ in terms of Schur functions in superspace $s_\Omega$ is obtained by using repeatedly,
starting from  $s_{(\Lambda^a;\emptyset)}$, the Pieri rule for the multiplication by $h_\ell$ of Theorem~\ref{thm:hs}. 
But since those Pieri rules are transposed of each other and do not involve any sign, 
the proposition follows immediately given that we start on the transposed superpartitions $(\Lambda^a;\emptyset)'$ and $(\Lambda^a;\emptyset)$ and with signs that differ by a factor of $(-1)^{\binom{m}{2}}$.
\end{proof}

The previous proposition suggests a natural duality between the $H_\Lambda$ and $e_\Lambda$ bases.  In effect,  let $\varphi$ be the homomorphism defined by 
\begin{equation} \label{defphi}
\varphi(\tilde p_r) = \tilde e_r  \quad {\rm and}  \quad \varphi(h_r)= e_r
\end{equation}
that is, such that $\varphi(H_\Lambda)=e_\Lambda$.  By  \eqref{e2snew} and \eqref{H2s}, 
the following result is essentially immediate:
\begin{corollary}  \label{corosdual} We have
\begin{equation}
\varphi(s_\Lambda)=(-1)^{\binom{m}{2}}s_{\Lambda'}
\end{equation}
As a consequence, the homomorphism $\varphi$ is an involution, that is, $\varphi \circ \varphi$ is the identity.
\end{corollary}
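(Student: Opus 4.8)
The plan is to play the two expansions \eqref{e2snew} and \eqref{H2s} against each other, using that they share the very same coefficient matrix $K_{\Omega\Lambda}$ and that $\varphi$ carries $H_\Lambda$ to $e_\Lambda$. First I would record this last fact directly from the definition \eqref{defphi}: since $H_\Lambda = p_{(\Lambda^a;\emptyset)}\, h_{(\emptyset;\Lambda^s)} = \tilde p_{\Lambda^a_1}\cdots \tilde p_{\Lambda^a_m}\, h_{\Lambda^s_1} h_{\Lambda^s_2}\cdots$, and $\varphi(\tilde p_r)=\tilde e_r$, $\varphi(h_r)=e_r$, we have $\varphi(H_\Lambda) = \tilde e_{\Lambda^a_1}\cdots \tilde e_{\Lambda^a_m}\, e_{\Lambda^s_1} e_{\Lambda^s_2}\cdots = e_{(\Lambda^a;\emptyset)}\, e_{(\emptyset;\Lambda^s)} = e_\Lambda$, exactly as $\varphi$ was designed.

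Next, since $\varphi$ is a ring homomorphism and hence $\mathbb{Q}$-linear, applying it to \eqref{H2s} yields
\[
e_\Lambda = \varphi(H_\Lambda) = \sum_{\Omega} K_{\Omega\Lambda}\, \varphi(s_\Omega),
\]
the sum running over superpartitions $\Omega$ of the same degree $(n|m)$ as $\Lambda$. Comparing with \eqref{e2snew}, which reads $e_\Lambda = (-1)^{\binom{m}{2}}\sum_{\Omega} K_{\Omega\Lambda}\, s_{\Omega'}$, and noting that $(-1)^{\binom{m}{2}}$ is a constant across the sum because every $\Omega$ has the common fermionic degree $m$, I obtain
\[
\sum_{\Omega} K_{\Omega\Lambda}\left(\varphi(s_\Omega) - (-1)^{\binom{m}{2}} s_{\Omega'}\right) = 0
\qquad\text{for all } \Lambda \text{ of degree } (n|m).
\]
The matrix $(K_{\Omega\Lambda})$ is the transition matrix expressing the basis $\{h_\Lambda\}$ in terms of the basis $\{\bar s_\Omega^*\}$ (see \eqref{h2snew}); as a change of basis on the finite-dimensional degree-$(n|m)$ subspace it is invertible, so the displayed relations force $\varphi(s_\Omega) = (-1)^{\binom{m}{2}} s_{\Omega'}$ for every $\Omega$. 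Equivalently, I could invert \eqref{H2s} to write each $s_\Omega$ in the $\{H_\Lambda\}$-basis, push $\varphi$ through, and collapse the product $\sum_\Lambda K_{\Gamma\Lambda}(K^{-1})_{\Lambda\Omega}=\delta_{\Gamma\Omega}$.

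For the involution claim I would argue on the basis $\{s_\Lambda\}$ itself. Conjugation $\Lambda\mapsto\Lambda'$ is an involution on superpartitions preserving the fermionic degree $m$, since it merely transposes the $m$-rook strip $\Lambda^\circledast/\Lambda^*$ into another $m$-rook strip. Hence
\[
\varphi\bigl(\varphi(s_\Lambda)\bigr) = (-1)^{\binom{m}{2}}\,\varphi(s_{\Lambda'}) = (-1)^{\binom{m}{2}}(-1)^{\binom{m}{2}}\, s_{(\Lambda')'} = s_\Lambda,
\]
so $\varphi\circ\varphi$ fixes every member of a basis and is therefore the identity.

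I do not expect any serious obstacle: the computation $\varphi(H_\Lambda)=e_\Lambda$ and the sign bookkeeping are routine, and the only genuinely structural input is the invertibility of $(K_{\Omega\Lambda})$, which is immediate once one recognizes it as a change-of-basis matrix between the two stated bases $\{h_\Lambda\}$ and $\{\bar s_\Omega^*\}$. The mildest point to be careful about is that the sign $(-1)^{\binom{m}{2}}$ depends only on the shared fermionic degree $m$, which is precisely what allows it to be pulled out of the sum and cancelled uniformly.
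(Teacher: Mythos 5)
Your proposal is correct and follows essentially the same route as the paper: apply $\varphi$ to \eqref{H2s}, use $\varphi(H_\Lambda)=e_\Lambda$, compare with \eqref{e2snew}, and invoke invertibility of the matrix $(K_{\Omega\Lambda})$ (you justify invertibility via \eqref{h2snew} as the transition matrix from $\{\bar s_\Omega^*\}$ to $\{h_\Lambda\}$, the paper via the $e_\Lambda$-to-$s_{\Lambda'}$ change of basis — both are valid). Your explicit verification that conjugation preserves the fermionic degree, so the two signs cancel in $\varphi\circ\varphi$, is exactly the step the paper leaves as "immediate."
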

\begin{proof} Applying the homomorphism $\varphi$ on both sides of \eqref{H2s}, we have from  \eqref{e2snew} 
\begin{equation}
e_\Lambda = \sum_\Omega K_{\Omega \Lambda} \,  \varphi(s_\Omega) = (-1)^{\binom{m}{2}} 
\sum_\Omega K_{\Omega \Lambda} \, s_{\Omega'}  
\end{equation}
Therefore $\varphi(s_\Lambda)=(-1)^{\binom{m}{2}}s_{\Lambda'}$ since the
matrix $\{ K_{\Omega \Lambda}\}_{\Omega, \Lambda}$ is invertible (it is, up to a sign, the change of basis matrix between the bases $e_\Lambda$ and $s_{\Lambda'}$). The involutivity of $\varphi$ is then immediate.
\end{proof}

Using Corollary~\ref{corosdual}, the Pieri rules for the multiplication of $s_\Lambda$ by 
$\tilde p_\ell$ and $h_\ell$ are seen, from Theorems \ref{thm:es} and 
\ref{thm:ets}, to be identical to those for the multiplication of $\bar s_\Lambda^*$ by 
$\tilde h_\ell$ and $h_\ell$ respectively.  The first formula in the corollary is the content of Theorem~\ref{thm:hs}.
But the point here is to understand how it follows from the duality (which unfortunately we were not able to prove without
first proving Theorem~\ref{thm:hs}).
We should note that these Pieri rules were conjectured to hold in \cite{BM}.
\begin{corollary}  \label{coronewpieri} We have, for $k \geq 1$ and $\ell \geq 0$,
\begin{equation}
  s_\Lambda \, h_k  = \sum_{\Omega} s_\Omega \qquad {\rm and} \qquad
 s_\Lambda\,  \tilde p_\ell = \sum_{\Omega} (-1)^{\#(\Omega,\Lambda)} s_\Omega
\end{equation}
where $\#(\Omega,\Lambda)$ is as usual the number of circles of $\Omega$ below the new circle and where 
the sums run over the superpartitions $\Omega$ obeying the conditions of
Theorem~\ref{thm:es} (in the $\bar s_\Lambda^* h_\ell$ case) and 
Theorem~\ref{thm:ets} (in the $\bar s_\Lambda^* \, \tilde h_\ell$ case) respectively.
\end{corollary}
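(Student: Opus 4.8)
The plan is to transport the already-established Pieri rules for multiplication by $e_\ell$ and $\tilde e_\ell$ (Theorems~\ref{thm:es} and~\ref{thm:ets}) across the duality of Corollary~\ref{corosdual}. Recall that $\varphi$ is an algebra involution with $\varphi(h_r)=e_r$, $\varphi(\tilde p_r)=\tilde e_r$, and $\varphi(s_\Lambda)=(-1)^{\binom{m}{2}}s_{\Lambda'}$ whenever $\Lambda$ has fermionic degree $m$. Since $\varphi$ is a bijective homomorphism, I may apply it to a known product identity, rewrite the right-hand side using the $e_\ell$/$\tilde e_\ell$ rules, and then apply $\varphi$ a second time to recover the desired identity for $h_k$ or $\tilde p_\ell$; the only real work is tracking the signs and the conjugation of the indexing superpartitions.

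For the first formula, I observe that $s_\Lambda\,h_k=\sum_\Omega s_\Omega$ is precisely Theorem~\ref{thm:hs}, so it may simply be cited. To exhibit how it also sits inside the duality, applying $\varphi$ turns it into $s_{\Lambda'}\,e_k=\sum_\Omega s_{\Omega'}$, which is the $s_\Lambda e_\ell$ instance of Theorem~\ref{thm:es} evaluated at $\Lambda'$ (conjugation sends the horizontal $\ell$-strip to a vertical one and the row circle conditions to column conditions). I will flag, however, that this route is circular: Corollary~\ref{corosdual} is itself proved through \eqref{H2s}, which relies on Theorem~\ref{thm:hs}. Hence it gives no independent proof, only the promised reinterpretation.

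The genuinely new content is the second formula, which I derive as follows. Applying $\varphi$ and using $\varphi(\tilde p_\ell)=\tilde e_\ell$ gives $\varphi(s_\Lambda\,\tilde p_\ell)=(-1)^{\binom{m}{2}}s_{\Lambda'}\,\tilde e_\ell$. Since $\Lambda'$ has fermionic degree $m$, the $s_\Lambda\tilde e_\ell$ (vertical-strip) case of Theorem~\ref{thm:ets} applied to $\Lambda'$ yields $s_{\Lambda'}\tilde e_\ell=\sum_\Psi(-1)^{\#(\Psi,\Lambda')}s_\Psi$, the sum being over $\Psi$ of fermionic degree $m+1$. Applying $\varphi$ once more, with $\varphi(s_\Psi)=(-1)^{\binom{m+1}{2}}s_{\Psi'}$, produces
\[
s_\Lambda\,\tilde p_\ell=(-1)^{\binom{m}{2}+\binom{m+1}{2}}\sum_\Psi(-1)^{\#(\Psi,\Lambda')}s_{\Psi'}=(-1)^{m}\sum_\Psi(-1)^{\#(\Psi,\Lambda')}s_{\Psi'},
\]
since $\binom{m}{2}+\binom{m+1}{2}=m^2$. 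Setting $\Omega=\Psi'$ and invoking the transposition identity $(-1)^{\#(\Psi,\Lambda')+m}=(-1)^{\#(\Psi',\Lambda)}$ recorded in the proofs of Theorems~\ref{pinball2} and~\ref{thm:ets}, the two factors of $(-1)^m$ cancel and I arrive at $s_\Lambda\,\tilde p_\ell=\sum_\Omega(-1)^{\#(\Omega,\Lambda)}s_\Omega$. Conjugating the vertical conditions governing $\Psi$ turns them into the horizontal conditions of the $\bar s_\Lambda^*\tilde h_\ell$ rule of Theorem~\ref{thm:ets}, so the index set $\{\Omega=\Psi'\}$ is exactly the one named in the statement.

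The main obstacle is the sign bookkeeping. One must juggle three independent sign sources — the factors $(-1)^{\binom{m}{2}}$ and $(-1)^{\binom{m+1}{2}}$ introduced by $\varphi$ at fermionic degrees $m$ and $m+1$, and the behaviour of the statistic $\#$ under conjugation — and verify that they collapse to the single factor $(-1)^{\#(\Omega,\Lambda)}$. Both the arithmetic identity $\binom{m}{2}+\binom{m+1}{2}=m^2$ and the transposition identity for $\#$ have already appeared in the text, so this is routine but unforgiving, since any sign slip yields the wrong rule. The only remaining (purely combinatorial) check is that conjugation carries the vertical-strip indexing set of Theorem~\ref{thm:ets} precisely onto the horizontal-strip set asserted here.
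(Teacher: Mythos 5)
Your proposal is correct and follows essentially the same route as the paper: apply the involution $\varphi$ of Corollary~\ref{corosdual} to the $\tilde e_\ell$ rule of Theorem~\ref{thm:ets}, use $\binom{m+1}{2}-\binom{m}{2}=m$ and the transposition identity $\#(\Omega',\Lambda')+\#(\Omega,\Lambda)=m$ to collapse the signs, and transpose the indexing conditions. Your remark that the first formula is simply Theorem~\ref{thm:hs} and that the duality route for it is not independent matches the paper's own comment preceding the corollary.
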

\begin{proof} 
We prove the second formula, as it involves a non-trivial sign.  
The first formula follows from the same argument (without the sign issue)
using Corollary~\ref{corosdual} and Theorem~\ref{thm:es}.

Applying $\varphi$ on both sides of the second equation in \eqref{etons}, we obtain from Corollary~\ref{corosdual}
\begin{equation}
 s_{\Lambda'}\, \tilde p_\ell  = \sum_{\Omega} (-1)^{\#(\Omega,\Lambda)+m} s_{\Omega'}
\end{equation}
where the extra $m$ in the sign comes from the difference between ${\binom{m}{2}}$ and ${\binom{m+1}{2}}$, and where the sum is over the $\Omega$'s described in
Theorem~\ref{thm:ets} (in the $s_\Lambda \, \tilde e_\ell$ case).
 Hence
\begin{equation}
s_{\Lambda} \, \tilde p_\ell  = \sum_{\Omega'} (-1)^{\#(\Omega',\Lambda')+m} s_{\Omega}
=  \sum_{\Omega'} (-1)^{\#(\Omega,\Lambda)} s_{\Omega}
\end{equation}
since ${\#(\Omega',\Lambda')}+{\#(\Omega,\Lambda)}=m$.  The result then follows
by transposing the conditions on $\Omega$ (which means considering 
$\bar s_\Lambda^* \, \tilde h_\ell$ instead of $s_\Lambda \, \tilde e_\ell$).
\end{proof}

It is immediate from Proposition~\ref{propdual}  that the linear map $\omega\circ \varphi^+ \circ \omega$ sends
$\bar s_\Lambda$ to $\bar s_{\Lambda'}$ (up to a sign), where
$\varphi^+$ is the adjoint of $\varphi$ with respect to the scalar product \eqref{scal1}.  But $\varphi^+$ turns out not
to be a homomorphism since otherwise  the map $\omega\circ \varphi^+ \circ \omega$ would also be a homomorphism, contradicting the fact
that
the Littlewood-Richardson coefficients 
associated to the products of Schur functions in superspace $\bar s_\Lambda$
do not have a symmetry under conjugation (see Remark~\ref{remark}). 
As such the duality between $\bar s_\Lambda$ and $\bar s_{\Lambda'}$ is less natural
(for instance, it does not lead to any analog of Corollary~\ref{coronewpieri}).

\begin{remark} \label{remarkpieri}
In Conjecture 4.1 of \cite{BM}, Pieri rules for the multiplication of $s_{\Lambda}$ by 
$s_{(\emptyset;r)}$, $s_{(r;\emptyset)}$, $s_{(\emptyset;1^r)}$ and $s_{(0;1^r)}$ are stated.  Given that $s_{(\emptyset;r)}=s_r=h_r$, 
$s_{(r;\emptyset)}=\tilde p_r$ (see Corollary~\ref{corop}),  $s_{(\emptyset;1^r)}=e_r$ and  $s_{(0;1^r)}= m_{(0;1^r)}=\tilde e_r$ (by the triangularity in \eqref{s2m}), these Pieri rules are proven in Corollary~\ref{coronewpieri} 
($h_r$ and $\tilde p_r$), Theorem~\ref{thm:es} ($e_r$) and Theorem~\ref{thm:ets} ($\tilde e_r$).
\end{remark}

\section{Tableaux}
In this section, we show that the Schur functions in superspace are generating series of certain types of tableaux.  

We will refer to $\{\bar 0, \bar 1, \bar 2,\bar 3,\dots \}$ as the set of {\it fermionic} nonnegative integers.  In this spirit,
we will also refer to the set of nonnegative
integers  $\{0, 1, 2,3,\dots \}$ as the set of {\it bosonic} nonnegative integers.  For  
$\alpha \in  \mathbb \{0,\bar 0,1,\bar 1,2,\bar 2, \dots \}$, we will say that ${\rm type}(\alpha)$ is bosonic or fermionic
depending on whether the corresponding integer is fermionic or bosonic.  Finally, define
\begin{equation}
|\alpha|= 
\left\{
\begin{array}{ll}
a & \text{if~} \alpha=\bar a \text{~is fermionic} \\
a & \text{if~} \alpha=  a \text{~is bosonic}
\end{array} \right.
\end{equation}

\subsection{$s$-tableaux}  
By \eqref{h2snew}, 
the tableaux needed to represent the Schur function
in superspace $s_\Lambda$ are those stemming from the Pieri rules associated to the multiplication 
of $s_\Lambda^*$ by $h_r$ or $\tilde h_r$ given in Theorem~\ref{thm:esbar} and \ref{pinball2}. 
We say that the sequence $\Omega=\Lambda_{(0)},\Lambda_{(1)},\dots, \Lambda_{(n)}=\Lambda$ is an $s$-tableau 
of shape $\Lambda/\Omega$ and weight $(\alpha_1,\dots,\alpha_n)$, where 
$\alpha_i\in \mathbb \{0,\bar 0, 1,\bar 1,2,\bar 2,\dots \}$, if $\Omega=\Lambda_{(i)}$ and $\Lambda=\Lambda_{(i-1)}$ 
obey the conditions of Theorem~\ref{thm:esbar} with $\ell=\alpha_i$ whenever $\alpha_i$ is bosonic or
the conditions of Theorem~\ref{pinball2} with $\ell=|\alpha_i|$ whenever $\alpha_i$ is fermionic.  
An $s$-tableau can be represented by a diagram constructed recursively in the following way: 
\begin{enumerate}
\item  the cells of $\Lambda_{(i)}^*/\Lambda_{(i-1)}^*$, which form a horizontal strip,  are filled with the letter $i$.  In 
the fermionic case, the new circle is also filled with a letter $i$. 
\item the circles of $\Lambda_{(i-1)}$ that are moved a row below keep their fillings.
\end{enumerate}
The sign of an $s$-tableau $T$, which corresponds to the product of the signs appearing in the fermionic horizontal strips, 
can be extracted quite efficiently from an $s$-tableau.  Read the fillings of the circles from top to bottom
to obtain a word (without repetition):
the sign of the tableau $T$ is  then equal to $(-1)^{{\rm inv} (T)}$, 
where ${{\rm inv} (T)}$ is the number of inversions of the word.

Given a diagram of an $s$-tableau, we define the path of a given circle (filled let's say with letter $i$) in the following way.  Let $c$ be the leftmost column that does not contain a square (a cell of $\Omega^*$) 
filled with an $i$.  The path starts in the position of the smallest entry 
larger than $i$ (let's say $j$) in column  $c$.  The path then moves to the smallest entry (let's say $k$) larger than $j$ in the row below (if there are many such $k$'s the path goes through the leftmost such $k$).   We continue this way
until
we reach the row above that of the circle filled with an $i$.

It is important to realize that a tableau can be identified with its diagram given that the sequence  $\Omega=\Lambda_{(0)},\Lambda_{(1)},\dots, \Lambda_{(n)}=\Lambda$ can be recovered from the diagram.    We obtain the
diagram corresponding to $\Omega=\Lambda_{(0)},\Lambda_{(1)},\dots, \Lambda_{(n-1)}$ by removing the letters $n$ from the diagram 
(including, possibly, the circled one), and by moving the remaining circle one row above according to the following rule.  
A circle (filled let's say with letter $i$) in a given row $r$ 
is moved to row $r-1$ if there is an $n$ in row $r-1$ that 
belongs to its path.  Otherwise the circle
in row $r$ stays in its position. 

\smallskip

Consider the tableau $\scriptsize {\tableau[scY]{1&2&2&3& 4 \\ 2&4&6  \\4&5&\bl\tcercle{4} \\ 6&6 \\ \bl\tcercle{2} }}$ of weight $(1,\bar 3, 1, \bar 3, 1,3)$ and shape $(2,0;5,3,2)$ ($\Omega=\emptyset$ in the example). 
 The path for the $\tcercle{2}$ is  ${\scriptsize {\tableau[scY]{1&2&2&\bf{3} & 4 \\ 2&\bf{4}&6  \\4&\bf{5}&\bl\tcercle{4} \\ \bf{6} & 6 \\ \bl\tcercle{2} }}}$ which is
seen as follows:  the leftmost column without a non-circled 2 is column 4.  Since there is only one entry, a 3, in that column, the path starts there.   The smallest entry larger than 3 in the row below, the second one, is 4.  Then  the smallest entry larger than 4 in the third row is 5.  Finally, the smallest entry larger than 4 in row 4 is 6, and the path goes through the leftmost 6.  The path then stops since the circled 2 is in row 5.
The
path for $\tcercle{4}$ can similarly be seen to be  ${\scriptsize {\tableau[scY]{1&2&2&3& 4 \\ 2&4&\bf{6}  \\4&5&\bl\tcercle{4} \\ 6&6 \\ \bl\tcercle{2} }}}$.  
It is obvious from the example that the non-circled letters of an $s$-tableau form an ordinary tableau.  
However it is not obvious where the circled letters can be added, and the paths above
could only be constructed because the tableau was valid.

\smallskip

\noindent The sequence of superpartitions associated to that $s$-tableau can then be recovered by stripping successively the tableau of its largest letter and possibly moving circled letters one row above according 
to their paths:

  {\scriptsize
\begin{tabular}{lllllll} 
${\tableau[scY]{1&2&2&3& 4 \\ 2&4&6  \\4&5&\bl\tcercle{4} \\ 6&6 \\ \bl\tcercle{2} }}\rightarrow$&
${\tableau[scY]{1&2&2&3& 4 \\ 2&4 &\bl\tcercle{4}  \\4&5 \\ \bl\tcercle{2}\\  }}\rightarrow$&
${\tableau[scY]{1&2&2&3&4 \\ 2&4 &\bl\tcercle{4}  \\4&\bl\tcercle{2}  \\  }}\rightarrow$&
${\tableau[scY]{1&2&2&3 \\ 2& \bl\tcercle{2}  \\3 }}\rightarrow$&
${\tableau[scY]{1&2&2& \bl\tcercle{2}   \\ 2}}\rightarrow$&
${\tableau[scY]{1}}$ & $\rightarrow \emptyset$ 
\end{tabular}}

\medskip

Now, define the skew Schur function in superspace $s_{\Lambda/\Omega}$ as
\begin{equation}
s_{\Lambda/\Omega} = \sum_{T} (-1)^{{\rm inv}(T)} (x\theta)^T
\end{equation}
where the sum is over all $s$-tableaux of shape $\Lambda/\Omega$, and where
\begin{equation}
(x\theta)^T= \prod_i x_i^{|\alpha_i|} \prod_{j \, : \, {\rm type}(\alpha_j)={\rm fermionic}} \theta_j
\end{equation}
if $T$ is of weight $(\alpha_1,\dots,\alpha_n)$.  We stress that the product over anticommuting variables is ordered
from left to right over increasing indices.

\begin{proposition} \label{propsymfun}
 $s_{\Lambda/\Omega}$ is a symmetric function in superspace.  Moreover,
\begin{equation}
s_{\Lambda/\Omega}= \sum_{\Gamma} \bar K_{\Lambda/\Omega, \Gamma} \, m_\Gamma 
\end{equation}
where $\bar K_{\Lambda/\Omega, \Gamma}=\sum_T  (-1)^{{\rm inv}(T)}$, with the sum over 
 all $s$-tableaux $T$ of weight 
$$(\bar \Gamma_1,\dots,\bar \Gamma_m,\Gamma_{m+1},\dots,\Gamma_N)  \qquad \text{\rm for} \quad \Gamma=(\Gamma_1,\dots, \Gamma_m;\Gamma_{m+1},\dots,\Gamma_N)$$ 
\end{proposition}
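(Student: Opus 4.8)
The plan is to derive the monomial expansion from the symmetry statement, so the real content is to prove that $s_{\Lambda/\Omega}$ actually lies in $\Lambda_N^m$. First I would dispose of the ``moreover'' \emph{assuming} symmetry: a symmetric superpolynomial is determined by the coefficients of its dominant monomials, and for a superpartition $\Gamma=(\Gamma_1,\dots,\Gamma_m;\Gamma_{m+1},\dots,\Gamma_N)$ the monomial $\theta_1\cdots\theta_m\,x_1^{\Gamma_1}\cdots x_N^{\Gamma_N}$ occurs with coefficient $1$ in $m_\Gamma$ and in no other $m_{\Gamma'}$ (this is the usual triangularity of the monomial basis with respect to its leading term). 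Hence if $s_{\Lambda/\Omega}=\sum_\Gamma a_\Gamma\, m_\Gamma$, then $a_\Gamma$ is precisely the coefficient of that monomial in $s_{\Lambda/\Omega}$. By the definition of $(x\theta)^T$, an $s$-tableau contributes exactly $\theta_1\cdots\theta_m\,x_1^{\Gamma_1}\cdots x_N^{\Gamma_N}$ when and only when its weight is $(\bar\Gamma_1,\dots,\bar\Gamma_m,\Gamma_{m+1},\dots,\Gamma_N)$, so $a_\Gamma=\sum_T(-1)^{\mathrm{inv}(T)}=\bar K_{\Lambda/\Omega,\Gamma}$, as claimed.

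For the symmetry I would reformulate $s_{\Lambda/\Omega}$ through a commuting product of vertices. Regard the external anticommuting variables $\theta_1,\dots,\theta_N$ as anticommuting also with the fermionic generators $\tilde h_\ell$ (the super-tensor convention), and set, for each $i$,
$$H(x_i,\theta_i)=\sum_{\ell\ge0}h_\ell\,x_i^\ell+\Bigl(\sum_{\ell\ge0}\tilde h_\ell\,x_i^\ell\Bigr)\theta_i,$$
which is an \emph{even} element, since $\tilde h_\ell\theta_i$ is a product of two odd quantities. By the very definition of $s$-tableaux as chains of Pieri moves, iterating Theorem~\ref{thm:esbar} (the bosonic steps, i.e.\ multiplication by $h_\ell$) and Theorem~\ref{pinball2} (the fermionic steps, i.e.\ multiplication by $\tilde h_\ell$) identifies the coefficient of $s_\Lambda^*$ in $s_\Omega^*\prod_{i=1}^N H(x_i,\theta_i)$, monomial by monomial in $x,\theta$, with a signed count of chains from $\Omega$ to $\Lambda$. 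Using the fact recorded just before the proposition, that the product of the fermionic Pieri signs along such a chain equals $(-1)^{\mathrm{inv}(T)}$, this coefficient equals $s_{\Lambda/\Omega}$ up to a single global factor $(-1)^{\binom{m}{2}}$ coming from collecting the anticommuting pairs $\tilde h_\ell\theta_i$ into two ordered blocks; since this factor depends only on the shape, it is irrelevant for the symmetry argument.

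The key point is then that the super-vertices commute: $H(x_i,\theta_i)H(x_j,\theta_j)=H(x_j,\theta_j)H(x_i,\theta_i)$ for $i\ne j$. This is a one-line check from the supercommutation relations $\theta_i\theta_j=-\theta_j\theta_i$, $\tilde h_a\tilde h_b=-\tilde h_b\tilde h_a$ and $\tilde h_a\theta_j=-\theta_j\tilde h_a$: the even--even terms commute because the $h_\ell$ are central, and the two fermionic factors produce matching signs. Consequently $\prod_{i=1}^N H(x_i,\theta_i)$ is invariant under the simultaneous interchange $(x_i,\theta_i)\leftrightarrow(x_{i+1},\theta_{i+1})$, and hence so is $s_\Omega^*\prod_i H(x_i,\theta_i)$. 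Writing this object as $\sum_\Lambda s_\Lambda^*\, c_\Lambda(x,\theta)$ and using the linear independence of the $s_\Lambda^*$, each coefficient $c_\Lambda$ inherits the invariance; since $c_\Lambda=\pm\,s_{\Lambda/\Omega}$, we conclude that $s_{\Lambda/\Omega}$ is a symmetric superpolynomial, and combined with the first paragraph this finishes the proof.

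I expect the main obstacle to be the sign bookkeeping in the second paragraph: precisely matching $(-1)^{\mathrm{inv}(T)}$ with the product of the position-dependent signs $(-1)^{\#(\Omega,\Lambda)}$ produced by Theorem~\ref{pinball2} at each fermionic step (the circles slide between successive insertions, so the count is not literally the final one) and, separately, tracking the anticommutation signs generated when the factors $\tilde h_\ell\theta_i$ are moved past one another. The first is exactly the content of the sign-reading rule stated before the proposition, and the second contributes only the shape-dependent global sign $(-1)^{\binom{m}{2}}$, which cancels out of the symmetry argument; isolating these two sources cleanly is the delicate part. As an alternative one could prove symmetry directly by constructing Bender--Knuth-type involutions on $s$-tableaux that exchange the multiplicities of the letters $i$ and $i+1$ (with the appropriate sign), but the commuting-vertex argument seems considerably shorter.
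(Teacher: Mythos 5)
Your proposal is correct and is in essence the paper's own argument: both rest on identifying the signed tableau count $\sum_T(-1)^{\mathrm{inv}(T)}$ with the structure constants of the iterated Pieri rules $s_\Omega^*\,h_\gamma=\sum_\Lambda \bar K_{\Lambda/\Omega,\gamma}\,s_\Lambda^*$, and on the (anti)commutation relations among the $h_\ell$ and $\tilde h_\ell$ to get the symmetry. The paper checks the required sign rule coefficient-by-coefficient under transpositions of the weight $\gamma$, whereas you package those same commutation relations into commuting super-vertices $H(x_i,\theta_i)$ and read symmetry off the manifestly invariant generating function (with the uniform $(-1)^{\binom{m}{2}}$ from reordering the $\tilde h_\ell\theta_i$ pairs, which indeed depends only on the fermionic degree) — a tidy repackaging rather than a genuinely different route.
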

\begin{proof}  By definition, the coefficient of $(x\theta)^\gamma$ in  $s_{\Lambda/\Omega}$ is equal to $\sum_T  (-1)^{{\rm inv}(T)}$,
where the sum is over all $s$-tableaux $T$ of weight $\gamma$.   From the construction of the $s$-tableaux from the
Pieri rules in Theorem~\ref{thm:esbar} and \ref{pinball2}, we have
\begin{equation} \label{honst}
s_{\Omega}^* \, h_\gamma=  \sum_{\Lambda} \bar K_{\Lambda/\Omega, \gamma} \, s_{\Lambda}^*
\end{equation}
where  $\bar K_{\Lambda/\Omega, \gamma}$ is equal to the signed sum over
all $s$-tableaux of weight $\gamma$.  By the commutation/anticommutation of the $h_i$ and $\tilde h_i$,
we have that if $\beta$ corresponds to $\gamma$ with two fermionic integers interchanged then
$\bar K_{\Lambda/\Omega, \beta}=-\bar K_{\Lambda/\Omega, \gamma}$, while if $\beta$ corresponds to $\gamma$ with two bosonic 
integers interchanged or with a bosonic integer and a fermionic integer interchanged then $\bar K_{\Lambda/\Omega, \beta}=\bar K_{\Lambda/\Omega, \gamma}$.  Thus the coefficients of $(x\theta)^\gamma$ and $(x\theta)^\beta$ in $s_{\Lambda/\Omega}$
are equal up to the right sign and
the first statement follows.  The other statement is then immediate since the coefficient of $(x\theta)^{(\bar \Gamma_1,\dots,\bar \Gamma_m,\Gamma_{m+1},\dots,\Gamma_N)}$ in $m_\Gamma$ is equal to 1.
\end{proof}
\begin{corollary} \label{coroschur}
We have that $s_{\Lambda/\emptyset}=s_{\Lambda}$.  Hence
\begin{equation}
s_{\Lambda} = \sum_{T} (-1)^{{\rm inv}(T)} (x\theta)^T
\end{equation}
where the sum is over all $s$-tableaux $T$ of shape $\Lambda$.
\end{corollary}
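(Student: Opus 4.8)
The plan is to deduce the statement entirely from Proposition~\ref{propsymfun} specialized to $\Omega=\emptyset$, so that the only substantive point is to identify the tableau-counting coefficient $\bar K_{\Lambda/\emptyset,\Gamma}$ with the Kostka coefficient $\bar K_{\Lambda\Gamma}$ defined in \eqref{h2snew}. Once this identification is in hand, Proposition~\ref{propsymfun} gives $s_{\Lambda/\emptyset}=\sum_\Gamma \bar K_{\Lambda/\emptyset,\Gamma}\,m_\Gamma=\sum_\Gamma \bar K_{\Lambda\Gamma}\,m_\Gamma$, and the monomial expansion \eqref{s2m} recognizes the right-hand side as $s_\Lambda$. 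The generating-function formula then follows immediately, since by definition $s_{\Lambda/\emptyset}$ is the signed sum $\sum_T(-1)^{{\rm inv}(T)}(x\theta)^T$ over all $s$-tableaux of shape $\Lambda/\emptyset$, which are precisely the $s$-tableaux of shape $\Lambda$.

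To carry out the identification, I would start from the relation \eqref{honst} established inside the proof of Proposition~\ref{propsymfun}, namely $s_\Omega^*\,h_\gamma=\sum_\Lambda \bar K_{\Lambda/\Omega,\gamma}\,s_\Lambda^*$, where $\bar K_{\Lambda/\Omega,\gamma}$ is the signed count of $s$-tableaux of shape $\Lambda/\Omega$ and weight $\gamma$. Setting $\Omega=\emptyset$ and using $s_\emptyset^*=1$ yields $h_\gamma=\sum_\Lambda \bar K_{\Lambda/\emptyset,\gamma}\,s_\Lambda^*$. Next I would specialize the weight $\gamma$ to the one attached to a superpartition $\Gamma=(\Gamma_1,\dots,\Gamma_m;\Gamma_{m+1},\dots,\Gamma_N)$, that is, $\gamma=(\bar\Gamma_1,\dots,\bar\Gamma_m,\Gamma_{m+1},\dots,\Gamma_N)$. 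For this choice, applying the Pieri operators of Theorems~\ref{thm:esbar} and \ref{pinball2} in the order dictated by $\gamma$ amounts to multiplying by $\tilde h_{\Gamma_1}\cdots\tilde h_{\Gamma_m}h_{\Gamma_{m+1}}\cdots h_{\Gamma_N}$, which is exactly $h_\Gamma$ by the definition of the homogeneous basis in superspace. Comparing $h_\Gamma=\sum_\Lambda \bar K_{\Lambda/\emptyset,\gamma}\,s_\Lambda^*$ with the defining relation $h_\Gamma=\sum_\Lambda \bar K_{\Lambda\Gamma}\,s_\Lambda^*$ from \eqref{h2snew}, and using that the $s_\Lambda^*$ form a basis, gives $\bar K_{\Lambda\Gamma}=\bar K_{\Lambda/\emptyset,\gamma}$. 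Since this $\gamma$ is precisely the weight used to define $\bar K_{\Lambda/\emptyset,\Gamma}$ in Proposition~\ref{propsymfun}, the two coefficients coincide.

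I do not expect a serious obstacle here: the corollary is essentially a bookkeeping consequence of Proposition~\ref{propsymfun} combined with the definition of the Kostka coefficients. The one point requiring care is the matching of ordering conventions, namely checking that the sorted weight $(\bar\Gamma_1,\dots,\bar\Gamma_m,\Gamma_{m+1},\dots,\Gamma_N)$ reproduces the product $h_\Gamma$ with the fermionic factors $\tilde h$ preceding the bosonic factors $h$, in accordance with $h_\Lambda=\tilde h_{\Lambda_1}\cdots\tilde h_{\Lambda_m}h_{\Lambda_{m+1}}\cdots h_{\Lambda_\ell}$. The sign is automatically correct, because the inversion statistic defining the sign $(-1)^{{\rm inv}(T)}$ of an $s$-tableau is exactly the product of the fermionic strip signs $(-1)^{\#(\Omega,\Lambda)}$ appearing in \eqref{eqtwo} and hence encoded in \eqref{honst}. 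With the identification $\bar K_{\Lambda/\emptyset,\Gamma}=\bar K_{\Lambda\Gamma}$ secured, the chain $s_{\Lambda/\emptyset}=\sum_\Gamma \bar K_{\Lambda\Gamma}\,m_\Gamma=s_\Lambda$ completes the argument.
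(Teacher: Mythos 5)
Your proposal is correct and follows essentially the same route as the paper: the paper's proof likewise deduces $\bar K_{\Lambda/\emptyset,\Gamma}=\bar K_{\Lambda\Gamma}$ by comparing \eqref{honst} (with $\Omega=\emptyset$) against the defining relation \eqref{h2snew}, and then invokes \eqref{s2m}. You merely spell out the weight-to-$h_\Gamma$ bookkeeping that the paper leaves implicit.
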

\begin{proof}  We have by \eqref{honst} and \eqref{h2snew} that $\bar K_{\Lambda/\emptyset, \Gamma}= \bar K_{\Lambda \Gamma}$.
The corollary then follows from \eqref{s2m}.
\end{proof}
We thus obtain the monomial expansion of  $s_{(3,1;2,1,1)}$ by listing every filling of the shape $(3,1;2,1,1)$ whose weight corresponds to a superpartition:

$$
\scriptsize{
{\tableau[scY]{1&1&1&\bl\tcercle{1}\\2&6\\3&\bl\tcercle{2} \\ 4\\5 }}\qquad
{\tableau[scY]{1&1&1&\bl\tcercle{1}\\2&5\\3&\bl\tcercle{2} \\ 4\\6 }}\qquad
{\tableau[scY]{1&1&1&\bl\tcercle{1}\\2&4\\3&\bl\tcercle{2} \\ 5\\6 }}\qquad
{\tableau[scY]{1&1&1&\bl\tcercle{1}\\2&3\\3&\bl\tcercle{2} \\ 4\\5 }}
}
$$
Therefore, $s_{(3,1;2,1,1)}=3\, m_{(3,1;1,1,1,1)}+m_{(3,1;2,1,1)}$ since there are 3 tableaux of weight
$(\bar 3,\bar 1, 1,1,1,1 )$ and one tableau of weight $(\bar 3, \bar 1,2,1,1)$.  We stress that
we don't have an easy criteria in general to determine whether a given filling is a valid tableau.  
However, in the example above, the rules for constructing tableaux immediately imply that we need to have
three non-circled 1's and one non-circled 2 (otherwise the circled 2 could never be in the second
column).  Then there are very few possibilities to fill the rest of the tableau with a weight corresponding to a superpartition. The case of weight
$(\bar 3,\bar 1, 1,1,1,1 )$ where a 3 is above the circled 2 is not allowed since again this would prevent the circled 2 from being in the
second column.

Note that when $\Omega \neq \emptyset$, the coefficient $\bar K_{\Lambda/\Omega, \Gamma}$ can be a negative integer.

\subsection{$\bar s$-tableaux} 
By \eqref{h2snew}, 
the tableaux needed to represent the Schur function
in superspace $\bar s_\Lambda$ are this time those stemming from the Pieri rules associated to the multiplication 
of $\bar s_\Lambda^*$ by $h_r$ or $\tilde h_r$ given in Theorem~\ref{thm:es} and \ref{thm:ets}.
We say that the sequence $\Omega=\Lambda_{(0)},\Lambda_{(1)},\dots, \Lambda_{(n)}=\Lambda$ is an $\bar s$-tableau 
of shape $\Lambda/\Omega$ and weight $(\alpha_1,\dots,\alpha_n)$, where 
$\alpha_i\in \mathbb \{0,\bar 0, 1,\bar 1,2,\bar 2,\dots \}$, if $\Omega=\Lambda_{(i)}$ and $\Lambda=\Lambda_{(i-1)}$ 
obey the conditions of Theorem~\ref{thm:es} with $\ell=\alpha_i$ whenever $\alpha_i$ is bosonic or
 the conditions of Theorem~\ref{thm:ets} 
with $\ell=|\alpha_i|$ whenever $\alpha_i$ is fermionic.  
An $\bar s$-tableau can be represented by a diagram constructed recursively in the following way: 
\begin{enumerate}
\item  the cells of $\Lambda_{(i)}^*/\Lambda_{(i-1)}^*$ are filled with the letter $i$.  In 
the fermionic case, the new circle is also filled with a letter $i$ 
\item the circles of $\Lambda_{(i-1)}$ that moved along a column or a row keep their fillings.
\end{enumerate}
As is the case for $s$-tableaux, the sign of an $\bar s$-tableau $T$ is equal to $(-1)^{{\rm inv} (T)}$, where ${{\rm inv} (T)}$ is 
the number of inversions of the word obtained by reading the filling of the circles from top to bottom.

It is important to realize that the sequence  $\Omega=\Lambda_{(0)},\Lambda_{(1)},\dots, \Lambda_{(n)}=\Lambda$ can be recovered from the diagram.  We obtain the
diagram corresponding to $\Omega=\Lambda_{(0)},\Lambda_{(1)},\dots, \Lambda_{(n-1)}$ by removing the letters $n$ from the diagram 
(including, possibly, the circled one), and by moving the circled letters one cell above if there is a letter $n$ above them
or to their left if there are letters $n$ to their left and none above them.  For instance, if one considers the $\bar s$-tableau $T$ below of weight $(3,\bar 1, \bar 2,1,\bar 3,5,\bar 0)$, the sequence of superpartitions associated to it can then be recovered by stripping successively the tableaux of their largest letter:

\medskip

{\scriptsize
\begin{tabular}{llllllll} 
$T={\tableau[scY]{1&1&1&5 & 6 & 6 & \bl \tcercle{5}  \\2&3&4 & 6 & \bl \tcercle{7} \\3&5& 6 & \bl \tcercle{3}\\5 & 6 & \bl\tcercle{2}\\}} \! \! \! \rightarrow$&
${\tableau[scY]{1&1&1&5 & 6 & 6 & \bl \tcercle{5}  \\2&3&4 & 6 \\3&5& 6 & \bl \tcercle{3}\\5 & 6 & \bl\tcercle{2}\\}}
\! \! \! \rightarrow$&
${\tableau[scY]{1&1&1&5 &\bl \tcercle{5}  \\2&3&4 & \bl \tcercle{3}\\3&5&\bl\tcercle{2} \\5 \\}}\!\! \rightarrow$&
${\tableau[scY]{1&1&1 & \bl \tcercle{3}  \\2&3 & 4 \\3&\bl\tcercle{2}\\ }} \! \rightarrow$&
${\tableau[scY]{1&1&1 &\bl \tcercle{3}  \\2&3  \\ 3 & \bl\tcercle{2}\\ }}\! \rightarrow$&
${\tableau[scY]{1&1 & 1 \\2 &\bl\tcercle{2} \\  }}\, \, \rightarrow$&
${\tableau[scY]{1 &1 &1 \\  }}$&
\end{tabular}}

We should stress that there is no immediate criteria to determine whether a given filling of a shape is a valid tableau.  It is only after checking that every removal of a letter corresponds to an application of a Pieri rule that we know that the filling is valid.  In the example above, removing the 6 corresponds to
an application of the Pieri rule $h_5$ since the the 6's form a horizontal strip and when moving 
the circled 2 and 3 above and the circled 5 to its left there are no collisions (two circles in the same row or
column).  Similarly, removing the 5's corresponds to the Pieri rule $\tilde h_3$ since the 5's form a horizontal strip with the circled one being the rightmost and when moving the circled 2 to its left and
the circled 3 above there are no collisions.

\smallskip

As was done in the previous subsection, define the Schur function in superspace $\bar s_{\Lambda/\Omega}$ as
\begin{equation}
\bar s_{\Lambda/\Omega} = \sum_{T} (-1)^{{\rm sign}(T)} (x\theta)^T
\end{equation}
where the sum is over all $\bar s$-tableaux of shape $\Lambda/\Omega$. 

The proof of the next proposition and its corollary are as in the previous subsection.
\begin{proposition}  $\bar s_{\Lambda/\Omega}$ is a symmetric function in superspace.  Moreover,
\begin{equation}
\bar s_{\Lambda/\Omega}= \sum_{\Gamma}  K_{\Lambda/\Omega, \Gamma} \, m_\Gamma 
\end{equation}
where $K_{\Lambda/\Omega, \Gamma}=\sum_T  (-1)^{{\rm sign}(T)}$, the sum over 
 all $\bar s$-tableaux $T$ of weight 
$$(\bar \Gamma_1,\dots,\bar \Gamma_m,\Gamma_{m+1},\dots,\Gamma_N)  \qquad \text{\rm for} \quad \Gamma=(\Gamma_1,\dots, \Gamma_m;\Gamma_{m+1},\dots,\Gamma_N)$$ 
\end{proposition}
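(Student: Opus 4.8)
The plan is to follow verbatim the strategy of Proposition~\ref{propsymfun}, simply replacing the Pieri rules of Theorems~\ref{thm:esbar} and \ref{pinball2} by those of Theorems~\ref{thm:es} and \ref{thm:ets}, which are precisely the rules built into the definition of an $\bar s$-tableau. First I would note that, by definition, the coefficient of $(x\theta)^\gamma$ in $\bar s_{\Lambda/\Omega}$ equals $\sum_T (-1)^{{\rm sign}(T)}$, the sum over all $\bar s$-tableaux $T$ of shape $\Lambda/\Omega$ and weight $\gamma$. Since each step $\Lambda_{(i-1)}\to\Lambda_{(i)}$ of such a tableau is an instance of Theorem~\ref{thm:es} (when $\alpha_i$ is bosonic) or Theorem~\ref{thm:ets} (when $\alpha_i$ is fermionic), iterating these rules yields
\begin{equation}
\bar s_{\Omega}^* \, h_\gamma = \sum_{\Lambda} K_{\Lambda/\Omega,\gamma}\, \bar s_{\Lambda}^*,
\end{equation}
where $h_\gamma$ denotes the product of the generators $h_{|\alpha_i|}$ or $\tilde h_{|\alpha_i|}$ dictated by the types of the entries of $\gamma$, and $K_{\Lambda/\Omega,\gamma}$ is exactly the signed tableau count above.

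The core of the argument is then to read off the superspace symmetry from the (anti)commutation relations of the generators. The bosonic generators $h_r$ commute with everything, whereas the fermionic generators $\tilde h_r$, having fermionic degree one, anticommute among themselves. Hence, if $\beta$ is obtained from $\gamma$ by interchanging two entries, then $K_{\Lambda/\Omega,\beta}=-K_{\Lambda/\Omega,\gamma}$ when both interchanged entries are fermionic, and $K_{\Lambda/\Omega,\beta}=K_{\Lambda/\Omega,\gamma}$ otherwise. These are precisely the sign rules obeyed by the coefficients of a symmetric superpolynomial expanded in the monomials $(x\theta)^\gamma$, so $\bar s_{\Lambda/\Omega}$ is symmetric in superspace.

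For the monomial expansion I would then restrict to sorted weights $\gamma=(\bar\Gamma_1,\dots,\bar\Gamma_m,\Gamma_{m+1},\dots,\Gamma_N)$ with $\Gamma$ a superpartition. Because the coefficient of this distinguished monomial in $m_\Gamma$ is $1$ and the $m_\Gamma$ form a basis of the symmetric superpolynomials, the symmetry just established forces $\bar s_{\Lambda/\Omega}=\sum_\Gamma K_{\Lambda/\Omega,\Gamma}\,m_\Gamma$, with $K_{\Lambda/\Omega,\Gamma}$ the signed count of $\bar s$-tableaux of that weight. The step I expect to require the most care is the fermionic sign bookkeeping: one must verify that the sign $(-1)^{{\rm sign}(T)}$ attached to a tableau, together with the left-to-right ordering convention on the $\theta_j$ in $(x\theta)^T$, agrees with the sign generated by reordering the $\tilde h$-factors, so that the anticommutation of the $\tilde h_r$ indeed produces the claimed antisymmetry of $K_{\Lambda/\Omega,\gamma}$. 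Apart from this, the argument is formally identical to that of Proposition~\ref{propsymfun}.
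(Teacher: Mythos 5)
Your proposal is correct and follows essentially the same route as the paper, which itself proves this proposition by declaring the argument identical to that of Proposition~\ref{propsymfun} with the Pieri rules of Theorems~\ref{thm:es} and \ref{thm:ets} in place of those of Theorems~\ref{thm:esbar} and \ref{pinball2}. Your identification of the fermionic sign bookkeeping as the delicate point is apt, but the argument you outline is exactly the intended one.
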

\begin{corollary}  \label{coroschurb}
We have that $\bar s_{\Lambda/\emptyset}=\bar s_{\Lambda}$.  Hence
\begin{equation}
\bar s_{\Lambda} = \sum_{T} (-1)^{{\rm sign}(T)} (x\theta)^T
\end{equation}
where the sum is over all $\bar s$-tableaux in superspace of shape $\Lambda$. 
\end{corollary}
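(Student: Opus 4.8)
The plan is to follow verbatim the argument used for Corollary~\ref{coroschur} in the preceding subsection, transporting every statement about $s$-tableaux and the coefficients $\bar K$ to its $\bar s$-counterpart. The starting point is the $\bar s$-analog of \eqref{honst}, namely
\begin{equation*}
\bar s_{\Omega}^* \, h_\gamma = \sum_{\Lambda} K_{\Lambda/\Omega,\, \gamma} \, \bar s_{\Lambda}^* ,
\end{equation*}
which encodes, exactly as in the $s$ case, the fact that iterating the Pieri rules of Theorem~\ref{thm:es} (for each bosonic part of $\gamma$) and Theorem~\ref{thm:ets} (for each fermionic part of $\gamma$) on $\bar s_\Omega^*$ produces a signed sum of $\bar s_\Lambda^*$ whose coefficient is precisely the signed count $K_{\Lambda/\Omega,\gamma}$ of $\bar s$-tableaux of shape $\Lambda/\Omega$ and weight $\gamma$. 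This identity is the content of the preceding (unlabeled) proposition, which I would take as established.

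First I would specialize this identity to $\Omega=\emptyset$. Since $\bar s_{\emptyset}^*=1$, it reads $h_\gamma = \sum_{\Lambda} K_{\Lambda/\emptyset,\gamma}\,\bar s_{\Lambda}^*$. Comparing with the defining relation $h_\Lambda = \sum_{\Omega} K_{\Omega \Lambda}\,\bar s_{\Omega}^*$ of \eqref{h2snew} and invoking the linear independence of $\{\bar s_\Lambda^*\}$, I obtain the key identification
\begin{equation*}
K_{\Lambda/\emptyset,\,\Gamma} = K_{\Lambda\Gamma} .
\end{equation*}

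Next I would substitute $\Omega=\emptyset$ into the monomial expansion of $\bar s_{\Lambda/\Omega}$ furnished by the preceding proposition, which yields $\bar s_{\Lambda/\emptyset} = \sum_\Gamma K_{\Lambda/\emptyset,\Gamma}\,m_\Gamma = \sum_\Gamma K_{\Lambda\Gamma}\,m_\Gamma$. By the second formula of \eqref{s2m}, the right-hand side is exactly the monomial expansion of $\bar s_\Lambda$, so $\bar s_{\Lambda/\emptyset}=\bar s_\Lambda$. Feeding this back into the tableaux definition of $\bar s_{\Lambda/\Omega}$ at $\Omega=\emptyset$ gives the stated generating-function formula $\bar s_\Lambda = \sum_T (-1)^{{\rm sign}(T)}(x\theta)^T$ over all $\bar s$-tableaux of shape $\Lambda$.

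The only step demanding genuine care is the verification of the $\bar s$-analog of \eqref{honst}, and within it the correct bookkeeping of signs: because Theorem~\ref{thm:ets} carries a factor $(-1)^{\#(\Omega,\Lambda)}$ at each fermionic step, one must check that the product of these signs over the fermionic steps of a tableau matches the global $(-1)^{{\rm sign}(T)}$ read off from the word of circle-fillings. This sign-matching---already absorbed into the preceding proposition by the same commutation/anticommutation argument used in Proposition~\ref{propsymfun}---is the crux; everything else is the formal, index-matching bookkeeping identical to the $s$-tableaux subsection.
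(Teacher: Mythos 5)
Your argument is correct and is essentially the paper's own proof: the paper likewise identifies $K_{\Lambda/\emptyset,\Gamma}=K_{\Lambda\Gamma}$ by comparing the $\bar s$-analog of \eqref{honst} at $\Omega=\emptyset$ with \eqref{h2snew}, and then concludes via the second formula of \eqref{s2m}, with the sign bookkeeping already absorbed into the preceding proposition exactly as you describe. No further comment is needed.
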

The monomial expansion of $\bar s_{(2,0;3)}$ is thus obtained by listing every filling of the shape $(2,0;3)$ whose weight is that of a superpartition
$$
\scriptsize{
{\tableau[scY]{1&4&6\\3&5&\bl\tcercle{1}\\ \bl\tcercle{2}  }}\,\, \, 
{\tableau[scY]{1&4&5\\3&6&\bl\tcercle{1}\\ \bl\tcercle{2}  }}\,\, \, 
{\tableau[scY]{1&3&6\\4&5&\bl\tcercle{1}\\ \bl\tcercle{2}  }}\qquad
{\tableau[scY]{1&3&4\\3&5&\bl\tcercle{1}\\ \bl\tcercle{2}  }} \,\, \, 
{\tableau[scY]{1&3&5\\3&4&\bl\tcercle{1}\\ \bl\tcercle{2}  }}\qquad
{\tableau[scY]{1&3&4\\3&4&\bl\tcercle{1}\\ \bl\tcercle{2}  }}\qquad
{\tableau[scY]{1&3&3\\3&4&\bl\tcercle{1}\\ \bl\tcercle{2}  }}\qquad
{\tableau[scY]{1&1&5\\3&4&\bl\tcercle{1}\\ \bl\tcercle{2}  }}\qquad
{\tableau[scY]{1&1&4\\3&3&\bl\tcercle{1}\\ \bl\tcercle{2}  }}\qquad
{\tableau[scY]{1&1&3\\3&3&\bl\tcercle{1}\\ \bl\tcercle{2}  }}
}
$$
Hence,
$$\bar s_{(2,0;3)}=3\, m_{(1,0;1,1,1,1)}+2\,m_{(1,0;2,1,1)}+m_{(1,0;2,2)}+m_{(1,0;3,1)}+m_{(2,0;1,1,1)}+m_{(2,0;2,1)}+m_{(2,0;3)}\, .$$ 
As was mentioned before, 
we don't have an easy criteria in general to determine whether a given filling is a valid tableau.  For instance, the tableau
$\scriptsize{
{\tableau[scY]{1&3&5\\4&6&\bl\tcercle{1}\\ \bl\tcercle{2}  }}}$ is not valid because the circled 1 and
the circled 2 collide at the moment of removing letter 4:
$$
\scriptsize{
{\tableau[scY]{1&3&5\\4&6&\bl\tcercle{1}\\ \bl\tcercle{2}  }}} \quad  \rightarrow \quad \scriptsize{
{\tableau[scY]{1&3&5\\4&\bl\tcercle{1}\\ \bl\tcercle{2}  }}} \quad \rightarrow \quad \scriptsize{
{\tableau[scY]{1&3\\4&\bl\tcercle{1}\\ \bl\tcercle{2}  }}} 
$$

We note that, as is the case for $\bar K_{\Lambda/\Omega, \Gamma}$,
the coefficient $K_{\Lambda/\Omega, \Gamma}$ can be a negative integer when $\Omega \neq \emptyset$.

\begin{remark} \label{remarktableaux} 
Combinatorial formulas, as
tableau generating series, for the monomial expansions of
$s_\Lambda$ and $\bar s_{\Lambda}$ are conjectured in \cite{BM}.  Their formulas essentially only 
concern the cases when the fermionic Pieri rules are applied last, that is, when the weights are of the type
$(a_1,...,a_\ell,\bar b_1,\dots,\bar b_m)$.  In those cases, their sums are cancellation free while 
ours are not (we have cancellation free sums when the fermionic Pieri rules are applied first).
We have not tried to prove their conjectures in this article, even though it would be interesting to
understand their combinatorial formula for $\bar s_\Lambda$ since it involves a subset of the $\bar s$-tableaux. 
 \end{remark}

\section{Further properties of skew Schur functions in superspace}
In this section, we first show that skew Schur functions in superspace can be obtained from Schur functions in superspace by taking their
adjoint with respect to the scalar product 
$\langle \! \langle \cdot \, , \cdot \rangle \! \rangle $.  We then connect the generalization to superspace of the
Littlewood-Richardson coefficients to skew Schur functions in superspace.  
These basic properties of skew Schur functions in superspace 
generalize well known properties in the classical case ($m=0$).

\begin{corollary} We have
\begin{equation}
\langle \! \langle s_{\Omega}^* \, f, s_{\Lambda} \rangle \! \rangle =
\langle \! \langle f , s_{\Lambda/\Omega} \rangle \! \rangle   \qquad {\rm and} \qquad
 \langle \! \langle   \bar s_{\Omega}^* \, f, \bar s_{\Lambda} \rangle \! \rangle =
\langle \! \langle f , \bar s_{\Lambda/\Omega} \rangle \! \rangle
\end{equation}
for all symmetric functions in superspace $f$.
\end{corollary}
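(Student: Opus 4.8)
The plan is to reduce the identity to a single pairing computation by bilinearity of the scalar product and then to observe that both sides compute the same skew Kostka coefficient $\bar K_{\Lambda/\Omega,\Gamma}$. Since $\langle \! \langle \cdot , \cdot \rangle \! \rangle$ is bilinear and the homogeneous functions $\{ h_\Gamma \}$, indexed by superpartitions $\Gamma$, form a basis (being dual to the monomial basis by \eqref{hmdual}), it suffices to prove the first identity when $f = h_\Gamma$.

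First I would evaluate the left-hand side. Taking $f = h_\Gamma$, the Pieri expansion \eqref{honst} gives $s_\Omega^* \, h_\Gamma = \sum_{\Lambda'} \bar K_{\Lambda'/\Omega, \Gamma} \, s_{\Lambda'}^*$, where the weight attached to $\Gamma=(\Gamma_1,\dots,\Gamma_m;\Gamma_{m+1},\dots)$ is exactly the one entering the tableau definition of $\bar K_{\Lambda/\Omega,\Gamma}$ in Proposition~\ref{propsymfun}, because $h_\Gamma = \tilde h_{\Gamma_1}\cdots \tilde h_{\Gamma_m} h_{\Gamma_{m+1}}\cdots$ records precisely its fermionic and bosonic parts. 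Pairing against $s_\Lambda$ and using the duality $\langle \! \langle s_{\Lambda'}^*, s_\Lambda \rangle \! \rangle = \delta_{\Lambda' \Lambda}$ that defines $s_\Lambda^*$, the sum collapses to $\langle \! \langle s_\Omega^* \, h_\Gamma, s_\Lambda \rangle \! \rangle = \bar K_{\Lambda/\Omega, \Gamma}$. For the right-hand side I would use the monomial expansion $s_{\Lambda/\Omega} = \sum_{\Gamma'} \bar K_{\Lambda/\Omega, \Gamma'} \, m_{\Gamma'}$ of Proposition~\ref{propsymfun}, so that pairing with $h_\Gamma$ and invoking \eqref{hmdual} in the form $\langle \! \langle h_\Gamma, m_{\Gamma'} \rangle \! \rangle = \delta_{\Gamma \Gamma'}$ gives $\langle \! \langle h_\Gamma, s_{\Lambda/\Omega} \rangle \! \rangle = \bar K_{\Lambda/\Omega, \Gamma}$. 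The two sides agree, and extending by bilinearity over $f$ yields the first identity for all symmetric functions in superspace. The second identity follows by the identical argument, using the $\bar s$-tableau analogue $\bar s_\Omega^* \, h_\Gamma = \sum_{\Lambda'} K_{\Lambda'/\Omega,\Gamma}\,\bar s_{\Lambda'}^*$ coming from Theorems~\ref{thm:es} and \ref{thm:ets} (established as in the previous subsection), the duality of $\bar s_\Lambda^*$ with $\bar s_\Lambda$, and the monomial expansion of $\bar s_{\Lambda/\Omega}$.

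This is the classical adjointness of skew Schur functions, and no step is genuinely difficult. The only point that requires care is bookkeeping: one must confirm that the coefficient $\bar K_{\Lambda/\Omega,\Gamma}$ produced by the Pieri expansion \eqref{honst} is literally the same signed tableau count $\sum_T (-1)^{\mathrm{inv}(T)}$ that defines the monomial expansion of $s_{\Lambda/\Omega}$, including the fermionic signs. This identification is already built into the definition of the $s$-tableaux and of $s_{\Lambda/\Omega}$ in Section~7, so the main obstacle is the alignment of weight and sign conventions rather than any new computation.
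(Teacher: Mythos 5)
Your proposal is correct and follows essentially the same route as the paper's own proof: reduce to $f=h_\Gamma$ by bilinearity, evaluate the left-hand side via the Pieri expansion \eqref{honst} together with the duality of Proposition~\ref{propdual}, and evaluate the right-hand side via the monomial expansion of Proposition~\ref{propsymfun} and the duality \eqref{hmdual}, both yielding $\bar K_{\Lambda/\Omega,\Gamma}$. Nothing further is needed.
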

\begin{proof}
Since the two cases are similar, we will only prove the first formula.  It suffices to consider that $f=h_{\Gamma}$ since the $h_\Lambda$'s form a basis of the space.  From Proposition~\ref{propsymfun} and the duality \eqref{hmdual} between the 
$h_\Lambda$ and $m_\Lambda$ bases, we have
\begin{equation}
\langle \! \langle h_\Gamma , s_{\Lambda/\Omega} \rangle \! \rangle =  \bar K_{\Lambda/\Omega, \Gamma} 
\end{equation}
On the other hand, using \eqref{honst}, we get from Proposition~\ref{propdual} that
\begin{equation}
\langle \! \langle  s_{\Omega}^* \, h_{\Gamma},
  s_{\Lambda} \rangle \! \rangle =  \langle \! \langle  \sum_{\Delta} \bar K_{\Delta/\Omega, \Gamma} \, s_{\Delta}^*,
  s_{\Lambda} \rangle \! \rangle =
\bar K_{\Lambda/\Omega, \Gamma} 
\end{equation}
and the result follows.
\end{proof}
Define $\bar c^\Lambda_{\Gamma \Omega}$ and $c^\Lambda_{\Gamma \Omega}$ to be respectively such that 
\begin{equation} 
\bar s_{\Gamma}\,  \bar s_{\Omega} = \sum_{\Lambda} \bar c^\Lambda_{\Gamma \Omega}\,  \bar s_{\Lambda}
 \qquad {\rm and} \qquad    
s_{\Gamma}\,  s_{\Omega} = \sum_{\Lambda} c^\Lambda_{\Gamma \Omega}\,  s_{\Lambda}    
\end{equation}
It is immediate from the (anti-)commutation relations between the Schur functions in superspace
 that if $\Gamma$ and $\Omega$ are respectively of fermionic degrees $a$ and $b$, then 
$\bar c^\Lambda_{\Gamma \Omega}=(-1)^{ab} \,  \bar c^\Lambda_{\Omega \Gamma}$ and 
$c^\Lambda_{\Gamma \Omega}=(-1)^{ab}  \, c^\Lambda_{\Omega \Gamma}$.  Even though $\bar c^\Lambda_{\Gamma \Omega}$ and 
$c^\Lambda_{\Gamma \Omega}$ are not always nonnegative from these relations, 
we can consider them as generalizations to superspace
of the Littlewood-Richardson coefficients. 

 We now extend to superspace the well-known connection between 
Littlewood-Richardson coefficients and skew Schur functions.
\begin{proposition} We have
\begin{equation}
s_{\Lambda/\Omega} =  \sum_{\Gamma}  \bar c^{\Lambda'}_{\Gamma' \Omega'} \, s_\Gamma
\qquad {\rm and} \qquad  
\bar s_{\Lambda/\Omega} = \sum_{\Gamma}  c^{\Lambda}_{\Omega \Gamma} \, \bar s_\Gamma
\end{equation}
Furthermore, 
 $c^\Lambda_{\Omega \Gamma}=c^{\Lambda'}_{\Gamma' \Omega'}$.

\end{proposition}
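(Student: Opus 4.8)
The plan is to establish the conjugation symmetry $c^\Lambda_{\Omega\Gamma}=c^{\Lambda'}_{\Gamma'\Omega'}$ first, and then to read off both skew expansions from the adjointness identity of the preceding corollary together with the duality of Proposition~\ref{propdual}. Throughout I write $p$ and $q$ for the fermionic degrees of $\Omega$ and $\Gamma$, recall that conjugation preserves fermionic degree, and note that the only contributing terms have $\Lambda$ of fermionic degree $c:=p+q$. For the symmetry I would apply the ring homomorphism $\varphi$ to $s_\Gamma\, s_\Omega=\sum_\Lambda c^\Lambda_{\Gamma\Omega}s_\Lambda$; since $\varphi(s_\Lambda)=(-1)^{\binom{m}{2}}s_{\Lambda'}$ by Corollary~\ref{corosdual} and $\binom{p+q}{2}=\binom{p}{2}+\binom{q}{2}+pq$, comparing coefficients of $s_{\Lambda'}$ gives $c^\Lambda_{\Gamma\Omega}=(-1)^{pq}c^{\Lambda'}_{\Gamma'\Omega'}$, and the stated anticommutation $c^\Lambda_{\Gamma\Omega}=(-1)^{pq}c^\Lambda_{\Omega\Gamma}$ then yields the desired identity.

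For the first expansion I would write $s_{\Lambda/\Omega}=\sum_\Gamma a_\Gamma s_\Gamma$ and extract $a_\Gamma=\langle \! \langle s_\Gamma^*,s_{\Lambda/\Omega}\rangle \! \rangle$ using that $\{s_\Gamma^*\}$ is dual to $\{s_\Gamma\}$. The adjointness identity turns this into $a_\Gamma=\langle \! \langle s_\Omega^*\, s_\Gamma^*,s_\Lambda\rangle \! \rangle$. I would then rewrite $s_\Omega^*=(-1)^{\binom{p}{2}}\omega\bar s_{\Omega'}$ and $s_\Gamma^*=(-1)^{\binom{q}{2}}\omega\bar s_{\Gamma'}$ from Proposition~\ref{propdual}; as $\omega$ is an algebra homomorphism their product equals $(-1)^{\binom{p}{2}+\binom{q}{2}}\omega(\bar s_{\Omega'}\bar s_{\Gamma'})=(-1)^{\binom{p}{2}+\binom{q}{2}}\sum_\Delta\bar c^\Delta_{\Omega'\Gamma'}\,\omega\bar s_\Delta$. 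Inverting Proposition~\ref{propdual} once more gives $\omega\bar s_\Delta=(-1)^{\binom{c}{2}}s_{\Delta'}^*$, so that after pairing with $s_\Lambda$ only $\Delta'=\Lambda$ survives and $a_\Gamma=(-1)^{\binom{p}{2}+\binom{q}{2}+\binom{c}{2}}\bar c^{\Lambda'}_{\Omega'\Gamma'}$. The binomial identity collapses this sign to $(-1)^{pq}$, which is cancelled by the anticommutation $\bar c^{\Lambda'}_{\Omega'\Gamma'}=(-1)^{pq}\bar c^{\Lambda'}_{\Gamma'\Omega'}$, leaving $a_\Gamma=\bar c^{\Lambda'}_{\Gamma'\Omega'}$ as claimed.

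The second expansion is entirely parallel. Writing $\bar s_{\Lambda/\Omega}=\sum_\Gamma b_\Gamma\bar s_\Gamma$ and using the second adjointness identity, $b_\Gamma=\langle \! \langle \bar s_\Omega^*\,\bar s_\Gamma^*,\bar s_\Lambda\rangle \! \rangle$. Replacing $\bar s_\Omega^*,\bar s_\Gamma^*$ by $(-1)^{\binom{p}{2}}\omega s_{\Omega'}$ and $(-1)^{\binom{q}{2}}\omega s_{\Gamma'}$, expanding $s_{\Omega'}s_{\Gamma'}=\sum_\Delta c^\Delta_{\Omega'\Gamma'}s_\Delta$, and using $\omega s_\Delta=(-1)^{\binom{c}{2}}\bar s_{\Delta'}^*$, the same pairing and sign cancellation give $b_\Gamma=c^{\Lambda'}_{\Gamma'\Omega'}$, which by the symmetry established above equals $c^\Lambda_{\Omega\Gamma}$.

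I expect no serious obstacle: the argument is purely formal, and the only genuinely delicate point is keeping the sign bookkeeping consistent among the several $\binom{\cdot}{2}$ factors coming from Proposition~\ref{propdual} and its inverse and the anticommutation signs, while checking at each stage that the conjugations and fermionic degrees match (in particular that the surviving $\Lambda$ has fermionic degree $p+q$ and that conjugation preserves fermionic degree).
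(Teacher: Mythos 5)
Your proposal is correct and follows essentially the same route as the paper: the conjugation symmetry is obtained by applying $\varphi$ and invoking Corollary~\ref{corosdual} together with the anticommutation sign, and both skew expansions are extracted from the adjointness corollary combined with Proposition~\ref{propdual} and the involution $\omega$, with the identical sign bookkeeping $\binom{p}{2}+\binom{q}{2}+\binom{p+q}{2}\equiv pq \pmod 2$. The only cosmetic difference is that you read off the coefficients directly as scalar products rather than introducing the intermediate structure constants $d^\Lambda_{\Omega\Gamma}$ for $s_\Omega^*\,s_\Gamma^*$ as the paper does.
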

\begin{proof}
Suppose that $\Gamma$ and $\Omega$ are respectively of fermionic degrees $a$ and $b$. 
The symmetry $c^\Lambda_{\Omega \Gamma}=(-1)^{ab}  c^{\Lambda'}_{\Omega' \Gamma'}$ is from Corollary~\ref{corosdual}
an immediate consequence of applying the homomorphism $\phi$ on $s_{\Omega}\,  s_{\Gamma} = \sum_{\Lambda} c^\Lambda_{\Omega \Gamma}\,  s_{\Lambda}$.  In effect, applying the homomorphism 
gives
\begin{equation} \label{eqsign}
(-1)^{\binom{a}{2}+\binom{b}{2}} s_{\Omega'}\,  s_{\Gamma'} = (-1)^{\binom{a+b}{2}} \sum_{\Lambda} 
 c^\Lambda_{\Omega \Gamma}\,  s_{\Lambda'}   
\end{equation}
since $\Lambda$ is necessarily of fermionic degree $a+b$.  Hence
$s_{\Omega'}\,  s_{\Gamma'} = (-1)^{ab} \sum_{\Lambda} 
 c^\Lambda_{\Omega \Gamma}\,  s_{\Lambda'}$, which gives the symmetry
$c^\Lambda_{\Omega \Gamma}= (-1)^{ab} \, c^{\Lambda'}_{\Omega' \Gamma'}= c^{\Lambda'}_{\Gamma' \Omega'}$.

We now prove the first formula.   Using the previous corollary, we have
\begin{equation}
\langle \! \langle s_{\Omega}^*  s_{\Gamma}^*, s_{\Lambda} \rangle \! \rangle =
\langle \! \langle s_\Gamma^* , s_{\Lambda/\Omega} \rangle \! \rangle
\end{equation}
Therefore, if  $d^\Lambda_{\Omega \Gamma}$ is such that
\begin{equation} \label{eqss}
s_{\Omega}^*\,  s_{\Gamma}^* = \sum_{\Lambda} d^\Lambda_{\Omega \Gamma}\,  s_{\Lambda}^*    
\end{equation}
then we have by Proposition~\ref{propdual}
\begin{equation}
s_{\Lambda/\Omega} = \sum_{\Gamma} d^{\Lambda}_{\Omega \Gamma} \, s_\Gamma
\end{equation}
Applying $\omega$ on both sides of \eqref{eqss}, we get as in equation \eqref{eqsign},
\begin{equation}
(-1)^{\binom{a}{2}+\binom{b}{2}}\bar s_{\Omega'}\,  \bar s_{\Gamma'} = \sum_{\Lambda} 
(-1)^{\binom{a+b}{2}} d^\Lambda_{\Omega \Gamma}\,  \bar s_{\Lambda'}   
\end{equation}
Therefore
\begin{equation}
 d^\Lambda_{\Omega \Gamma} =  (-1)^{ab} \, 
c^{\Lambda'}_{\Omega' \Gamma'} =  \bar c^{\Lambda'}_{\Gamma' \Omega'} 
\end{equation}
and the first formula follows.

We can prove in a similar way that $\bar s_{\Lambda/\Omega} = \sum_{\Gamma}  c^{\Lambda'}_{\Gamma' \Omega'} \, \bar s_\Gamma$.
Using the symmetry $c^{\Lambda'}_{\Gamma' \Omega'}=c^{\Lambda}_{\Omega \Gamma}$, the second formula is then seen to hold.
\end{proof}

\begin{remark} \label{remark}
Somewhat surprisingly, the coefficient  $\bar c^\Lambda_{\Omega \Gamma}$ does not
have in general any symmetry under conjugation.
 For instance, it can be checked that
if $\Gamma=(1;)$, $\Omega=(0;)$ and $\Lambda=(1,0;)$ then 
$\bar c^{\Lambda}_{\Omega \Gamma}=1$ while
$\bar c^{\Lambda'}_{\Gamma' \Omega'}=0$.  
\end{remark}

\section{Large $m$ and $N$ limit}

Let $\Lambda$ be a superpartition of fermionic degree $m$.
Lemmas \ref{lems} and \ref{lemsb} state that
in the
identification \eqref{eqidenti}
between $\Lambda_N^m$ and $\mathbb Q [x_1,\dots,x_N]^{S_m \times S_{m^c}}$
we have
\begin{equation}
s_\Lambda \quad \longleftrightarrow \quad  (-1)^{\binom{m}{2}}
\partial_{\omega_m} \pi_{\omega_{m^c}} \hat K_{(\Lambda^a)^R,\Lambda^s}(x)
\end{equation}
and
\begin{equation}
\bar s_\Lambda \quad \longleftrightarrow \quad  
\partial_{\omega_{(N-m)^c}}' K_{(\Lambda^s)^R,\Lambda^a}(y)
\end{equation}
 where 
$y$ stands for the variables $y_1=x_N,y_2=x_{N-1},\dots,y_N=x_1$ and where
the $'$ indicates that the divided differences act on the $y$ variables.

Let $\mu=\Lambda^s$ and  $\lambda=\Lambda^a-\delta_m$, where $\delta_m=(m-1,m-2,\dots,0)$.
It was shown in \cite{BLM} that when $m \geq |\lambda|+|\mu|$  and $N-m \geq |\lambda|+|\mu|$ (the large $m$ and $N$ limit), the 
identification becomes
\begin{equation}
s_\Lambda \quad \longleftrightarrow \quad s_\lambda(x_1,\dots,x_m) s_\mu(x_{m+1},\dots,x_N)
\end{equation}
and
\begin{equation}
\bar s_\Lambda \quad \longleftrightarrow  \quad s_\lambda(x_1,\dots,x_N) s_\mu(x_{m+1},\dots,x_N)
\end{equation}
We thus have the following proposition which does not seem to have an easy proof in the Key world.
\begin{proposition}  Let $\mu=\Lambda^s$ and  $\lambda=\Lambda^a-\delta_m$, where $\delta_m=(m-1,m-2,\dots,0)$. 
If  $m \geq |\lambda|+|\mu|$  and $N-m \geq |\lambda|+|\mu|$ then
\begin{equation}
 (-1)^{\binom{m}{2}}
\partial_{\omega_m} \pi_{\omega_{m^c}} \hat K_{(\Lambda^a)^R,\Lambda^s}(x)=  s_\lambda(x_1,\dots,x_m) s_\mu(x_{m+1},\dots,x_N)
\end{equation}
and
\begin{equation}
\partial_{\omega_{m^c}} K_{(\Lambda^s)^R,\Lambda^a}(x_N,x_{N-1},\dots,x_1) =  s_\lambda(x_1,\dots,x_N) s_\mu(x_{m+1},\dots,x_N)
\end{equation}
\end{proposition}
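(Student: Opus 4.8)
The plan is to deduce both identities at once from the operator descriptions of Lemmas~\ref{lems} and~\ref{lemsb} together with the factorization proved in~\cite{BLM}, using only the fact that the identification~\eqref{eqidenti} attaches a single well-defined bisymmetric polynomial to each symmetric superpolynomial. No direct manipulation of the divided-difference operators is attempted: as the author stresses, such a proof ``in the Key world'' does not appear to be available, and the whole point is to bypass it.

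First I would note that, by Lemma~\ref{lems}, the left-hand side of the first equation is \emph{by definition} the image of $s_\Lambda$ under~\eqref{eqidenti}. The hypotheses $m\geq|\lambda|+|\mu|$ and $N-m\geq|\lambda|+|\mu|$ are precisely the large-$m$-and-$N$ regime in which~\cite{BLM} computes this same image to be $s_\lambda(x_1,\dots,x_m)\,s_\mu(x_{m+1},\dots,x_N)$. Since~\eqref{eqidenti} produces a \emph{unique} polynomial, the two descriptions of the image of $s_\Lambda$ must coincide, which is exactly the first identity. The second identity follows by running the same argument with $\bar s_\Lambda$ and Lemma~\ref{lemsb}: the operator expression $\partial_{\omega_{(N-m)^c}}' K_{(\Lambda^s)^R,\Lambda^a}(y)$ is the image of $\bar s_\Lambda$ under~\eqref{eqidenti}, while~\cite{BLM} gives that image as $s_\lambda(x_1,\dots,x_N)\,s_\mu(x_{m+1},\dots,x_N)$.

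The one step requiring genuine care is the change of variables $y_i=x_{N+1-i}$ in the second identity: I would check that $\partial_{\omega_{(N-m)^c}}'$ acting on the $y$-variables, applied to $K_{(\Lambda^s)^R,\Lambda^a}(y)$, really agrees with the form $\partial_{\omega_{m^c}}K_{(\Lambda^s)^R,\Lambda^a}(x_N,\dots,x_1)$ written in the statement, so that both sides genuinely name the image of $\bar s_\Lambda$. Beyond this bookkeeping I expect no obstacle, since all the nontrivial content---the factorization of the image in the stable range---is imported from~\cite{BLM}; the remaining work is merely to recognize, via Lemmas~\ref{lems} and~\ref{lemsb}, that each equation equates two descriptions of a single polynomial and then invoke the well-definedness of~\eqref{eqidenti}. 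This is exactly why the identity is opaque from the Key side: that a divided-difference image of a Key polynomial should equal a product of two ordinary Schur functions becomes transparent only after detouring through superspace.
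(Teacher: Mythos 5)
Your proposal is correct and is essentially identical to the paper's own argument: the paper likewise obtains the proposition immediately by combining the identifications of Lemmas~\ref{lems} and~\ref{lemsb} with the stable-range computation of the same images in~\cite{BLM}, invoking the well-definedness of~\eqref{eqidenti}. The only point you flag for care (the relabeling $y_i=x_{N+1-i}$) is exactly the bookkeeping the paper also leaves implicit.
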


\section{Cauchy formulas}
As is the case in symmetric function theory, the dualities of Proposition~\ref{propdual} translate into Cauchy type formulas.  
The one most relevant to this work is the following.  Given two bases $\{f_\Lambda\}_\Lambda$ and $\{g_\Lambda\}_\Lambda$ dual to each other with respect to the scalar product $\langle \! \langle \cdot \, , \cdot \rangle \! \rangle$, we have \cite{DLM1} 
\begin{equation}
\prod_{i,j}(1+x_i y_j + \theta_i \phi_j) =
\sum_\Lambda  (-1)^{\binom{m}{2}} f_\Lambda(x,\theta) \, \omega(g_{\Lambda})(y,\phi)
\end{equation}
where $m$ is the fermionic degree of $\Lambda$, and where the variables $y_1,y_2,\dots$ are ordinary variables while the variables $\phi_1,\phi_2,\dots$ are anticommuting (they also anticommute with the $\theta_i$'s).  Using Proposition~\ref{propdual}, we then get
\begin{equation}
\prod_{i,j}(1+x_i y_j + \theta_i \phi_j) = 
\sum_\Lambda s_\Lambda(x,\theta) \, \bar s_{\Lambda'}(y,\phi)
\end{equation}
The tableaux generating series of Propositions~\ref{coroschur} and ~\ref{coroschurb} suggest that there should exist a bijective proof of that formula using an extension to superspace of the dual Robinson-Schensted-Knuth algorithm \cite{Sta}.

\begin{acknow}
The authors would like to thank L.-F. Pr\'eville-Ratelle for his help in uncovering the Pieri rules of
Section~\ref{subsec41}.  The authors would also like to thank O. Blondeau-Fournier, P. Desrosiers and P. Mathieu for very fruitful discussions.
This work was supported by FONDECYT (Fondo Nacional de Desarrollo Cient\'{\i}fico y Tecnol\'ogico de Chile) postdoctoral grant \#{3130631} (M. J.) and regular grant \#{1130696} (L. L.).
\end{acknow}

\appendix

\section{}

In the appendix, we prove various technical results that were needed to prove the Pieri rules.  They range from 
the very elementary to the quite intricate (see Lemma~\ref{lemfinal} for instance).  Although the proofs of the
elementary ones may 
probably be found in the literature, we include them for completeness.
\begin{lemma} \label{lemreorder}
Using the notation of Section~\ref{secpieri}, we have
\begin{equation} 
K_{(\Lambda^s)^R,\Lambda^a}(x)=\pi_{\omega_{N-m}}\pi_{(N-m,\alpha_1)}\pi_{(N-m+1,\alpha_2)}\cdots\pi_{(N-1,\alpha_m)}x^{\Lambda^*}
\end{equation} where $\alpha_i$ is the row of the $i$-th circle (starting from the top) in $\Lambda$.
\end{lemma}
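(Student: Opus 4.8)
The plan is to prove the identity by working entirely on the right-hand side: writing $Q=\pi_{\omega_{N-m}}\pi_{(N-m,\alpha_1)}\pi_{(N-m+1,\alpha_2)}\cdots\pi_{(N-1,\alpha_m)}$ for the operator in question, I will show that $Q\,x^{\Lambda^*}=K_{(\Lambda^s)^R,\Lambda^a}(x)$. The basic tool is the elementary sorting property of the operators $\pi_i$ on Key polynomials, which follows at once from the recursive definition of $K_\eta$ together with $\pi_i^2=\pi_i$: for every composition $\gamma$ one has $\pi_i K_\gamma=K_\gamma$ when $\gamma_i\le\gamma_{i+1}$ and $\pi_i K_\gamma=K_{s_i\gamma}$ when $\gamma_i>\gamma_{i+1}$. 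In words, $\pi_i$ rearranges the entries in positions $i$ and $i+1$ of the indexing composition into weakly increasing order, acting as the identity when they coincide (here using that $K_\gamma$ is symmetric in $x_i,x_{i+1}$ whenever $\gamma_i=\gamma_{i+1}$; see \cite{Las}).

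I would first record the effect of a single block $\pi_{(b,a)}=\pi_b\pi_{b-1}\cdots\pi_a$. Applying its factors in the order $\pi_a,\pi_{a+1},\dots,\pi_b$ (right to left) and invoking the sorting property repeatedly shows that $\pi_{(b,a)}$ performs exactly one left-to-right bubble pass on positions $a,\dots,b+1$ of the indexing composition: it places the maximum of the values $\gamma_a,\dots,\gamma_{b+1}$ in position $b+1$ while shifting the remaining entries one step to the left in their original relative order. The degenerate convention $\pi_{(b,a)}=1$ for $b<a$ matches the empty pass. Likewise, iterating the sorting property along any reduced word for $\omega_{N-m}$ (a complete sorting network on positions $1,\dots,N-m$) shows that $\pi_{\omega_{N-m}}K_\gamma$ is indexed by the weakly increasing rearrangement of the entries in those positions.

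With these two facts I would apply the factors of $Q$ to $x^{\Lambda^*}=K_{\Lambda^*}$ from right to left, that is, process the circles from the bottom one $\alpha_m$ upward, tracking the indexing composition. The crux is a counting argument. When the block $\pi_{(N-m+i-1,\alpha_i)}$ is reached, the blocks already applied (those for $\alpha_m,\dots,\alpha_{i+1}$) act only on positions $\ge\alpha_{i+1}>\alpha_i$, so position $\alpha_i$ still carries $\Lambda^*_{\alpha_i}=\Lambda^a_i$, while the smaller circled parts $\Lambda^a_{i+1},\dots,\Lambda^a_m$ have already been bubbled out to positions $N-m+i+1,\dots,N$. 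Since the number of positions $\alpha_i+1,\dots,N-m+i$ equals the number of non-circled rows below $\alpha_i$, by the identity $N-m+i-\alpha_i=(N-\alpha_i)-(m-i)$, those positions hold precisely the non-circled parts coming from rows below $\alpha_i$; as $\Lambda^*$ is weakly decreasing these are all $\le\Lambda^*_{\alpha_i}=\Lambda^a_i$. Hence $\Lambda^a_i$ is a maximal entry of its window and the bubble pass moves it exactly to its target position $N-m+i$, where it is untouched by the remaining factors (which act on strictly smaller position ranges). After all $m$ blocks, positions $N-m+1,\dots,N$ carry $\Lambda^a_1,\dots,\Lambda^a_m$, and positions $1,\dots,N-m$ carry the non-circled parts in weakly decreasing order; the final factor $\pi_{\omega_{N-m}}$ reverses them into $(\Lambda^s)^R$. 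The indexing composition is therefore exactly $(\Lambda^s)^R,\Lambda^a$, which yields $Q\,x^{\Lambda^*}=K_{(\Lambda^s)^R,\Lambda^a}(x)$ and proves the lemma (the case $m=0$ is the degenerate instance in which no circle is moved and only the reversal takes place).

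I expect the main obstacle to be exactly the bookkeeping of this last paragraph: one must be sure that at every stage the entry designated to move is genuinely a maximum of its window, so that the bubble pass relocates the intended circled part rather than some larger entry, and that circled parts once placed are never disturbed. Both points rest on the counting identity above and on the monotonicity of $\Lambda^*$. A secondary, milder, point is the presence of equal parts, where a circled part may tie with a non-circled one; there the relevant $\pi_i$ simply act as the identity, and since tied entries carry the same value the resulting composition is unaffected.
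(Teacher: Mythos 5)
Your argument is correct and follows essentially the same route as the paper's proof: each block $\pi_{(N-m+i-1,\alpha_i)}$, processed from the bottom circle upward, cyclically shifts the circled entry $\Lambda^a_i$ out to position $N-m+i$, and the final $\pi_{\omega_{N-m}}$ sorts the surviving entries into $(\Lambda^s)^R$; your counting identity $N-m+i-\alpha_i=(N-\alpha_i)-(m-i)$ simply makes explicit why $\Lambda^a_i$ is maximal in its window, a point the paper leaves as ``easy to see.'' (One cosmetic remark: your general description of a bubble pass --- maximum to the end, ``remaining entries shift one step left'' --- is only accurate for entries sitting to the right of the maximum, but since in every application the maximum occupies the first position of the window, this is precisely the case you use.)
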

\begin{proof} By definition, we have $K_{\Lambda^*}(x)=x^{\Lambda^*}$.  It is then easy to see, again from the definition of Key polynomials, that $\pi_{(N-1,\alpha_m)} x^{\Lambda^*}=K_\gamma(x)$, where
$\gamma$ is the composition obtained by acting with
the cycle $(\alpha_m,N,N-1,\dots,\alpha_m+1)$ on the rows of $\Lambda^*$, that is, by sending row $\alpha_m$ to row
$N$, and shifting rows $\alpha_m+1$ to $N$ one row up.
  The action of $\pi_{(N-m,\alpha_1)}\pi_{(N-m+1,\alpha_2)}\cdots\pi_{(N-1,\alpha_m)}$
on $x^{\Lambda^*}$ thus produces $K_{\Lambda^s,\Lambda^a}(x)$, since $(\Lambda^s,\Lambda^a)$ corresponds to the composition obtained by sending row $\alpha_i$ to row
$N-m+i$, for $i=1,\dots,m$, and shifting the remaining rows up if necessary.  Finally, acting with $\pi_{\omega_{N-m}}$ on  $K_{\Lambda^s,\Lambda^a}(x)$
simply reorders the entries of $\Lambda^s$ in a non-decreasing way to produce $K_{(\Lambda^s)^R,\Lambda^a}(x)$.
\end{proof}

\begin{lemma} \label{lemapelx}
Using the notation of Section~\ref{secpieri}, we have
\begin{equation}
x^{\Lambda^*} e_\ell=
\sum_{i_1+i_2+\dots+i_k=\ell}
\boldsymbol{\pi}_{i_1,I_1}  \boldsymbol{\pi}_{i_2,I_2} \cdots \boldsymbol{\pi}_{i_k,I_k} 
x^{\Lambda^*} x^{(i_1,I_1)}x^{(i_2,I_2)}\cdots x^{(i_k,I_k)}
\end{equation}
\end{lemma}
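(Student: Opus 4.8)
The plan is to expand both sides as explicit sums of monomials and match them block by block. On the left, directly from the definition of the elementary symmetric function $e_\ell=e_\ell(x_1,\dots,x_N)$, we have $x^{\Lambda^*}e_\ell=\sum_{S}x^{\Lambda^*+\mathbf{1}_S}$, the sum being over all $\ell$-subsets $S$ of $\{1,\dots,N\}$ (here $\Lambda^*$ is viewed as a vector in $\mathbb{Z}_{\ge 0}^N$ with trailing zeros, $\mathbf{1}_S$ is the indicator of $S$, and with the convention that, if $\Lambda^*$ has fewer than $N$ parts, the trailing zero rows are grouped into a final block). The structural facts I would record first are that the blocks $I_1,\dots,I_k$ are pairwise separated intervals, so that each $\boldsymbol{\pi}_{i_j,I_j}$ only uses operators $\pi_c$ with $c$ and $c+1$ both in $I_j$; consequently operators attached to distinct blocks commute, and the portion of $x^{\Lambda^*}$ supported on a block $I_j$, namely $\prod_{c\in I_j}x_c^{a_j}$ (all parts in $I_j$ being equal to $a_j$), is symmetric in those variables and can be pulled outside $\boldsymbol{\pi}_{i_j,I_j}$ (since $\pi_c(fg)=f\,\pi_c(g)$ whenever $f$ is symmetric in $x_c,x_{c+1}$). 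These two remarks let me factor the right-hand side completely into a product over blocks.

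After this factorization, the statement reduces to a single per-block identity: for an interval $I=[a,b]$ of length $s=b-a+1$ and $0\le i\le s$,
\[
\boldsymbol{\pi}_{i,I}\,(x_a x_{a+1}\cdots x_{a+i-1})=e_i(x_a,x_{a+1},\dots,x_b).
\]
This is the heart of the argument. I would prove it by recognizing the left-hand side as a Key polynomial: the monomial $x_a\cdots x_{a+i-1}$ corresponds, in the block variables, to the dominant composition $(1^i,0^{s-i})$, and $\boldsymbol{\pi}_{i,I}$ is a reduced word (of length $i(s-i)$) for the Grassmannian permutation sorting $(1^i,0^{s-i})$ into the weakly increasing $(0^{s-i},1^i)$. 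Using the braid and commutation relations for the $\pi_c$, the recursive definition of $K_\eta$ then gives $\boldsymbol{\pi}_{i,I}(x_a\cdots x_{a+i-1})=K_{(0^{s-i},1^i)}$, which by \eqref{keyschur} equals $s_{(1^i)}(x_a,\dots,x_b)=e_i(x_a,\dots,x_b)$. Alternatively, and more self-containedly, I would argue by induction on $s$ via the factorization $\boldsymbol{\pi}_{i,[a,b]}=\pi_{[b-i,b-1]}\,\boldsymbol{\pi}_{i,[a,b-1]}$, the base case $s=i$ being immediate ($\boldsymbol{\pi}_{i,I}=1$ and $x_a\cdots x_b=e_s$), with the inductive step resting on the auxiliary identity $\pi_{[b-i,b-1]}\,e_i(x_a,\dots,x_{b-1})=e_i(x_a,\dots,x_b)$, which one verifies from $e_i(x_a,\dots,x_b)=e_i(x_a,\dots,x_{b-1})+x_b\,e_{i-1}(x_a,\dots,x_{b-1})$ and the explicit action of $\pi_c$ on monomials.

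Granting the per-block identity, the recombination is routine. Pulling the symmetric factors out and applying the identity block by block turns the generic summand of the right-hand side into $x^{\Lambda^*}\prod_{j=1}^{k}e_{i_j}(x_{I_j})$, so that summing over $i_1+\cdots+i_k=\ell$ yields $x^{\Lambda^*}\sum_{i_1+\cdots+i_k=\ell}\prod_j e_{i_j}(x_{I_j})$. Since the blocks partition $\{1,\dots,N\}$, comparing the coefficient of $t^\ell$ in $\prod_{c=1}^N(1+x_c t)=\prod_{j}\prod_{c\in I_j}(1+x_c t)$ shows the inner sum is exactly $e_\ell(x_1,\dots,x_N)$, recovering the left-hand side. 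The only genuinely nontrivial step, and hence where I expect the real work to lie, is the per-block Demazure identity; everything else is bookkeeping about commuting operators and symmetric factors. The subtlety to watch there is confirming that $\boldsymbol{\pi}_{i,I}$ really is a reduced word for the intended Grassmannian permutation (equivalently, that the auxiliary identity in the inductive approach holds), since it is precisely this combinatorial fact that forces the operator to spread the single dominant placement of the added cells into the full symmetric sum over all placements.
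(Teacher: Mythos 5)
Your proposal is correct and follows essentially the same route as the paper: commute $x^{\Lambda^*}$ past the operators (justified by its block-wise symmetry), factor $e_\ell$ as $\sum_{i_1+\cdots+i_k=\ell}\prod_j e_{i_j}(X_{I_j})$, and establish the per-block identity $\boldsymbol{\pi}_{i,I}\,x^{(i,I)}=e_i(X_I)$ by recognizing the left side as the Key polynomial $K_{(0^{s-i},1^i)}=s_{(1^i)}(X_I)$. Your alternative inductive proof of the per-block identity is a fine self-contained substitute, but it is not needed given the Key polynomial machinery already in place.
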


\begin{proof}
By construction, $x^{\Lambda^*}$ commutes with $\boldsymbol{\pi}_{i_1,I_1}  \boldsymbol{\pi}_{i_2,I_2} \cdots \boldsymbol{\pi}_{i_k,I_k}$.
We thus only need to prove that 
\begin{equation}
e_\ell=
\sum_{i_1+i_2+\dots+i_k=\ell}
\boldsymbol{\pi}_{i_1,I_1}  \boldsymbol{\pi}_{i_2,I_2} \cdots \boldsymbol{\pi}_{i_k,I_k} 
x^{(i_1,I_1)}x^{(i_2,I_2)}\cdots x^{(i_k,I_k)}
\end{equation}
Let $X_{I}$ be the alphabet made out of the variables $x_{j}$ for $j\in I$.
It is well known (see for instance \cite{Mac}) that
\begin{equation}
e_\ell(x_1,\dots,x_N) = \sum_{i_1+i_2+\dots+i_k=\ell} e_{i_1}(X_{I_1}) \, e_{i_2}(X_{I_2}) \cdots  e_{i_k}(X_{I_k})  
\end{equation}
Since the $\boldsymbol{\pi}_{i_j,I_j}$'s commute among themselves, the 
result will then follow if we can show that $e(X_{I_j})=  \boldsymbol{\pi}_{i_j,I_j} \, x^{(i_j,I_j)}$ for all $j$.  
We only consider the case $j=1$, the remaining ones being similar. For simplicity we let $i_1=i$ and $I_1=\{1,\dots,r\}$.
By definition of Key polynomials, $K_{1^{i},0^{r-i}}= x^{1^i,0^{r-i}}$.  Hence, from \eqref{keyschur},  we have
\begin{equation}
s_{1^i}(x_1,\dots,x_r)= K_{0^{r-i},1^{i}}=
\pi_{[r-i,r-1]}\dots\pi_{[2,i+1]}\pi_{[1,i]} K_{1^{i},0^{r-i}}= \pi_{[r-i,r-1]}\dots\pi_{[2,i+1]}\pi_{[1,i]} x^{1^i,0^{r-i}} 
\end{equation}
and the result follows since it is well known that $e_i(x_1,\dots,x_{r})=s_{1^i}(x_1,\dots,x_{r})$.
\end{proof}

\begin{lemma}\label{lem:propR}
$\mathcal{R}_{N,[\alpha_1,\dots,\alpha_m]}$ obeys the following properties:
\begin{enumerate}
\item
 $\mathcal{R}_{N,[\alpha_1,\ldots,\alpha_m]}\pi_{\alpha_i-1}=
 \mathcal{R}_{N,[\alpha_1,\ldots,\alpha_{i-1},\alpha_i-1,\alpha_{i+1},\ldots,\alpha_m]}.$
 \item
 $\mathcal{R}_{N,[\alpha_1,\ldots,\alpha_m]}\pi_{\beta}=
 \mathcal{R}_{N,[\alpha_1,\ldots,\alpha_m]}$ if $\beta \not \in  \{\alpha_1-1,\dots,\alpha_m-1\}$.
\end{enumerate}
where we use the notation of Section~\ref{secpieri}.
\end{lemma}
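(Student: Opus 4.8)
The plan is to recognize $\mathcal{R}_{N,[\alpha_1,\dots,\alpha_m]}$ as $\pi_w$ for an explicit permutation $w$, realized by the given product as a \emph{reduced} word, and then to read off both statements from the behaviour of the $\pi_i$ under right multiplication, which is governed entirely by the descents of $w$. The essential tool is the standard consequence of the relations $\pi_i^2=\pi_i$ and the braid relations: for any reduced decomposition $w=s_{i_1}\cdots s_{i_\ell}$ and any simple reflection $s_\beta$,
\begin{equation*}
\pi_w\,\pi_\beta=
\begin{cases}
\pi_{w s_\beta}, & \ell(w s_\beta)=\ell(w)+1,\\
\pi_w, & \ell(w s_\beta)=\ell(w)-1,
\end{cases}
\end{equation*}
the absorption case being immediate from $\pi_w=\pi_{w s_\beta}\pi_\beta$ together with $\pi_\beta^2=\pi_\beta$.

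First I would pin down $w$. I claim the word defining $\mathcal{R}_{N,[\alpha_1,\dots,\alpha_m]}$ is reduced and represents the permutation $w$ whose one-line notation sends $\alpha_i\mapsto N-m+i$ for $i=1,\dots,m$ and fills the remaining positions with the values $\{1,\dots,N-m\}$ in \emph{decreasing} order (this is just the row rearrangement of Lemma~\ref{lemreorder} read as a permutation of positions). Reducedness is checked by a length count: the number of generators appearing is $\binom{N-m}{2}+\sum_{i=1}^m\bigl(N-m+i-\alpha_i\bigr)$, and one verifies that this equals the number of inversions of $w$. Indeed the $N-m$ complementary positions carry $\{1,\dots,N-m\}$ in decreasing order, contributing $\binom{N-m}{2}$ inversions, while the circle value $w(\alpha_i)=N-m+i$ exceeds exactly the $(N-m+i-\alpha_i)$ complementary values lying to its right and no circle value, contributing the second sum. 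Since the product of the corresponding simple reflections is $w$ and its length equals $\ell(w)$, the word is reduced and $\mathcal{R}_{N,[\alpha_1,\dots,\alpha_m]}=\pi_w$.

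With $w$ in hand the two claims are descent computations. From the one-line notation, $w(\alpha_i)=N-m+i$ is larger than every entry to its right, so for $\beta\notin\{\alpha_1-1,\dots,\alpha_m-1\}$ the position $\beta+1$ is never a circle position and one finds $w(\beta)>w(\beta+1)$; thus $\beta$ is a right descent, and the absorption case gives $\mathcal{R}_{N,[\alpha_1,\dots,\alpha_m]}\pi_\beta=\pi_w=\mathcal{R}_{N,[\alpha_1,\dots,\alpha_m]}$, which is statement~(2). For statement~(1), with $\beta=\alpha_i-1\geq 1$ (assuming the decremented value $\alpha_i-1$ is not already one of the $\alpha_j$, so that the right-hand side is a legitimate strictly increasing index set), one has $w(\alpha_i-1)<w(\alpha_i)$, an ascent, so the rule gives $\pi_w\pi_{\alpha_i-1}=\pi_{w s_{\alpha_i-1}}$. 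Right multiplication by $s_{\alpha_i-1}$ interchanges the values in positions $\alpha_i-1$ and $\alpha_i$, placing the circle value $N-m+i$ in position $\alpha_i-1$ and leaving the remaining entries decreasing; the resulting permutation is exactly the one associated with $[\alpha_1,\dots,\alpha_{i-1},\alpha_i-1,\alpha_{i+1},\dots,\alpha_m]$, which identifies $\pi_{w s_{\alpha_i-1}}$ with $\mathcal{R}_{N,[\alpha_1,\dots,\alpha_i-1,\dots,\alpha_m]}$.

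The main obstacle is the identification $\mathcal{R}_{N,[\alpha_1,\dots,\alpha_m]}=\pi_w$ together with the reducedness of the defining word; once this is settled the rest is bookkeeping of ascents and descents. I expect the cleanest route to reducedness is an induction peeling off the factors $\pi_{(N-m+i-1,\alpha_i)}$ one at a time and checking that each appended block strictly increases the length (no braid cancellation), so that the lengths add. A purely computational alternative, which avoids naming $w$ altogether, is to push $\pi_\beta$ leftward through the factors using the elementary identities $\pi_{(b,a)}\pi_\beta=\pi_\beta\,\pi_{(b,a)}$ for $\beta\le a-2$, $\pi_{(b,a)}\pi_a=\pi_{(b,a)}$, $\pi_{(b,a)}\pi_{a-1}=\pi_{(b,a-1)}$, and $\pi_{(b,a)}\pi_\beta=\pi_{\beta-1}\pi_{(b,a)}$ for $a+1\le\beta\le b$; but tracking the cascade of index shifts (and the boundary value $\beta=b+1$) is more delicate, so I would favour the permutation-theoretic argument.
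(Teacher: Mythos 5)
Your argument is correct, and it takes a genuinely different route from the paper's. The paper proves both properties by direct manipulation of the word: it pushes $\pi_\beta$ leftward through the factors $\pi_{(v+i-1,\alpha_i)}$ using only $\pi_i^2=\pi_i$, the braid relations and commutations --- essentially the ``purely computational alternative'' you describe and set aside at the end (property (1) is just the commutation of $\pi_{\alpha_i-1}$ past the later factors followed by the tautology $\pi_{(v+i-1,\alpha_i)}\pi_{\alpha_i-1}=\pi_{(v+i-1,\alpha_i-1)}$; property (2) is a three-case braid cascade). Your identification $\mathcal{R}_{N,[\alpha_1,\dots,\alpha_m]}=\pi_w$ --- whose inversion count I checked and which is consistent with Lemma~\ref{lemreorder} --- converts both properties into descent bookkeeping for the $0$-Hecke rule $\pi_w\pi_\beta\in\{\pi_{ws_\beta},\pi_w\}$; this is cleaner, and it yields for free that $\mathcal{R}$ absorbs $\pi_\beta$ for \emph{every} right descent $\beta$ of $w$. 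What the paper's approach buys is uniformity: the product defining $\mathcal{R}_{N,[\alpha_1,\dots,\alpha_m]}$ makes sense for an arbitrary index sequence, and its proof of (1) never uses $\alpha_{i-1}<\alpha_i-1$, so the identity holds verbatim when $\alpha_i-1=\alpha_{i-1}$ and produces an $\mathcal{R}$ with a repeated index. That degenerate case is not decorative: it is exactly what gets fed into Lemma~\ref{lemequal} in the proof of Theorem~\ref{thm:esbar} (the collision of two circles). Your permutation parametrization excludes it, as you yourself flag; to fully replace the paper's proof you would need to supplement the descent argument with the one-line direct computation for that case (or interpret the degenerate right-hand side separately). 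Everything else --- the reducedness count, the descent analysis for (2), and the ascent/value-swap analysis for (1) --- is sound.
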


\begin{proof} Let $v=N-m$.  Consider first Case {\it (1)}.  We have
\begin{eqnarray}
\mathcal{R}_{N,[\alpha_1,\ldots,\alpha_m]}\pi_{\alpha_i-1}
&=&\pi_{\omega_{v}}\pi_{(v,\alpha_1)}\pi_{(v+1,\alpha_2)}\cdots\pi_{(N-1,\alpha_m)}\pi_{\alpha_i-1}\nonumber\\
&=&\pi_{\omega_v}\pi_{(v,\alpha_1)}\pi_{(v+1,\alpha_2)}\cdots
\pi_{(v+i-1,\alpha_i)}\pi_{\alpha_i-1}\cdots\pi_{(N-1,\alpha_m)}\nonumber\\
&=&\pi_{\omega_v}\pi_{(v,\alpha_1)}\pi_{(v+1,\alpha_2)}\cdots
\pi_{(v+i-1,\alpha_i-1)}\cdots\pi_{(N-1,\alpha_m)}\nonumber\\
&=&\mathcal{R}_{N,[\alpha_1,\ldots,\alpha_i-1\ldots,\alpha_m]}\nonumber
\end{eqnarray}
Case {\it (2)} has three cases.  If $\beta<\alpha_1-1$ then
\begin{eqnarray}
\mathcal{R}_{N,[\alpha_1,\ldots,\alpha_m]}\pi_{\beta}&=&\pi_{\omega_v}\pi_{(v,\alpha_1)}\pi_{(v+1,\alpha_2)}\cdots\pi_{(N-1,\alpha_m)}\pi_{\beta}\nonumber\\
&=&\pi_{\omega_v}\pi_{\beta}\pi_{(v,\alpha_1)}\pi_{(v+1,\alpha_2)}\cdots\pi_{(N-1,\alpha_m)}\nonumber\\
&=&\pi_{\omega_v}\pi_{(v,\alpha_1)}\pi_{(v+1,\alpha_2)}\cdots\pi_{(N-1,\alpha_m)}\nonumber\\
&=&\mathcal{R}_{N,[\alpha_1,\ldots,\alpha_m]}\nonumber
\end{eqnarray}
If $\beta=\alpha_i$ for some $i$ then
\begin{eqnarray}
\mathcal{R}_{N,[\alpha_1,\ldots,\alpha_m]}\pi_{\alpha_i}&=&\pi_{\omega_v}\pi_{(v,\alpha_1)}\pi_{(v+1,\alpha_2)}\cdots\pi_{(N-1,\alpha_m)}\pi_{\alpha_i}\nonumber\\
&=&\pi_{\omega_v}\pi_{(v,\alpha_1)}\pi_{(v+1,\alpha_2)}\cdots
\pi_{(v+i-1,\alpha_{i})}\pi_{\alpha_i}\cdots\pi_{(N-1,\alpha_m)}\nonumber\\
&=&\pi_{\omega_v}\pi_{(v,\alpha_1)}\pi_{(v+1,\alpha_2)}\cdots
\pi_{(v+i-1,\alpha_{i})}\cdots\pi_{(N-1,\alpha_m)}\nonumber\\
&=&\mathcal{R}_{N,[\alpha_1,\ldots,\alpha_m]}\nonumber
\end{eqnarray}
Finally, if $\alpha_{i}<\beta<\alpha_{i+1}-1$ (the argument is the same for $\alpha_m<\beta$) then
\begin{eqnarray}
\mathcal{R}_{N,[\alpha_1,\ldots,\alpha_m]}\pi_{\beta}&=&\pi_{\omega_v}\pi_{(v,\alpha_1)}\pi_{(v+1,\alpha_2)}\cdots\pi_{(N-1,\alpha_m)}\pi_{\beta}\nonumber\\
&=&\pi_{\omega_v}\pi_{(v,\alpha_1)}\pi_{(v+1,\alpha_2)}\cdots
\pi_{(v+i-1,\alpha_{i})}\pi_{\beta}\cdots\pi_{(N-1,\alpha_m)}\nonumber\\
&=&\pi_{\omega_v}\pi_{(v,\alpha_1)}\pi_{(v+1,\alpha_2)}\cdots
\pi_{(v+i-1,\beta-1)}\pi_{\beta}\pi_{(\beta-2,\alpha_{i})}\cdots\pi_{(N-1,\alpha_m)}\nonumber\\
&=&\pi_{\omega_v}\pi_{(v,\alpha_1)}\pi_{(v+1,\alpha_2)}\cdots
\pi_{(v+i-1,\beta+1)}\pi_\beta\pi_{\beta-1}\pi_{\beta}\pi_{(\beta-2,\alpha_{i})}\cdots\pi_{(N-1,\alpha_m)}\nonumber\\
&=&\pi_{\omega_v}\pi_{(v,\alpha_1)}\pi_{(v+1,\alpha_2)}\cdots
\pi_{(v+i-1,\beta+1)}\pi_{\beta-1}\pi_{\beta}\pi_{\beta-1}\pi_{(\beta-2,\alpha_{i})}\cdots\pi_{(N-1,\alpha_m)}\nonumber\\
&=&\pi_{\omega_v}\pi_{(v,\alpha_1)}\pi_{(v+1,\alpha_2)}\cdots
\pi_{(v+i-2,\alpha_{i-1})}\pi_{\beta-1}\pi_{(v+i-1,\alpha_{i})}\cdots\pi_{(N-1,\alpha_m)}\nonumber
\end{eqnarray}
At this point, we repeat as in the second line until we get to the beginning of the product. 
\begin{eqnarray}
\pi_{\omega_v}\pi_{\beta-i}\pi_{(v,\alpha_1)}\pi_{(v+1,\alpha_2)}
\cdots\pi_{(N-1,\alpha_m)}&=&\pi_{\omega_v}\pi_{(v,\alpha_1)}\pi_{(v+1,\alpha_2)}
\cdots\pi_{(N-1,\alpha_m)}\nonumber\\
&=&\mathcal{R}_{N,[\alpha_1,\ldots,\alpha_m]}\nonumber
\end{eqnarray}
\end{proof}

\begin{lemma}\label{lem:Ralpha}
If $\mu_{\alpha_i}=\mu_{\alpha_i-1}$ then
\begin{equation}
\mathcal{R}_{N,[\alpha_1,\dots,\alpha_i,\dots,\alpha_m]}x^\mu=
\mathcal{R}_{N,[\alpha_1,\dots,\alpha_i-1,\dots,\alpha_m]}x^\mu.
\end{equation}
\end{lemma}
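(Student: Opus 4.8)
The plan is to peel off a single divided-difference operator and reduce the whole statement to the elementary observation that $\pi_{\alpha_i-1}$ fixes any monomial that is symmetric in $x_{\alpha_i-1}$ and $x_{\alpha_i}$. First I would invoke Case \textit{(1)} of Lemma~\ref{lem:propR}, which already records the identity
$$
\mathcal{R}_{N,[\alpha_1,\dots,\alpha_i,\dots,\alpha_m]}\,\pi_{\alpha_i-1}
=\mathcal{R}_{N,[\alpha_1,\dots,\alpha_i-1,\dots,\alpha_m]}.
$$
Applying both sides to $x^\mu$ rewrites the right-hand side of the lemma as $\mathcal{R}_{N,[\alpha_1,\dots,\alpha_i,\dots,\alpha_m]}\,\pi_{\alpha_i-1}\,x^\mu$, so the entire claim collapses to showing that $\pi_{\alpha_i-1}\,x^\mu=x^\mu$ whenever $\mu_{\alpha_i}=\mu_{\alpha_i-1}$.

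The one computation to carry out is then with the definition $\pi_j=(x_j-x_{j+1})^{-1}(x_j-x_{j+1}\kappa_{j,j+1})$ at $j=\alpha_i-1$. The hypothesis $\mu_{\alpha_i-1}=\mu_{\alpha_i}$ says precisely that $x^\mu$ carries equal exponents in the variables $x_{\alpha_i-1}$ and $x_{\alpha_i}$, so that $\kappa_{\alpha_i-1,\alpha_i}\,x^\mu=x^\mu$. Substituting this into the definition gives
$$
\pi_{\alpha_i-1}\,x^\mu=\frac{1}{x_{\alpha_i-1}-x_{\alpha_i}}\bigl(x_{\alpha_i-1}\,x^\mu-x_{\alpha_i}\,\kappa_{\alpha_i-1,\alpha_i}\,x^\mu\bigr)=\frac{x_{\alpha_i-1}-x_{\alpha_i}}{x_{\alpha_i-1}-x_{\alpha_i}}\,x^\mu=x^\mu.
$$

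Combining the two displays yields
$$
\mathcal{R}_{N,[\alpha_1,\dots,\alpha_i-1,\dots,\alpha_m]}\,x^\mu
=\mathcal{R}_{N,[\alpha_1,\dots,\alpha_i,\dots,\alpha_m]}\,\pi_{\alpha_i-1}\,x^\mu
=\mathcal{R}_{N,[\alpha_1,\dots,\alpha_i,\dots,\alpha_m]}\,x^\mu,
$$
which is exactly the asserted equality. I do not expect any genuine obstacle: the argument is purely formal once Lemma~\ref{lem:propR}\,\textit{(1)} is granted. The only point worth flagging is that $\pi_{\alpha_i-1}$ must be a legitimate operator, i.e. $\alpha_i-1\geq 1$; but the hypothesis $\mu_{\alpha_i}=\mu_{\alpha_i-1}$ already forces $\alpha_i\geq 2$, so this is automatic.
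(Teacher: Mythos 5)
Your proposal is correct and follows exactly the paper's own argument: both reduce the claim to the identity $\mathcal{R}_{N,[\dots,\alpha_i,\dots]}\pi_{\alpha_i-1}=\mathcal{R}_{N,[\dots,\alpha_i-1,\dots]}$ from Lemma~\ref{lem:propR}\,\textit{(1)} together with the observation that $\pi_{\alpha_i-1}x^\mu=x^\mu$ when $\mu_{\alpha_i}=\mu_{\alpha_i-1}$. The only difference is that you spell out the elementary computation of $\pi_{\alpha_i-1}x^\mu$, which the paper leaves implicit.
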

\begin{proof}
Since $\mu_{\alpha_i}=\mu_{\alpha_i-1}$, we have $x^\mu=\pi_{\alpha_i-1}x^\mu$.
By Lemma \ref{lem:propR}, we then have that
\begin{equation}
\mathcal{R}_{N,[\alpha_1,\dots,\alpha_i,\dots,\alpha_m]}x^\mu= \mathcal{R}_{N,[\alpha_1,\dots,\alpha_i,\dots,\alpha_m]}\pi_{\alpha_i-1}x^\mu 
=\mathcal{R}_{N,[\alpha_1,\dots,\alpha_i-1,\dots,\alpha_m]}x^\mu. \nonumber
\end{equation}
\end{proof}

\begin{corollary} \label{coroalphap}
If there is no addable corner in
row $\alpha_i$ of $\mu$ then there exists a row, call it $\alpha'_i$ such that $\mu_{\alpha_i}=\mu_{\alpha'_i}$ and there is an addable corner in row $\alpha'_i$ of $\mu$. Furthermore,
\begin{equation}
\mathcal{R}_{N,[\alpha_1,\dots,\alpha_i,\dots,\alpha_m]}x^\mu=
\mathcal{R}_{N,[\alpha_1,\dots,\alpha'_i,\dots,\alpha_m]}x^\mu.
\end{equation}
\end{corollary}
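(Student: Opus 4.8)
The plan is to identify $\alpha_i'$ explicitly as the top of the maximal block of equal parts of $\mu$ containing row $\alpha_i$, and then to obtain the operator identity by iterating Lemma~\ref{lem:Ralpha} one row at a time.

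First I would establish the existence of $\alpha_i'$. Since $\mu$ is a partition, the hypothesis that row $\alpha_i$ has no addable corner means precisely that $\mu_{\alpha_i-1}=\mu_{\alpha_i}$ (so in particular $\alpha_i>1$). Let $\alpha_i'$ be the smallest index with $\mu_{\alpha_i'}=\mu_{\alpha_i}$, that is, the topmost row of the maximal block of rows of $\mu$ all having length $\mu_{\alpha_i}$. By minimality, either $\alpha_i'=1$ or $\mu_{\alpha_i'-1}>\mu_{\alpha_i'}$; in both cases row $\alpha_i'$ has an addable corner, while by construction $\mu_{\alpha_i'}=\mu_{\alpha_i}$ and $\alpha_i'<\alpha_i$.

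Then I would prove the operator identity by descending induction on $\alpha_i-\alpha_i'$. For any row $\beta$ with $\alpha_i'<\beta\le\alpha_i$ we have $\mu_\beta=\mu_{\beta-1}$, since both rows lie in the block, so Lemma~\ref{lem:Ralpha} gives
$$\mathcal{R}_{N,[\alpha_1,\dots,\beta,\dots,\alpha_m]}x^\mu=\mathcal{R}_{N,[\alpha_1,\dots,\beta-1,\dots,\alpha_m]}x^\mu.$$
Applying this starting from $\beta=\alpha_i$ and decreasing the $i$-th entry by one unit at a time, the $i$-th index travels down through the block from $\alpha_i$ to $\alpha_i'$, each step being licensed by Lemma~\ref{lem:Ralpha}; the resulting chain of equalities yields $\mathcal{R}_{N,[\alpha_1,\dots,\alpha_i,\dots,\alpha_m]}x^\mu=\mathcal{R}_{N,[\alpha_1,\dots,\alpha_i',\dots,\alpha_m]}x^\mu$, as desired.

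The only delicate point is a bookkeeping one: as the $i$-th entry decreases it may formally coincide with $\alpha_{i-1}$ when a circle already sits inside the block. I would emphasize that this does not affect the identity, since the $\mathcal{R}$-operators are simply products of the $\pi_j$'s, and Lemma~\ref{lem:Ralpha} applies at the level of these products irrespective of whether the intermediate index sequence stays strictly increasing; the degenerate case $\alpha_{i-1}=\alpha_i'$ is precisely the one treated separately (via Lemma~\ref{lemequal}) in the proof of Theorem~\ref{thm:esbar}. I do not expect any genuine obstacle beyond verifying that the block hypothesis $\mu_\beta=\mu_{\beta-1}$ persists at every intermediate step, which is immediate from the definition of the block.
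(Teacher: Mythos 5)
Your proposal is correct and follows essentially the same route as the paper, which proves the corollary by "repeated applications of Lemma~\ref{lem:Ralpha}"; your choice of $\alpha_i'$ as the topmost row of the block of parts equal to $\mu_{\alpha_i}$ and the one-row-at-a-time descent are exactly the intended argument, just written out in more detail. The extra remarks on existence of $\alpha_i'$ and on the degenerate collision with $\alpha_{i-1}$ are sound and consistent with how the paper handles that case via Lemma~\ref{lemequal}.
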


\begin{proof}
Follows by repeated applications of Lemma \ref{lem:Ralpha}.
\end{proof}

\begin{lemma}\label{lem:pin}  We have
\begin{equation}
 \pi_{(n-1,i)}\pi_{(n,i)}=\pi_n(\pi_{n-1}\pi_n)(\pi_{n-2}\pi_{n-1})\dots(\pi_{i}\pi_{i+1})
 \end{equation}
\end{lemma}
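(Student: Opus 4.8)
The plan is to argue by induction on $n-i$, with the single braid relation $\pi_{n-1}\pi_n\pi_{n-1}=\pi_n\pi_{n-1}\pi_n$ doing all the substantive work at each step. Throughout, $i$ is held fixed; I write $L_n=\pi_{(n-1,i)}\pi_{(n,i)}$ for the left-hand side and $R_n=\pi_n(\pi_{n-1}\pi_n)(\pi_{n-2}\pi_{n-1})\cdots(\pi_i\pi_{i+1})$ for the right-hand side. For the base case $n=i+1$ one has $L_{i+1}=\pi_i\,(\pi_{i+1}\pi_i)=\pi_i\pi_{i+1}\pi_i$ and $R_{i+1}=\pi_{i+1}(\pi_i\pi_{i+1})=\pi_{i+1}\pi_i\pi_{i+1}$, so the two sides agree by a single use of the braid relation; I would check this directly.

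For the inductive step I would first rewrite $L_n$ by pushing the isolated factor $\pi_n$ to the left. Since $L_n=(\pi_{n-1}\pi_{n-2}\cdots\pi_i)\,\pi_n\,(\pi_{n-1}\cdots\pi_i)$ and $\pi_n$ commutes with each of $\pi_{n-2},\dots,\pi_i$ (all of index at most $n-2$), I can slide $\pi_n$ leftward past $\pi_{n-2}\cdots\pi_i$ so that it sits immediately after the leading $\pi_{n-1}$. This gives $L_n=\pi_{n-1}\pi_n\,(\pi_{n-2}\cdots\pi_i)(\pi_{n-1}\cdots\pi_i)$. The purpose of the commutation is that the trailing product $(\pi_{n-2}\cdots\pi_i)(\pi_{n-1}\cdots\pi_i)$ is exactly $\pi_{(n-2,i)}\pi_{(n-1,i)}=L_{n-1}$, so that $L_n=\pi_{n-1}\pi_n\,L_{n-1}$.

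I would then apply the induction hypothesis $L_{n-1}=\pi_{n-1}(\pi_{n-2}\pi_{n-1})(\pi_{n-3}\pi_{n-2})\cdots(\pi_i\pi_{i+1})$, obtaining $L_n=\pi_{n-1}\pi_n\pi_{n-1}(\pi_{n-2}\pi_{n-1})\cdots(\pi_i\pi_{i+1})$. Grouping the first three operators and applying $\pi_{n-1}\pi_n\pi_{n-1}=\pi_n\pi_{n-1}\pi_n$ converts this into $\pi_n(\pi_{n-1}\pi_n)(\pi_{n-2}\pi_{n-1})\cdots(\pi_i\pi_{i+1})=R_n$, which closes the induction.

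The only genuinely nontrivial move is the braid relation; everything else is bookkeeping with the commutation rule $\pi_a\pi_b=\pi_b\pi_a$ for $|a-b|>1$. The one point that requires care is the index bookkeeping in the commutation step, namely verifying that after $\pi_n$ has been moved past $\pi_{n-2}\cdots\pi_i$ the remaining product is precisely $L_{n-1}$ for the same value of $i$, together with the degenerate boundary cases (where a product such as $\pi_{(n-2,i)}$ is empty) handled by the convention $\pi_{(b,a)}=1$ for $b<a$. I do not expect any real obstacle: this is a standard staircase reduced-word rearrangement, and since every manipulation uses only the braid and commutation relations and never the idempotency $\pi_i^2=\pi_i$, the identical computation is valid in the underlying Coxeter group as well.
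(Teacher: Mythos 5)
Your proof is correct and matches the paper's approach: the paper's entire proof is ``Easy by induction,'' and your induction on $n-i$ — factoring $L_n=\pi_{n-1}\pi_n L_{n-1}$ via the commutation $\pi_n\pi_j=\pi_j\pi_n$ for $j\le n-2$ and then applying one braid relation $\pi_{n-1}\pi_n\pi_{n-1}=\pi_n\pi_{n-1}\pi_n$ — is exactly the intended argument, with the base case and degenerate conventions handled properly.
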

\begin{proof} Easy by induction.
\end{proof}

\begin{lemma} \label{lemequal}
If $\alpha_i=\alpha_{i+1}$ for some $i$ then
\begin{equation}
\partial_{\omega_{(N-m)^c}}\mathcal{R}_{N,[\alpha_1,\dots,\alpha_m]}=0
\end{equation}
\end{lemma}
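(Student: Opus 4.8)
The plan is to exhibit a single divided difference that already annihilates $\mathcal{R}_{N,[\alpha_1,\dots,\alpha_m]}$ on the left, and then to peel that factor off the \emph{right} end of $\partial_{\omega_{(N-m)^c}}$. Set $v:=N-m$, fix an index $i$ with $\alpha_i=\alpha_{i+1}=:a$, and note that the hypothesis forces $1\le i\le m-1$, so that $v+i\in\{v+1,\dots,N-1\}$. The key operator will be $\partial_{v+i}$.

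First I would isolate the two consecutive factors $\pi_{(v+i-1,a)}\,\pi_{(v+i,a)}$ of $\mathcal{R}_{N,[\alpha_1,\dots,\alpha_m]}$ and rewrite them with Lemma~\ref{lem:pin} (taking $n=v+i$ and lower index $a$) as
\[
\pi_{(v+i-1,a)}\,\pi_{(v+i,a)}=\pi_{v+i}\,(\pi_{v+i-1}\pi_{v+i})(\pi_{v+i-2}\pi_{v+i-1})\cdots(\pi_a\pi_{a+1}),
\]
so the combined subword now begins with $\pi_{v+i}$. Everything standing to its left in $\mathcal{R}_{N,[\alpha_1,\dots,\alpha_m]}$, namely $\pi_{\omega_{N-m}}$ together with the factors $\pi_{(v+j-1,\alpha_j)}$ for $j<i$, involves only operators $\pi_k$ with $k\le v+i-2$, hence acts on the variables $x_1,\dots,x_{v+i-1}$, which are disjoint from $x_{v+i},x_{v+i+1}$. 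Thus $\partial_{v+i}$ commutes past all of them and lands immediately to the left of $\pi_{v+i}$, giving $\partial_{v+i}\,\mathcal{R}_{N,[\alpha_1,\dots,\alpha_m]}=0$ by the relation $\partial_{v+i}\pi_{v+i}=0$.

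To finish I would use that $\omega_{(N-m)^c}$ is the longest element of the symmetric group on $\{v+1,\dots,N\}$, whose simple generators are exactly $s_{v+1},\dots,s_{N-1}$. Since $v+i$ lies in this range, $s_{v+i}$ is a right descent of the longest element, so there is a reduced word ending in $s_{v+i}$ and a length-additive factorization $\partial_{\omega_{(N-m)^c}}=\partial_{\omega_{(N-m)^c}s_{v+i}}\,\partial_{v+i}$. Composing with the previous step yields
\[
\partial_{\omega_{(N-m)^c}}\mathcal{R}_{N,[\alpha_1,\dots,\alpha_m]}=\partial_{\omega_{(N-m)^c}s_{v+i}}\bigl(\partial_{v+i}\mathcal{R}_{N,[\alpha_1,\dots,\alpha_m]}\bigr)=0.
\]

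I expect the only delicate points to be purely bookkeeping: verifying that all factors of $\mathcal{R}_{N,[\alpha_1,\dots,\alpha_m]}$ sitting to the left of the exposed $\pi_{v+i}$ really have index at most $v+i-2$ (so the disjoint-variable commutation is legitimate), and confirming that $v+i$ is a genuine generator index of $\omega_{(N-m)^c}$ so that peeling $\partial_{v+i}$ off the right is a reduced factorization. Both reduce to the inequalities $1\le i\le m-1$ coming from $\alpha_i=\alpha_{i+1}$, after which the relation $\partial_{v+i}\pi_{v+i}=0$ closes the argument.
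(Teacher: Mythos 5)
Your proof is correct and follows essentially the same route as the paper's: both expose $\pi_{v+i}$ on the left of the doubled block via Lemma~\ref{lem:pin}, commute it (equivalently, commute $\partial_{v+i}$) past the lower-indexed factors, and conclude from $\partial_{v+i}\pi_{v+i}=0$. The only cosmetic difference is that the paper moves $\pi_{v+i}$ leftward until it meets $\partial_{\omega_{(N-m)^c}}$ and invokes $\partial_{\omega_{(N-m)^c}}\pi_{v+i}=0$, rather than peeling $\partial_{v+i}$ off the right end of the longest-element divided difference.
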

\begin{proof} Let $v=N-m$.  We have
\begin{eqnarray}
\partial_{\omega_{v^c}}\mathcal{R}_{N,[\alpha_1,\dots,\alpha_i,\alpha_i,\dots,\alpha_m]}=&
\partial_{\omega_{v^c}}\pi_{\omega_v}\pi_{(v,\alpha_1)}\cdots\pi_{(v+i-1,\alpha_i)}\pi_{(v+i,\alpha_i)}
\cdots\pi_{(N-1,\alpha_m)} \nonumber 
\end{eqnarray}
Use Lemma \ref{lem:pin} to rewrite
$\pi_{(v+i-1,\alpha_i)}\pi_{(v+i,\alpha_i)}$ as $\pi_{v+i}$ to the left times a string of isobaric divided differences. 
 Then $\pi_{v+i}$ commutes with every isobaric divided difference to its left, and the result follows given that
$\partial_{\omega_{v^c}}\pi_{v+i}=0$.
\end{proof}

\begin{lemma}\label{lem:xpipi}
 For $i<v$,
 \begin{equation}
  x_v\pi_{\omega_{v-1}}\pi_{(v-1,i)}=\pi_{\omega_{v-1}}\pi_{(v-1,i+1)}(\pi_i-1)x_i.
 \end{equation}
\end{lemma}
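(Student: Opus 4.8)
The plan is to slide the factor $x_v$ rightward through the entire string, converting it into a product of the operators $\hat\pi_j=\pi_j-1$, and then to use the idempotency properties of $\pi_{\omega_{v-1}}$ to rewrite that product in the desired form. First I would record the elementary rewriting
\[
x_{j+1}\,\pi_j = (\pi_j-1)\,x_j = \hat\pi_j\, x_j ,
\]
which is immediate from the given relation $\pi_j x_j = x_{j+1}\pi_j + x_j$. Because $\pi_{\omega_{v-1}}$ is a word in $\pi_1,\dots,\pi_{v-2}$ only, it commutes with multiplication by $x_v$, so the left-hand side of the lemma equals $\pi_{\omega_{v-1}}\,x_v\,\pi_{(v-1,i)}$. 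Applying the rewriting above repeatedly to $x_v\,\pi_{v-1}\pi_{v-2}\cdots\pi_i$ (first $x_v\pi_{v-1}=\hat\pi_{v-1}x_{v-1}$, then $x_{v-1}\pi_{v-2}=\hat\pi_{v-2}x_{v-2}$, and so on down to $\pi_i$) telescopes to
\[
x_v\,\pi_{(v-1,i)} = \hat\pi_{v-1}\hat\pi_{v-2}\cdots\hat\pi_i\,x_i .
\]

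It therefore remains to establish the operator identity
\[
\pi_{\omega_{v-1}}\,\hat\pi_{v-1}\hat\pi_{v-2}\cdots\hat\pi_{i+1}\hat\pi_i = \pi_{\omega_{v-1}}\,\pi_{v-1}\pi_{v-2}\cdots\pi_{i+1}\,\hat\pi_i .
\]
The two facts that drive this are, on the one hand, $\pi_{\omega_{v-1}}\pi_j=\pi_{\omega_{v-1}}$ for every $j\in\{1,\dots,v-2\}$ (since $\omega_{v-1}s_j<\omega_{v-1}$ and $\pi_j^2=\pi_j$), which yields $\pi_{\omega_{v-1}}\hat\pi_j=0$ for all such $j$; and on the other hand the commutation $\pi_a\hat\pi_b=\hat\pi_b\pi_a$ whenever $|a-b|>1$. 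I would convert the factors $\hat\pi_{v-1},\hat\pi_{v-2},\dots,\hat\pi_{i+1}$ into $\pi_{v-1},\dots,\pi_{i+1}$ one at a time from the left, substituting $\hat\pi_j=\pi_j-1$ at the $j$-th stage. The $\pi_j$ part continues the induction, while the $-1$ part produces a term whose leftmost operators are $\pi_{\omega_{v-1}}\pi_{v-1}\cdots\pi_{j+1}\hat\pi_{j-1}$; since $j-1$ differs from each of $j+1,\dots,v-1$ by more than one, $\hat\pi_{j-1}$ slides all the way to the left and is killed by $\pi_{\omega_{v-1}}\hat\pi_{j-1}=0$ (note $1\le j-1\le v-2$ throughout the range $i+1\le j\le v-1$). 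Hence every $-1$ term vanishes and only the fully converted string survives.

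When $i=v-1$ there is nothing to convert: $\pi_{(v-1,i+1)}$ is empty and the claim reduces immediately to $x_v\pi_{v-1}=\hat\pi_{v-1}x_{v-1}$ after commuting $x_v$ past $\pi_{\omega_{v-1}}$. In all other cases, multiplying the established identity on the right by $x_i$ and recalling $\pi_{(v-1,i+1)}=\pi_{v-1}\cdots\pi_{i+1}$ gives exactly $\pi_{\omega_{v-1}}\pi_{(v-1,i+1)}(\pi_i-1)x_i$, which is the right-hand side of the lemma.

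The telescoping of the first paragraph and the index bookkeeping are routine; the step I expect to demand the most care is the left-to-right conversion of the $\hat\pi$'s into $\pi$'s, where one must verify at each stage that the discarded summand, after the harmless commutations, genuinely abuts a factor $\pi_{\omega_{v-1}}\hat\pi_{j-1}$ and so vanishes. Keeping track of the range $i+1\le j\le v-1$ (equivalently $1\le j-1\le v-2$) is what guarantees that this annihilation applies at every stage.
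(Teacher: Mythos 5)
Your proof is correct and follows essentially the same route as the paper's: commute $x_v$ past $\pi_{\omega_{v-1}}$, telescope $x_v\pi_{(v-1,i)}$ into $(\pi_{v-1}-1)\cdots(\pi_i-1)x_i$ via $x_{r+1}\pi_r=(\pi_r-1)x_r$, and then convert the factors $(\pi_j-1)$ into $\pi_j$ from the left using $\pi_{\omega_{v-1}}(\pi_r-1)=0$ for $r<v-1$ together with the commutations $|a-b|>1$. Your treatment is if anything slightly more careful than the paper's, which glosses over the boundary case $i=v-1$ and the index bookkeeping that you make explicit.
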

\begin{proof}
We first commute $x_v$ and $\pi_{\omega_{v-1}}$.  Then we use the relation $x_{r+1}\, \pi_r= (\pi_r -1)x_r$ repeatedly to
get $x_v \pi_{(v-1,i)}= (\pi_{v-1}-1) (\pi_{v-2}-1)\dots (\pi_i-1) \, x_i$.  Since $\pi_{\omega_{v-1}} {(\pi_r-1)}=0$ for
all $r<v-1$, we have
\begin{align}
 x_v\pi_{\omega_{v-1}}\pi_{(v-1,i)}&= (\pi_{v-1}-1) (\pi_{v-2}-1)(\pi_{v-3}-1)\dots (\pi_i-1) \, x_i \nonumber \\ 
&= \pi_{v-1}(\pi_{v-2}-1)(\pi_{v-3}-1)\dots (\pi_i-1) \, x_i 
\nonumber \\ 
&= \pi_{v-1}\pi_{v-2}(\pi_{v-3}-1)\dots (\pi_i-1) \, x_i 
\end{align}
where we used the fact that $\pi_{v-1}$ and $(\pi_{v-3}-1)$ commute.  Continuing in this way, we get that
$ x_v\pi_{\omega_{v-1}}\pi_{(v-1,i)}= \pi_{\omega_{v-1}}\pi_{v-1}\pi_{v-2}\pi_{v-3}\cdots \pi_{i+1}(\pi_i-1)x_i$, and the lemma
follows.
\end{proof}

Recall that $e_{\ell}^{(v)}$ and $e_{\ell}^{(1,\dots,v)}$ were defined in Section~\ref{secpieri}.  
The following two lemmas are immediate.
\begin{lemma}\label{lem:elk1}
 \begin{equation}
  e_\ell^{(v)}=e_\ell-x_v e^{(v)}_{\ell-1}
 \end{equation}
\end{lemma}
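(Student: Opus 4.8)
The plan is to prove the identity by the elementary observation that every squarefree monomial of degree $\ell$ in $x_1,\dots,x_N$ either contains the variable $x_v$ or it does not. First I would recall that $e_\ell^{(v)}$ denotes $e_\ell$ evaluated at $x_v=0$, that is, the elementary symmetric polynomial of degree $\ell$ in the alphabet obtained from $x_1,\dots,x_N$ by deleting $x_v$. Partitioning the monomials of $e_\ell=\sum_{i_1<\dots<i_\ell} x_{i_1}\cdots x_{i_\ell}$ according to this dichotomy, the terms in which $x_v$ does not occur sum to $e_\ell^{(v)}$, while the terms in which $x_v$ does occur are exactly $x_v$ times a squarefree monomial of degree $\ell-1$ avoiding $x_v$, and hence sum to $x_v\,e_{\ell-1}^{(v)}$.

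Equivalently, and perhaps more cleanly, I would argue through the generating series. Using the well-known identity $\sum_{\ell\ge 0} e_\ell\,t^\ell=\prod_{i=1}^N(1+x_i t)$, I would isolate the factor corresponding to $x_v$:
\begin{equation}
\sum_{\ell\ge 0} e_\ell\,t^\ell=(1+x_v t)\prod_{i\neq v}(1+x_i t)=(1+x_v t)\sum_{\ell\ge 0} e_\ell^{(v)}\,t^\ell .
\end{equation}
Extracting the coefficient of $t^\ell$ on both sides yields $e_\ell=e_\ell^{(v)}+x_v\,e_{\ell-1}^{(v)}$, and rearranging gives the claim.

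There is no genuine obstacle here: the statement is a direct consequence of the standard one-variable splitting of the elementary symmetric functions, consistent with the lemma being flagged as immediate. The only thing to keep straight is the bookkeeping of which alphabet the two symbols $e_\ell^{(v)}$ and $e_{\ell-1}^{(v)}$ refer to, namely the alphabet with $x_v$ removed in both cases.
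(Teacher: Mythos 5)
Your proof is correct, and since the paper states this lemma without proof (it is explicitly flagged as immediate), your argument — splitting the squarefree monomials of $e_\ell$ according to whether they contain $x_v$, or equivalently factoring $(1+x_vt)$ out of the generating series $\prod_i(1+x_it)$ — is exactly the standard justification the authors had in mind. Nothing further is needed.
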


\begin{lemma}\label{lem:el1k1}
 \begin{equation}
  e_\ell^{(1,\dots,v)}=e_\ell^{(1,\dots,v-1)}-x_ve_{\ell-1}^{(1,\dots,v)}
 \end{equation}
\end{lemma}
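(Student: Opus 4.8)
The plan is to recognize this as the standard one-variable recursion for elementary symmetric functions, applied to the appropriate alphabet. Recall from Section~\ref{secpieri} that $e_\ell^{(1,\dots,v)}$ denotes $e_\ell$ evaluated at $x_1=\cdots=x_v=0$, so it is the elementary symmetric function of degree $\ell$ in the alphabet $\{x_{v+1},\dots,x_N\}$; likewise $e_\ell^{(1,\dots,v-1)}$ is the degree-$\ell$ elementary symmetric function in $\{x_v,x_{v+1},\dots,x_N\}$, and $e_{\ell-1}^{(1,\dots,v)}$ is the degree-$(\ell-1)$ elementary symmetric function in $\{x_{v+1},\dots,x_N\}$. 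Thus the claimed identity is, after rearrangement, simply
\begin{equation}
 e_\ell(x_v,x_{v+1},\dots,x_N)= e_\ell(x_{v+1},\dots,x_N)+ x_v\, e_{\ell-1}(x_{v+1},\dots,x_N).
\end{equation}

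First I would invoke the generating series $\prod_i(1+x_i t)=\sum_{\ell\ge 0}e_\ell\, t^\ell$. Setting $x_1=\cdots=x_{v-1}=0$ yields $\prod_{i\ge v}(1+x_i t)=\sum_{\ell} e_\ell^{(1,\dots,v-1)}\, t^\ell$, and factoring off the single factor $(1+x_v t)$ gives
\begin{equation}
 \sum_{\ell} e_\ell^{(1,\dots,v-1)}\, t^\ell = (1+x_v t)\sum_{\ell} e_\ell^{(1,\dots,v)}\, t^\ell.
\end{equation}
Comparing the coefficients of $t^\ell$ on both sides produces $e_\ell^{(1,\dots,v-1)}=e_\ell^{(1,\dots,v)}+x_v\, e_{\ell-1}^{(1,\dots,v)}$, which is exactly the asserted relation once the last term is moved to the left.

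Alternatively one may argue directly: a squarefree monomial of degree $\ell$ in the variables $\{x_v,\dots,x_N\}$ either omits $x_v$, and such monomials sum to $e_\ell^{(1,\dots,v)}$, or it contains $x_v$ as a factor, and those sum to $x_v\, e_{\ell-1}^{(1,\dots,v)}$. There is no real obstacle here; as the paper notes, the statement is immediate, and the only point demanding attention is bookkeeping the superscript conventions so that each term is read as the elementary symmetric function in the correct sub-alphabet. (The companion Lemma~\ref{lem:elk1} is the same recursion applied to removing the single variable $x_v$ from the full alphabet rather than the initial segment $x_1,\dots,x_{v-1}$.)
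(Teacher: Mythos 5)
Your proof is correct: after unwinding the superscript convention, the identity is exactly the standard recursion $e_\ell(x_v,\dots,x_N)=e_\ell(x_{v+1},\dots,x_N)+x_v\,e_{\ell-1}(x_{v+1},\dots,x_N)$, which your generating-function (or direct monomial-splitting) argument establishes. The paper states this lemma without proof, declaring it immediate, so your write-up simply supplies the routine verification the authors omitted.
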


\begin{lemma}\label{lem:elpi1}
 \begin{equation}
  e_\ell^{(1,\dots,r)}\pi_r=\pi_r\left(e_\ell^{(1,\dots,r+1)}+x_r e_{\ell-1}^{(1,\dots,r+1)}\right)-x_re_{\ell-1}^{(1,\dots,r+1)}
 \end{equation}
\end{lemma}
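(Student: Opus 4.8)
The plan is to isolate the dependence on the single variable $x_{r+1}$, which together with $x_r$ is the only variable on which $\pi_r$ acts nontrivially, and then to transport $\pi_r$ across it using the commutation relations for the isobaric divided differences. First I would apply Lemma~\ref{lem:el1k1} with $v=r+1$, which gives
\begin{equation}
e_\ell^{(1,\dots,r)}=e_\ell^{(1,\dots,r+1)}+x_{r+1}\,e_{\ell-1}^{(1,\dots,r+1)}.
\end{equation}
Multiplying on the right by $\pi_r$ then reduces the claim to understanding the two pieces $e_\ell^{(1,\dots,r+1)}\pi_r$ and $x_{r+1}\,e_{\ell-1}^{(1,\dots,r+1)}\pi_r$ separately.

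The key observation is that both $e_\ell^{(1,\dots,r+1)}$ and $e_{\ell-1}^{(1,\dots,r+1)}$ are polynomials in the variables $x_{r+2},\dots,x_N$ alone; in particular they involve neither $x_r$ nor $x_{r+1}$, so they are fixed by $\kappa_{r,r+1}$ and hence commute with $\pi_r$. Moving these factors to the right of $\pi_r$ yields
\begin{equation}
e_\ell^{(1,\dots,r)}\pi_r=\pi_r\,e_\ell^{(1,\dots,r+1)}+x_{r+1}\pi_r\,e_{\ell-1}^{(1,\dots,r+1)}.
\end{equation}
It then remains only to rewrite the factor $x_{r+1}\pi_r$. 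Rearranging the relation $\pi_r\,x_r=x_{r+1}\pi_r+x_r$ gives $x_{r+1}\pi_r=\pi_r\,x_r-x_r$; substituting this and factoring $\pi_r$ out on the left produces exactly
\begin{equation}
e_\ell^{(1,\dots,r)}\pi_r=\pi_r\left(e_\ell^{(1,\dots,r+1)}+x_r\,e_{\ell-1}^{(1,\dots,r+1)}\right)-x_r\,e_{\ell-1}^{(1,\dots,r+1)},
\end{equation}
which is the desired identity.

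There is no substantive obstacle here: the whole argument is a one-step recurrence followed by a commutation and a single rearrangement of the $\pi$--$x$ relation. The only point requiring a moment's care is the commutativity claim, namely that $e_\ell^{(1,\dots,r+1)}$ and $e_{\ell-1}^{(1,\dots,r+1)}$, being supported on $x_{r+2},\dots,x_N$, pass freely through $\pi_r$. This is immediate from the definition $\pi_r=(x_r-x_{r+1})^{-1}(x_r-x_{r+1}\kappa_{r,r+1})$, since $\kappa_{r,r+1}$ fixes every $x_j$ with $j\notin\{r,r+1\}$ and the prefactor is a rational function of $x_r,x_{r+1}$ only.
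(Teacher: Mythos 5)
Your proof is correct and follows essentially the same route as the paper's: rewrite $e_\ell^{(1,\dots,r)}$ via Lemma~\ref{lem:el1k1}, commute the elementary symmetric functions (which are free of $x_r,x_{r+1}$) past $\pi_r$, and apply the relation $x_{r+1}\pi_r=(\pi_r-1)x_r$. No discrepancies to report.
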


\begin{proof}
 From Lemma \ref{lem:el1k1}, we have that
$$e_\ell^{(1,\dots,r)}=e_\ell^{(1,\dots,r+1)}+x_{r+1} e_{\ell-1}^{(1,\dots,r+1)}$$
and by the fact that $x_{r+1}\pi_r=(\pi_r-1)x_r$, we have
\begin{eqnarray}
e_\ell^{(1,\dots,r)}\pi_r&=&e_\ell^{(1,\dots,r+1)}\pi_r+x_{r+1} e_{\ell-1}^{(1,\dots,r+1)}\pi_r\nonumber\\
&=&\pi_re_\ell^{(1,\dots,r+1)}+x_{r+1}\pi_r e_{\ell-1}^{(1,\dots,r+1)}\nonumber\\
&=&\pi_re_\ell^{(1,\dots,r+1)}+((\pi_r-1)x_r) e_{\ell-1}^{(1,\dots,r+1)}\nonumber\\
&=&\pi_r\left(e_\ell^{(1,\dots,r+1)}+x_r e_{\ell-1}^{(1,\dots,r+1)}\right)-x_re_{\ell-1}^{(1,\dots,r+1)}\nonumber
\end{eqnarray}
\end{proof}

\begin{lemma}\label{lem:elvpi} We have
\begin{equation}
e^{(v)}_\ell \pi_{\omega_v}=\sum_{j=1}^v \pi_{\omega_{v-1}}\pi_{(v-1,j)}x_1x_2\dots x_{j-1}e_{\ell-j+1}^{(1,2,\dots,j)}
\end{equation}
where we note that $e_{0}^{(1,2,\dots,j)}=1$ and $e_n^{(1,2,\dots,j)}=0$ for $n<0$ .
 \end{lemma}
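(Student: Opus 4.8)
The plan is to argue by induction on $v$, the base case $v=1$ being immediate since both sides collapse to $e_\ell^{(1)}$. For the inductive step I would use the column reading of the longest element, $\pi_{\omega_v}=\pi_{[1,v-1]}\,\pi_{\omega_{v-1}}$ with $\pi_{[1,v-1]}=\pi_1\pi_2\cdots\pi_{v-1}$ (this is the reduced word $\omega_v=(s_1\cdots s_{v-1})(s_1\cdots s_{v-2})\cdots(s_1)$, so that $\pi_{[1,v-1]}\pi_{\omega_{v-1}}$ telescopes to $\pi_{\omega_v}$). Since $e_\ell^{(v)}$ is symmetric in $x_1,\dots,x_{v-1}$ it commutes with $\pi_1,\dots,\pi_{v-2}$, so I can push it rightward until it sits directly in front of $\pi_{v-1}$, reducing the whole problem to understanding the single product $e_\ell^{(v)}\pi_{v-1}$.

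The key one-step identity is $e_\ell^{(v)}\pi_{v-1}=\pi_{v-1}e_\ell^{(v-1)}+x_{v-1}e_{\ell-1}^{(v-1,v)}$, which I would obtain by peeling $x_{v-1}$ off $e_\ell^{(v)}$ (the remaining factor $e_\cdot^{(v-1,v)}$ is symmetric in $x_{v-1},x_v$ and so commutes with $\pi_{v-1}$), using the relation $\pi_{v-1}x_v=x_{v-1}\pi_{v-1}-x_{v-1}$ to carry $x_{v-1}$ through, and recombining with Lemma~\ref{lem:elk1}. Substituting it splits $e_\ell^{(v)}\pi_{\omega_v}$ into a main term $\pi_{[1,v-1]}e_\ell^{(v-1)}\pi_{\omega_{v-1}}$ and a remainder term $\pi_{[1,v-2]}\,x_{v-1}e_{\ell-1}^{(v-1,v)}\,\pi_{\omega_{v-1}}$.

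For the main term, the inductive hypothesis rewrites $e_\ell^{(v-1)}\pi_{\omega_{v-1}}$ as the sum over $j=1,\dots,v-1$; then the same decomposition one level down, $\pi_{[1,v-2]}\pi_{\omega_{v-2}}=\pi_{\omega_{v-1}}$, together with the commutation of $\pi_{v-1}$ past $\pi_{\omega_{v-2}}$ and the telescoping $\pi_{v-1}\pi_{(v-2,j)}=\pi_{(v-1,j)}$, produces exactly the $j=1,\dots,v-1$ terms of the claimed formula. The remainder term should supply the single missing $j=v$ term $\pi_{\omega_{v-1}}x_1\cdots x_{v-1}e_{\ell-v+1}^{(1,\dots,v)}$. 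To see this I would split $e_{\ell-1}^{(v-1,v)}=\sum_{p+q=\ell-1}e_p(x_1,\dots,x_{v-2})\,e_q^{(1,\dots,v)}$, commute the tail factor $e_q^{(1,\dots,v)}$ (a symmetric function of $x_{v+1},\dots,x_N$) to the far right, and pull the $\pi_{\omega_{v-1}}$-output, which is symmetric in $x_1,\dots,x_{v-1}$, out in front. Everything then reduces to the elementary claim that $\pi_{[1,v-2]}\bigl(x_{v-1}e_p(x_1,\dots,x_{v-2})\bigr)$ vanishes for $p<v-2$ and equals $x_1\cdots x_{v-1}$ for $p=v-2$, which selects precisely $q=\ell-v+1$.

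The main obstacle is exactly this control of the remainder term. I expect to settle the last claim by applying $\pi_{v-2},\pi_{v-3},\dots,\pi_1$ one at a time to the generating function $x_{v-1}\prod_{i=1}^{v-2}(1+tx_i)$: at each stage the inactive factors are symmetric in the two active variables and can be pulled out, while $\pi_i(x_{i+1})=0$ annihilates the lower piece and $\pi_i$ fixes the squarefree top piece, so one power of $t$ and one variable are absorbed per step and only the coefficient of $t^{v-2}$, namely $x_1\cdots x_{v-1}$, survives. (Alternatively, the $x$-through-$\pi$ bookkeeping in the remainder can be organised through Lemma~\ref{lem:xpipi}.) Assembling the main and remainder terms yields the full sum over $j=1,\dots,v$, closing the induction.
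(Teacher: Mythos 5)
Your argument is correct, but it runs along a genuinely different axis from the paper's. The paper fixes $v$ and inducts on $\ell$, feeding the recursion $e_\ell^{(v)}=e_\ell-x_ve_{\ell-1}^{(v)}$ of Lemma~\ref{lem:elk1} into the inductive hypothesis and then untangling the resulting sums with Lemma~\ref{lem:xpipi}, Lemma~\ref{lem:el1k1} and a couple of reindexings. You instead fix $\ell$ and induct on $v$, peeling one column $\pi_{[1,v-1]}$ off $\pi_{\omega_v}$, commuting $e_\ell^{(v)}$ up to the single factor $\pi_{v-1}$, and splitting via the one-step identity $e_\ell^{(v)}\pi_{v-1}=\pi_{v-1}e_\ell^{(v-1)}+x_{v-1}e_{\ell-1}^{(v-1,v)}$ (which I checked: it follows from $x_{v-1}\pi_{v-1}=\pi_{v-1}x_v+x_{v-1}$ and the two-variable-deleted factorization). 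The main term then telescopes cleanly into the $j\le v-1$ part of the sum, and the remainder reduces to the evaluation $\pi_{[1,v-2]}\bigl(x_{v-1}e_p(x_1,\dots,x_{v-2})\bigr)=\delta_{p,v-2}\,x_1\cdots x_{v-1}$, which your generating-function computation establishes correctly; the final repositioning of $\pi_{\omega_{v-1}}$ is legitimate because $x_1\cdots x_{v-1}e_{\ell-v+1}^{(1,\dots,v)}$ is symmetric in $x_1,\dots,x_{v-1}$ and hence commutes with $\pi_1,\dots,\pi_{v-2}$. What your route buys is conceptual transparency: each summand $j$ is visibly the point at which the chain of variables stops growing, and the only nontrivial input is an elementary straightening identity rather than the paper's bookkeeping of shifted sums. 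What the paper's route buys is economy of means within its own appendix: it recycles lemmas already proved for other purposes and never needs to vary $v$, which is the quantity that is fixed ($v=N-m$) in every application. Either proof would serve.
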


\begin{proof}
We proceed by induction on $\ell$. If $\ell=0$ then there is only the term $j=1$ in the sum:
\begin{equation}
e^{(v)}_0 \pi_{\omega_v}=\pi_{\omega_v} 
=\pi_{\omega_{v-1}}\pi_{(v-1,1)} 
\end{equation}
and the result is seen to hold.  Now, suppose that the result is true for
$k\le \ell$. 
By Lemma \ref{lem:elk1}, we have
\begin{equation}
e_{\ell+1}^{(v)}\pi_{\omega_{v}}
=(e_{\ell+1}-x_ve_{\ell}^{(v)})\pi_{\omega_{v}}
=e_{\ell+1}\pi_{\omega_{v}}-x_ve_{\ell}^{(v)}\pi_{\omega_{v}}
\end{equation}
Hence, by induction, 
\begin{eqnarray}
e_{\ell+1}^{(v)}\pi_{\omega_{v}} &=&\pi_{\omega_{v}}e_{\ell+1}-x_v\sum_{j=1}^v \pi_{\omega_{v-1}}\pi_{(v-1,j)}x_1x_2\dots x_{j-1}e_{\ell-j+1}^{(1,2,\dots,j)}\nonumber\\
&=&\pi_{\omega_{v}}e_{\ell+1}-x_v\pi_{\omega_{v-1}}x_1\dots x_{v-1}e_{\ell-v+1}^{(1,2,\dots,v)}-x_v\sum_{j=1}^{v-1} \pi_{\omega_{v-1}}\pi_{(v-1,j)}x_1x_2\dots x_{j-1}e_{\ell-j+1}^{(1,2,\dots,j)}\nonumber\\
&=&\pi_{\omega_{v}}e_{\ell+1}-\pi_{\omega_{v-1}}x_1\dots x_{v-1}x_ve_{\ell-v+1}^{(1,2,\dots,v)}-\sum_{j=1}^{v-1} x_v\pi_{\omega_{v-1}}\pi_{(v-1,j)}x_1x_2\dots x_{j-1}e_{\ell-j+1}^{(1,2,\dots,j)}\nonumber
\end{eqnarray}
Then by Lemma \ref{lem:xpipi}, we have
\begin{eqnarray}
e_{\ell+1}^{(v)}\pi_{\omega_{v}} &=&\pi_{\omega_{v}}e_{\ell+1}-\pi_{\omega_{v-1}}x_1\dots x_ve_{\ell-v+1}^{(1,2,\dots,v)}\nonumber\\
&&-\sum_{j=1}^{v-1} \pi_{\omega_{v-1}}\pi_{(v-1,j+1)}(\pi_j-1)x_jx_1x_2\dots x_{j-1}e_{\ell-j+1}^{(1,2,\dots,j)}\nonumber\\
&=&\pi_{\omega_{v}}e_{\ell+1}-\pi_{\omega_{v-1}}x_1\dots x_ve_{\ell-v+1}^{(1,2,\dots,v)}-\sum_{j=1}^{v-1} \pi_{\omega_{v-1}}\pi_{(v-1,j)}x_1x_2\dots x_{j}e_{\ell-j+1}^{(1,2,\dots,j)}\nonumber\\
&&+\sum_{j=1}^{v-1} \pi_{\omega_{v-1}}\pi_{(v-1,j+1)}x_1x_2\dots x_{j}e_{\ell-j+1}^{(1,2,\dots,j)}\nonumber
\end{eqnarray}
We combine the second term with the sum to get
\begin{eqnarray}
e_{\ell+1}^{(v)}\pi_{\omega_{v}} &=&\pi_{\omega_{v}}e_{\ell+1}-\sum_{j=1}^{v} \pi_{\omega_{v-1}}\pi_{(v-1,j)}x_1x_2\dots x_{j}e_{\ell-j+1}^{(1,2,\dots,j)}\nonumber\\
&&+\sum_{j=1}^{v-1} \pi_{\omega_{v-1}}\pi_{(v-1,j+1)}x_1x_2\dots x_{j}e_{\ell-j+1}^{(1,2,\dots,j)}\nonumber
\end{eqnarray}
We then take out the first term of the first sum and renumber the second sum to obtain
\begin{eqnarray}
e_{\ell+1}^{(v)}\pi_{\omega_{v}} &=&\pi_{\omega_{v}}e_{\ell+1}-\pi_{\omega_{v}}x_1e^{(1)}_{\ell}-\sum_{j=2}^{v} \pi_{\omega_{v-1}}\pi_{(v-1,j)}x_1x_2\dots x_{j}e_{\ell-j+1}^{(1,2,\dots,j)}\nonumber\\
&&+\sum_{j=2}^{v} \pi_{\omega_{v-1}}\pi_{(v-1,j)}x_1x_2\dots x_{j-1}e_{\ell-j+2}^{(1,2,\dots,j-1)}\nonumber
\end{eqnarray}
We now use Lemma \ref{lem:elk1} to combine the first two terms and combine the sums
$$
e_{\ell+1}^{(v)}\pi_{\omega_{v}} =\pi_{\omega_{v}}e^{(1)}_{\ell+1}+\sum_{j=2}^{v} \pi_{\omega_{v-1}}\pi_{(v-1,j)}x_1x_2\dots x_{j-1}
\left(e_{\ell-j+2}^{(1,2,\dots,j-1)} - x_{j}e_{\ell-j+1}^{(1,2,\dots,j)}\right)
$$
We use Lemma \ref{lem:el1k1} to combine the terms within the sum to get
$$
e_{\ell+1}^{(v)}\pi_{\omega_{v}}  = \pi_{\omega_{v}}e^{(1)}_{\ell+1}+\sum_{j=2}^{v} \pi_{\omega_{v-1}}\pi_{(v-1,j)}x_1x_2\dots x_{j-1}
e_{\ell-j+2}^{(1,2,\dots,j)}
$$
We finally combine to make one sum
$$
e_{\ell+1}^{(v)}\pi_{\omega_{v}} =\sum_{j=1}^{v} \pi_{\omega_{v-1}}\pi_{(v-1,j)}x_1x_2\dots x_{j-1}
e_{\ell-j+2}^{(1,2,\dots,j)}
$$
\end{proof}

\begin{lemma}\label{lem:prod1}
Suppose $\alpha_d< j+d$ and let $r=j+d$. Then we have that
\begin{align}
&\partial_{\omega_{(v-1)^c}}\pi_{\omega_{v-1}}\pi_{(v-1,j)}x_1x_2\dots x_{j-1}e_{\ell-j+1}^{(1,2,\dots,j)}
\prod_{i=1}^d\pi_{(v+i-1,\alpha_i)}=\nonumber\\
& \qquad (-1)^d\partial_{\omega_{(v-1)^c}}\pi_{\omega_{v-1}}
\left(\prod_{i=1}^{d}\pi_{(v+i-2,\alpha_i)}\right)\pi_{(v+d-1,r)}
x_1x_2\dots x_{r-1}e_{\ell-r+1}^{(1,2,\dots,r)}
\end{align}
\end{lemma}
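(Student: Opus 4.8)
The plan is to prove the identity by induction on $d$, the number of factors $\pi_{(v+i-1,\alpha_i)}$ in the product, writing $v=N-m$ throughout. The base case $d=0$ is immediate: the product is empty, the sign $(-1)^0=1$, one has $r=j$, and both sides reduce to $\partial_{\omega_{(v-1)^c}}\pi_{\omega_{v-1}}\pi_{(v-1,j)}x_1\cdots x_{j-1}e_{\ell-j+1}^{(1,\dots,j)}$. For the inductive step I would first observe that the hypothesis $\alpha_d<j+d$, together with $\alpha_{d-1}<\alpha_d$, forces $\alpha_{d-1}<j+(d-1)$, so that the inductive hypothesis applies to the leading $d-1$ factors. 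Multiplying the inductive hypothesis on the right by the remaining factor $\pi_{(v+d-1,\alpha_d)}$ then expresses the left-hand side for $d$ as
\[
(-1)^{d-1}\,\partial_{\omega_{(v-1)^c}}\pi_{\omega_{v-1}}\Bigl(\prod_{i=1}^{d-1}\pi_{(v+i-2,\alpha_i)}\Bigr)\,\pi_{(n,r-1)}\,x_1\cdots x_{r-2}\,e_{\ell-r+2}^{(1,\dots,r-1)}\,\pi_{(n+1,\alpha_d)},
\]
where $n=v+d-2$ and $r=j+d$. Comparing with the desired right-hand side, the whole lemma reduces to the single-step identity
\[
\pi_{(n,r-1)}\,x_1\cdots x_{r-2}\,e_{\ell-r+2}^{(1,\dots,r-1)}\,\pi_{(n+1,\alpha_d)}=-\,\pi_{(n,\alpha_d)}\,\pi_{(n+1,r)}\,x_1\cdots x_{r-1}\,e_{\ell-r+1}^{(1,\dots,r)},
\]
holding after the common prefix $\partial_{\omega_{(v-1)^c}}\pi_{\omega_{v-1}}\prod_{i=1}^{d-1}\pi_{(v+i-2,\alpha_i)}$ is prepended.

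To establish this single-step identity I would move the top letter $\pi_{n+1}=\pi_{v+d-1}$ of the appended chain $\pi_{(n+1,\alpha_d)}=\pi_{n+1}\pi_{(n,\alpha_d)}$ leftward: since $v+d-1\ge r$, it commutes both with the monomial $x_1\cdots x_{r-2}$ and with the symmetric expression $e_{\ell-r+2}^{(1,\dots,r-1)}=e_{\ell-r+2}(x_r,\dots,x_N)$, and it can then be reshuffled into $\pi_{(n,r-1)}$ using the braid relations, in the spirit of Lemma~\ref{lem:pin}. What remains is to let $\pi_{(n,\alpha_d)}$ act on $x_1\cdots x_{r-2}\,e_{\ell-r+2}^{(1,\dots,r-1)}$. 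Here I would invoke Lemma~\ref{lem:el1k1} to write $e_{\ell-r+2}^{(1,\dots,r-1)}=e_{\ell-r+2}^{(1,\dots,r)}+x_r\,e_{\ell-r+1}^{(1,\dots,r)}$ (equivalently Lemma~\ref{lem:elpi1}), together with the variable-transport relation $x_{i+1}\pi_i=(\pi_i-1)x_i$, so as to turn the extra factor $x_r$ into $x_{r-1}$ while lowering the degree of the $e$ by one, reproducing the pattern $x_1\cdots x_{r-1}\,e_{\ell-r+1}^{(1,\dots,r)}$ demanded on the right.

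The sign $-1$ per step, and the vanishing of all the spurious cross-terms, would come from the same absorption mechanism used in the proof of Lemma~\ref{lem:xpipi}: each operator $(\pi_i-1)$ generated by the transport relation is annihilated on the left, via $\pi_{\omega_{v-1}}(\pi_i-1)=0$ for $i<v-1$ and via $\partial_i\pi_i=0$ inside $\partial_{\omega_{(v-1)^c}}$, so that precisely the constant part of one such factor survives. The hypothesis $\alpha_d<j+d=r$ is exactly what guarantees that the products $\pi_{(n,\alpha_d)}$ and $\pi_{(n+1,r)}$ on the right are non-degenerate (i.e. $\alpha_d\le n$ and the chains are properly ordered), which is what makes the reduction legitimate.

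I expect the main obstacle to be the bookkeeping in this single-step identity: carefully commuting the appended chain past the monomial, the symmetric function, and the existing $\pi$-blocks by braid moves, and then verifying that after applying the transport relation exactly one $-1$ survives while every other term is killed by the $\partial_{\omega_{(v-1)^c}}\pi_{\omega_{v-1}}$ prefix. Degenerate boundary cases would have to be handled separately, notably $j=v$, where $\pi_{(v-1,j)}=1$ and $\pi_{n+1}$ no longer commutes with $e_{\ell-r+2}^{(1,\dots,r-1)}$, as well as the extreme values where the $e$-degree reaches $0$.
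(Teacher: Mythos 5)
Your outline coincides with the paper's own proof: the paper also argues by induction on $d$, peels off the final factor $\pi_{(v+d-1,\alpha_d)}$, and reduces to exactly the one-step identity you isolate; it then writes the appended chain as $\pi_{(v+d-1,r)}\,\pi_{r-1}\,\pi_{(r-2,\alpha_d)}$, commutes the block $\pi_{(v+d-1,r)}$ (all of whose indices are $\ge r$) past $x_1\cdots x_{r-2}\,e^{(1,\dots,r-1)}_{\ell-r+2}$, and handles $e^{(1,\dots,r-1)}_{\ell-r+2}\pi_{r-1}$ via Lemma~\ref{lem:elpi1} --- which is precisely the identity you re-derive from Lemma~\ref{lem:el1k1} and the transport relation $x_{i+1}\pi_i=(\pi_i-1)x_i$. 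Your base case $d=0$ versus the paper's $d=1$, and your letter-by-letter description of the commutation, are cosmetic differences.

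The one step that would not go through as you have written it is the annihilation of the unwanted term. The piece $\pi_{r-1}(\cdots)$ produced by Lemma~\ref{lem:elpi1} is \emph{not} killed by $\pi_{\omega_{v-1}}(\pi_i-1)=0$: the factor $(\pi_{r-1}-1)$ cannot be commuted left to $\pi_{\omega_{v-1}}$ because it fails to commute with $\pi_{(v+d-2,r-1)}$ and $\pi_{(v+d-1,r)}$, both of which contain $\pi_{r-1}$ or $\pi_r$. Nor does $\partial_i\pi_i=0$ apply to it directly, since $r-1=j+d-1$ need not lie in $\{v,\dots,N-1\}$ and $\pi_{r-1}$ is in any case buried inside the word. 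The correct mechanism is the Lemma~\ref{lem:pin} move you invoke elsewhere: $\pi_{(v+d-1,r)}\pi_{r-1}=\pi_{(v+d-1,r-1)}$, and Lemma~\ref{lem:pin} applied to $\pi_{(v+d-2,r-1)}\pi_{(v+d-1,r-1)}$ extracts $\pi_{v+d-1}$ on the far left; this letter commutes with $\pi_{\omega_{v-1}}$ and with every $\pi_{(v+i-2,\alpha_i)}$ in the prefix, and $\partial_{\omega_{(v-1)^c}}\pi_{v+d-1}=0$ because $v+d-1\ge v$. With that correction (and noting that in the boundary case $j=v$ the block $\pi_{(v+d-1,r)}$ is empty, so no commutation past the symmetric function is needed at all), your argument is the paper's.
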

\begin{proof}
We prove the lemma by induction on $d$ for a fixed $j$.  
The base case $d=1$ is easily seen to hold by applying the same logic as the inductive step.
Now assume the lemma is true for $k \leq d$, and suppose $\alpha_{d+1}< j+d+1$.  Then
\begin{align}
&\partial_{\omega_{(v-1)^c}}\pi_{\omega_{v-1}}\pi_{(v-1,j)}x_1x_2\dots x_{j-1}e_{\ell-j+1}^{(1,2,\dots,j)}
\prod_{i=1}^{d+1}\pi_{(v+i-1,\alpha_i)}=\nonumber\\
& \qquad (-1)^d \partial_{\omega_{(v-1)^c}}\pi_{\omega_{v-1}}
\left(\prod_{i=1}^{d}\pi_{(v+i-2,\alpha_i)}\right)\pi_{(v+d-1,r)}
x_1x_2\dots x_{r-1}e_{\ell-r+1}^{(1,2,\dots,r)}\pi_{(v+d,\alpha_{d+1})} \label{rhs}
\end{align}
Since $\alpha_{d+1}< r+1$, then $\pi_{(v+d,\alpha_{d+1})} $ cannot commute with $x_1x_2\dots x_{r-1}e_{\ell-r+1}^{(1,2,\dots,r)}$.

We use 
$\pi_{(v+d,\alpha_{d+1})}=\pi_{(v+d,r+1)}\pi_r\pi_{(r-1,\alpha_{d+1})}$ and then commute 
$\pi_{(v+d,r+1)}$ with $x_1x_2\dots x_{r-1}e_{\ell-r+1}^{(1,2,\dots,r)}$ to transform
the right hand side of \eqref{rhs} into
$$(-1)^d\partial_{\omega_{(v-1)^c}}\pi_{\omega_{v-1}}
\left(\prod_{i=1}^{d}\pi_{(v+i-2,\alpha_i)}\right)\pi_{(v+d-1,r)}\pi_{(v+d,r+1)}
x_1x_2\dots x_{r-1}e_{\ell-r+1}^{(1,2,\dots,r)}\pi_r\pi_{(r-1,\alpha_{d+1})}$$
Now, applying Lemma~\ref{lem:elpi1}, the previous expression becomes
\begin{align}
& (-1)^d\partial_{\omega_{(v-1)^c}}\pi_{\omega_{v-1}}
\left(\prod_{i=1}^{d}\pi_{(v+i-2,\alpha_i)}\right)\pi_{(v+d-1,r)}\pi_{(v+d,r+1)}\pi_r
\nonumber \\ 
& \qquad \qquad \qquad \qquad \times x_1x_2\dots x_{r-1}
\left(e_\ell^{(1,\dots,r+1)}+x_r e_{\ell-1}^{(1,\dots,r+1)}\right)
\pi_{(r-1,\alpha_{d+1})}\nonumber\\
& +(-1)^{d+1}\partial_{\omega_{(v-1)^c}}\pi_{\omega_{v-1}}
\left(\prod_{i=1}^{d}\pi_{(v+i-2,\alpha_i)}\right)\pi_{(v+d-1,r)}\pi_{(v+d,r+1)} 
\nonumber \\
& \qquad \qquad \qquad \qquad  \times x_1x_2\dots x_{r-1}
x_re_{\ell-1}^{(1,\dots,r+1)}\pi_{(r-1,\alpha_{d+1})}\nonumber
\end{align}
The first term of the expression is $0$ because by Lemma \ref{lem:pin}
one can extract $\pi_{v+d}$ from the left in
$\pi_{(v+d-1,r)}\pi_{(v+d,r+1)}\pi_r$.
$$\partial_{\omega_{(v-1)^c}}\pi_{\omega_{v-1}}
\left(\prod_{i=1}^{d}\pi_{(v+i-2,\alpha_i)}\right)\pi_{(v+d-1,r)}\pi_{(v+d,r+1)}\pi_r=0.$$
The second term is
$$(-1)^{d+1}\partial_{\omega_{(v-1)^c}}\pi_{\omega_{v-1}}
\left(\prod_{i=1}^{d}(-1)\pi_{(v+i-2,\alpha_i)}\right)\pi_{(v+d-1,r)}\pi_{(v+d,r+1)}\pi_{(r-1,\alpha_{d+1})}
x_1x_2\dots x_{r-1}
x_re_{\ell-1}^{(1,\dots,r+1)}$$
Using $\pi_{(v+d-1,r)}\pi_{(v+d,r+1)}\pi_{(r-1,\alpha_{d+1})}
=\pi_{(v+d-1,\alpha_{d+1})}\pi_{(v+d,r+1)}$, the left hand side of \eqref{rhs}
finally becomes
$$
(-1)^{d+1}\partial_{\omega_{(v-1)^c}}\pi_{\omega_{v-1}}
\left(\prod_{i=1}^{d+1}\pi_{(v+i-2,\alpha_i)}\right)\pi_{(v+d,r+1)}
x_1x_2\dots x_{r}
e_{\ell-1}^{(1,\dots,r+1)}
$$
and the lemma holds.
\end{proof}

\begin{corollary}\label{cor:delpi1}
For a given $j$, let $d$ be the maximum value such that
$\alpha_d< j+d$. We then have
\begin{align}\label{eqn:delpi1}
&\partial_{\omega_{(v-1)^c}}\pi_{\omega_{v-1}}\pi_{(v-1,j)}x_1x_2\dots x_{j-1}e_{\ell-j+1}^{(1,2,\dots,j)}
\prod_{i=1}^m\pi_{(v+i-1,\alpha_i)}=\nonumber\\
& \qquad (-1)^d \partial_{\omega_{(v-1)^c}}\pi_{\omega_{v-1}}
\left(\prod_{i=1}^{d}\pi_{(v+i-2,\alpha_i)}\right)\pi_{(v+d-1,r)}\left(\prod_{i=d+1}^m\pi_{(v+i-1,\alpha_i)}\right)
x_1x_2\dots x_{r-1}e_{\ell-r+1}^{(1,2,\dots,r)}\nonumber\\
\end{align}
where $r=j+d$.
\end{corollary}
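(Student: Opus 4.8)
The plan is to derive the corollary directly from Lemma~\ref{lem:prod1} by splitting the full product $\prod_{i=1}^m \pi_{(v+i-1,\alpha_i)}$ into its first $d$ factors and its remaining $m-d$ factors: the lemma disposes of the first block, and a commutation argument handles the second. First I would record the combinatorial content of the phrase ``$d$ is the maximum value such that $\alpha_d<j+d$''. Since the $\alpha_i$ are the rows of the circles listed from top to bottom, they form a strictly increasing sequence of integers, so $\alpha_{i+1}-(i+1)\ge \alpha_i-i$; that is, $\alpha_i-i$ is non-decreasing in $i$. Hence the inequality $\alpha_i<j+i$ holds exactly for $i=1,\dots,d$ and fails for $i>d$. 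In particular the hypothesis $\alpha_d<j+d$ of Lemma~\ref{lem:prod1} is satisfied, and for every $i>d$ one has $\alpha_i\ge j+i\ge j+d+1=r+1$.

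Next I would write $\prod_{i=1}^m \pi_{(v+i-1,\alpha_i)}=\bigl(\prod_{i=1}^d \pi_{(v+i-1,\alpha_i)}\bigr)\bigl(\prod_{i=d+1}^m \pi_{(v+i-1,\alpha_i)}\bigr)$ and apply Lemma~\ref{lem:prod1} to the portion of the left-hand side of \eqref{eqn:delpi1} involving only $\prod_{i=1}^d$ (when $d=0$ this step is vacuous and $r=j$). This rewrites the left-hand side of \eqref{eqn:delpi1} as
\[
(-1)^d\,\partial_{\omega_{(v-1)^c}}\pi_{\omega_{v-1}}\left(\prod_{i=1}^{d}\pi_{(v+i-2,\alpha_i)}\right)\pi_{(v+d-1,r)}\, x_1x_2\cdots x_{r-1}\,e_{\ell-r+1}^{(1,2,\dots,r)}\prod_{i=d+1}^m\pi_{(v+i-1,\alpha_i)}.
\]
It then remains to move the trailing product $\prod_{i=d+1}^m \pi_{(v+i-1,\alpha_i)}$ leftward, past the multiplication operator $x_1\cdots x_{r-1}\,e_{\ell-r+1}^{(1,2,\dots,r)}$, so as to land on the right-hand side of \eqref{eqn:delpi1}.

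For that last step I would invoke the standard fact that $\pi_s$ is linear over functions symmetric in $x_s$ and $x_{s+1}$: if $\kappa_{s,s+1}f=f$, then $\pi_s(fg)=f\,\pi_s(g)$, so $\pi_s$ commutes as an operator with multiplication by $f$. Every divided difference $\pi_t$ occurring in a factor with $i>d$ satisfies $t\ge \alpha_i\ge r+1$, while $x_1\cdots x_{r-1}$ involves only $x_1,\dots,x_{r-1}$ and $e_{\ell-r+1}^{(1,2,\dots,r)}$ is symmetric in $x_{r+1},\dots,x_N$; both are therefore symmetric in $x_t$ and $x_{t+1}$ for all such $t$. Consequently the entire trailing product commutes with the multiplication operator, and moving it to the left produces exactly the right-hand side of \eqref{eqn:delpi1}. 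The only genuinely delicate point—and the one place where the maximality of $d$ is essential—is the index bound $\alpha_i\ge r+1$ for $i>d$, since this is precisely what guarantees the required commutation; everything else is bookkeeping.
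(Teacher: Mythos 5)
Your proposal is correct and follows essentially the same route as the paper's own proof: split $\prod_{i=1}^m\pi_{(v+i-1,\alpha_i)}$ into the first $d$ factors and the rest, apply Lemma~\ref{lem:prod1} to the first block, and then commute the trailing block past $x_1\cdots x_{r-1}\,e_{\ell-r+1}^{(1,2,\dots,r)}$ using the bound $\alpha_i\ge r+1$ for $i>d$ coming from the maximality of $d$. Your explicit remarks that $\alpha_i-i$ is non-decreasing (so the condition $\alpha_i<j+i$ holds precisely for $i\le d$) and that $\pi_t$ commutes with multiplication by functions symmetric in $x_t,x_{t+1}$ merely spell out details the paper leaves implicit.
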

\begin{proof}
We first break up the product $\displaystyle{\prod_{i=1}^m\pi_{(v+i-1,\alpha_i)}
=\prod_{i=1}^d\pi_{(v+i-1,\alpha_i)}\prod_{i=d+1}^m\pi_{(v+i-1,\alpha_i)}}.$

Using Lemma~\ref{lem:prod1} to multiply by the first factor, the left hand side of (\ref{eqn:delpi1}) becomes
$$(-1)^d
\partial_{\omega_{(v-1)^c}}\pi_{\omega_{v-1}}
\left(\prod_{i=1}^{d}\pi_{(v+i-2,\alpha_i)}\right)\pi_{(v+d-1,r)}
x_1x_2\dots x_{r-1}e_{\ell-r+1}^{(1,2,\dots,r)}\prod_{i=d+1}^m\pi_{(v+i-1,\alpha_i)}.$$
By hypothesis, $\alpha_{d+1}\ge j+d+1$ which implies that 
the rightmost product in the previous expression commutes with 
$x_1x_2\dots x_{r-1}e_{\ell-r+1}^{(1,2,\dots,r)}$.  The 
left hand side of  (\ref{eqn:delpi1}) is thus equal to its right hand side
and the result follows.
\end{proof}

\begin{corollary} \label{corobigexpan}
Let $v=N-m$. We have
\begin{equation}
\partial_{\omega_{(v-1)^c}}e^{(v)}_\ell\mathcal{R}_{N,[\alpha_1,\dots,\alpha_m]}=
\partial_{\omega_{(v-1)^c}}\sum_{r\not \in \{\alpha_1,\dots,\alpha_m \}}
(-1)^{{\rm pos}(r)}\mathcal{R}_{N,[\alpha_1,\ldots,r,\ldots,\alpha_m]}x_1\dots x_{r-1}e_{\ell-r+1}^{(1,2,\dots,r)}
\end{equation}
where we recall that ${\rm pos}(r)$ is the amount of elements of $\{\alpha_1,\ldots,\alpha_m\}$ that are less than $r$.
\end{corollary}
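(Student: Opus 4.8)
The plan is to start from the left-hand side, peel off the operator $\pi_{\omega_v}$ that begins $\mathcal{R}_{N,[\alpha_1,\dots,\alpha_m]}$, and feed the resulting factor $e_\ell^{(v)}\pi_{\omega_v}$ directly into Lemma~\ref{lem:elvpi}. Writing $\mathcal{R}_{N,[\alpha_1,\dots,\alpha_m]}=\pi_{\omega_v}\prod_{i=1}^m\pi_{(v+i-1,\alpha_i)}$ and substituting the expansion of Lemma~\ref{lem:elvpi} gives
\begin{equation}
\partial_{\omega_{(v-1)^c}}e_\ell^{(v)}\mathcal{R}_{N,[\alpha_1,\dots,\alpha_m]}=\sum_{j=1}^v\partial_{\omega_{(v-1)^c}}\pi_{\omega_{v-1}}\pi_{(v-1,j)}x_1\cdots x_{j-1}e_{\ell-j+1}^{(1,2,\dots,j)}\prod_{i=1}^m\pi_{(v+i-1,\alpha_i)}
\end{equation}
which is exactly a sum, over $j$, of expressions of the form appearing on the left-hand side of Corollary~\ref{cor:delpi1}.

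Next I would apply Corollary~\ref{cor:delpi1} termwise. For each fixed $j$, let $d$ be the largest index with $\alpha_d<j+d$ and set $r=j+d$; the corollary converts the $j$-th summand into $(-1)^d\partial_{\omega_{(v-1)^c}}$ applied to $\pi_{\omega_{v-1}}\bigl(\prod_{i=1}^{d}\pi_{(v+i-2,\alpha_i)}\bigr)\pi_{(v+d-1,r)}\bigl(\prod_{i=d+1}^m\pi_{(v+i-1,\alpha_i)}\bigr)$ times $x_1\cdots x_{r-1}e_{\ell-r+1}^{(1,2,\dots,r)}$. The key observation is that the operator standing to the left of $x_1\cdots x_{r-1}$ is precisely $\mathcal{R}_{N,[\alpha_1,\dots,r,\dots,\alpha_m]}$: since $v=N-m$ one has $N-(m+1)=v-1$, so expanding the definition of $\mathcal{R}$ for the increasing sequence obtained by inserting $r$ between $\alpha_d$ and $\alpha_{d+1}$ produces $\pi_{\omega_{v-1}}\prod_{i=1}^{m+1}\pi_{(v+i-2,\beta_i)}$, and after re-indexing the tail product $\prod_{i=d+2}^{m+1}\pi_{(v+i-2,\alpha_{i-1})}=\prod_{i=d+1}^{m}\pi_{(v+i-1,\alpha_i)}$ this matches the string above on the nose.

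It then remains to settle the bookkeeping. First I would verify that $r\notin\{\alpha_1,\dots,\alpha_m\}$ and that $d={\rm pos}(r)$: the maximality of $d$ forces $\alpha_d<r$ while $\alpha_{d+1}\ge j+d+1=r+1$, so exactly $d$ of the $\alpha_i$ lie below $r$, whence $(-1)^d=(-1)^{{\rm pos}(r)}$. Finally I would check that $j\mapsto r=j+{\rm pos}(r)$ is a bijection from $\{1,\dots,v\}$ onto $\{1,\dots,N\}\setminus\{\alpha_1,\dots,\alpha_m\}$; its inverse is $r\mapsto r-{\rm pos}(r)$, which is strictly increasing on the non-$\alpha$ values because $f(d')=\alpha_{d'}-d'$ is non-decreasing (as $\alpha$ is strictly increasing). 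Re-indexing the sum over $j$ as a sum over $r\notin\{\alpha_1,\dots,\alpha_m\}$ then yields exactly the right-hand side of the corollary.

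The genuine content of the argument is carried entirely by Lemma~\ref{lem:elvpi} and Corollary~\ref{cor:delpi1}, which are already available; the present corollary is an assembly of these. The step I expect to be the main obstacle is the clean identification of the reshuffled product of $\pi$'s with $\mathcal{R}_{N,[\alpha_1,\dots,r,\dots,\alpha_m]}$, together with the verification that the index $d$ supplied by Corollary~\ref{cor:delpi1} coincides with ${\rm pos}(r)$. Once these are pinned down, the bijectivity of $j\mapsto r$ and the matching of signs are routine.
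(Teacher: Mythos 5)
Your proposal is correct and follows essentially the same route as the paper's own proof: expand $e_\ell^{(v)}\pi_{\omega_v}$ via Lemma~\ref{lem:elvpi}, apply Corollary~\ref{cor:delpi1} termwise, identify the resulting operator string with $\mathcal{R}_{N,[\alpha_1,\dots,r,\dots,\alpha_m]}$, and re-index the sum over $j$ as a sum over $r\notin\{\alpha_1,\dots,\alpha_m\}$ with $(-1)^{d}=(-1)^{{\rm pos}(r)}$. The only cosmetic difference is that you spell out the bijectivity of $j\mapsto r$ and the equality $d={\rm pos}(r)$ in more detail than the paper (which simply asserts them "by construction"), while omitting the explicit convention $d=0$ when no index satisfies $\alpha_d<j+d$ — a case in which all the formulas degenerate correctly anyway.
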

\begin{proof}
\begin{eqnarray*}
&&\partial_{\omega_{(v-1)^c}}e^{(v)}_\ell\mathcal{R}_{N,[\alpha_1,\dots,\alpha_m]}=
\partial_{\omega_{(v-1)^c}}e^{(v)}_\ell\pi_{\omega_v}\pi_{(v,\alpha_1)}\pi_{(v+1,\alpha_2)}\cdots\pi_{(N-1,\alpha_m)}\\
&=&\sum_{j=1}^v \partial_{\omega_{(v-1)^c}} \pi_{\omega_{v-1}}\pi_{(v-1,j)}x_1x_2\dots x_{j-1}e_{\ell-j+1}^{(1,2,\dots,j)}\pi_{(v,\alpha_1)}\pi_{(v+1,\alpha_2)}\cdots\pi_{(N-1,\alpha_m)}
\end{eqnarray*}
by Lemma \ref{lem:elvpi}.

For each $j$ in the sum define $d(j)$ to be the maximum value such that
$\alpha_{d(j)}< j+d(j)$. If no such $d(j)$ exists, then set $d(j)=0$. Then by Corollary~\ref{cor:delpi1} 
we have
\begin{eqnarray*}
&&\partial_{\omega_{(v-1)^c}}e^{(v)}_\ell\mathcal{R}_{N,[\alpha_1,\dots,\alpha_m]}=\sum_{j=1}^v (-1)^{d(j)}\partial_{\omega_{(v-1)^c}} \pi_{\omega_{v-1}}\times\\
&&\qquad
 \left(\prod_{i=1}^{d(j)}\pi_{(v+i-2,\alpha_i)}\right)\pi_{(v+d(j)-1,r(j))}\left(\prod_{i=d(j)+1}^m\pi_{(v+i-1,\alpha_i)}\right)
x_1x_2\dots x_{r(j)-1}e_{\ell-r(j)+1}^{(1,2,\dots,r(j))}
\end{eqnarray*}
where $r(j)=j+d(j)$. We can simplify this by using 
$$\mathcal{R}_{N,[\alpha_1,\ldots,r(j),\ldots,\alpha_m]}=
\pi_{\omega_{v-1}}\left(\prod_{i=1}^{d(j)}\pi_{(v+i-2,\alpha_i)}\right)\pi_{(v+d(j)-1,r(j))}
\left(\prod_{i=d(j)+1}^m\pi_{(v+i-1,\alpha_i)}\right)$$
And so now we have 
\begin{eqnarray*}
&&\partial_{\omega_{(v-1)^c}}e^{(v)}_\ell\mathcal{R}_{N,[\alpha_1,\dots,\alpha_m]}=\partial_{\omega_{(v-1)^c}}
\sum_{j=1}^v (-1)^{d(j)}\mathcal{R}_{N,[\alpha_1,\ldots,r(j),\ldots,\alpha_m]}
x_1x_2\dots x_{r(j)-1}e_{\ell-r(j)+1}^{(1,2,\dots,r(j))}
\end{eqnarray*}
 Note here that by construction, $\{r(1),\dots,r(v)\}$ is precisely the complement of $\{\alpha_1,\dots,\alpha_m\}$.
Therefore, we can change the summation to sum over the complement of $\{\alpha_1,\dots,\alpha_m\}$ and
furthermore, we can set $\text{pos}(r(j))=d(j)$, which happens to agree with the definition in Section \ref{secpieri} to 
be the amount of numbers in $\{\alpha_1,\dots,\alpha_m\}$ that are less than $r(j)$.

So we conclude that
$$\partial_{\omega_{(v-1)^c}}e^{(v)}_\ell\mathcal{R}_{N,[\alpha_1,\dots,\alpha_m]}=
\partial_{\omega_{(v-1)^c}}\sum_{r\not \in \{\alpha_1,\dots,\alpha_m \}}
(-1)^{{\rm pos}(r)}\mathcal{R}_{N,[\alpha_1,\ldots,r,\ldots,\alpha_m]}x_1\dots x_{r-1}e_{\ell-r+1}^{(1,2,\dots,r)}.$$
\end{proof}

\begin{lemma} \label{lemKh}
Let $\lambda$ be a partition. Using the notation of Section~\ref{secpieri2}, we have
\begin{equation}{\label {Kh}}
x^{\lambda}\, h_k=\sum_{\mu}\left(\prod_{\substack{i\\ \mu_{i+1}=\lambda_i}}^{({\rm down})}\pi_i \right) x^\mu
\end{equation}
where the sum is over all partitions $\mu$ such that $\mu/\lambda$ is a horizontal $k$-strip. 
\end{lemma}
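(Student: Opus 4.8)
The plan is to prove the identity by induction on the number of variables, establishing for each $n$ the statement in $x_1,\dots,x_n$. First I would fix notation: write $h_\ell^{(n)}=h_\ell(x_1,\dots,x_n)$, pad $\lambda$ to $n$ parts, and for a horizontal $k$-strip $\mu/\lambda$ set $D_\mu^{(n)}=\prod^{(\mathrm{down})}_{i:\,\mu_{i+1}=\lambda_i}\pi_i$, the product running over $1\le i\le n-1$ from the largest index to the smallest. The base case $n=1$ is immediate: $x_1^{\lambda_1}h_k^{(1)}=x_1^{\lambda_1+k}$, and the unique strip $\mu=(\lambda_1+k)$ carries the empty operator product.

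For the inductive step I would start from the standard recursion $h_k^{(n)}=\sum_{j=0}^k x_n^{\,j}\,h_{k-j}^{(n-1)}$, so that, with $\hat\lambda=(\lambda_1,\dots,\lambda_{n-1})$,
\[
x^\lambda h_k^{(n)}=\sum_{j=0}^k \bigl(x^{\hat\lambda}h_{k-j}^{(n-1)}\bigr)\,x_n^{\,\lambda_n+j}.
\]
Applying the inductive hypothesis to each factor $x^{\hat\lambda}h_{k-j}^{(n-1)}=\sum_{\hat\mu}D_{\hat\mu}^{(n-1)}x^{\hat\mu}$ (sum over horizontal $(k-j)$-strips over $\hat\lambda$) and commuting the power $x_n^{\lambda_n+j}$ to the right through $D_{\hat\mu}^{(n-1)}$ — legitimate since these operators involve only $x_1,\dots,x_{n-2}$ — turns each term into $D_{\hat\mu}^{(n-1)}x^{\mu}$ with $\mu=(\hat\mu,\lambda_n+j)$. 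I would then sort the terms by whether the last row is completed, that is, whether $\mu_n=\lambda_n+j$ is strictly less than, or at least, $\lambda_{n-1}$.

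When $\mu_n<\lambda_{n-1}$ the pair $(\hat\mu,\mu_n)$ is a genuine non-completed horizontal $k$-strip over $\lambda$, the index $n-1$ does not contribute to $D_\mu^{(n)}$, and $D_\mu^{(n)}=D_{\hat\mu}^{(n-1)}$, so the term already has the desired form. The delicate case — and the main obstacle — is $\mu_n\ge\lambda_{n-1}$: here term-by-term matching fails, because the terms with $\mu_n>\lambda_{n-1}$ correspond to \emph{no} valid strip, while the genuine completed strip ($\mu_n=\lambda_{n-1}$) must acquire an extra leftmost factor $\pi_{n-1}$ that the naive expansion does not produce. I would resolve this by grouping together all terms with $j\ge j_{\max}:=\lambda_{n-1}-\lambda_n$; setting $q=\lambda_{n-1}$ and $m_0=k-j_{\max}$, the same $h$-recursion telescopes this group back into $x^{\hat\lambda}x_n^{q}\,h_{m_0}^{(n)}$, a single complete symmetric function in all $n$ variables.

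Finally I would reduce the required equality to the sub-lemma
\[
\pi_{n-1}\,h_{m}(x_1,\dots,x_{n-1})=h_{m}(x_1,\dots,x_n),
\]
using that $x^{\hat\lambda}=x^{(\lambda_1,\dots,\lambda_{n-2})}x_{n-1}^{q}$ and that $x_{n-1}^{q}x_n^{q}$, being symmetric in $x_{n-1},x_n$, commutes with $\pi_{n-1}$; this rewrites $x^{\hat\lambda}x_n^{q}h_{m_0}^{(n)}$ as $\pi_{n-1}\bigl(x^{\hat\lambda}h_{m_0}^{(n-1)}x_n^{q}\bigr)$, which after one more use of the inductive hypothesis equals $\sum_{\hat\mu}\pi_{n-1}D_{\hat\mu}^{(n-1)}x^{\mu}=\sum_\mu D_\mu^{(n)}x^\mu$ over exactly the completed strips (where $\mu_n=\lambda_{n-1}$ forces the $\pi_{n-1}$ factor). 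The sub-lemma itself follows from the direct computation $\tfrac{x_{n-1}h_m(x_1,\dots,x_{n-1})-x_n h_m(x_1,\dots,x_{n-2},x_n)}{x_{n-1}-x_n}=\sum_{a+b=m}h_a(x_1,\dots,x_{n-2})\,h_b(x_{n-1},x_n)=h_m(x_1,\dots,x_n)$. Combining the two cases shows each horizontal $k$-strip over $\lambda$ appears exactly once with the correct operator, completing the induction.
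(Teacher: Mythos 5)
Your proof is correct and follows essentially the same route as the paper's: induction on the number of variables, splitting the $h$-recursion at $d=\lambda_{n-1}-\lambda_n$ so that the non-completed last rows come from $h^{(n-1)}$ terms and the completed case telescopes into a single $x_n^{d}h_{k-d}^{(n)}$, then using $\pi_{n-1}h_m(x_1,\dots,x_{n-1})=h_m(x_1,\dots,x_n)$ together with the fact that $x_{n-1}^{q}x_n^{q}$ commutes with $\pi_{n-1}$ before re-applying the inductive hypothesis. The only cosmetic difference is that you expand fully and regroup (and prove the $\pi_{n-1}h_m$ identity explicitly) where the paper invokes the split form of the recursion directly.
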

\begin{proof} We proceed by induction on the number of variables $N$.
The result is easily checked when $N=1$.  Now suppose
(\ref{Kh}) is true when the number of variables is less than $N$.
We let $\lambda=(\lambda_1,\dots,\lambda_N)$, $\hat \lambda =(\lambda_1,\dots,\lambda_{N-1})$, $d=\lambda_{N-1}-\lambda_N$, and denote by $h_k^{[\ell]}$
the complete symmetric function $h_k(x_1,\dots,x_{\ell})$ in $\ell$ variables.
Using the simple identity $h_k^{[N]}=x_N^dh_{k-d}^{[N]}+\sum_{j=0}^{d-1}x_N^jh_{k-j}^{[N-1]}$, we have
\begin{align}
x^\lambda h^{[N]}_k&=x^\lambda\left[x_N^dh_{k-d}^{[N]}+\sum_{j=0}^{d-1}x_N^jh_{k-j}^{[N-1]}\right]\nonumber\\
&=\left(x^{\hat \lambda} x_N^{\lambda_N}\right)x_N^d\left(\pi_{N-1}h_{k-d}^{(N-1)}\right)+\sum_{j=0}^{d-1}\left(x^{\hat \lambda}x_N^{\lambda_N}\right)x_N^jh_{k-j}^{[N-1]} \\
&=\pi_{N-1} x_N^{\lambda_N+d}x^{\hat \lambda}  h_{k-d}^{[N-1]}+\sum_{j=0}^{d-1}x_N^{\lambda_N+j}x^{\hat \lambda} h_{k-j}^{[N-1]}
\end{align}
where we used the fact that $x^{\hat \lambda} x_N^{\lambda_N+d}$, being symmetric in $x_{N-1}$ and $x_N$, commutes with $\pi_{N-1}$.  Hence, by induction,
\begin{align}
x^\lambda h^{[N]}_k &=\pi_{N-1}x_N^{\lambda_N+d}\sum_{\substack{\nu/\hat \lambda \text{ is a }\\\text{horiz. } k-d\text{-strip}}}\left(\prod_{\substack{i < N-1\\ \nu_{i+1}=\lambda_i}}^{({\rm down})} \pi_i \right)x^\nu+\sum_{j=0}^{d-1}x_N^{\lambda_N+j}
\sum_{\substack{\nu/\hat \lambda \text{ is a }\\ \text{horiz. } k-j\text{-strip}}}\left(\prod_{\substack{i < N-1\\ \nu_{i+1}=\lambda_i}}^{({\rm down})}\pi_i \right)x^\nu\nonumber\\
&=\sum_{\substack{\nu/\hat \lambda \text{ is a }\\ \text{horiz. }k-d\text{-strip}}}\left(\prod_{\substack{i\\ \nu_{i+1}=\lambda_i}}^{({\rm down})}\pi_i \right)x^{\nu,\lambda_N+d}
+\sum_{j=0}^{d-1}
\sum_{\substack{\nu/\hat \lambda \text{ is a }\\ \text{horiz. } k-j\text{-strip}}}\left(\prod_{\substack{i\\ \nu_{i+1}=\lambda_i}}^{({\rm down})}\pi_i \right)x^{\nu,\lambda_N+j}\nonumber\\
&=\sum_{\substack{\mu/\lambda \text{ is a }\\ \text{horiz. } k\text{-strip}}}\left(\prod_{\substack{i\\ \mu_{i+1}=\lambda_i}}^{({\rm down})} \pi_i \right)x^\mu\nonumber
\end{align}
and the lemma follows.
\end{proof}

\begin{lemma}\label{lem:betazero}  If $\beta >i$, then
$\partial_{\omega_m} \pi_{\omega_{m^c}}
 \hat{\pi}_{\omega_m}\hat{\pi}_{[m,{\alpha_m-1}]}
 \cdots\hat{\pi}_{[i+1,{\alpha_{i+1}-1}]}\hat{\pi}_{[i,{\beta+1}]}\hat{\pi}_{\beta}=0$, where we use the notation of Section~\ref{secpieri2}
\end{lemma}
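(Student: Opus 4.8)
The plan is to reduce the whole statement to a single absorption fact about the prefix operator $D:=\partial_{\omega_m}\pi_{\omega_{m^c}}\hat\pi_{\omega_m}$. First I would record, using only $\pi_j^2=\pi_j$, the identity $\hat\pi_j=\pi_j-1$, and the fact that $\omega_{m^c}$ is the longest element of the symmetric group on $\{m+1,\dots,N\}$, that $\pi_{\omega_{m^c}}\pi_j=\pi_{\omega_{m^c}}$ for $m+1\le j\le N-1$, whence
\[
\pi_{\omega_{m^c}}\hat\pi_j=\pi_{\omega_{m^c}}\pi_j-\pi_{\omega_{m^c}}=0,\qquad m+1\le j\le N-1 .
\]
Moreover every such $\hat\pi_j$ commutes with $\hat\pi_{\omega_m}$, since all generators occurring in $\hat\pi_{\omega_m}$ carry indices $\le m-1$ and hence differ from $j$ by at least $2$. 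Combining these, $D\,\hat\pi_j=\partial_{\omega_m}\bigl(\pi_{\omega_{m^c}}\hat\pi_j\bigr)\hat\pi_{\omega_m}=0$ for every $j$ with $m+1\le j\le N-1$. This is the vanishing mechanism I will try to trigger.

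Next I would peel the offending suffix apart with a braid move. Because $\beta>i$, the suffix $\hat\pi_{[i,\beta+1]}\hat\pi_\beta$ ends in $\hat\pi_\beta\hat\pi_{\beta+1}\hat\pi_\beta$, so the braid relation $\hat\pi_\beta\hat\pi_{\beta+1}\hat\pi_\beta=\hat\pi_{\beta+1}\hat\pi_\beta\hat\pi_{\beta+1}$ together with commuting $\hat\pi_{\beta+1}$ past $\hat\pi_i,\dots,\hat\pi_{\beta-1}$ (whose indices are all $\le\beta-1$) yields $\hat\pi_{[i,\beta+1]}\hat\pi_\beta=\hat\pi_{\beta+1}\,\hat\pi_{[i,\beta]}\,\hat\pi_{\beta+1}$. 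The claim thus becomes the assertion that the single extra generator $\hat\pi_{\beta+1}$, sitting to the right of the blocks $\hat\pi_{[i+1,\alpha_{i+1}-1]}\cdots\hat\pi_{[m,\alpha_m-1]}$, can be transported all the way to the left until it reaches $D$ as some $\hat\pi_\gamma$ with $m+1\le\gamma\le N-1$, where $D\hat\pi_\gamma=0$ by the first paragraph. I would organize the transport as an induction on $m-i$, i.e.\ on the number of blocks lying to the left of the migrating generator. The base case $i=m$ is immediate: there are no intervening blocks, $\beta+1\ge m+2$, so $\hat\pi_{\beta+1}$ commutes past $\hat\pi_{\omega_m}$ and is annihilated directly by $\pi_{\omega_{m^c}}\hat\pi_{\beta+1}=0$.

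For the inductive step I would push $\hat\pi_{\beta+1}$ leftward through the block $\hat\pi_{[i+1,\alpha_{i+1}-1]}$ by precisely the braid-propagation already used in the proof of Lemma~\ref{lem:propR} (and, via Lemma~\ref{lem:pin}, in Lemma~\ref{lemequal}): each local braid $\hat\pi_r\hat\pi_{r+1}\hat\pi_r=\hat\pi_{r+1}\hat\pi_r\hat\pi_{r+1}$ followed by the far commutations extracts the migrating generator on the left of the block with a strictly larger index, leaving one fewer block to its left and so invoking the inductive hypothesis (or landing it directly in front of $D$). The hard part, and the step I would treat most carefully, is the index bookkeeping in this propagation: I must verify that the migrating generator never stalls at the boundary value $j=m$ (where neither $\pi_{\omega_{m^c}}$ nor $\hat\pi_{\omega_m}$ annihilates it, and where $\hat\pi_{\omega_m}$ would merely reproduce $-D$), and that it in fact exits each block with index $\ge m+1$. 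This is exactly where the hypothesis $\beta>i$, together with the strict nesting $\alpha_{i}<\alpha_{i+1}<\dots<\alpha_m$, is used: it forces the index of the migrating generator to increase past $m$ as it crosses the intervening blocks. Once that monotonicity is checked, the generator arrives in front of $D$ as a $\hat\pi_\gamma$ with $m+1\le\gamma\le N-1$, the product vanishes, and the induction closes.
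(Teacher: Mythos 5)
Your proposal is correct and follows essentially the same route as the paper: the paper likewise extracts a migrating generator via $\hat{\pi}_{[i,\beta+1]}\hat{\pi}_\beta=\hat{\pi}_{\beta+1}\hat{\pi}_{[i,\beta+1]}$ (one braid move plus far commutations), propagates it leftward through each block $\hat{\pi}_{[j,\alpha_j-1]}$ with the index incrementing at each crossing, and kills the resulting $\hat{\pi}_{\beta+m-i}$ using exactly your two observations that it commutes with $\hat{\pi}_{\omega_m}$ and that $\pi_{\omega_{m^c}}\hat{\pi}_\gamma=0$ for $m+1\le\gamma\le N-1$. The index bookkeeping you flag as the delicate point (each incoming index must lie strictly inside the block so that the extraction identity, rather than the absorbing case $\hat{\pi}_{[a,c]}\hat{\pi}_c=-\hat{\pi}_{[a,c]}$, applies) is precisely the step the paper compresses into ``we repeat the above process,'' and your use of $\beta>i$ to place the final index past $m$ matches the paper's concluding sentence.
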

\begin{proof}
\begin{eqnarray*}
& & \partial_{\omega_m} \pi_{\omega_{m^c}}
 \hat{\pi}_{\omega_m}\hat{\pi}_{[m,{\alpha_m-1}]}
 \cdots\hat{\pi}_{[i+1,{\alpha_{i+1}-1}]}\hat{\pi}_{[i,{\beta+1}]}\hat{\pi}_{\beta} \\
& &
\qquad \qquad \qquad \qquad \qquad \qquad
 = \partial_{\omega_m} \pi_{\omega_{m^c}}
 \hat{\pi}_{\omega_m}\hat{\pi}_{[m,{\alpha_m-1}]}
 \cdots\hat{\pi}_{[i+1,{\alpha_{i+1}-1}]}\hat{\pi}_{[i,{\beta-1}]}\hat{\pi}_{\beta}\hat{\pi}_{\beta+1}\hat{\pi}_{\beta}\\
 & &
\qquad \qquad \qquad \qquad \qquad \qquad
 = \partial_{\omega_m} \pi_{\omega_{m^c}}
 \hat{\pi}_{\omega_m}\hat{\pi}_{[m,{\alpha_m-1}]}
 \cdots\hat{\pi}_{[i+1,{\alpha_{i+1}-1}]}\hat{\pi}_{[i,{\beta-1}]}\hat{\pi}_{\beta+1}\hat{\pi}_{\beta}\hat{\pi}_{\beta+1}\\
 & &
\qquad \qquad \qquad \qquad \qquad \qquad
= \partial_{\omega_m} \pi_{\omega_{m^c}}
 \hat{\pi}_{\omega_m}\hat{\pi}_{[m,{\alpha_m-1}]}
 \cdots\hat{\pi}_{[i+1,{\alpha_{i+1}-1}]}\hat{\pi}_{\beta+1}\hat{\pi}_{[i,{\beta+1}]}
\end{eqnarray*}
We repeat the above process through each $\hat{\pi}_{[j,{\alpha_j-1}]}$ to get
$$ \partial_{\omega_m} \pi_{\omega_{m^c}}
 \hat{\pi}_{\omega_m}\hat{\pi}_{\beta+m-i}\hat{\pi}_{[m,{\alpha_m-1}]}
 \cdots\hat{\pi}_{[i+1,{\alpha_{i+1}-1}]}\hat{\pi}_{[i,{\beta+1}]}$$
 The result then holds since, for $\beta >i$, 
$\hat{\pi}_{\beta+m-i}$ commutes with $\hat{\pi}_{\omega_m}$ and is such that $\pi_{\omega_{m^c}}\hat{\pi}_{\beta+m-i}=0$ (given that
$\pi_j \hat \pi_j=0$).
\end{proof}

\begin{lemma}\label{lem:propP}
$\mathcal{P}_{N,[\alpha_1,\dots,\alpha_m]}$ obeys the following properties (we use the notation of Section~\ref{secpieri2})
\begin{enumerate}
\item\label{prop:p1}
$\mathcal P_{N,[\alpha_1,\ldots,\alpha_m]}{\pi}_{\alpha_i}=
\mathcal P_{N,[\alpha_1,\ldots,\alpha_m]}+\mathcal P_{N,[\alpha_1,\ldots,\alpha_i+1,\ldots,\alpha_m]}$\\
\item\label{prop:pzero}
$\mathcal P_{N,[\alpha_1,\ldots,\alpha_m]}{\pi}_{\alpha_i-1}=0$\\
\item\label{prop:id}
$\mathcal P_{N,[\alpha_1,\ldots,\alpha_m]}{\pi}_{\beta}= \mathcal P_{N,[\alpha_1,\ldots,\alpha_m]}$
for $\beta \notin \{\alpha_i,\alpha_i-1\}$ for $i=1,\ldots,m$.\\
\item
$\mathcal P_{N,[\alpha_1,\ldots,\alpha_m]}\boldsymbol{\pi}_{i_j,I_j}=
\mathcal P_{N,[\alpha_1,\ldots,\alpha_i,\ldots,\alpha_m]}+\mathcal P_{N,[\alpha_1,\ldots,\alpha_i+i_j,\ldots,\alpha_m]}$
if $I_j$ starts with $\alpha_i$.\\
\item
$\mathcal P_{N,[\alpha_1,\ldots,\alpha_m]}\boldsymbol{\pi}_{i_j,I_j}=\mathcal P_{N,[\alpha_1,\ldots,\alpha_m]}$
if $I_j$ starts with  $\beta$ such that $\beta \not \in \{\alpha_1,\dots,\alpha_m\}$.
\end{enumerate}
\end{lemma}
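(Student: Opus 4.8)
The plan is to prove the three single-generator identities (1)--(3) as genuine operator identities, and then to deduce (4) and (5) by feeding the individual factors of $\boldsymbol{\pi}_{i_j,I_j}$ into them one at a time. The device that unifies (1)--(3) is the substitution $\pi_\beta=1+\hat\pi_\beta$, which turns every claim into a statement about $\mathcal{P}_{N,[\alpha_1,\dots,\alpha_m]}\hat\pi_\beta$; writing the staircase as $Q=\hat{\pi}_{[m,\alpha_m-1]}\cdots\hat{\pi}_{[1,\alpha_1-1]}$, everything reduces to tracking how a single $\hat\pi_\beta$, appended on the right of $\partial_{\omega_m}\pi_{\omega_{m^c}}\hat{\pi}_{\omega_m}Q$, slides leftward through $Q$ under the braid and commutation relations. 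First I would record the elementary moves that drive the argument: for $j\le\beta\le c-1$ a braid move gives $\hat{\pi}_{[j,c]}\hat\pi_\beta=\hat\pi_{\beta+1}\hat{\pi}_{[j,c]}$ (tunnelling, raising the index by one); $\hat{\pi}_{[j,c]}\hat\pi_\beta=\hat\pi_\beta\hat{\pi}_{[j,c]}$ whenever $\beta\notin[j-1,c+1]$; $\hat{\pi}_{[j,c]}\hat\pi_{c+1}=\hat{\pi}_{[j,c+1]}$ at the upper endpoint; and $\hat{\pi}_{[j,c]}\hat\pi_{c}=-\hat{\pi}_{[j,c]}$ from $\hat\pi_c^2=-\hat\pi_c$.

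With these moves the three cases are distinguished by the position of $\beta$ relative to the block endpoints $\alpha_i-1$. If $\beta=\alpha_i$, then $\hat\pi_{\alpha_i}$ commutes down to the block $\hat{\pi}_{[i,\alpha_i-1]}$ and extends it to $\hat{\pi}_{[i,\alpha_i]}$, moving the $i$-th circle from row $\alpha_i$ to $\alpha_i+1$; together with the $\mathcal{P}$ coming from the $1$ in $\pi_{\alpha_i}=1+\hat\pi_{\alpha_i}$ this is exactly (1). If $\beta\notin\{\alpha_i,\alpha_i-1\}$, then $\hat\pi_\beta$ either commutes past every block (when $\beta>\alpha_m$) or lies in some block interior and climbs the staircase, being converted through $\hat\pi_{\beta+1},\hat\pi_{\beta+2},\dots$ as it tunnels upward; in either case it emerges with an index in $\{m+1,\dots,N-1\}$, commutes past $\hat\pi_{\omega_m}$, and is killed by the prefix via $\pi_{\omega_{m^c}}\hat\pi_k=\pi_{\omega_{m^c}}(\pi_k-1)=0$, so $\mathcal{P}\hat\pi_\beta=0$ and (3) follows. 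If $\beta=\alpha_i-1$ the index reaches the endpoint of the $i$-th block, $\hat\pi_{\alpha_i-1}^2=-\hat\pi_{\alpha_i-1}$ yields $\mathcal{P}\hat\pi_{\alpha_i-1}=-\mathcal{P}$, and hence $\mathcal{P}\pi_{\alpha_i-1}=\mathcal{P}-\mathcal{P}=0$, which is (2). That a non-excluded index always tunnels out above row $m$ rather than stalling at $\hat\pi_{\omega_m}$ uses only $\alpha_1<\cdots<\alpha_m$ together with $\alpha_j\ge j$, which keeps the rising index inside each successive block.

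The main obstacle is the degenerate case of consecutive circles $\alpha_{i-1}=\alpha_i-1$ in the proof of (2): there $\beta=\alpha_i-1=\alpha_{i-1}$ is not a clean endpoint of the $i$-th block but instead merges into the $(i-1)$-st block, producing the configuration $\mathcal{P}_{N,[\dots,\alpha_i,\alpha_i,\dots]}$ with two equal indices rather than the expected $-\mathcal{P}$. To close this gap I would prove, as an auxiliary collapse identity in the spirit of Lemma~\ref{lemequal}, that
\[
\mathcal{P}_{N,[\dots,\beta,\beta,\dots]}=-\,\mathcal{P}_{N,[\dots,\beta-1,\beta,\dots]} .
\]
Here the two equal indices make the underlying $\hat\pi$-word non-reduced, and the collapse comes from a braid move followed by $\hat\pi_\beta^2=-\hat\pi_\beta$, with the $\hat\pi_{\omega_m}$ in the prefix supplying the extra length needed to expose $\beta$ as a descent; this restores $\mathcal{P}\pi_{\alpha_i-1}=0$ in the consecutive case.

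Finally, (4) and (5) are read off from (1)--(3) by applying the runs of $\boldsymbol{\pi}_{i_j,I_j}$ successively. For (5), a block beginning at some $\beta\notin\{\alpha_1,\dots,\alpha_m\}$ contains no circle, so none of its run indices meet $\{\alpha_i,\alpha_i-1\}$ and every factor acts as the identity by (3); this makes (5) a clean operator identity. For (4), telescoping a block beginning at $\alpha_i$ through (1) and (3) produces the two displayed terms (circle fixed, or circle advanced by $i_j$) together with intermediate configurations $\mathcal{P}_{N,[\dots,\alpha_i+s,\dots]}$, $0<s<i_j$, that place the circle inside a constant block of $\Lambda^*$. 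These extra terms are precisely the ones annihilated by (2) upon evaluation on the monomial $x^\mu$ occurring in the application, since $\mu_{\alpha_i+s}=\mu_{\alpha_i+s-1}$ forces $\mathcal{P}\,x^\mu=\mathcal{P}\,\pi_{\alpha_i+s-1}x^\mu=0$; this is the sense in which the two-term rule (4) is invoked in the proof of Theorem~\ref{thm:es}.
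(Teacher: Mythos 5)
Your treatment of (1)--(3) and (5) is essentially the paper's own: you append $\pi_\beta=1+\hat\pi_\beta$ and commute it leftward through the staircase $\hat\pi_{[m,\alpha_m-1]}\cdots\hat\pi_{[1,\alpha_1-1]}$; the term $\hat\pi_{\alpha_i}$ extends the $i$-th block to give (1), the relation $\hat\pi_{\alpha_i-1}\pi_{\alpha_i-1}=0$ gives (2), and for (3) the stray $\hat\pi_\beta$ is braided up the staircase until it emerges with an index in $\{m+1,\dots,N-1\}$ and is annihilated via $\pi_{\omega_{m^c}}\hat\pi_k=0$ --- your tunnelling move $\hat\pi_{[j,c]}\hat\pi_\beta=\hat\pi_{\beta+1}\hat\pi_{[j,c]}$ is exactly the content of Lemma~\ref{lem:betazero}. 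The two places where you depart from the written proof are both improvements and should be kept. First, in (2) the one-step commutation of $\pi_{\alpha_i-1}$ to the position just right of $\hat\pi_{[i,\alpha_i-1]}$ breaks down precisely when $\alpha_{i-1}=\alpha_i-1$, since the $(i-1)$-st block then contains $\hat\pi_{\alpha_i-2}$; your collapse identity $\mathcal P_{N,[\dots,\beta,\beta,\dots]}=-\mathcal P_{N,[\dots,\beta-1,\beta,\dots]}$ is correct (it follows from Lemma~\ref{lem:hatpiab} together with $\hat\pi_{\omega_m}\hat\pi_{i-1}=-\hat\pi_{\omega_m}$) and, combined with (1) applied to the $(i-1)$-st circle, it does close this case. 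Second, your count in (4) is the right one: the telescoping produces all $i_j+1$ configurations $\mathcal P_{N,[\dots,\alpha_i+s,\dots]}$, $0\le s\le i_j$, and the middle ones do not vanish as operators. For instance, with $N=4$, a single circle in row $1$ and $\boldsymbol{\pi}_{2,[1,3]}=\pi_1\pi_2$ one computes
\begin{equation*}
\mathcal P_{4,[1]}\,\pi_1\pi_2=\mathcal P_{4,[1]}+\mathcal P_{4,[2]}+\mathcal P_{4,[3]},\qquad \mathcal P_{4,[2]}=\pi_2\pi_3\pi_2\hat\pi_1\neq 0,
\end{equation*}
so the two-term form of (4) is not a bare operator identity once $i_j\ge 2$; it becomes valid only after evaluation on the monomials $x^\mu$ arising in the proof of Theorem~\ref{thm:es}, where $\mu_{\alpha_i+s}=\mu_{\alpha_i+s-1}$ kills the intermediate terms through (2), exactly as you say. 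The paper's proof of (4) asserts the two-term identity directly and overlooks these middle terms, so on this point your formulation (the $(i_j+1)$-term identity, plus the vanishing on $x^\mu$) is the one that should be recorded.
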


\begin{proof}
To prove {\it (1)}, consider
\begin{eqnarray}
 \mathcal P_{N,[\alpha_1,\ldots,\alpha_m]}{\pi}_{\alpha_i}&=&
 \partial_{\omega_m} \pi_{\omega_{m^c}}
 \hat{\pi}_{\omega_m}\hat{\pi}_{[m,\alpha_m-1]}\cdots\hat{\pi}_{[1,{\alpha_1-1}]}{\pi}_{\alpha_i}\nonumber\\
 &=&
 \partial_{\omega_m} \pi_{\omega_{m^c}}
 \hat{\pi}_{\omega_m}\hat{\pi}_{[m,{\alpha_m-1}]}
 \cdots\hat{\pi}_{[i,{\alpha_i-1}]}{\pi}_{\alpha_i}\hat{\pi}_{[{i-1},{\alpha_{i-1}-1}]}\cdots\hat{\pi}_{[1,{\alpha_1-1}]}\nonumber\\
 &=&
  \partial_{\omega_m} \pi_{\omega_{m^c}}
 \hat{\pi}_{\omega_m}\hat{\pi}_{[m,{\alpha_m-1}]}
 \cdots\hat{\pi}_{[i,{\alpha_i-1}]}
 (1+\hat{\pi}_{\alpha_i})
 \hat{\pi}_{[{i-1},{\alpha_{i-1}-1}]}\cdots\hat{\pi}_{[1,{\alpha_1-1}]}\nonumber\\
&=&
\mathcal  P_{N,[\alpha_1,\ldots,\alpha_m]}+\mathcal P_{N,[\alpha_1,\ldots,\alpha_i+1,\ldots,\alpha_m]}\nonumber
 \end{eqnarray}

For {\it (2)}, we have
$$
 \mathcal P_{N,[\alpha_1,\ldots,\alpha_m]}{\pi}_{\alpha_i-1}=
 \partial_{\omega_m} \pi_{\omega_{m^c}}
 \hat{\pi}_{\omega_m}\hat{\pi}_{[m,{\alpha_m-1}]}
 \cdots\hat{\pi}_{[i,{\alpha_i-1}]}{\pi}_{\alpha_i-1}\hat{\pi}_{[{i-1},{\alpha_{i-1}-1}]}\cdots\hat{\pi}_{[1,{\alpha_1-1}]}\nonumber\\
 =0
$$
since $\hat{\pi}_{\alpha_i-1}{\pi}_{\alpha_i-1}=0$.

For {\it (3)}, suppose that $\alpha_i < \beta <\alpha_{i+1}-1$
(the cases $\beta<\alpha_1-1$ and $\beta > \alpha_m$ are similar).  We have
\begin{eqnarray}
\mathcal P_{N,[\alpha_1,\ldots,\alpha_m]}{\pi}_{\beta}
  &=&
 \partial_{\omega_m} \pi_{\omega_{m^c}}
 \hat{\pi}_{\omega_m}\hat{\pi}_{[m,{\alpha_m-1}]}
 \cdots\hat{\pi}_{[i,{\beta+1}]}{\pi}_{\beta}\hat{\pi}_{[{\beta+2},{\alpha_i-1}]}
 \hat{\pi}_{[{i-1},{\alpha_{i-1}-1}]}\cdots\hat{\pi}_{[1,{\alpha_1-1}]}\nonumber\\
  &=&
 \partial_{\omega_m} \pi_{\omega_{m^c}}
 \hat{\pi}_{\omega_m}\hat{\pi}_{[m,{\alpha_m-1}]}
 \cdots\hat{\pi}_{[i,{\beta+1}]}(\hat{\pi}_{\beta}+1)\hat{\pi}_{[{\beta+2},{\alpha_i-1}]}
 \hat{\pi}_{[{i-1},{\alpha_{i-1}-1}]}\cdots\hat{\pi}_{[1,{\alpha_1-1}]}\nonumber\\
 &=&
 \mathcal P_{N,[\alpha_1,\ldots,\alpha_m]}\nonumber
 \end{eqnarray}
 since  $\partial_{\omega_m} \pi_{\omega_{m^c}}
 \hat{\pi}_{\omega_m}\hat{\pi}_{[m,{\alpha_m-1}]}
 \cdots\hat{\pi}_{[i,{\beta+1}]}\hat{\pi}_{\beta}=0$ by Lemma \ref{lem:betazero} (observe that the lemma applies
since $i \leq \alpha_i<\beta$). 
 
 To prove {\it (4)}, let $I_j=[a,b]$ with $a=\alpha_i$.  By definition, we have
 \begin{equation} 
 \mathcal P_{N,[\alpha_1,\ldots,\alpha_m]}\boldsymbol{\pi}_{i_j,I_j}
 = \mathcal P_{N,[\alpha_1,\ldots,\alpha_m]}
  \pi_{[b-i_j,b-1]}   \cdots \pi_{[\alpha_i+1,\alpha_i+i_j]} \pi_{[\alpha_i,\alpha_i+i_j-1]}\nonumber
\end{equation}
Now, 
by construction of $I_j$, we have $b< \alpha_{i+1}$ since rows $\alpha_i$ and
$\alpha_{i+1}$ need to be distinct.
Hence,  $\pi_{[b-i_j,b-1]}   \cdots \pi_{[\alpha_i+1,\alpha_i+i_j]}$ acts as the identity on 
$\mathcal P_{N,[\alpha_1,\ldots,\alpha_m]}$ from  {\it (\ref{prop:id})}. We thus have
 \begin{eqnarray}
\mathcal P_{N,[\alpha_1,\ldots,\alpha_m]}\boldsymbol{\pi}_{i_j,I_j}
 &=& \mathcal P_{N,[\alpha_1,\ldots,\alpha_m]}
 \pi_{[\alpha_i,\alpha_i+i_j-1]}.\nonumber
\end{eqnarray}
By successive applications of {\it (\ref{prop:p1})} and {\it (\ref{prop:pzero})} we finally get
  \begin{eqnarray}
\mathcal P_{N,[\alpha_1,\ldots,\alpha_m]}\boldsymbol{\pi}_{i_j,I_j}
 &=& \mathcal P_{N,[\alpha_1,\ldots,\alpha_m]}+\mathcal P_{N,[\alpha_1,\ldots,\alpha_i+i_j\ldots,\alpha_m]}
 \nonumber
\end{eqnarray}
since $\alpha_i+i_j\leq b<\alpha_{i+1}$ by construction.

We finally prove {\it (5)}. 
Let $I_j=[a,b]$.  By construction, we have that
$\alpha_i < a\leq b < \alpha_{i+1}$ for some $i$ (the cases
$a\leq b < \alpha_{1}$ and $\alpha_m<a\leq b$ are similar).
By definition, we have
\begin{equation} \label{eqseqpi}
 \mathcal P_{N,[\alpha_1,\ldots,\alpha_m]}\boldsymbol{\pi}_{i_j,I_j}
 = \mathcal P_{N,[\alpha_1,\ldots,\alpha_m]}
  \pi_{[b-i_j,b-1]}   \cdots \pi_{[a+1,a+i_j]} \pi_{[a,a+i_j-1]}
\end{equation}
As can be seen in the previous equation, of the $\pi_j$'s acting from the right on 
$\mathcal P_{N,[\alpha_1,\ldots,\alpha_m]}$, the highest $j$ is such that
$j=b-1<\alpha_{i+1}-1$ while the lowest $j$ is such that $\alpha_{i}<a=j$.
We thus get from {\it (\ref{prop:id})} that
 all $\pi_j$'s in \eqref{eqseqpi} act as the identity
on $ \mathcal P_{N,[\alpha_1,\ldots,\alpha_m]}$, which gives 
$$
\mathcal P_{N,[\alpha_1,\ldots,\alpha_m]}\boldsymbol{\pi}_{i_j,I_j}
 = \mathcal P_{N,[\alpha_1,\ldots,\alpha_m]}\nonumber
$$
\end{proof}

Recall that for $S\subseteq \{1,2,\dots,N\}$, $e_\ell^{S}$ and $h_\ell^{S}$ respectively stand for
$e_\ell(x_1,\dots,x_N)$ and  $h_\ell(x_1,\dots,x_N)$ with $x_i=0$ for $i \notin S$, that is, to
$e_\ell$ and  $h_\ell$ in the variables $x_i$ for  $i \in S$.
The next two lemmas, being elementary, are stated without proof.
\begin{lemma}\label{eqn:elm}
$$e_\ell^{(m+1)}=\sum_{k=0}^\ell(-1)^{\ell-k}x_{m+1}^{\ell-k}e_{k}$$
\end{lemma}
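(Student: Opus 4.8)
The plan is to prove the identity by passing to the generating function for the elementary symmetric functions, which reduces the claimed relation to a one-line manipulation of formal power series. Recall that $\sum_{\ell\geq 0}e_\ell\,t^\ell=\prod_{i=1}^N(1+x_it)$, and that setting $x_{m+1}=0$ simply deletes the factor $(1+x_{m+1}t)$, so that $\sum_{\ell\geq 0}e_\ell^{(m+1)}\,t^\ell=\prod_{i\neq m+1}(1+x_it)$.

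First I would write the quotient and expand $(1+x_{m+1}t)^{-1}$ as a geometric series:
\begin{equation}
\sum_{\ell\geq 0}e_\ell^{(m+1)}\,t^\ell=\frac{\prod_{i=1}^N(1+x_it)}{1+x_{m+1}t}=\left(\sum_{k\geq 0}e_k\,t^k\right)\left(\sum_{j\geq 0}(-1)^jx_{m+1}^j\,t^j\right).
\end{equation}
Extracting the coefficient of $t^\ell$ on both sides gives $e_\ell^{(m+1)}=\sum_{k+j=\ell}(-1)^jx_{m+1}^j e_k$, which upon setting $j=\ell-k$ is exactly the asserted formula.

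Alternatively, and more in keeping with the surrounding lemmas, I would argue by induction on $\ell$ using the one-variable recursion $e_\ell=e_\ell^{(m+1)}+x_{m+1}e_{\ell-1}^{(m+1)}$, i.e.\ the rearrangement of Lemma~\ref{lem:elk1} with $v=m+1$. The base case $\ell=0$ holds since $e_0^{(m+1)}=e_0=1$. For the inductive step I would substitute the induction hypothesis for $e_{\ell-1}^{(m+1)}$ into $e_\ell^{(m+1)}=e_\ell-x_{m+1}e_{\ell-1}^{(m+1)}$ and absorb the leading term $e_\ell$ as the $k=\ell$ summand. I do not expect any genuine obstacle: the whole content is the standard relationship between $e_\ell$ and its specialization at a single vanishing variable, and either route is only a few lines, which is presumably why the authors state it without proof.
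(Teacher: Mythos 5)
Your proposal is correct. The paper explicitly states this lemma (together with Lemma~\ref{eqn:eli}) without proof, declaring it elementary, so there is no argument in the paper to compare against; either of your two routes fills the gap cleanly. The generating-function computation $\sum_{\ell}e_\ell^{(m+1)}t^\ell=\prod_{i=1}^N(1+x_it)/(1+x_{m+1}t)$ followed by extraction of the coefficient of $t^\ell$ is immediate, and your inductive alternative correctly inverts the recursion of Lemma~\ref{lem:elk1}; both are sound and either would serve as the omitted proof.
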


\begin{lemma}\label{eqn:eli}
$$e_\ell^{\{1,\dots,i\}}=\sum_{k=0}^\ell(-1)^{\ell-k}h_{\ell-k}^{\{i+1,\dots, N\}}e_{k}$$
\end{lemma}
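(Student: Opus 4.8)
The plan is to prove the identity by the standard generating-function device for symmetric functions, exactly as one would prove the companion Lemma~\ref{eqn:elm}. First I would introduce the two series
$$E(t)=\prod_{j=1}^N(1+x_jt)=\sum_{r\ge 0}e_r\,t^r,\qquad H(t)=\prod_{j=1}^N\frac{1}{1-x_jt}=\sum_{r\ge 0}h_r\,t^r,$$
together with their restricted versions: for $S\subseteq\{1,\dots,N\}$ set $E^S(t)=\prod_{j\in S}(1+x_jt)=\sum_{r\ge 0}e_r^S\,t^r$ and $H^S(t)=\prod_{j\in S}(1-x_jt)^{-1}=\sum_{r\ge 0}h_r^S\,t^r$ (see \cite{Mac}); these encode precisely the restricted symmetric functions appearing in the statement.

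The key step is to exploit the factorization of $E(t)$ along the splitting of the variables into $A=\{1,\dots,i\}$ and $B=\{i+1,\dots,N\}$, namely $E(t)=E^A(t)\,E^B(t)$, together with the elementary reciprocity $E^B(t)\,H^B(-t)=\prod_{j\in B}(1+x_jt)\prod_{j\in B}(1+x_jt)^{-1}=1$. Multiplying $E(t)$ by $H^B(-t)$ then collapses the $B$-factor, giving
$$H^B(-t)\,E(t)=H^B(-t)\,E^B(t)\,E^A(t)=E^A(t).$$
Extracting the coefficient of $t^\ell$ from both sides finishes the proof: on the right it equals $e_\ell^A=e_\ell^{\{1,\dots,i\}}$, while on the left the Cauchy product of $H^B(-t)=\sum_{r\ge 0}(-1)^r h_r^B\,t^r$ with $E(t)=\sum_{k\ge 0}e_k\,t^k$ gives $\sum_{k=0}^{\ell}(-1)^{\ell-k}h_{\ell-k}^{\{i+1,\dots,N\}}e_k$, which is exactly the right-hand side of the asserted identity.

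There is no serious obstacle here, the statement being elementary as the text indicates; the only points demanding care are purely bookkeeping. One must track that the sign $(-1)^{\ell-k}$ arises from evaluating $H^B$ at $-t$ rather than $t$, and one must note that the factor $e_k$ on the right is the \emph{full} elementary symmetric function in all $N$ variables, which is precisely what the factorization $E(t)=E^A(t)E^B(t)$ supplies. Finally, I would remark that the identical manipulation with $B$ specialized to the singleton $\{m+1\}$, so that $H^{\{m+1\}}(-t)=(1+x_{m+1}t)^{-1}$, recovers Lemma~\ref{eqn:elm}; thus both lemmas are two instances of a single computation.
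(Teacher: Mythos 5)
Your proof is correct: the factorization $E(t)=E^{A}(t)E^{B}(t)$ together with $H^{B}(-t)E^{B}(t)=1$ immediately yields the identity upon extracting the coefficient of $t^{\ell}$, and your observation that Lemma~\ref{eqn:elm} is the case $B=\{m+1\}$ is also accurate. The paper explicitly states this lemma without proof as elementary, and your generating-function argument is precisely the standard one the authors clearly had in mind.
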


\begin{lemma}\label{eqn:hk}
$$h_{k}^{\{i+1,\dots, N\}}\pi_N=h_{k}^{\{i+1,\dots, N+1\}}+\hat{\pi}_Nh_{k}^{\{i+1,\dots, N-1,N+1\}}$$
\end{lemma}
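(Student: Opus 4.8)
The plan is to read the identity as an equality of operators on $\mathbb{Q}[x_1,\dots,x_{N+1}]$ and reduce it to the standard recursion for complete homogeneous symmetric functions together with the two structural facts about $\pi_N$ already used in the paper. Write $A=\{i+1,\dots,N-1\}$ for the spectator indices, so the three functions occurring are $h_k^{A\cup\{N\}}$, $h_k^{A\cup\{N,N+1\}}$ and $h_k^{A\cup\{N+1\}}$, which are exactly $h_k^{\{i+1,\dots,N\}}$, $h_k^{\{i+1,\dots,N+1\}}$ and $h_k^{\{i+1,\dots,N-1,N+1\}}$. The two facts I would invoke are: (i) any polynomial symmetric in $x_N$ and $x_{N+1}$ commutes with $\pi_N$ (hence with $\hat{\pi}_N=\pi_N-1$), which is immediate from the definition $\pi_N=\frac{1}{x_N-x_{N+1}}(x_N-x_{N+1}\kappa_{N,N+1})$; and (ii) the relation $x_{N+1}\pi_N=(\pi_N-1)x_N=\hat{\pi}_N x_N$, which is the $r=N$ instance of the identity already exploited in the proof of Lemma~\ref{lem:elpi1}. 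Of the three functions above, only $h_k^{A\cup\{N\}}$ fails to be symmetric in the pair $(x_N,x_{N+1})$, and the whole argument is built around replacing it by symmetric pieces.

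First I would record the one-variable recursion coming from $\sum_k h_k^{B}t^k=\prod_{b\in B}(1-x_bt)^{-1}$, namely $h_k^{B\cup\{y\}}=h_k^{B}+y\,h_{k-1}^{B\cup\{y\}}$. Applied twice this gives
\[
h_k^{A\cup\{N\}}=h_k^{A\cup\{N,N+1\}}-x_{N+1}\,h_{k-1}^{A\cup\{N,N+1\}}
\qquad\text{and}\qquad
x_N\,h_{k-1}^{A\cup\{N,N+1\}}=h_k^{A\cup\{N,N+1\}}-h_k^{A\cup\{N+1\}}.
\]
With these, the computation is a short chain. Substituting the first identity into $h_k^{A\cup\{N\}}\pi_N$ and using (i) to commute the symmetric factor $h_{k-1}^{A\cup\{N,N+1\}}$ past $\pi_N$ turns the left-hand side into $h_k^{A\cup\{N,N+1\}}\pi_N-x_{N+1}\pi_N h_{k-1}^{A\cup\{N,N+1\}}$. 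Next I would apply (ii) to replace $x_{N+1}\pi_N$ by $\hat{\pi}_N x_N$, then use the second recursion to rewrite $x_N h_{k-1}^{A\cup\{N,N+1\}}$ as $h_k^{A\cup\{N,N+1\}}-h_k^{A\cup\{N+1\}}$; this yields $h_k^{A\cup\{N,N+1\}}\pi_N-\hat{\pi}_N h_k^{A\cup\{N,N+1\}}+\hat{\pi}_N h_k^{A\cup\{N+1\}}$. Finally, since $h_k^{A\cup\{N,N+1\}}$ is $(x_N,x_{N+1})$-symmetric it commutes with $\hat{\pi}_N$, so $\hat{\pi}_N h_k^{A\cup\{N,N+1\}}=h_k^{A\cup\{N,N+1\}}(\pi_N-1)$ and the first two terms collapse to $h_k^{A\cup\{N,N+1\}}$. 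Recognizing $A\cup\{N,N+1\}=\{i+1,\dots,N+1\}$ and $A\cup\{N+1\}=\{i+1,\dots,N-1,N+1\}$ then gives exactly the claimed right-hand side.

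The statement is elementary, so there is no serious obstacle; the only thing requiring care is the bookkeeping — keeping straight which of the three $h_k$'s is symmetric in the pair $(x_N,x_{N+1})$ and may therefore be slid past $\pi_N$ or $\hat{\pi}_N$, and applying relation (ii) in the right order so the stray variable $x_{N+1}$ is absorbed rather than left over. Should one prefer to avoid the operator manipulation altogether, a mechanical alternative is to apply both sides to an arbitrary $f$, expand $\pi_N f=(x_Nf-x_{N+1}\kappa_{N,N+1}f)/(x_N-x_{N+1})$ and $\hat{\pi}_N$ likewise, use $\kappa_{N,N+1}h_k^{A\cup\{N+1\}}=h_k^{A\cup\{N\}}$, and clear denominators; the equality then reduces to the single polynomial identity $x_N h_k^{A\cup\{N\}}=(x_N-x_{N+1})h_k^{A\cup\{N,N+1\}}+x_{N+1}h_k^{A\cup\{N+1\}}$, which is once more the recursion above.
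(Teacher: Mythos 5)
Your argument is correct: both recursions $h_k^{B\cup\{y\}}=h_k^{B}+y\,h_{k-1}^{B\cup\{y\}}$ are instances of the generating-function identity, the commutation of $(x_N,x_{N+1})$-symmetric polynomials with $\pi_N$ and $\hat\pi_N$ is immediate from the definition, and the relation $x_{N+1}\pi_N=\hat\pi_N x_N$ is exactly the one the paper already uses; chaining these does yield $h_k^{A\cup\{N,N+1\}}+\hat\pi_N h_k^{A\cup\{N+1\}}$ as you claim, and the ``mechanical'' reduction to $x_N h_k^{A\cup\{N\}}=(x_N-x_{N+1})h_k^{A\cup\{N,N+1\}}+x_{N+1}h_k^{A\cup\{N+1\}}$ also checks out. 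The paper's proof takes a different decomposition: it expands $h_{k}^{\{i+1,\dots,N\}}=\sum_{j}x_N^{j}\,h_{k-j}^{\{i+1,\dots,N-1\}}$, slides the $x_N$-free factors past $\pi_N$, applies the two-variable operator identity $x_N^{j}\pi_N=h_j^{\{N,N+1\}}+\hat\pi_N x_{N+1}^{j}$ term by term, and resums. Your version trades that family of identities (one for each power $j$) for a single application of the degree-one relation $x_{N+1}\pi_N=\hat\pi_N x_N$, at the cost of first rewriting $h_k^{\{i+1,\dots,N\}}$ via the $h$-recursion; it is arguably a bit leaner since no resummation is needed, while the paper's route makes visible the underlying fact that $\pi_N$ sends $x_N^{j}$ to the symmetric piece $h_j(x_N,x_{N+1})$ plus a $\hat\pi_N$-correction. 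Both are sound; no gaps.
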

\begin{proof}
\begin{eqnarray*}
h_{k}^{\{i+1,\dots, N\}}\pi_N&=&\displaystyle{\sum_{i=0}^kx^i_Nh_{k-i}^{\{i+1,\dots, N-1\}}\pi_N}\\
&=&\displaystyle{\sum_{i=0}^kx^i_N\pi_Nh_{k-i}^{\{i+1,\dots, N-1\}}}\\
&=&\displaystyle{\sum_{i=0}^k\left(h_i^{\{N,N+1\}}+\hat{\pi}_Nx_{N+1}^i\right)h_{k-i}^{\{i+1,\dots, N-1\}}}\\
&=&\displaystyle{\sum_{i=0}^k h_i^{\{N,N+1\}}h_{k-i}^{\{i+1,\dots, N-1\}}+\hat{\pi}_N\sum_{i=0}^kx_{N+1}^i h_{k-i}^{\{i+1,\dots, N-1\}}}\\
&=&h_{k}^{\{i+1,\dots, N+1\}}+\hat{\pi}_Nh_{k}^{\{i+1,\dots, N-1,N+1\}}
\end{eqnarray*}
\end{proof}

\begin{lemma}\label{eqn:xmk}
 $$\partial_{\omega_{m+1}}\pi_{\omega_{(m+1)^c}}x_{m+1}^{k}\pi_{[m+1,{N-1}]}= 
\partial_{\omega_{m+1}}\pi_{\omega_{(m+1)^c}}\sum_{i=m}^{N-1}\hat{\pi}_{[m+1,i]}h_{k}^{\{i+1,\dots, N\}}$$
\end{lemma}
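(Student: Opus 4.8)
The plan is to strip the prefix $\partial_{\omega_{m+1}}\pi_{\omega_{(m+1)^c}}$ off and prove instead a purely operator-theoretic identity carrying a remainder, then show that the prefix annihilates that remainder. Concretely, I would first establish by induction on $N$ the identity
$$x_{m+1}^{k}\,\pi_{[m+1,N-1]}=\sum_{i=m}^{N-1}\hat\pi_{[m+1,i]}\,h_k^{\{i+1,\dots,N\}}+R_N,$$
where $R_N$ is a sum of terms each of which begins, on the left, with a factor $\hat\pi_j$ for some $j\in\{m+2,\dots,N-1\}$. For $N=m+1$ the chain $\pi_{[m+1,m]}$ is empty, both sides equal $x_{m+1}^k$, and $R_{m+1}=0$. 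The reason the naive version (with $R_N=0$) is false, and hence the reason the prefix is present at all, is that extra terms are unavoidably generated in the induction; the content of the proof is that all of them lie in the annihilated class.

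For the inductive step I would write $\pi_{[m+1,N-1]}=\pi_{[m+1,N-2]}\,\pi_{N-1}$, insert the hypothesis for $x_{m+1}^k\pi_{[m+1,N-2]}$, and push the trailing $\pi_{N-1}$ through each summand. The key input is Lemma~\ref{eqn:hk}, with its pivot index lowered by one, which yields $h_k^{\{i+1,\dots,N-1\}}\pi_{N-1}=h_k^{\{i+1,\dots,N\}}+\hat\pi_{N-1}h_k^{\{i+1,\dots,N-2,N\}}$. The first pieces reproduce the summands $i=m,\dots,N-2$ of the target sum. The $i=N-2$ second piece collapses, via $\hat\pi_{[m+1,N-2]}\hat\pi_{N-1}=\hat\pi_{[m+1,N-1]}$ and $h_k^{\{N\}}=x_N^k$, into exactly the missing $i=N-1$ summand $\hat\pi_{[m+1,N-1]}x_N^k$. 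For $i\le N-3$ the second pieces, together with $R_{N-1}\pi_{N-1}$, become the new remainder: using the braid commutation $\hat\pi_{[m+1,i]}\hat\pi_{N-1}=\hat\pi_{N-1}\hat\pi_{[m+1,i]}$ (valid since every index of $[m+1,i]$ differs from $N-1$ by at least two), each such term is rewritten to begin with $\hat\pi_{N-1}$, while the terms inherited from $R_{N-1}$ already begin with some $\hat\pi_j$, $j\le N-2$. In every case the leftmost index lies in $\{m+2,\dots,N-1\}$, so $R_N$ has the required form.

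Finally I would apply $\partial_{\omega_{m+1}}\pi_{\omega_{(m+1)^c}}$ to both sides. Since $\omega_{(m+1)^c}$ is the longest element of the symmetric group on $\{m+2,\dots,N\}$, whose simple reflections are exactly $s_{m+2},\dots,s_{N-1}$, a reduced word for it can be taken to end in any such $s_j$; hence $\pi_{\omega_{(m+1)^c}}\pi_j=\pi_{\omega_{(m+1)^c}}$ by $\pi_j^2=\pi_j$, and therefore $\pi_{\omega_{(m+1)^c}}\hat\pi_j=0$ for every $j\in\{m+2,\dots,N-1\}$. As each term of $R_N$ begins with such a $\hat\pi_j$, the remainder is killed and the stated equality falls out. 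I expect the main obstacle to be precisely the bookkeeping of the inductive step: one must confirm that the clean identity genuinely fails, separate exactly the terms that feed the target sum from those that become remainder, and verify that every remainder term can be brought by braid moves into the annihilated shape $\hat\pi_j(\cdots)$ with $m+2\le j\le N-1$. Granting Lemma~\ref{eqn:hk} and the longest-element relation, the remaining manipulations are routine.
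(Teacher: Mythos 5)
Your proof is correct, and it rests on the same two pillars as the paper's: Lemma~\ref{eqn:hk} to push the trailing $\pi$ through $h_k^{\{i+1,\dots,N\}}$, and the fact that $\pi_{\omega_{(m+1)^c}}\hat\pi_j=0$ for every simple reflection $j\in\{m+2,\dots,N-1\}$ of the group on $\{m+2,\dots,N\}$. The organization differs, though. The paper inducts on $N$ with the prefix $\partial_{\omega_{m+1}}\pi_{\omega_{(m+1)^c}}$ kept in place throughout, so each unwanted term $\hat\pi_{[m+1,i]}\hat\pi_N h_k^{\{i+1,\dots,N-1,N+1\}}$ with $i\neq N-1$ is killed immediately at the step where it is created (after commuting $\hat\pi_N$ to the left); the price is that the prefix itself is $N$-dependent, forcing the factorization $\pi^{N+1}_{\omega_{(m+1)^c}}=\pi_{[m+2,N]}\pi^{N}_{\omega_{(m+1)^c}}$ at every step. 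You instead prove a prefix-free operator identity carrying an explicit remainder $R_N$ lying in the left ideal generated by $\hat\pi_{m+2},\dots,\hat\pi_{N-1}$, and annihilate everything in one stroke at the end. This cleanly separates the combinatorial induction from the annihilation argument and avoids the $N$-dependence of the prefix, at the cost of having to verify that the remainder class is stable under the inductive step (right multiplication by $\pi_{N-1}$ preserves the leftmost factor, and the new terms commute into the form $\hat\pi_{N-1}(\cdots)$ since all indices of $[m+1,i]$ with $i\le N-3$ differ from $N-1$ by at least two); your treatment of the boundary case $N=m+2$, where no genuine remainder terms arise, is also consistent. Either route is fine; yours is arguably slightly cleaner to state, the paper's slightly shorter to execute.
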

\begin{proof} 
Proof by induction on $N$.
For the base case, let $N=m+1$ then we have
 $$\partial_{\omega_{m+1}}\pi_{\omega_{(m+1)^c}}x_{m+1}^{k}\pi_{[m+1,{m}]}= 
\partial_{\omega_{m+1}}\pi_{\omega_{(m+1)^c}}x_{m+1}^{k}$$ and $x_{m+1}^{k}=h_k^{\{m+1\}}.$

Now suppose it is true for $N$ and lets show that it is true for $N+1$. (Note here that
$\pi_{\omega_{(m+1)^c}}$ depends on $N$ so lets say $\pi^N_{\omega_{(m+1)^c}}$ 
is for $N$ variables. Also note that 
$\pi^{N+1}_{\omega_{(m+1)^c}}=\pi_{[m+2,N]}\pi^N_{\omega_{(m+1)^c}}$.)
So we have
\begin{eqnarray*}
\partial_{\omega_{m+1}}\pi^{N+1}_{\omega_{(m+1)^c}}x_{m+1}^{k}\pi_{[m+1,{N}]}&=
\partial_{\omega_{m+1}}\pi_{[m+2,N]}\pi^N_{\omega_{(m+1)^c}}x_{m+1}^{k}\pi_{[m+1,{N-1}]}\pi_N\\
&=
\pi_{[m+2,N]}\partial_{\omega_{m+1}}\pi^N_{\omega_{(m+1)^c}}x_{m+1}^{k}\pi_{[m+1,{N-1}]}\pi_N
\end{eqnarray*}
Now use the induction hypothesis to get
$$=\pi_{[m+2,N]}\partial_{\omega_{m+1}}\pi^N_{\omega_{(m+1)^c}}\sum_{i=m}^{N-1}\hat{\pi}_{[m+1,i]}h_{k}^{\{i+1,\dots, N\}}\pi_N.$$
And now use Lemma \ref{eqn:hk} to get
\begin{eqnarray*}
&=&\displaystyle{\partial_{\omega_{m+1}}\pi_{[m+2,N]}\pi^N_{\omega_{(m+1)^c}}\sum_{i=m}^{N-1}\hat{\pi}_{[m+1,i]}
\left(h_{k}^{\{i+1,\dots, N+1\}}+\hat{\pi}_Nh_{k}^{\{i+1,\dots, N-1,N+1\}}\right)}\\
&=&\displaystyle{\partial_{\omega_{m+1}}\pi^{N+1}_{\omega_{(m+1)^c}}
\left(\sum_{i=m}^{N-1}\hat{\pi}_{[m+1,i]}h_{k}^{\{i+1,\dots, N+1\}}+\sum_{i=m}^{N-1}\hat{\pi}_{[m+1,i]}\hat{\pi}_Nh_{k}^{\{i+1,\dots, N-1,N+1\}}\right)}.
\end{eqnarray*}
Notice that in the second sum, the $\pi_N$ commutes with $\hat{\pi}_{[m+1,i]}$ when $i\ne N-1$
 and $\pi^{N+1}_{\omega_{(m+1)^c}}\hat{\pi}_N=0$ so only the last summand survives and we have
 \begin{eqnarray*}
 &=&\displaystyle{\partial_{\omega_{m+1}}\pi^{N+1}_{\omega_{(m+1)^c}}
\left(\sum_{i=m}^{N-1}\hat{\pi}_{[m+1,i]}h_{k}^{\{i+1,\dots, N+1\}}+\hat{\pi}_{[m+1,N-1]}\hat{\pi}_Nh_{k}^{\{N+1\}}\right)}\\
&=&\displaystyle{\partial_{\omega_{m+1}}\pi^{N+1}_{\omega_{(m+1)^c}}
\sum_{i=m}^{N}\hat{\pi}_{[m+1,i]}h_{k}^{\{i+1,\dots, N+1\}}}.
\end{eqnarray*}
\end{proof}

\begin{lemma}\label{lem:elm+1}
\begin{equation}
 \partial_{\omega_{m+1}}\pi_{\omega_{(m+1)^c}}e_\ell^{(m+1)}\pi_{[m+1,N-1]}= \partial_{\omega_{m+1}}\pi_{\omega_{(m+1)^c}}\sum_{i=1}^{N-m}\hat{\pi}_{[m+1,m+i-1]}e_\ell^{\{1,\dots, m+i-1\}}
\end{equation}
\end{lemma}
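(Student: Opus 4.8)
The plan is to reduce the identity to the three elementary lemmas immediately preceding it, treating everything as a purely formal manipulation of polynomials and divided-difference operators. The key observation is that the full elementary symmetric functions $e_k=e_k(x_1,\dots,x_N)$ are symmetric in \emph{all} variables, hence commute with every $\pi_i$ and $\hat\pi_i$, whereas the monomial $x_{m+1}^{\ell-k}$ does not commute with the operators and must be kept in place.

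First I would expand $e_\ell^{(m+1)}$ using Lemma~\ref{eqn:elm}, writing $e_\ell^{(m+1)}=\sum_{k=0}^\ell(-1)^{\ell-k}x_{m+1}^{\ell-k}e_k$, so that the left-hand side becomes
\begin{equation}
\partial_{\omega_{m+1}}\pi_{\omega_{(m+1)^c}}\,e_\ell^{(m+1)}\,\pi_{[m+1,N-1]}
=\sum_{k=0}^\ell(-1)^{\ell-k}\,\partial_{\omega_{m+1}}\pi_{\omega_{(m+1)^c}}\,x_{m+1}^{\ell-k}\,e_k\,\pi_{[m+1,N-1]}.
\end{equation}
Next I would slide each $e_k$ to the right past $\pi_{[m+1,N-1]}$, which is legitimate precisely because $e_k$ is fully symmetric. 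Applying Lemma~\ref{eqn:xmk} (with $k$ replaced by $\ell-k$) to the factor $x_{m+1}^{\ell-k}\pi_{[m+1,N-1]}$ turns each summand into
\begin{equation}
(-1)^{\ell-k}\,\partial_{\omega_{m+1}}\pi_{\omega_{(m+1)^c}}\Bigl(\sum_{i=m}^{N-1}\hat{\pi}_{[m+1,i]}\,h_{\ell-k}^{\{i+1,\dots,N\}}\Bigr)e_k.
\end{equation}

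I would then interchange the two summations and collect the coefficient of each $\hat\pi_{[m+1,i]}$, obtaining the inner sum $\sum_{k=0}^\ell(-1)^{\ell-k}h_{\ell-k}^{\{i+1,\dots,N\}}e_k$, which is exactly $e_\ell^{\{1,\dots,i\}}$ by Lemma~\ref{eqn:eli}. This gives
\begin{equation}
\partial_{\omega_{m+1}}\pi_{\omega_{(m+1)^c}}\sum_{i=m}^{N-1}\hat{\pi}_{[m+1,i]}\,e_\ell^{\{1,\dots,i\}},
\end{equation}
and a final reindexing $i=m+j-1$ (so $j$ runs from $1$ to $N-m$) matches the right-hand side of the claimed identity verbatim. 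I do not anticipate a serious obstacle here: the whole argument is a telescoping of the three stated lemmas. The only points demanding care are bookkeeping ones, namely verifying that $e_k$ commutes with $\pi_{[m+1,N-1]}$ while $x_{m+1}^{\ell-k}$ does not, correctly aligning the summation index ranges ($m\le i\le N-1$ versus $1\le j\le N-m$), and confirming that the interchange of the finite sums over $k$ and $i$ is unobstructed since the operators act linearly. None of these steps involves any genuinely new combinatorial input beyond Lemmas~\ref{eqn:elm}, \ref{eqn:eli}, and~\ref{eqn:xmk}.
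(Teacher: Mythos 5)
Your proposal is correct and follows essentially the same route as the paper: expand $e_\ell^{(m+1)}$ via Lemma~\ref{eqn:elm}, commute the fully symmetric $e_k$ past $\pi_{[m+1,N-1]}$, apply Lemma~\ref{eqn:xmk} to $x_{m+1}^{\ell-k}\pi_{[m+1,N-1]}$ (keeping the prefix $\partial_{\omega_{m+1}}\pi_{\omega_{(m+1)^c}}$, which that lemma requires), swap the sums, recognize $e_\ell^{\{1,\dots,i\}}$ via Lemma~\ref{eqn:eli}, and reindex. No gaps.
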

\begin{proof}
For simplicity, assume that each expression begins with $ \partial_{\omega_{m+1}}\pi_{\omega_{(m+1)^c}}$. By Lemma \ref{eqn:elm}, we have
$$e_\ell^{(m+1)}\pi_{[m+1,N-1]}=\left(\sum_{k=0}^\ell(-1)^{\ell-k}x_{m+1}^{\ell-k}e_{k}\right)\pi_{[m+1,N-1]}$$
Commuting  $e_k$ and $\pi_{[m+1,N-1]}$ and using
Lemma \ref{eqn:xmk}, we obtain 
$$e_\ell^{(m+1)}\pi_{[m+1,N-1]}=\sum_{k=0}^\ell(-1)^{\ell-k}\left(\sum_{i=m}^{N-1}\hat{\pi}_{[m+1,i]}h_{\ell-k}^{\{i+1,\dots, N\}}\right)e_{k}$$
Switching the order of the sums, we get 
by Lemma \ref{eqn:eli} that
$$e_\ell^{(m+1)}\pi_{[m+1,N-1]}=\sum_{i=m}^{N-1}\hat{\pi}_{[m+1,i]}e_\ell^{\{1,\dots, i\}}$$
The lemma is then seen to hold after reindexing the sum and putting back
 $ \partial_{\omega_{m+1}}\pi_{\omega_{(m+1)^c}}$ on both sides of the equation.
\end{proof}

The following lemma is immediate.
\begin{lemma}\label{lem:el1k}
\begin{equation}
e_\ell^{\{1,\dots,k\}}=e_\ell^{\{1,\dots,k-1\}}+x_ke_{\ell-1}^{\{1,\dots,k-1\}}
\end{equation}
\end{lemma}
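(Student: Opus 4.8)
The plan is to recognize this as the classical deletion recurrence for the elementary symmetric polynomials, specialized to the variable $x_k$. By the definition recalled just before Lemma~\ref{eqn:elm}, the quantity $e_\ell^{\{1,\dots,k\}}$ is simply $e_\ell(x_1,\dots,x_k)$, and likewise $e_\ell^{\{1,\dots,k-1\}}=e_\ell(x_1,\dots,x_{k-1})$ and $e_{\ell-1}^{\{1,\dots,k-1\}}=e_{\ell-1}(x_1,\dots,x_{k-1})$. Thus the statement to prove reads
\begin{equation}
e_\ell(x_1,\dots,x_k)=e_\ell(x_1,\dots,x_{k-1})+x_k\, e_{\ell-1}(x_1,\dots,x_{k-1}).
\end{equation}

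The cleanest route is via the generating function. First I would write
\begin{equation}
\sum_{\ell\ge 0} e_\ell(x_1,\dots,x_k)\, t^\ell = \prod_{i=1}^{k}(1+x_i t)=(1+x_k t)\prod_{i=1}^{k-1}(1+x_i t)=(1+x_k t)\sum_{\ell\ge 0} e_\ell(x_1,\dots,x_{k-1})\, t^\ell,
\end{equation}
and then extract the coefficient of $t^\ell$ on both sides. On the right, the constant term of $1+x_k t$ contributes $e_\ell(x_1,\dots,x_{k-1})$ while the linear term $x_k t$ contributes $x_k\, e_{\ell-1}(x_1,\dots,x_{k-1})$, which is precisely the asserted identity.

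Alternatively, one may argue combinatorially without generating functions: $e_\ell(x_1,\dots,x_k)$ is the sum of all squarefree degree-$\ell$ monomials in $x_1,\dots,x_k$, and each such monomial either omits $x_k$ — and these are exactly the terms of $e_\ell(x_1,\dots,x_{k-1})$ — or contains $x_k$ — and upon factoring out $x_k$ these are exactly $x_k$ times the terms of $e_{\ell-1}(x_1,\dots,x_{k-1})$. Since the two cases partition the monomials, summing gives the identity. There is no genuine obstacle here, consistent with the statement being flagged as immediate; the only bookkeeping is the boundary conventions $e_0=1$ and $e_{\ell-1}=0$ when $\ell=0$, under which the formula remains valid in the degenerate case.
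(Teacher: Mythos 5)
Your proof is correct; the paper states this lemma without proof (labelling it ``immediate''), and both your generating-function extraction and your combinatorial split according to whether a squarefree monomial contains $x_k$ are exactly the standard argument being implicitly invoked. Nothing further is needed.
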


\begin{lemma}\label{lem:el1kp}
\begin{equation}
e_\ell^{\{1,\dots,k\}}\hat{\pi}_k=\hat{\pi}_ke_\ell^{\{1,\dots,k-1\}}+{\pi}_k\, x_{k+1}e_{\ell-1}^{\{1,\dots,k-1\}}
\end{equation}
\end{lemma}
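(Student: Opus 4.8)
The plan is to reduce everything to two elementary facts: the splitting of Lemma~\ref{lem:el1k} and the commutation relation $\pi_k x_{k+1}=x_k\pi_k-x_k$, exactly as was done for the parenthesis-superscript version in Lemma~\ref{lem:elpi1}. First I would apply Lemma~\ref{lem:el1k} with index $k$ to write $e_\ell^{\{1,\dots,k\}}=e_\ell^{\{1,\dots,k-1\}}+x_k e_{\ell-1}^{\{1,\dots,k-1\}}$, so that composing on the right with $\hat\pi_k$ gives
\begin{equation}
e_\ell^{\{1,\dots,k\}}\hat\pi_k=e_\ell^{\{1,\dots,k-1\}}\hat\pi_k+x_k e_{\ell-1}^{\{1,\dots,k-1\}}\hat\pi_k.
\end{equation}

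The next observation is that both $e_\ell^{\{1,\dots,k-1\}}$ and $e_{\ell-1}^{\{1,\dots,k-1\}}$ are polynomials in $x_1,\dots,x_{k-1}$ only; in particular they do not involve $x_k$ or $x_{k+1}$, so they are trivially symmetric in that pair and hence commute with $\pi_k$ and therefore with $\hat\pi_k=\pi_k-1$. Thus the first summand is already $\hat\pi_k e_\ell^{\{1,\dots,k-1\}}$, which is precisely the first term on the right-hand side of the claim, while the second summand becomes $x_k\hat\pi_k e_{\ell-1}^{\{1,\dots,k-1\}}$.

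It then remains to identify $x_k\hat\pi_k$ with $\pi_k x_{k+1}$. This is the one substantive step, and it follows immediately from the stated relation $\pi_i x_{i+1}=x_i\pi_i-x_i$ with $i=k$, since $\pi_k x_{k+1}=x_k\pi_k-x_k=x_k(\pi_k-1)=x_k\hat\pi_k$. Feeding this back in gives $x_k\hat\pi_k e_{\ell-1}^{\{1,\dots,k-1\}}=\pi_k x_{k+1}e_{\ell-1}^{\{1,\dots,k-1\}}$, and summing the two pieces yields the asserted identity. The computation is entirely formal, so I expect no genuine obstacle; the only points requiring care are keeping the curly-brace superscript $\{1,\dots,k-1\}$ (variables retained) distinct from the parenthesis superscript used elsewhere (variables set to zero), and invoking the correct commutation relation $\pi_k x_{k+1}=x_k\pi_k-x_k$ rather than its companion $\pi_k x_k=x_{k+1}\pi_k+x_k$.
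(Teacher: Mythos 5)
Your proof is correct and follows essentially the same route as the paper: the paper's own argument is simply ``immediate from Lemma~\ref{lem:el1k} and the identity $x_k\hat\pi_k=\pi_k x_{k+1}$,'' which is exactly the decomposition and commutation you carry out. The additional details you supply (commuting $e_\ell^{\{1,\dots,k-1\}}$ past $\hat\pi_k$ because it does not involve $x_k,x_{k+1}$, and deriving $x_k\hat\pi_k=\pi_k x_{k+1}$ from $\pi_k x_{k+1}=x_k\pi_k-x_k$) are all accurate.
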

\begin{proof}
The lemma is immediate from the previous lemma and the identity
$x_k \hat \pi_k = \pi_k x_{k+1}$.
\end{proof}

\begin{lemma}\label{lem:hatpiab} We have
\begin{equation}
\hat{\pi}_{[a,b]}\hat{\pi}_{[a-1,b-1]}\hat\pi_b=\hat\pi_{a-1}\hat{\pi}_{[a,b]}\hat{\pi}_{[a-1,b-1]}
\end{equation}
\end{lemma}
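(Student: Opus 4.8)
The plan is to prove the identity by induction on the width $b-a$, using only the braid relation $\hat\pi_i\hat\pi_{i+1}\hat\pi_i=\hat\pi_{i+1}\hat\pi_i\hat\pi_{i+1}$ and the commutation relation $\hat\pi_i\hat\pi_j=\hat\pi_j\hat\pi_i$ for $|i-j|>1$. Notably the quadratic relation $\hat\pi_i^2=-\hat\pi_i$ will never be needed, so the statement is really an equality of reduced words in the $\hat\pi$'s, which is what makes the induction go through cleanly. For the base case $b=a$ both sides collapse to a single application of the braid relation: the left-hand side becomes $\hat\pi_a\hat\pi_{a-1}\hat\pi_a$ and the right-hand side becomes $\hat\pi_{a-1}\hat\pi_a\hat\pi_{a-1}$, which are equal.

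For the inductive step I would assume the identity for the pair $(a+1,b)$ and derive it for $(a,b)$. First I would factor $\hat\pi_{[a-1,b-1]}=\hat\pi_{a-1}\hat\pi_{[a,b-1]}$ and $\hat\pi_{[a,b]}=\hat\pi_a\hat\pi_{[a+1,b]}$, so that the left-hand side reads $\hat\pi_a\hat\pi_{[a+1,b]}\hat\pi_{a-1}\hat\pi_{[a,b-1]}\hat\pi_b$. Since every index in the block $\hat\pi_{[a+1,b]}$ is at least $a+1$, hence at distance $\ge 2$ from $a-1$, the generator $\hat\pi_{a-1}$ slides to the left past that block, rewriting the left-hand side as $\hat\pi_a\hat\pi_{a-1}\bigl(\hat\pi_{[a+1,b]}\hat\pi_{[a,b-1]}\hat\pi_b\bigr)$. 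The parenthesized factor is exactly the left-hand side of the lemma for the shifted pair $(a+1,b)$, so the induction hypothesis converts it into $\hat\pi_a\hat\pi_{[a+1,b]}\hat\pi_{[a,b-1]}$.

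At this point the expression equals $\hat\pi_a\hat\pi_{a-1}\hat\pi_a\,\hat\pi_{[a+1,b]}\hat\pi_{[a,b-1]}$. Applying the braid relation to the leading triple $\hat\pi_a\hat\pi_{a-1}\hat\pi_a=\hat\pi_{a-1}\hat\pi_a\hat\pi_{a-1}$, then commuting the trailing $\hat\pi_{a-1}$ rightward past $\hat\pi_{[a+1,b]}$ (again legitimate by the index-distance condition), and finally reassembling $\hat\pi_a\hat\pi_{[a+1,b]}=\hat\pi_{[a,b]}$ together with $\hat\pi_{a-1}\hat\pi_{[a,b-1]}=\hat\pi_{[a-1,b-1]}$, one recovers precisely $\hat\pi_{a-1}\hat\pi_{[a,b]}\hat\pi_{[a-1,b-1]}$, the right-hand side.

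The only genuinely delicate point is the middle move: recognizing that after peeling $\hat\pi_a\hat\pi_{a-1}$ off the left, the surviving inner word is an exact copy of the statement one step narrower, which is what allows the hypothesis to be invoked. Everything else is bookkeeping with the commutation relation, and I would double-check at each slide that the moving generator $\hat\pi_{a-1}$ only ever crosses a block of indices $\ge a+1$, so that all the required distance-$\ge 2$ conditions are indeed satisfied.
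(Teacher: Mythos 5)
Your proof is correct, and it is essentially the argument the paper has in mind: the paper's proof of this lemma is simply ``Easy by induction,'' and your induction on $b-a$ (peeling off $\hat\pi_a$ and $\hat\pi_{a-1}$, commuting $\hat\pi_{a-1}$ across the block of indices $\ge a+1$, invoking the hypothesis for $(a+1,b)$, and finishing with one braid move) is the natural way to carry that out. All the commutation steps you flag are valid since every index crossed is at distance at least $2$ from $a-1$.
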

\begin{proof}
Easy by induction.
\end{proof}

\begin{lemma}\label{lem:hatpim}
If $h+1\leq \alpha\leq \beta$ and $h \leq m$, then
\begin{equation}
\partial_{\omega_{m+1}}\hat{\pi}_{[h+1,\alpha]}\hat{\pi}_{[h,\beta]}=
-\partial_{\omega_{m+1}}\hat{\pi}_{[h+1,\beta]}\hat{\pi}_{[h,\alpha-1]}
\end{equation}
\end{lemma}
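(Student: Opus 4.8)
The plan is to prove the identity by induction on $\beta$, keeping $h$ and $\alpha$ fixed, with base case $\beta=\alpha$. Before starting the induction I would isolate the single algebraic fact that drives the whole argument and is the only place where the hypothesis $h\le m$ is used. Since $\omega_{m+1}$ is the longest element of $S_{m+1}$, every generator $s_h$ with $h\le m$ is a right descent, so a reduced word for $\omega_{m+1}$ ends in $s_h$ and hence $\partial_{\omega_{m+1}}\partial_h=0$ (because $\partial_h^2=0$). Combining this with $\pi_h=\partial_h x_h$ gives $\partial_{\omega_{m+1}}\pi_h=0$, and therefore
\[
\partial_{\omega_{m+1}}\hat\pi_h=\partial_{\omega_{m+1}}(\pi_h-1)=-\partial_{\omega_{m+1}}\qquad(h\le m).
\]
This is exactly the mechanism that manufactures the minus sign on the right-hand side.

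For the base case $\beta=\alpha$ I would split off the last factor, $\hat\pi_{[h,\alpha]}=\hat\pi_{[h,\alpha-1]}\hat\pi_\alpha$, and apply Lemma~\ref{lem:hatpiab} with $a=h+1$ and $b=\alpha$ (legitimate since $h+1\le\alpha$), which rewrites
\[
\hat\pi_{[h+1,\alpha]}\hat\pi_{[h,\alpha-1]}\hat\pi_\alpha=\hat\pi_h\,\hat\pi_{[h+1,\alpha]}\hat\pi_{[h,\alpha-1]}.
\]
The preliminary relation then converts the leading $\hat\pi_h$ into the factor $-1$, producing $-\partial_{\omega_{m+1}}\hat\pi_{[h+1,\alpha]}\hat\pi_{[h,\alpha-1]}$, which is precisely the right-hand side when $\beta=\alpha$ (there $\hat\pi_{[h+1,\beta]}=\hat\pi_{[h+1,\alpha]}$).

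For the inductive step $\beta>\alpha$ I would peel off the last factor on the left, $\hat\pi_{[h,\beta]}=\hat\pi_{[h,\beta-1]}\hat\pi_\beta$, so that the left-hand side equals $\partial_{\omega_{m+1}}\hat\pi_{[h+1,\alpha]}\hat\pi_{[h,\beta-1]}\hat\pi_\beta$. The inductive hypothesis applies to the pair $(\alpha,\beta-1)$ since $\alpha\le\beta-1$; right-multiplying that operator identity by $\hat\pi_\beta$ replaces $\hat\pi_{[h+1,\alpha]}\hat\pi_{[h,\beta-1]}$ by $-\hat\pi_{[h+1,\beta-1]}\hat\pi_{[h,\alpha-1]}$ under $\partial_{\omega_{m+1}}$. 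It then remains to observe that $\hat\pi_\beta$ commutes with $\hat\pi_{[h,\alpha-1]}$, which holds because the largest index occurring there is $\alpha-1$ and $\beta-(\alpha-1)\ge 2$; after this commutation $\hat\pi_{[h+1,\beta-1]}\hat\pi_\beta$ reassembles into $\hat\pi_{[h+1,\beta]}$ and the desired right-hand side $-\partial_{\omega_{m+1}}\hat\pi_{[h+1,\beta]}\hat\pi_{[h,\alpha-1]}$ appears.

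The interval splittings and the index bookkeeping in the commutation step are purely mechanical, so I do not expect them to cause difficulty. The genuinely structural ingredients are the sign-producing relation $\partial_{\omega_{m+1}}\hat\pi_h=-\partial_{\omega_{m+1}}$ and the braid rearrangement of Lemma~\ref{lem:hatpiab}, both of which are needed only in the base case; accordingly I expect the base case, rather than the induction, to carry the conceptual weight, and the main point of care is to invoke Lemma~\ref{lem:hatpiab} with the correct endpoints $a=h+1$, $b=\alpha$.
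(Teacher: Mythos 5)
Your proof is correct and uses the same two ingredients as the paper's: the braid rearrangement of Lemma~\ref{lem:hatpiab} with $a=h+1$, $b=\alpha$ to extract a leading $\hat\pi_h$, and the relation $\partial_{\omega_{m+1}}\hat\pi_h=-\partial_{\omega_{m+1}}$ to convert it into the sign. The only cosmetic difference is that you commute the tail $\hat\pi_{\alpha+1}\cdots\hat\pi_\beta$ past $\hat\pi_{[h,\alpha-1]}$ one generator at a time inside an induction on $\beta$, whereas the paper does it in a single step.
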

\begin{proof} We have from the previous lemma
$$
\hat{\pi}_{[h+1,\alpha]}\hat{\pi}_{[h,\beta]}=
\hat{\pi}_{[h+1,\alpha]}\hat{\pi}_{[h,\alpha-1]} \hat \pi_{\alpha} \hat \pi_{\alpha+1} \cdots \hat \pi_\beta
=  \hat \pi_{h} \hat{\pi}_{[h+1,\alpha]}\hat{\pi}_{[h,\alpha-1]}  \hat \pi_{\alpha+1} \cdots \hat \pi_\beta
$$
Hence, $\hat{\pi}_{[h+1,\alpha]}\hat{\pi}_{[h,\beta]}=  \hat \pi_{h} \hat{\pi}_{[h+1,\beta]}\hat{\pi}_{[h,\alpha-1]}$ and
the lemma follows since $\partial_{\omega_{m+1}} \hat \pi_h= - \partial_{\omega_{m+1}}$.
\end{proof}

\begin{lemma}\label{lem:pihatprod}
Suppose that $\alpha_{h}\ge h+j$.  Then, for $1\leq h\leq m$, we have
\begin{eqnarray}
&&
\partial_{\omega_{m+1}} \pi_{\omega_{(m+1)^c}}\hat{\pi}_{\omega_m}
\hat{\pi}_{[{m+1},{m+j-1}]}e_\ell^{\{1,\dots, m+j-1\}}
\left(\prod_{i=m}^h\hat{\pi}_{[i,{\alpha_{i}-1}]}\right) \\
&&\quad\quad=
(-1)^{m+1-h} \partial_{\omega_{m+1}} \pi_{\omega_{(m+1)^c}}\hat{\pi}_{\omega_m}
\left(\prod_{i=m}^h \hat \pi_{[i+1,\alpha_{i}-1]}\right)\hat\pi_{[h,h+j-2]}
e_\ell^{\{1,\dots,h+j-2\}} \nonumber
\end{eqnarray}
where the products are decreasing.

\end{lemma}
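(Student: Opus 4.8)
The plan is to prove the identity by descending induction on $h$, stripping off the rightmost factor $\hat\pi_{[h,\alpha_h-1]}$ of the product at each stage. The base case is $h=m+1$, where the product $\prod_{i=m}^{m+1}\hat\pi_{[i,\alpha_i-1]}$ is empty and the two sides coincide verbatim. Since the circle rows satisfy $\alpha_1<\cdots<\alpha_m$, the hypothesis propagates upward: if $\alpha_h\ge h+j$ then $\alpha_{h+1}>\alpha_h$ forces $\alpha_{h+1}\ge (h+1)+j$, so the inductive hypothesis at $h+1$ is always available. Throughout I would use the relations $\pi_i=\hat\pi_i+1$, $\hat\pi_i\pi_i=0$, the braid and commutation relations, and the boundary facts $\partial_{\omega_{m+1}}\hat\pi_i=-\partial_{\omega_{m+1}}$ and $\partial_{\omega_{m+1}}\pi_i=\partial_{\omega_{m+1}}\partial_ix_i=0$ (valid for $i\le m$, as in the proof of Lemma~\ref{lem:hatpim}), together with the consequence $\partial_{\omega_{m+1}}\hat\pi_{\omega_m}=(-1)^{\binom{m}{2}}\partial_{\omega_{m+1}}$ obtained by iterating the first of these.

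For the inductive step I would write the left-hand side at $h$ as the left-hand side at $h+1$ multiplied on the right by $\hat\pi_{[h,\alpha_h-1]}$, and replace the former by the inductive hypothesis. Abbreviating the common prefix $\partial_{\omega_{m+1}}\pi_{\omega_{(m+1)^c}}\hat\pi_{\omega_m}\prod_{i=m}^{h+1}\hat\pi_{[i+1,\alpha_i-1]}$ by $\mathcal{Q}$, and keeping track of the sign $(-1)^{m-h}$ coming from the hypothesis at $h+1$, the claim reduces to showing $\mathcal{Q}\,\hat\pi_{[h+1,h+j-1]}e_\ell^{\{1,\dots,h+j-1\}}\hat\pi_{[h,\alpha_h-1]}=-\,\mathcal{Q}\,\hat\pi_{[h+1,\alpha_h-1]}\hat\pi_{[h,h+j-2]}e_\ell^{\{1,\dots,h+j-2\}}$. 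Because $e_\ell^{\{1,\dots,h+j-1\}}$ is symmetric in $x_1,\dots,x_{h+j-1}$, multiplication by it commutes with every $\hat\pi_k$ for $k\le h+j-2$; I would use this to slide it to the right past $\hat\pi_{[h,h+j-2]}$, leaving $e_\ell^{\{1,\dots,h+j-1\}}\hat\pi_{[h+j-1,\alpha_h-1]}$, and then split this with Lemma~\ref{lem:el1kp} applied at $k=h+j-1$. This produces two terms.

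First I would treat the ``$\hat\pi$'' term $\hat\pi_{h+j-1}e_\ell^{\{1,\dots,h+j-2\}}$. Commuting $e_\ell^{\{1,\dots,h+j-2\}}$ past the index-$\ge h+j$ block, concatenating the consecutive blocks $\hat\pi_{[h,h+j-2]}\hat\pi_{[h+j-1,\alpha_h-1]}=\hat\pi_{[h,\alpha_h-1]}$, and then applying the operator identity $\hat\pi_{[h+1,\alpha]}\hat\pi_{[h,\beta]}=\hat\pi_h\hat\pi_{[h+1,\beta]}\hat\pi_{[h,\alpha-1]}$ (the identity established in the proof of Lemma~\ref{lem:hatpim} from Lemma~\ref{lem:hatpiab}) with $\alpha=h+j-1$, $\beta=\alpha_h-1$, this term collapses to $\mathcal{Q}\,\hat\pi_h\hat\pi_{[h+1,\alpha_h-1]}\hat\pi_{[h,h+j-2]}e_\ell^{\{1,\dots,h+j-2\}}$; here empty blocks are read as the identity, which also subsumes the case $j=1$. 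The decisive fact $\mathcal{Q}\,\hat\pi_h=-\mathcal{Q}$ (for every $h\le m$) then gives exactly the desired right-hand side: after commuting $\hat\pi_h$ leftward past the index-$\ge h+2$ factors of the product and past $\pi_{\omega_{(m+1)^c}}$, it reduces to $\partial_{\omega_{m+1}}\hat\pi_{\omega_m}\hat\pi_h=-\partial_{\omega_{m+1}}\hat\pi_{\omega_m}$. For the second ``$\pi$'' term I would convert the trailing $\pi_{h+j-1}$ into a leading $\pi_h$ by the identity $\hat\pi_{[h+1,h+j-1]}\hat\pi_{[h,h+j-2]}\pi_{h+j-1}=\pi_h\hat\pi_{[h+1,h+j-1]}\hat\pi_{[h,h+j-2]}$ (again from Lemma~\ref{lem:hatpiab} together with $\pi_i=\hat\pi_i+1$), and then the companion fact $\mathcal{Q}\,\pi_h=0$, coming from $\partial_{\omega_{m+1}}\hat\pi_{\omega_m}\pi_h=(-1)^{\binom{m}{2}}\partial_{\omega_{m+1}}\pi_h=0$, annihilates it.

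The routine parts are the numerous commutations of $e_\ell$-type symmetric functions past divided differences and the consecutive-block concatenations. The genuinely delicate point, which I expect to be the main obstacle, is the uniform verification of the two absorption facts $\mathcal{Q}\,\hat\pi_h=-\mathcal{Q}$ and $\mathcal{Q}\,\pi_h=0$ at the boundary value $h=m$: there $\hat\pi_m$ and $\pi_m$ are not swallowed by $\hat\pi_{\omega_m}$, so one must carry them all the way to $\partial_{\omega_{m+1}}$. This is precisely what forces the use of $\partial_{\omega_{m+1}}\hat\pi_{\omega_m}=(-1)^{\binom{m}{2}}\partial_{\omega_{m+1}}$ instead of the simpler relation $\hat\pi_{\omega_m}\hat\pi_h=-\hat\pi_{\omega_m}$, which is valid only for $h\le m-1$; getting this boundary bookkeeping right (and confirming the signs agree there) is the crux of the argument.
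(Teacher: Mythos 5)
Your proposal is correct and follows essentially the same route as the paper's proof: a descending induction on $h$ that peels off $\hat\pi_{[h,\alpha_h-1]}$, splits via Lemma~\ref{lem:el1kp}, kills the $\pi$-term using Lemma~\ref{lem:hatpiab} together with $\partial_{\omega_{m+1}}\pi_h=0$, and extracts the sign from the rebracketing identity underlying Lemma~\ref{lem:hatpim} combined with $\partial_{\omega_{m+1}}\hat\pi_{\omega_m}\hat\pi_h=-\partial_{\omega_{m+1}}\hat\pi_{\omega_m}$. The only (welcome) cosmetic difference is that you anchor the induction at the vacuous case $h=m+1$, where the two sides coincide verbatim, whereas the paper starts at $h=m$ and asserts that case separately.
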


\begin{proof}
We prove the lemma by induction on $h$ for a fixed $j$ in the decreasing direction.  We will not show the base case $h=m$ since it can
 be proven using the same ideas as in the general case.  Therefore, 
suppose the lemma holds for $h$ and we will show that it holds for $h-1$. Assume $\alpha_{h-1}\ge h-1+j$.
We then have
\begin{align} \label{eqimportant}
\partial_{\omega_{m+1}} & \pi_{\omega_{(m+1)^c}}\hat{\pi}_{\omega_m}
\hat{\pi}_{[{m+1},{m+j-1}]}e_\ell^{\{1,\dots, m+j-1\}}
\left(\prod_{i=m}^{h-1}\hat{\pi}_{[i,{\alpha_{i}-1}]}\right) \\
&=
\partial_{\omega_{m+1}} \pi_{\omega_{(m+1)^c}}\hat{\pi}_{\omega_m}
\hat{\pi}_{[{m+1},{m+j-1}]}e_\ell^{\{1,\dots, m+j-1\}}
\left(\prod_{i=m}^{h}\hat{\pi}_{[i,{\alpha_{i}-1}]}\right)\hat{\pi}_{[h-1,{\alpha_{h-1}-1}]}\nonumber \\
&=
(-1)^{m+1-h} \partial_{\omega_{m+1}} \pi_{\omega_{(m+1)^c}}\hat{\pi}_{\omega_m}
\left(\prod_{i=m}^h \hat \pi_{[i+1,\alpha_{i}-1]}\right)\hat\pi_{[h,h+j-2]}e_\ell^{\{1,\dots,h+j-2\}}
\hat{\pi}_{[h-1,{\alpha_{h-1}-1}]}\nonumber 
\end{align}
Since $\alpha_{h-1}\ge h-1+j$, $\hat{\pi}_{[h-1,{\alpha_{h-1}-1}]}$ does not commute with $e_\ell^{\{1,\dots,h+j-2\}}$.
Using $\hat{\pi}_{[h-1,{\alpha_{h-1}-1}]}=\hat{\pi}_{[h-1,h+j-3]}\hat{\pi}_{h+j-2}\hat{\pi}_{[h+j-1,{\alpha_{h-1}-1}]}$ and
Lemma~\ref{lem:el1kp}, the right hand side of the previous equation becomes
\begin{eqnarray}
&&(-1)^{m+1-h} \partial_{\omega_{m+1}} \pi_{\omega_{(m+1)^c}}\hat{\pi}_{\omega_m}
\left(\prod_{i=m}^h \hat \pi_{[i+1,\alpha_{i}-1]}\right)\hat\pi_{[h,h+j-2]}\hat{\pi}_{[h-1,h+j-3]}\times\\
&&\quad\quad\quad\quad
\left(\hat{\pi}_{h+j-2}e_\ell^{\{1,\dots,h+j-3\}}+\pi_{h+j-2}x_{h+j-1}e_{\ell-1}^{\{1,\dots,h+j-3\}}\right)
\hat{\pi}_{[h+j-1,{\alpha_{h-1}-1}]} \nonumber
\end{eqnarray}
The second term of the sum is zero from Lemma \ref{lem:hatpiab} (note that $\pi_{a}=\hat\pi_{a}+1$ satisfies
the usual braid relation with $\hat \pi_b$) and the fact that
$\partial_{\omega_{m+1}} \pi_{m}=0$.
The previous expression is thus equal to
\begin{eqnarray}
&&(-1)^{m+1-h} \partial_{\omega_{m+1}} \pi_{\omega_{(m+1)^c}}\hat{\pi}_{\omega_m}
\left(\prod_{i=m}^h \hat \pi_{[i+1,\alpha_{i}-1]}\right)\hat\pi_{[h,h+j-2]}\hat{\pi}_{[h-1,h+j-3]}\times\\
&&\quad\quad\quad\quad
\hat{\pi}_{h+j-2}e_\ell^{\{1,\dots,h+j-3\}}
\hat{\pi}_{[h+j-1,{\alpha_{h-1}-1}]} \nonumber
\end{eqnarray}
Now, $e_\ell^{\{1,\dots,h+j-3\}}$ commutes and the expression becomes
\begin{equation}
(-1)^{m+1-h} \partial_{\omega_{m+1}} \pi_{\omega_{(m+1)^c}}\hat{\pi}_{\omega_m}
\left(\prod_{i=m}^h \hat \pi_{[i+1,\alpha_{i}-1]}\right)\,
\left(\hat\pi_{[h,h+j-2]}\hat{\pi}_{[h-1,\alpha_{h-1}-1]}\right)
e_\ell^{\{1,\dots,(h-1)+j-2\}} \nonumber
\end{equation}
Using Lemma~\ref{lem:hatpim}, we finally get for the right hand side of \eqref{eqimportant}
\begin{eqnarray}
&&(-1)^{m+1-h} \partial_{\omega_{m+1}} \pi_{\omega_{(m+1)^c}}\hat{\pi}_{\omega_m}
\left(\prod_{i=m}^h \hat \pi_{[i+1,\alpha_{i}-1]}\right)\times\\
&&\quad\quad\quad\quad
\left((-1)\hat\pi_{[h,\alpha_{h-1}-1]}\hat{\pi}_{[h-1,(h-1)+j-2]}\right)
e_\ell^{\{1,\dots,(h-1)+j-2\}} \nonumber\\
&&(-1)^{m+1-(h-1)} \partial_{\omega_{m+1}} \pi_{\omega_{(m+1)^c}}\hat{\pi}_{\omega_m}
\left(\prod_{i=m}^{h-1} \hat \pi_{[i+1,\alpha_{i}-1]}\right)
\hat{\pi}_{[h-1,(h-1)+j-2]}
e_\ell^{\{1,\dots,(h-1)+j-2\}} \nonumber
\end{eqnarray}
and the lemma holds.
\end{proof}

\begin{corollary}\label{cor:alphah}
For a given $j$, let $h$ be the minimum value such that $\alpha_{h}\ge h+j$.  Then
\begin{eqnarray}
&&
\partial_{\omega_{m+1}} \pi_{\omega_{(m+1)^c}}\hat{\pi}_{\omega_m}
\hat{\pi}_{[{m+1},{m+j-1}]}e_\ell^{\{1,\dots, m+j-1\}}
\left(\prod_{i=m}^1\hat{\pi}_{[i,{\alpha_{i}-1}]}\right) \\
&&\quad=
(-1)^{m+1-h} \partial_{\omega_{m+1}} \pi_{\omega_{(m+1)^c}}\hat{\pi}_{\omega_m}
\left(\prod_{i=m}^h \hat \pi_{[i+1,\alpha_{i}-1]}\right)\hat\pi_{[h,h+j-2]}
\left(\prod_{i=h-1}^1\hat{\pi}_{[i,{\alpha_{i}-1}]}\right)
e_\ell^{\{1,\dots,h+j-2\}} \nonumber
\end{eqnarray}
\end{corollary}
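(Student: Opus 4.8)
The plan is to obtain the corollary directly from Lemma~\ref{lem:pihatprod} by splitting the product of divided differences at the distinguished index $h$ and then commuting the leftover factors past an elementary symmetric function. This is a packaging argument: the lemma does the genuine work on the ``head'' of the product, and the choice of $h$ guarantees that the ``tail'' is inert.

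First I would write $A=\partial_{\omega_{m+1}}\pi_{\omega_{(m+1)^c}}\hat{\pi}_{\omega_m}\hat{\pi}_{[m+1,m+j-1]}$ and factor the full product as $\prod_{i=m}^1\hat{\pi}_{[i,\alpha_i-1]}=\bigl(\prod_{i=m}^h\hat{\pi}_{[i,\alpha_i-1]}\bigr)\bigl(\prod_{i=h-1}^1\hat{\pi}_{[i,\alpha_i-1]}\bigr)$, so that the left-hand side of the corollary reads $A\,e_\ell^{\{1,\dots,m+j-1\}}\bigl(\prod_{i=m}^h\hat{\pi}_{[i,\alpha_i-1]}\bigr)\bigl(\prod_{i=h-1}^1\hat{\pi}_{[i,\alpha_i-1]}\bigr)$. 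By the choice of $h$ as the minimum index with $\alpha_h\ge h+j$, the hypothesis of Lemma~\ref{lem:pihatprod} holds for the head factor. Applying that lemma as an identity of operators and composing on the right with the untouched tail $\prod_{i=h-1}^1\hat{\pi}_{[i,\alpha_i-1]}$ turns the expression into $(-1)^{m+1-h}\partial_{\omega_{m+1}}\pi_{\omega_{(m+1)^c}}\hat{\pi}_{\omega_m}\bigl(\prod_{i=m}^h\hat{\pi}_{[i+1,\alpha_i-1]}\bigr)\hat{\pi}_{[h,h+j-2]}\,e_\ell^{\{1,\dots,h+j-2\}}\bigl(\prod_{i=h-1}^1\hat{\pi}_{[i,\alpha_i-1]}\bigr)$.

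The last step, which is the only one requiring care, is to move the tail to the left of $e_\ell^{\{1,\dots,h+j-2\}}$ so as to match the right-hand side of the corollary. Here the minimality of $h$ does all the work: for every $i<h$ one has $\alpha_i<i+j$, hence $\alpha_i-1\le i+j-2\le h+j-3$, so each factor $\hat{\pi}_{[i,\alpha_i-1]}$ of the tail is a product of operators $\hat{\pi}_k$ with $1\le k\le h+j-3$. Every such $\hat{\pi}_k$ acts only on the variables $x_k,x_{k+1}$ with $k+1\le h+j-2$, and a divided difference commutes with multiplication by any polynomial symmetric in its two variables; since $e_\ell^{\{1,\dots,h+j-2\}}=e_\ell(x_1,\dots,x_{h+j-2})$ is symmetric in those variables, the whole tail commutes with it. Swapping their order yields precisely the stated identity. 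The anticipated obstacle is purely bookkeeping, namely verifying that the minimality of $h$ bounds every index occurring in the tail by $h+j-3$; once this is checked the commutation, and hence the corollary, follows immediately. The degenerate case $h=1$ gives an empty tail and reduces the statement to Lemma~\ref{lem:pihatprod} itself.
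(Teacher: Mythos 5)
Your proposal is correct and follows essentially the same route as the paper's proof: split the product at $h$, apply Lemma~\ref{lem:pihatprod} to the factors $i=m,\dots,h$, and use the minimality of $h$ (i.e.\ $\alpha_i<i+j$ for $i<h$) to commute the remaining tail past $e_\ell^{\{1,\dots,h+j-2\}}$. The only difference is that you spell out the index bound $\alpha_i-1\le h+j-3$ justifying the commutation, which the paper merely asserts.
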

\begin{proof}
We first break up the product  as
$\displaystyle{\prod_{i=m}^1\hat{\pi}_{[i,{\alpha_{i}-1}]}=\left(\prod_{i=m}^h\hat{\pi}_{[i,{\alpha_{i}-1}]}\right)\left(\prod_{i=h-1}^1\hat{\pi}_{[i,{\alpha_{i}-1}]}\right)}$.
Using Lemma \ref{lem:pihatprod} to multiply by the first factor, we get
$$(-1)^{m+1-h} \partial_{\omega_{m+1}} \pi_{\omega_{(m+1)^c}}\hat{\pi}_{\omega_m}
\left(\prod_{i=m}^h \hat \pi_{[i+1,\alpha_{i}-1]}\right)\hat\pi_{[h,h+j-2]}
e_\ell^{\{1,\dots,h+j-2\}}
\left(\prod_{i=h-1}^1\hat{\pi}_{[i,{\alpha_{i}-1}]}\right).$$
By hypothesis $\alpha_{h-1}<h-1+j$, which implies that the rightmost product in the previous expression commutes with $e_\ell^{\{1,\dots,h+j-2\}}$. Therefore we
have 
$$(-1)^{m+1-h} \partial_{\omega_{m+1}} \pi_{\omega_{(m+1)^c}}\hat{\pi}_{\omega_m}
\left(\prod_{i=m}^h \hat \pi_{[i+1,\alpha_{i}-1]}\right)\hat\pi_{[h,h+j-2]}
\left(\prod_{i=h-1}^1\hat{\pi}_{[i,{\alpha_{i}-1}]}\right)
e_\ell^{\{1,\dots,h+j-2\}}.$$
\end{proof}

\begin{lemma} We have \label{lemfinal}
\begin{eqnarray} \label{eqfinal}
&&\partial_{\omega_{m+1}} \pi_{\omega_{(m+1)^c}}
\sum_{j=1}^{N-m}\hat{\pi}_{[{m+1},{m+j-1}]}e_\ell^{\{1,\dots, m+j-1\}}
\hat{\pi}_{\omega_m}\hat{\pi}_{[m,{\alpha_m-1}]}\ldots\hat{\pi}_{[1,{\alpha_1-1}]}x^{\Lambda^*}\nonumber \\
&&\qquad\qquad \qquad =
\sum_{r\not \in \{\alpha_1,\dots,\alpha_m \}}(-1)^{{\rm pos}(r)}\mathcal{P}_{N,[\alpha_1,\dots,r,\dots,\alpha_m]}
e_\ell^{\{1,\dots,r-1\}}x^{\Lambda^*}
\end{eqnarray}
\end{lemma}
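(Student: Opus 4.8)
The plan is to prove the identity term by term in the sum over $j$, matching the $j$-th summand of the left-hand side with a single summand on the right for an appropriate index $r=r(j)$, and then to check that $j\mapsto r(j)$ is a bijection onto the complement of $\{\alpha_1,\dots,\alpha_m\}$. First I would fix $j$ and rearrange the $j$-th summand so that $\hat\pi_{\omega_m}$ is brought all the way to the front. This is legitimate because $\hat\pi_{\omega_m}$ only involves $\hat\pi_1,\dots,\hat\pi_{m-1}$, which commute with $\hat\pi_{[m+1,m+j-1]}$ (whose indices are all $\geq m+1$) and also commute with multiplication by $e_\ell^{\{1,\dots,m+j-1\}}$, since the latter is symmetric in $x_1,\dots,x_{m+j-1}$, hence in each pair $x_i,x_{i+1}$ with $i\leq m-1$, so multiplication by it commutes with the corresponding $\hat\pi_i$. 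After this rearrangement, writing $\hat\pi_{\omega_m}\hat\pi_{[m,\alpha_m-1]}\cdots\hat\pi_{[1,\alpha_1-1]}=\hat\pi_{\omega_m}\prod_{i=m}^1\hat\pi_{[i,\alpha_i-1]}$, the summand becomes exactly the left-hand side of Corollary~\ref{cor:alphah} applied to $x^{\Lambda^*}$.

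Next I would apply Corollary~\ref{cor:alphah} with $h=h(j)$ the minimal index satisfying $\alpha_h\geq h+j$ (and $h=m+1$ if no such index exists). Setting $r=h+j-1$, the corollary rewrites the summand as $(-1)^{m+1-h}$ times $\partial_{\omega_{m+1}}\pi_{\omega_{(m+1)^c}}\hat\pi_{\omega_m}$ applied to the operator $\big(\prod_{i=m}^h\hat\pi_{[i+1,\alpha_i-1]}\big)\hat\pi_{[h,r-1]}\big(\prod_{i=h-1}^1\hat\pi_{[i,\alpha_i-1]}\big)$, with the surviving factor $e_\ell^{\{1,\dots,r-1\}}$ on the right. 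The key combinatorial observation is that $\alpha_{h-1}<r<\alpha_h$, so inserting $r$ into $[\alpha_1,\dots,\alpha_m]$ places it in position $h$; a direct term-by-term comparison then shows that this operator is precisely the string $\hat\pi_{[m+1,\beta_{m+1}-1]}\cdots\hat\pi_{[1,\beta_1-1]}$ appearing in the definition of $\mathcal{P}_{N,[\alpha_1,\dots,\alpha_{h-1},r,\alpha_h,\dots,\alpha_m]}$ after the factor $\hat\pi_{\omega_{m+1}}$.

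The step I expect to be the main obstacle is converting the $\hat\pi_{\omega_m}$ produced by Corollary~\ref{cor:alphah} into the $\hat\pi_{\omega_{m+1}}$ demanded by the definition of $\mathcal{P}$, while tracking the resulting sign. Here I would use the factorization $\hat\pi_{\omega_{m+1}}=\hat\pi_{[1,m]}\hat\pi_{\omega_m}$ together with the identity $\partial_{\omega_{m+1}}\hat\pi_{[1,m]}=(-1)^m\partial_{\omega_{m+1}}$ (both already used in the proof of the $\tilde p$-Pieri lemma preceding Corollary~\ref{corop}), noting that $\hat\pi_{[1,m]}$ commutes with $\pi_{\omega_{(m+1)^c}}$ since their indices are separated by at least two. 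This gives $\partial_{\omega_{m+1}}\pi_{\omega_{(m+1)^c}}\hat\pi_{\omega_m}\,[\mathrm{op}]=(-1)^m\mathcal{P}_{N,[\alpha_1,\dots,r,\dots,\alpha_m]}$, so the total sign is $(-1)^{m+1-h}(-1)^m=(-1)^{h-1}$. Since $\mathrm{pos}(r)$ is the number of $\alpha_i<r$, which is $h-1$, the sign is exactly $(-1)^{\mathrm{pos}(r)}$, and the $j$-th summand equals $(-1)^{\mathrm{pos}(r)}\mathcal{P}_{N,[\alpha_1,\dots,r,\dots,\alpha_m]}e_\ell^{\{1,\dots,r-1\}}x^{\Lambda^*}$.

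To finish, I would observe, exactly as in the proof of Corollary~\ref{corobigexpan} (where $d(j)=h(j)-1$ plays the role of $\mathrm{pos}(r)$ and $r(j)=j+d(j)$), that as $j$ ranges over $1,\dots,N-m$ the values $r(j)=h(j)+j-1$ range bijectively over the complement of $\{\alpha_1,\dots,\alpha_m\}$ in $\{1,\dots,N\}$. Summing the per-$j$ identities then reindexes the left-hand sum into the sum over $r\notin\{\alpha_1,\dots,\alpha_m\}$ on the right-hand side of \eqref{eqfinal}, completing the proof.
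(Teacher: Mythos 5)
Your proposal is correct and follows essentially the same route as the paper's proof: define $h(j)$ as the minimal index with $\alpha_{h(j)}\geq h(j)+j$, apply Corollary~\ref{cor:alphah} after commuting $\hat\pi_{\omega_m}$ to the front, convert $\hat\pi_{\omega_m}$ into $\hat\pi_{\omega_{m+1}}$ via $\partial_{\omega_{m+1}}\hat\pi_{[1,m]}=(-1)^m\partial_{\omega_{m+1}}$ to recognize $\mathcal{P}_{N,[\alpha_1,\dots,r,\dots,\alpha_m]}$ with total sign $(-1)^{h(j)-1}=(-1)^{\mathrm{pos}(r)}$, and reindex via the bijection $j\mapsto r(j)=h(j)+j-1$ onto the complement of $\{\alpha_1,\dots,\alpha_m\}$. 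The only difference is that you spell out the commutations the paper dismisses as ``straightforward manipulations,'' and you do so correctly.
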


\begin{proof}
For each $j$ in the sum on the left hand side of \eqref{eqfinal}, define $h(j) \geq 1$ to be the minimum value such that
$\alpha_{h(j)}\ge h(j)+j$. If no such $h(j)$ exists, set $h(j)=m+1$. Then, using Corollary \ref{cor:alphah} and straightforward manipulations, the left hand side of \eqref{eqfinal} can be reexpressed as
\begin{eqnarray*}
&& \sum_{j=1}^{N-m}
(-1)^{m+1-h(j)} \partial_{\omega_{m+1}} \pi_{\omega_{(m+1)^c}}\hat{\pi}_{\omega_m}\times\\
&&\quad\qquad\qquad\left(\prod_{i=m}^{h(j)} \hat \pi_{[i+1,\alpha_{i}-1]}\right)\hat\pi_{[h(j),h(j)+j-2]}
\left(\prod_{i=h(j)-1}^1\hat{\pi}_{[i,{\alpha_{i}-1}]}\right)
e_\ell^{\{1,\dots,h(j)+j-2\}} x^{\Lambda^*} \\
&& \qquad =\sum_{j=1}^{N-m}
(-1)^{m+1-h(j)} \partial_{\omega_{m+1}} \pi_{\omega_{(m+1)^c}}\hat{\pi}_{\omega_m}\times\\
&&\quad\qquad\qquad\left(\prod_{i=m}^{h(j)} \hat \pi_{[i+1,\alpha_{i}-1]}\right)\hat\pi_{[h(j),r(j)-1]}
\left(\prod_{i=h(j)-1}^1\hat{\pi}_{[i,{\alpha_{i}-1}]}\right)
e_\ell^{\{1,\dots,r(j)-1\}} x^{\Lambda^*}
\end{eqnarray*}
where we used the substitution $r(j)=h(j)+j-1$ in the last sum.  Observe that we
almost have the form for 
\begin{eqnarray*}
\mathcal{P}_{N,[\alpha_1,\dots,r(j),\dots,\alpha_m]}=
\partial_{\omega_{m+1}} \pi_{\omega_{(m+1)^c}}\hat{\pi}_{\omega_{m+1}}
\left(\prod_{i=m}^{h(j)} \hat \pi_{[i+1,\alpha_{i}-1]}\right)\hat\pi_{[h(j),r(j)-1]}
\left(\prod_{i=h(j)-1}^1\hat{\pi}_{[i,{\alpha_{i}-1}]}\right)
\end{eqnarray*} 
in the last expression except that we need
$\hat \pi_{\omega_{m+1}}$ instead of $\hat \pi_{\omega_m}$. Using $\hat \pi_{\omega_{m+1}}= \hat \pi_1 \cdots \hat \pi_m \hat \pi_{\omega_m}$,
we have
$$\partial_{\omega_{m+1}} \pi_{\omega_{(m+1)^c}}\hat{\pi}_{\omega_m}=(-1)^{m}\partial_{\omega_{m+1}} \pi_{\omega_{(m+1)^c}}\, \hat{\pi}_{\omega_{m+1}}$$
The left hand side of \eqref{eqfinal} can thus be rewritten as
\begin{eqnarray*}
&&\sum_{j=1}^{N-m}(-1)^{m+1-h(j)}(-1)^{m}\mathcal{P}_{N,[\alpha_1,\dots,r(j),\dots,\alpha_m]}
e_\ell^{\{1,\dots,r(j)-1\}} x^{\Lambda^*} \\
&&\quad\quad\quad=\sum_{j=1}^{N-m}(-1)^{h(j)-1}\mathcal{P}_{N,[\alpha_1,\dots,r(j),\dots,\alpha_m]}
e_\ell^{\{1,\dots,r(j)-1\}} x^{\Lambda^*}
\end{eqnarray*}
Note here that by construction, $\{r(1),\dots,r(N-m)\}$ is precisely the complement of $\{\alpha_1,\dots,\alpha_m\}$.
Therefore, we can change the summation to sum over the complement of $\{\alpha_1,\dots,\alpha_m\}$ and
furthermore, we can set $\text{pos}(r(j))=h(j)-1$, which happens to agree with the definition of $\text{pos}(r)$
given in Section~\ref{secpieri} (which corresponds to the number of elements of $\{\alpha_1,\dots,\alpha_m\}$ smaller than $r$). 
Hence the left hand side of (\ref{eqfinal})
becomes
$$\sum_{r\not \in \{\alpha_1,\dots,\alpha_m \}}(-1)^{{\rm pos}(r)}\mathcal{P}_{N,[\alpha_1,\dots,r,\dots,\alpha_m]}
e_\ell^{\{1,\dots,r-1\}}x^{\Lambda^*}$$
and the lemma holds.
\end{proof}


\begin{thebibliography}{99}



\bibitem{BDLM1} O. Blondeau-Fournier, P. Desrosiers, L. Lapointe and P. Mathieu,
\emph{Macdonald polynomials in superspace: conjectural definition and positivity
conjectures}, Lett. Math. Phys. {\bf 101}, 27--47 (2012).

\bibitem{BDLM2} O. Blondeau-Fournier, P. Desrosiers, L. Lapointe and P. Mathieu,
\emph{Macdonald polynomials in superspace as eigenfunctions of commuting operators}, Journal of Combinatorics {\bf 3}, no. 3, 495--562 (2012).

\bibitem{BLM} O. Blondeau-Fournier, L. Lapointe and P. Mathieu,
\emph{Double Macdonald polynomials as the stable limit of Macdonald superpolynomials}, J. Algebr. Comb. {\bf 41}, 397--459 (2015). 





\bibitem{BM} O. Blondeau-Fournier and P. Mathieu, \emph{Schur superpolynomials: combinatorial definition and Pieri rule}, arXiv:1408.2807.

\bibitem{CL}
S. Corteel and J. Lovejoy, {\it Overpartitions}, Trans. Amer. Math. Soc. 356 (2004), 1623--1635.

\bibitem{DLM1} {{ P. Desrosiers}}, L. Lapointe and P. Mathieu, \emph{
Classical symmetric functions in superspace}, 
J. Alg. Combin. {\bf 24}, 209--238 (2006). 

\bibitem{DLM2} {{ P. Desrosiers}}, L. Lapointe and P. Mathieu, \emph{Orthogonality of
    Jack polynomials in superspace}, Adv. Math. {\bf 212}, 361--388 (2007). 

\bibitem{Hai}M. Haiman, {\it Hilbert schemes, polygraphs, and the Macdonald positivity conjecture}, J. Amer. Math. Soc. {\bf 14} (2001) 941--1006.

\bibitem{Ion} B. Ion, {\it Demazure characters and non-symmetric Macdonald polynomials}, Duke Math. J. {\bf 116} (2003), 299--318.

\bibitem{Las}
               {A. Lascoux},
                {\it Symmetric functions and combinatorial operators on polynomials},
CBMS {\bf 99}, AMS 2003.


\bibitem{LS}
A. Lascoux and M.P. Sch\"utzenberger, {\it Keys and standard bases},
Invariant Theory and Tableaux, IMA Volumes in Math and its Applications
{\bf 19}, Springer, 1988, p. 125--144.



\bibitem{Mac}
               {I.~G.~ Macdonald},
                {\it Symmetric functions and {H}all polynomials},
2nd ed., Clarendon Press, 1995.

\bibitem{Mas} S. Mason, {\it An explicit construction of type A Demazure atoms}, J. Alg. Combin. {\bf 29}, 295--313 (2009).


\bibitem{Sta} R.~P. Stanley, Enumerative Combinatorics, Vol. 2,
Cambridge Studies in Advanced Mathematics Vol. 62, 
Cambridge
University Press, 1999.
\end{thebibliography}
\end{document}